   \definecolor{cites}{rgb}{0.50 , 0.00 , 0.00}  
   \definecolor{urls} {rgb}{0.00 , 0.00 , 0.50}  
   \definecolor{links}{rgb}{0.00 , 0.00 , 0.50}   
\definecolor{sccol}{rgb}{0,0,0.5}
\newcommand\A{{\mathcal A}}
\newcommand\B{{\mathcal B}}
\newcommand\cC{{\mathcal C}}
\newcommand\cD{{\mathcal D}}
\newcommand\cE{{\mathcal E}}
\newcommand\cF{{\mathcal F}}
\newcommand\cI{{\mathcal I}}
\newcommand\cS{{\mathcal S}}
\newcommand\C{{\mathbb C}}
\newcommand\D{{\mathbb D}}
\newcommand\N{{\mathbb N}}
\newcommand\T{{\mathbb T}}
\newcommand\R{{\mathbb R}}
\newcommand\Z{{\mathbb Z}}
\newcommand\Hto{\,
   \unitlength0.1ex
   \begin{picture}(30,15)
   \put(13,16){\makebox(0,0)[]{\tiny\rm H}}
   \put(15,5){\makebox(0,0)[]{$\to$}}
   \end{picture}\,
}
\newcommand\per{{\sf per}}
\newcommand\ph{{\varphi}}
\newcommand\eps{{\varepsilon}}
\newcommand\spec{{\rm spec}\,}  
\newcommand\specn{{\rm spec}}   
\newcommand\speps{{\rm spec}_\eps}
\newcommand\Specn{{\rm Spec}}   
\newcommand\Spec{{\rm Spec}\,}  
\newcommand\Speps{{\rm Spec}_\eps}
\newcommand\dist{{\rm dist}}
\newcommand\conv{{\rm conv}}
\newcommand\diag{{\rm diag}}
\newcommand\Diag{{\rm Diag}}
\newcommand\ri{{\rm i}}
\theoremstyle{plain}
\newtheorem{theorem}{Theorem}[section]
\newtheorem{lemma}[theorem]{Lemma}
\newtheorem{corollary}[theorem]{Corollary}
\newtheorem{proposition}[theorem]{Proposition}
\theoremstyle{definition}
\newtheorem{definition}[theorem]{Definition}
\newtheorem{example}[theorem]{Example}
\newtheorem{remark}[theorem]{Remark}
\newenvironment{proofof}[1]
 {\par\noindent{\em Proof of #1.}}
 {\qed\par\pagebreak[2]}
\numberwithin{figure}{section}  
\numberwithin{equation}{section}
\begin{document}

\title{On Spectral Inclusion Sets and Computing the Spectra and Pseudospectra of Bounded Linear Operators}
\titlemark{On Spectral Inclusion Sets and Computing Spectra and Pseudospectra}



\emsauthor{1}{
	\givenname{Simon}
	\surname{Chandler-Wilde}
	\mrid{271253}
	\orcid{0000-0003-0578-1283}}{S.~N.~Chandler-Wilde}
\emsauthor{2}{
	\givenname{Ratchanikorn}
	\surname{Chonchaiya}
	\mrid{964274}
	\orcid{0000-0003-1865-3790}}{R.~Chonchaiya}
\emsauthor{3}{
	\givenname{Marko}
	\surname{Lindner}
	\mrid{687128}
	\orcid{0000-0001-8483-2944}}{M.~Lindner}

\Emsaffil{1}{
	\department{Department of Mathematics and Statistics}
	\organisation{University of Reading}
	\rorid{05v62cm79}
	\zip{RG6 6AX}
	\city{Reading}
	\country{UK}
	\affemail{s.n.chandler-wilde@reading.ac.uk}}
\Emsaffil{2}{
	\department{Faculty of Science}
	\organisation{King Mongkut's University of Technology Thonburi}
	\rorid{0057ax056}
	\zip{10140}
	\city{Bangkok}
	\country{Thailand}
\affemail{hengmath@hotmail.com}
}
\Emsaffil{3}{
	\department{Institut f\"ur Mathematik}
	\organisation{TU Hamburg (TUHH)}
	\rorid{04bs1pb34}
	\zip{D-21073}
	\city{Hamburg}
	\country{Germany}
\affemail{lindner@tuhh.de}
}
\classification[47B35,46E40,47B80]{47A10}

\keywords{band matrix, band-dominated matrix, solvability complexity index, pseudoergodic}

\begin{abstract}
In this paper we derive novel families of inclusion sets for the spectrum and pseudospectrum of large classes of bounded linear operators, and establish convergence of particular sequences of these inclusion sets to the spectrum or pseudospectrum, as appropriate. Our results apply, in particular, to bounded linear operators on a separable Hilbert space that, with respect to some orthonormal basis, have a representation as a bi-infinite matrix that is banded or band-dominated. More generally, our results apply in cases where the matrix entries themselves are bounded linear operators on some Banach space. In the scalar matrix entry case we show that our methods, given the input information we assume, lead to a sequence of approximations to the spectrum, each element of which can be computed in finitely many arithmetic operations, so that, with our assumed inputs, the problem of determining the spectrum of a band-dominated operator has solvability complexity index one, in the sense of Ben-Artzi et al.\ [arXiv:1508.03280, 2020].
As a concrete and substantial application, we apply our methods to the determination of the spectra of non-self-adjoint bi-infinite tridiagonal matrices that are pseudoergodic in the sense of Davies [Commun.\ Math.\ Phys. 216 (2001) 687--704]. 
\end{abstract}

\maketitle

\begin{center}
{\it \large Dedicated to Prof E.~Brian Davies on the occasion of his 80th birthday.}
\end{center}

\tableofcontents

\section{Introduction and overview}

The determination of the spectra of bounded linear operators is a fundamental problem in functional analysis with applications across science and engineering (e.g., \cite{TrefEmbBook,Davies2007:Book,Hansen:nPseudo,ColbRomanHansen,Colb2022}). In this paper\footnote{This paper, which has been a long time in preparation, is based in large part on Chapters 3 and 4 of the 2010 PhD thesis of the second author \cite{HengThesis}, carried out under the supervision of the first and third authors. The title of earlier  drafts of this paper was ``Upper bounds on the spectra
and pseudospectra of Jacobi and related operators''. With this title this paper is referenced from other work,  that derives in part from the results and ideas in this paper, by one or more of the three of us with collaborators Hagger, Seidel, and Schmidt \cite{CW.Heng.ML:tridiag,LiSei:BigQuest,HagLiSei,LiSchmidt:Givens}. An announcement of some of the results of this paper has appeared in the conference proceedings \cite{PAMM}.} we establish novel families of inclusion sets for the spectrum and  pseudospectrum of large classes of bounded linear operators, and establish convergence of particular sequences of these inclusion sets to the spectrum or  pseudospectrum, as appropriate. Our results apply, in particular, to bounded linear operators $A$ on a separable Hilbert space $Y$ that, with respect to some orthonormal basis  $(e_i)_{i\in \Z}$ for $Y$, have a matrix representation $[a_{i,j}]_{i,j\in \Z}$, where $a_{i,j} := (Ae_j,e_i)$ with $(\cdot,\cdot)$ the inner product on $Y$, that is banded or band-dominated (as defined in \S\ref{sec:keynot} below). More generally, our results apply in the case that $Y$ is the Banach space $Y=\ell^2(\Z,X)$, where $X$ is a Banach space, and $A:Y\to Y$ is a bounded linear operator that has the banded or band-dominated matrix representation $[a_{ij}]_{i,j\in \Z}$, where each $a_{i,j}\in L(X)$, the space of bounded linear operators on $X$.

\subsection{The main ideas, their provenance and significance} \label{sec:ideas} 
Let us sketch the key ideas of the paper, deferring a more detailed account of the main results to \S\ref{sec:main}. 
To explain these ideas, focussing first on the case that $A=[a_{i,j}]_{i,j\in \Z}$ is  a bi-infinite tridiagonal matrix, let $A(i{:}j,k{:}\ell)$ denote the $(j{+}1{-}i)\times (\ell{+}1{-}k)$ submatrix (or finite section) of $A$ consisting of the elements in rows $i$ through $j$ and columns $k$ through $\ell$. Abbreviate the $n\times n$ principal submatrix $A(k{+}1{:}k{+}n,k{+}1{:}k{+}n)$ as $A_{n,k}$, and the $(n{+}2) \times n$ submatrix  $A(k{:}k{+}n{+1},k{+}1{:}k{+}n)$ as $A^+_{n,k}$ (see Figure \ref{fig:matrices}). Further, let $A_n:= A_{2n+1,-n-1}$ and $A^+_n:= A^+_{2n+1,-n-1}$, so that $A^+_n$ contains all the non-zero elements in columns $-n$ through $n$. Similarly, for $x=(x_i)_{i\in \Z}$,  let $x_{n,k}$ denote the (column) vector of length $n$ consisting of entries $k+1$ through $k+n$ of $x$.

Suppose that we wish to compute an approximation to $\Spec A$, the spectrum of a bi-infinite tridiagonal matrix $A$
acting as a bounded linear operator on $\ell^2(\Z)$. It is well known that if  $\lambda \in \Spec A$ then, for every $\eps>0$, there exists $x\in \ell^2(\Z)$ such that $x$ is an {\em $\eps$-pseudoeigenvector of $A$ for $\lambda$}, meaning that $x\neq 0$ and $\|(A-\lambda I)x\| \leq \eps \|x\|$, or there exists an  $\eps$-pseudoeigenvector of $A^*$ for $\lambda$, where $A^*$ is the matrix that is the transpose of $A$.  Our first key observation is the following: 
\begin{quotation}
\noindent {\em If $x$ is an $\eps$-pseudoeigenvector of $A$ for $\lambda$ then, for each $n\in \N$, there is a computable $\eps_n> 0$, with $\eps_n\to0$ as $n\to\infty$, such that $x_{n,k}$ is an $(\eps+\eps_n)$-pseudoeigenvector of $A_{n,k}$ for $\lambda$, for some $k\in \Z$.}
\end{quotation}
This observation implies that, for some $k\in \Z$, $\Spec A \subset \Speps A_{n,k}$, the (closed) $\eps$-pseudospectrum of the matrix $A_{n,k}$ (see \S\ref{sec:keynot} for definitions). Indeed, this inclusion holds for all $\eps>0$, which we will see implies that
\begin{equation} \label{eq:incl}
\Spec A\ \subset\ \Sigma_{\eps_n}^n(A)\ :=\ \overline{\bigcup_{k\in  \Z} \Specn_{\eps_n} A_{n,k}},
\end{equation}
for some positive null sequence $(\eps_n)_{n\in \N}$.
Optimising the above idea (see \S\ref{sec:incl}) leads, concretely,  to the conclusion that \eqref{eq:incl} holds with $\eps_n = r(A)\eta_n$, where 
\begin{equation} \label{eq:rA}
r(A)\ :=\ \sup_{i\in \Z}|a_{i,i+1}|+\sup_{i\in \Z}|a_{i,i-1}| \quad \mbox{and} \quad 2\sin\left(\frac{\pi}{4n+2}\right)\ \leq\ \eta_n\ \leq\ 2\sin\left(\frac{\pi}{2n+4}\right).
\end{equation}

In terms of provenance, the above observation and  \eqref{eq:incl} are reminiscent of the classical Gershgorin theorem \cite{Gershgorin,Varga} that provides an enclosure for the eigenvalues of an arbitrary $n\times n$ matrix.  Indeed, in the simplest case $n=1$,  $A_{n,k}=[a_{k{+}1,k{+}1}]$ is the $(k{+}1)$th diagonal entry of the matrix $A$, the above inequality implies that $\eta_n=1$, and $\Specn_{\eps_n} A_{n,k}$ is just $a_{k{+}1,k{+}1} + r(A) \overline{\D}$, the closed disc of radius $r(A)$ centred on $a_{k{+}1,k{+}1}$. Thus \eqref{eq:incl} in the case $n=1$ is  simply
\begin{equation} \label{eq:Gersch}
\Spec A\ \subset\ \overline{\bigcup_{k\in  \Z} (a_{k,k} + r(A) \D)},
\end{equation}
an enclosure of $\Spec A$ by discs centred on the diagonal entries of $A$ that is an immediate consequence of (an infinite matrix version) of the classical Gershgorin theorem\footnote{An infinite matrix version of the Gershgorin theorem that implies \eqref{eq:Gersch} is proved as \cite[Theorem 2.50]{HengThesis}. Alternatively, the inclusion \eqref{eq:Gersch} follows by applying \eqref{eq:spepspert2} below, with $B$ the diagonal of $A$, noting that $\|A-B\|\leq r(A)$ and $\Specn_{r(A)} B = \Spec B + r(A)\overline{\D}$.}. So our enclosures \eqref{eq:incl} can be seen as an extension of Gershgorin's theorem to create a whole family of inclusion sets for the spectra of bi-infinite tridiagonal matrices.

An attraction of a sequence $(\Sigma_{\eps_n}^n(A))_{n\in \N}$ of inclusion sets is that one can think of taking the limit. We will see significant examples below, including well-studied examples of tridiagonal pseudoergodic matrices in the sense of Davies \cite{Davies2001:PseudoErg}, for which $\Sigma_{\eps_n}^n(A)\Hto \Spec A$ as $n\to\infty$ ($\Hto$ is the Hausdorff convergence that we recall in \S\ref{sec:keynot}). But, in general, the sequence $(\Sigma_{\eps_n}^n(A))_{n\in \N}$, based on computing spectral quantities of square matrix finite sections of $A$, suffers from {\em spectral pollution} \cite{DaviesPlum}: there are limit points of the sequence $(\Sigma_{\eps_n}^n(A))$ that are not in $\Spec A$. 

The second key idea, dating back to Davies \& Plum \cite[pp.~423-434]{DaviesPlum} (building on Davies \cite{Davies1998:Encl}) for the self-adjoint case,  and Hansen \cite[Theorem 48]{HansenJFA08} (proved in \cite{Hansen:nPseudo}) and \cite{Hansen:nPseudo} for the general case (see also \cite{HeinPotLind08,SCI,ColbRomanHansen,ColHan2023}), is to avoid spectral pollution by working with rectangular rather than square finite sections. In our context this means to replace the $n\times n$ matrix $A_{n,k}$ by the $(n+2)\times n$ matrix $A_{n,k}^+$, containing all the non-zero entries of $A$  in columns $k+1$ to $k+n$. It is a standard characterisation of the pseudospectrum (see \eqref{eq:spepsnu} and \eqref{eq:lowernorm2}, or \cite[{\S}I.2]{TrefEmbBook}) that 
$$
\Specn_{\eps_n} A_{n,k}\ =\ \left\{\lambda\in \C: \min\left(s_{\min}((A-\lambda I)_{n,k}), s_{\min}((A^*-\lambda I)_{n,k}\right)\leq \eps_n\right\},
$$
where $s_{\min}$ denotes the smallest singular value. This implies (with a little work, see Proposition \ref{prop:same}) that
$$
\Sigma^n_{\eps_n}(A)\ =\ \left\{\lambda\in \C: \inf_{k\in \Z}\,\min\left(s_{\min}((A-\lambda I)_{n,k}), s_{\min}((A^*-\lambda I)_{n,k}\right)\leq \eps_n\right\}.
$$
Replacing square with rectangular finite sections in the above expression gives an alternative sequence of approximations to $\Spec A$, viz.~$(\Gamma_{\eps''_n}^n(A))_{n\in \N}$, where
\begin{equation} \label{eqGn}
\Gamma^n_{\eps''_n}(A)\ :=\ \left\{\lambda\in \C: \inf_{k\in \Z}\,\min\left(s_{\min}((A-\lambda I)^+_{n,k}), s_{\min}((A^*-\lambda I)^+_{n,k}\right)\leq \eps''_n\right\}.
\end{equation}
Again $\Spec A \subset \Gamma^n_{\eps''_n}(A)$ for each $n$ (equation \eqref{incl:met2}), provided $\eps''_n$ is large enough. (We show in \S\ref{sec:tau1} that if $\eps''_n:= r(A)\eta''_n$ then $\eta''_n = 2\sin(\pi/(2n+2))$ is the smallest choice of $\eta''_n$ that maintains this inclusion for all tridiagonal $A$.) Crucially, and this is the benefit of replacing square by rectangular finite sections, also $\Gamma^n_{\eps''_n}(A)\Hto \Spec A$ as $n\to\infty$ (Theorem \ref{thm:converge}),  for every tridiagonal $A$.

The third idea is to realise that the entries in the tridiagonal matrix can themselves be matrices (or indeed operators on some Banach space). This extends the above results to general banded matrices (see \S\ref{sec:main} below). Further, by perturbation arguments, we can obtain inclusion sets, and  a sequence of approximations converging to the spectrum, also for the band-dominated case (\S\ref{sec:bdo}).

The final main idea of the paper is that, through a further perturbation argument (see \S\ref{sec:comput}), the union and infimum over all $k\in \Z$, in \eqref{eq:incl} and \eqref{eqGn}, can be replaced by a union and infimum over $k\in K_n$, where $K_n\subset \Z$ is a finite, $A$- and $n$-dependent subset. Thus, provided that one has access through some oracle to the sets $K_n$, for $n\in \N$,  a sequence of approximations to $\Spec A$ is obtained that can be computed in a finite number of arithmetic operations. In the language of the  solvability complexity index (SCI) of \cite{SCIshort,SCI,Colb2022,ColHan2023,Matt}  these results (see \S\ref{sec:SCI} and \S\ref{sec:scalar}) lead to a proof that the computational problem of determining the spectrum of a band-dominated operator has SCI equal to one, if one chooses an evaluation set (in the sense of \cite{SCI,Colb2022,ColHan2023}) that provides access to appropriately chosen  submatrices, i.e.\ to $A^+_{n,k}$  and $(A^*)^+_{n,k}$, for $n\in \N$ and $k\in K_n$. To showcase  this result we provide a concrete realisation of the associated algorithm (concrete in the sense that the sets $K_n$ can be constructed) for  the class of tridiagonal matrices that are pseudoergodic in the sense of Davies \cite{Davies2001:PseudoErg} (see \S\ref{sec:pseudoergodic}), illustrating this algorithm by numerical results taken from \cite{CW.Heng.ML:tridiag}.

To clarify the significance of this combination of ideas, let us make comparison to the classical idea of approximating the spectrum or pseudospectrum of $A$ by those of a single large finite section of $A$. To avoid spectral pollution we take a rectangular finite section: precisely (cf.~\eqref{eqGn}), consider the approximation 
\begin{equation} \label{eq:specfs}
\mathcal{S}^n_\eps(A)\ :=\ \left\{\lambda\in \C: \min\left(s_{\min}((A-\lambda I)^+_{n}), s_{\min}((A^*-\lambda I)^+_{n})\right)\leq \eps\right\},
\end{equation}
defined in terms of smallest singular values of $2n{+}3\times 2n{+}1$ finite sections of $A$ and $A^*$, which is a particular instance of the approximation sequence proposed by Hansen  \cite{HansenJFA08}. Then it follows from \cite[Theorem 48]{HansenJFA08} (the proof deferred to \cite{Hansen:nPseudo}) that, for every $\eps>0$,
$\mathcal{S}^n_\eps(A)\Hto\Speps A$ as $n\to\infty$. Since also $\Speps A\Hto \Spec A$ as $\eps\to 0$, we have that
$$
\Spec A = \lim_{\eps\to0} \, \lim_{n\to\infty}\, \mathcal{S}^n_\eps(A).
$$

This result trivially implies that there exists a positive null sequence $(\eps_n)_{n\in \N}$ such that $\mathcal{S}^n_{\eps_n}(A)\Hto$ $\Spec A$ as $n\to\infty$. But, in contrast to the new approximation families that we introduce in this paper, this null sequence $(\eps_n)$ depends on $A$, in an ill-defined, and unquantifiable way. Indeed, the SCI result arguments  of \cite{SCI}, that we discuss in \S\ref{sec:SCI} and \S\ref{sec:other}, imply that there exists no sequence of universal algorithms, indexed by $n\in \N$, operating on the set of all tridiagonal matrices,  such that the $n$th algorithm takes just finitely many values of the matrix entries $a_{i,j}$ as input and gives $\eps_n$ as output, in  such a way that $\mathcal{S}^n_{\eps_n}(A)\Hto\Spec A$ for all tridiagonal $A$ \cite{Matt}. In the language of \cite{SCI}, the SCI of computing the spectrum of a tridiagonal operator is two \cite{Matt} if the evaluation set is restricted to the mappings $A\mapsto a_{i,j}$, for $i,j\in \Z$ (i.e., to only {\it local information} about the matrix $A$). In contrast, our results in this paper show that the SCI is one if we expand the evaluation set to include also the mappings $A\mapsto K_n$ for $n\in \N$ (in some sense {\it global information} about the matrix $A$). We are hopeful that changing the evaluation set in analogous ways, to include global as well as local information, should lead to reductions in SCI for a broad range of problems, in particular for the  computation of a range of other spectral quantities (cf.~\cite{SCI,ColbRomanHansen,Colb2022,ColHan2023}).


\subsection{The structure of the paper} \label{sec:struct}
In the next subsection we briefly recall key notations and definitions necessary to read our main results. In \S\ref{sec:main} we give an overview of  these main results. We state our results on inclusion sets for the spectrum and pseudospectrum in the case that the matrix representation of $A$ is banded  in \S\ref{sec:incl}, prove our results on convergence of our inclusion set sequences to the spectrum and pseudospectrum for the banded case in \S\ref{sec:conv}, detail the extension to the band-dominated case in \S\ref{sec:bdo}, and summarise our results on computability and the solvability complexity index in \S\ref{sec:comput} and \S\ref{sec:SCI}. In \S\ref{sec:exam} we give a first example (the shift operator) that serves to illustrate our inclusions \eqref{incl:met1}, \eqref{incl:met1*} and \eqref{incl:met2} below and demonstrate their sharpness, and in \S\ref{sec:other} we survey related work, expanding on the discussion above.

In \S\ref{sec:pseud} we detail further properties of pseudospectra needed for our later arguments and introduce the so-called Globevnik property. Sections \ref{sec:tau}--\ref{sec:tau1} prove the inclusions  \eqref{incl:met1}, \eqref{incl:met1*}, and \eqref{incl:met2}, respectively. In \S\ref{sec:thmgen} we prove Theorem \ref{thm:fin} that we state in \S\ref{sec:comput}, that justifies replacement of the infinite set of sub-matrices that are part of the definitions of our inclusion sets with a finite subset. This result is extended to the general band-dominated case in Theorem \ref{thm:fin2}.  In \S\ref{sec:scalar} we give details of implementation, leading to our solvability complexity index results. We  discuss concrete applications of our general results in \S\ref{sec:pseudop}, notably to the case where $A$ is tridiagonal and pseudoergodic in the sense of Davies \cite{Davies2001:PseudoErg}. In \S\ref{sec:ext} we indicate open problems and directions for further work. 

\subsection{Key notations} \label{sec:keynot} 
As usual, we write $\Z,\,\R,\,\C$ for the integer, real, and complex numbers, $\N\subset\Z$ is the set of positive integers,
$\D\subset\C$ the open unit disk, $\T\subset \C$ the unit circle, and, for $n\in\N$, $\T_n\subset \T$ denotes the set of
$n$th roots of unity, $\T_n = \{z\in \C:z^n=1\}$. $\overline S$ denotes the closure of a set $S\subset\C$ and $\bar z$ denotes the complex conjugate of $z\in \C$. Throughout, $X$ will denote some Banach space: at particular points in the text we will make clear where we are specialising to the case that $X$ is a Hilbert space or is finite-dimensional. Further,
\begin{equation} \label{Edef}
E:=\ell^2(\Z,X)
\end{equation}
will denote the space of bi-infinite
$X$-valued sequences $x=(x_j)_{j\in\Z}$ that are
square-summable, a Banach space with the norm $\|\cdot\|$ defined
by $\|x\| = (\sum_{j\in\Z} |x_j|_X^2)^{1/2}$. We will abbreviate $\ell^2(\Z,\C)$ as $\ell^2(\Z)$.

\paragraph{The spectrum, pseudospectrum, and lower norm} Let $Y$ be a Banach space (throughout the paper our Banach and Hilbert spaces are assumed complex). Given $A\in L(Y)$, the space of bounded linear operators on $Y$, we denote the spectrum of $A$ by $\Spec A $. With the convention that, for $B\in L(Y)$, $\|B^{-1}\|:= \infty$ if $B$ is not invertible, and that $1/\infty:=0$, and where $I$ is the identity operator on $Y$, we define the {\em closed $\eps$-pseudospectrum},
$\Specn_\eps A$, by
\begin{equation} \label{eq:pse}
\Specn_\eps A := \{\lambda\in \C: 1/\|(A-\lambda I)^{-1}\|\leq \eps\}, \quad \eps>0.
\end{equation}
Clearly $\Spec A \subset \Specn_\eps A\subset \Specn_{\eps^\prime}A$, for $0<\eps<\eps^\prime$, and 
\begin{equation} \label{eq:speceps}
\Spec A = \bigcap_{\eps>0} \Specn_\eps A .
\end{equation}

To write certain inclusion statements more compactly, we will define also $\Specn_0A:= \Spec A$. For $\eps>0$ we define also the {\it open $\eps$-pseudospectrum}\footnote{We refer readers unfamiliar with the pseudospectrum, its properties and applications, to \S\ref{sec:pseud} below, and to B\"ottcher \& Lindner \cite{BoeLi:Pseudospectrum}, Davies \cite{Davies2007:Book}, Trefethen \& Embree \cite{TrefEmbBook}, and Hagen, Roch, Silbermann \cite{HaRoSi2}. We note that the $\eps$-pseudospectrum, as defined in \cite{TrefEmbBook,Davies2007:Book} and most recent literature, is our open $\eps$-pseudospectrum, but in part of the literature, including  \cite{HaRoSi2}, $\eps$-pseudospectrum means our closed $\eps$-pseudospectrum. As we recall in \S\ref{sec:pseud}, if the Banach space $X$ has the so-called {\em Globevnik property}, which holds in particular if $X$ is a Hilbert space or is finite-dimensional, these two pseudospectra are related simply by $\Speps(B)=\overline{\speps(B)}$, for $\eps>0$ and $B\in L(X)$.},
$\specn_\eps A$, given by \eqref{eq:pse} with the $\leq$ replaced by $<$; the above inclusions and \eqref{eq:speceps} hold also with $\Specn_\eps A$ replaced by $\specn_\eps A$. As a consequence of standard perturbation arguments (e.g., \cite[Theorem 1.2.9]{Davies2007:Book}), $\Spec_\eps A$ is closed, for $\eps\geq 0$, and $\speps A$ open, for $\eps>0$. 

Given Banach spaces $X$ and $Y$ and $B\in L(X,Y)$, the space of bounded linear operators from $X$ to $Y$, one can study, as a counterpart to the operator norm $\|B\|:=\sup_{\|x\|=1}\|Bx\|$, the quantity
\[
\nu(B)\ :=\ \inf_{\|x\|=1}\|Bx\|
\]
that is sometimes (by abuse of notation) called the {\sl lower norm} (and sometimes, e.g.~in \cite{Seidel:Neps}, the {\sl injection modulus}) of $B$. In the case that $Y$ is a Hilbert space and $A\in L(Y)$, $\|A\|$ is the largest singular value
of $A$ and $\nu(A)$ is the smallest\footnote{Recall that the singular values of $A\in L(Y)$, defined whenever $Y$ is a Hilbert space, are the points in the spectrum of $(A'A)^{1/2}$, where $A'$ is the Hilbert space adjoint.}. For every Banach space $Y$ and $A\in L(Y)$ the equality
\begin{equation} \label{eq:invNorm}
1/\|A^{-1}\|\ =\min(\nu(A),\nu(A^*)) =: \mu(A),
\end{equation}
holds, where $A^*$ is the Banach space adjoint (see below). 
In particular, $A$ is invertible if and only if $\nu(A)$ and $\nu(A^*)$ are both nonzero, i.e., if and only if $\mu(A)\neq 0$, in which case
$\nu(A)=\nu(A^*)=\mu(A)$. Further, if $A$ is Fredholm of index zero, in particular if $Y$ is finite-dimensional, then $\nu(A)=0$ if and only if $\nu(A^*)=0$, so that $\mu(A)=\nu(A)$.

From the definitions of $\speps A$ and $\Speps A$ and \eqref{eq:invNorm} it follows that
\begin{equation} \label{eq:spepsnu}
\begin{aligned}
\speps A\ &=\ \{\lambda\in\C:\mu(A-\lambda I)<\eps\} , \;\; \eps>0, \quad \mbox{and}\\ 
\Speps A\ &=\ \{\lambda\in\C:\mu(A-\lambda I)\leq\eps\}, \;\; \eps\geq 0.
\end{aligned}
\end{equation}
Simple but important properties of the lower norm are that, if $A,B\in L(X,Y)$ and $Z$ is a closed subspace of $X$, then 
\begin{equation} \label{eq:lnProp}
\nu(A|_Z) \geq \nu(A)\qquad \mbox{and} \qquad |\nu(A)-\nu(B)|\leq \|A-B\|;
\end{equation}
see, e.g., \cite[Lemma 2.38]{LiBook} for the second of these properties. It is easy to see (since $\mu(A)=\nu(A)$ if $\mu(A)\neq 0$) that also
\begin{equation} \label{eq:lnPro2}
|\mu(A)-\mu(B)|\leq \|A-B\|.
\end{equation}

\paragraph{Hausdorff distance and notions of set convergence}
Let $\C^B$, $\C^C$  denote, respectively, the sets of bounded and compact non-empty subsets of $\C$. For $a\in \C$ and $S\in \C^B$, let $\dist(a,S) := \inf_{b\in S}|a-b|$. For $S_1,S_2\subset \C^B$ let
\[
d_H(S_1,S_2)\ :=\ \max\left( \sup_{s_1\in S_1} \dist(s_1,S_2)\,,\, \sup_{s_2\in S_2}\dist(s_2,S_1)\right),
\]
which is termed the {\em Hausdorff distance} between $S_1$ and $S_2$. It is well known (e.g., \cite{HausBook,HaRoSi2}) that $d_H(\cdot,\cdot)$ is a metric on $\C^C$, the so-called {\em Hausdorff metric}. For $S_1,S_2\in \C^B$ it is clear that $d_H(S_1,S_2)=d_H(\overline{S_1},\overline{S_2})$, so that $d_H(S_1,S_2)=0$ if and only if $\overline{S_1}=\overline{S_2}$, and $d_H(\cdot,\cdot)$ is a pseudometric on $\C^B$. 
For a sequence $(S_n)\subset \C^B$ and $S\in \C^B$ we write $S_n\Hto S$ if $d_H(S_n,S)\to 0$. This limit is in general not unique: if $S_n\Hto S$ and $T\in \C^B$ then $S_n\Hto T$ if and only if $\overline{S}=\overline{T}.$

Simple results that we will use extensively are that if $(S_n) \subset \C^C$ and $S_1\supset S_2\supset \ldots$, in which case $S := \cap_{n=1}^\infty S_n$ is non-empty, then $S_n\Hto S$, and that if $S\in \C^B$, $(S_n), (T_n)\subset \C^B$, $S\subset T_n\subset S_n$ and $S_n\Hto S$, then $T_n\Hto S$.  (These results can be seen directly, or via the equivalence of Hausdorff convergence with other notions of set convergence, see \cite[p.~171]{HausBook} or \cite[Proposition 3.6]{HaRoSi2}.)

\paragraph{Band and band-dominated operators} 
Let $A\in L(E)$, in which case (e.g., \cite[\S1.3.5]{LiBook}) $A$ has a matrix representation $[A]=[a_{i,j}]$, that we will denote again by $A$, with each $a_{i,j}\in L(X)$, such that, for every $x=(x_j)_{j\in \Z}\in E$ with finitely many non-zero entries, $Ax = y=(y_i)_{i\in \Z}$, where 
\begin{equation} \label{eq:matmult}
y_i = \sum_{j\in \Z} a_{i,j}x_j, \quad i\in \Z.
\end{equation}
 The main body of our results are for the case where $A\in BO(E)$, where $BO(E)$ denotes the linear subspace of those  $A\in L(E)$ whose matrix representation is {\em banded} with some {\em band-width $w$}, meaning that $a_{i,j}=0$ for $|i-j|>w$, in which case \eqref{eq:matmult} holds (and is a finite sum) for all $x\in E$. By perturbation arguments we also extend our results to the case where $A\in BDO(E)$, where $BDO(E)$, the space of {\em band-dominated operators}, is the closure of $BO(E)$ in $L(E)$ with respect to the operator norm (see, e.g., \cite[\S1.3.6]{LiBook}). 

\paragraph{Adjoints} In many of our arguments we require the (Banach space) adjoint $A^*$ of $A$ which we think of as an operator on $E^*:=\ell^2(\Z,X^*)$, where $X^*$ is the dual space of $X$. This has the matrix representation $[a_{j,i}^*]$, where $a_{j,i}^*\in L(X^*)$ is the Banach space adjoint of $a_{j,i}\in L(X)$. In the case that $X=\C^n$, equipped with the $\ell^p$ norm, with $1\leq p\leq \infty$, we will identify $X^*$ with $\C^n$ equipped with the $\ell^q$ norm, where $q$ is the usual conjugate index ($1/p+1/q=1$), so that $a_{j,i}\in \C^{n\times n}$ is just an $n\times n$ matrix with scalar entries, and $a_{j,i}^*$ is just $a_{j,i}^T$, the transpose of the matrix $a_{j,i}$. 

\subsection{Our main results} \label{sec:main}

Having introduced key notations and definitions, let us now state our main results.

\paragraph{Reduction to the tridiagonal case}  A simple but important observation is that, to construct inclusion sets for the spectrum and pseudospectrum of $A\in BO(E)$ it is enough to consider the case when the matrix representation has band-width one, so that the matrix is tridiagonal\footnote{In the case $X=\C$, in which case $E=\ell^2(\Z)$ is a Hilbert space, and if $A$ is self-adjoint, reduction of computation of $\Spec A$ (and the pseudospectra of $A$) to consideration of the tridiagonal case is discussed in another sense 
in \cite[Corollary 26]{HansenJFA08}, viz.~the sense of reduction to tridiagonal form by application of a sequence of Householder transformations (which preserve spectrum and pseudospectra).}. For suppose that  $A$ is banded with band-width $w\in \N$ and, for $b\in \N$, let $\mathcal{I}_b:E\to E_b:=\ell^2(\Z,X^b)$ be the mapping $x=(x_i)_{i\in \Z}\mapsto \xi = (\xi_i)_{i\in \Z}$, with $\xi_i=(x_{bi+1},\ldots, x_{b(i+1)})\in X^b$. Then $\mathcal{I}_b$ is an isomorphism, indeed an isometric isomorphism if we equip the product space $X^b$ with the norm 
\begin{equation} \label{eq:norm}
\|(\xi_1,\ldots,\xi_b)\|_{X^b} := \left(\sum_{j=1}^b \|\xi_j\|_X^2\right)^{1/2}
\end{equation}
(which we assume hereafter). Thus $A\in L(E)$ and $A_b\in L(E_b)$, given by $A_b=\mathcal{I}_bA\mathcal{I}_b^{-1}$, share the same spectrum and pseudospectra and, provided $b\geq w$, the matrix representation of $A_b$ is tridiagonal.

Thus in the statement of our main results we focus on the case where $A\in L(E)$ has a  matrix representation that is tridiagonal, so that, introducing the abbreviations $\alpha_j:=a_{j+1,j}$, $\beta_j:= a_{j,j}$, $\gamma_j := a_{j-1,j}$,
\begin{equation} \label{eq:A}
A\ =\ \left(\begin{array}{ccccccc} \ddots&\ddots\\
\ddots&\beta_{-2}&\gamma_{-1}\\
&\alpha_{-2}&\beta_{-1}&\gamma_{0}\\
\cline{4-4}
&&\alpha_{-1}&\multicolumn{1}{|c|}{\beta_0}&\gamma_1\\
\cline{4-4}
&&&\alpha_{0}&\beta_1&\gamma_2\\
&&&&\alpha_1&\beta_2&\ddots\\
&&&&&\ddots&\ddots
\end{array}\right),
\end{equation}
where the box encloses  $\beta_0=a_{0,0}$. As a special case of \eqref{eq:matmult},  $y:=Ax\in E$
has $i$th entry $y_i\in X$ given by
\[
y_i \ =\  \alpha_{i-1} x_{i-1} \, +\, \beta_i x_i \, +\, \gamma_{i+1} x_{i+1}, \qquad i \in\Z.
\]
It is easy to see that the mapping $A$ given by this rule is bounded, i.e.\ $A\in L(E)$, if and only if $(\alpha_i)$,
$(\beta_i)$, and $(\gamma_i)$ are bounded sequences in $L(X)$, i.e.\ are sequences in $\ell^\infty(\Z,L(X))$, a Banach space equipped with the norm $\|\cdot\|_\infty$ defined by $\|x\|_\infty := \sup_{i\in \Z}\|x_i\|_{L(X)}$, 
in which case
\begin{equation} \label{eq:normbound2}
\|A\| \leq r(A) + \|\beta\|_\infty, \quad \mbox{ where } r(A) := \|\alpha\|_\infty+\|\gamma\|_\infty.
\end{equation}
Conversely, 
 noting that $\|a_{i,j}\|_{L(X)}\leq \|A\|$, for every $i,j\in \Z$,
\begin{equation} \label{eq:normbound}
r(A) = \|\alpha\|_\infty+\|\gamma\|_\infty\ \leq\ 2\|A\|. 
\end{equation}

\subsubsection{Our spectral inclusion sets for the tridiagonal case: the $\tau$, $\pi$, and $\tau_1$ methods} \label{sec:incl}
Since to derive spectral inclusion sets for $A\in BO(E)$ it is enough to study the tridiagonal case, we restrict to this case in this subsection (and in much of the rest of the paper); the action of $A$ is that of multiplication by the tridiagonal matrix \eqref{eq:A}. In this paper we derive for this tridiagonal case three different families  of
spectral inclusion sets that enclose the spectrum or
pseudospectra of the operator $A$. Each spectral inclusion set is defined as the 
union of the
pseudospectra (or related sets) of certain finite matrices. These
finite matrices are either principal submatrices of the infinite matrix \eqref{eq:A}  (we call the corresponding approximation method
the ``$\tau$ method'', where $\tau$ is for {\em truncation}), or are circulant-type
modifications of these submatrices (the ``$\pi$ method'', $\pi$ for {\em periodised truncation}), or they are connected to
infinite submatrices with finite row- or column number (this is our ``$\tau_1$ method'', $\tau_1$ for {\em one-sided truncation}). 

In each method, as discussed below \eqref{eq:epsn}, there is a ``penalty term'', $\eps_n$, $\eps'_n$, and $\eps''_n$, for the $\tau$, $\pi$, and $\tau_1$ methods, respectively, which arises from replacing infinite matrices by finite matrices. As will be demonstrated by the simple example where $A$ is the shift operator in \S\ref{sec:exam}, our values for $\eps_n$, $\eps'_n$, and $\eps''_n$ are optimal; there is at least one tridiagonal $A\in L(E)$ such that, if we make any of these values smaller, the claimed inclusions fail. In this limited sense our inclusion sets are sharp.

To define these methods, 
 for $k\in\Z$ and $n\in\N$, let $P_{n,k}:E\to E$ be the projection operator given by
\begin{equation} \label{eq:Pdef}
(P_{n,k}x)_j\ =\ \left\{ \begin{array}{cc}
                         x_j, & j \in \{k+1,k+2, \dots, k+n\}, \\
                         0, & \mbox{otherwise},
                       \end{array}\right.
\end{equation}
and let $E_{n,k} := P_{n,k}(E)$ be the 
range of $P_{n,k}$.
In the $\tau$ and $\pi$ methods, we construct inclusion sets for
the spectrum and pseudospectrum of $A$ from those of the finite
section operators $A_{n,k} := P_{n,k}AP_{n,k}|_{E_{n,k}}:E_{n,k}\to
E_{n,k}$ and their ``periodised'' versions $A^{\per,t}_{n,k}:E_{n,k}\to
E_{n,k}$, with $t\in\T$, so that, for $n\in \N$, $k\in \Z$, $A_{n,k}$ acts by multiplication by the $n\times
n$ principal submatrix
\begin{equation} \label{eq:Ank}
A_{n,k}\ =\ \left(\begin{array}{ccccc}
\beta_{k+1} & \gamma_{k+2}& & & \\
\alpha_{k+1} & \beta_{k+2} & \gamma_{k+3} & & \\
 & \ddots & \ddots & \ddots & \\
 & & \alpha_{k+n-2} & \beta_{k+n-1} & \gamma_{k+n} \\
 & & & \alpha_{k+n-1} & \beta_{k+n} \end{array} \right)
\end{equation}
of $A$ and $A^{\per,t}_{n,k}$ acts by multiplication by the matrix $A^{\per,t}_{n,k}:= A_{n,k}+B_{n,k}^t$, where $B_{n,k}^t$ is the $n\times n$ matrix whose entry in row $i$, column $j$ is $\delta_{i,1}\delta_{j,n} t\alpha_k+ \delta_{i,n}\delta_{j,1}\bar t\gamma_{k+n+1}$, where $\delta_{ij}$ is the Kronecker delta, so that, for $n\geq 3$,
\begin{equation} \label{eq:Ankper}
A^{\per,t}_{n,k} = \left(\begin{array}{ccccc}
\beta_{k+1} & \gamma_{k+2}& & & t\alpha_{k} \\
\alpha_{k+1} & \beta_{k+2} & \gamma_{k+3} & & \\
 & \ddots & \ddots & \ddots & \\
 & & \alpha_{k+n-2} & \beta_{k+n-1} & \gamma_{k+n} \\
 \bar t\gamma_{k+n+1}& & & \alpha_{k+n-1} & \beta_{k+n} \end{array}
 \right)
\end{equation}
(see Figure \ref{fig:matrices} for visualisations of these finite section matrices).

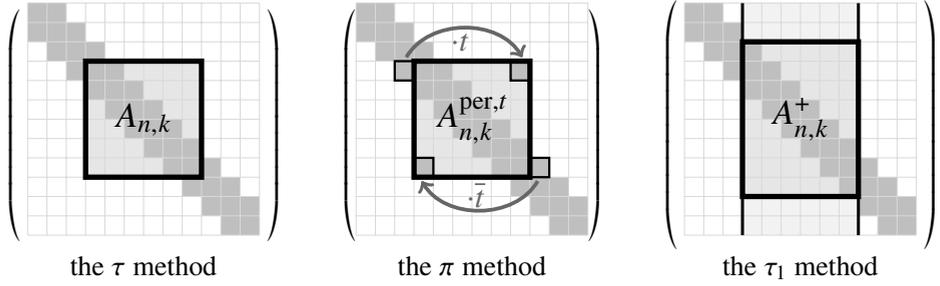
\begin{figure}[t]
\begin{center}
\begin{tabular}{cp{0mm}cp{0mm}c}
$
\begin{pmatrix}~
\begin{tikzpicture}[scale=0.51]
\fill[gray!20] (1.5,4.5) rectangle ++(3,-3);
\foreach \k in {0,0.5,...,5}
  \fill[gray!50] (\k,6-\k) rectangle ++(1.0,-1.0);
\draw[gray!30,very thin,step=0.5cm] (0,0) grid (6,6);
\draw[black,line width=0.7mm] (1.5,4.5) rectangle ++(3,-3);
\node at (3,3) {\Large $A_{n,k}$};
\end{tikzpicture}
~\end{pmatrix}
$
&&
$
\begin{pmatrix}
\begin{tikzpicture}[scale=0.51]
\fill[gray!20] (1.5,4.5) rectangle ++(3,-3);
\foreach \k in {0,0.5,...,5}
  \fill[gray!50] (\k,6-\k) rectangle ++(1.0,-1.0);
\fill[gray!50] (4.5,4.5) rectangle ++(-0.5,-0.5);
\fill[gray!50] (1.5,1.5) rectangle ++(0.5,0.5);
\draw[gray!30,very thin,step=0.5cm] (0,0) grid (6,6);
\draw[black,line width=0.3mm] (4.5,4.5) rectangle ++(-0.5,-0.5);
\draw[black,line width=0.3mm] (1.5,1.5) rectangle ++(0.5,0.5);
\draw[black,line width=0.3mm] (4.5,1.5) rectangle ++(0.5,0.5);
\draw[black,line width=0.3mm] (1.5,4.5) rectangle ++(-0.5,-0.5);
\draw[,line width=0.7mm] (1.5,4.5) rectangle ++(3,-3);
\draw[->,black!60,line width=0.5mm] (4.7,1.4) to[bend angle=-60,bend right] ++(-3,0);
\draw[->,black!60,line width=0.5mm] (1.3,4.6) to[bend angle=-60,bend right] ++(3,0);
\node at (3,3) {\Large $A_{n,k}^{{\rm per},t}$};
\node[black!60] at (2.7,5.0) {\large $\cdot t$};
\node[black!60] at (3.1,1.05) {\large $\cdot \overline t$};
\end{tikzpicture}
\end{pmatrix}
$
&&
$
\begin{pmatrix}~
\begin{tikzpicture}[scale=0.51]
\fill[gray!10] (1.5,6) rectangle ++(3,-6);
\fill[gray!20] (1.5,5) rectangle ++(3,-4);
\foreach \k in {0,0.5,...,5}
  \fill[gray!50] (\k,6-\k) rectangle ++(1.0,-1.0);
\draw[gray!30,very thin,step=0.5cm] (0,0) grid (6,6);
\draw[black,line width=0.7mm] (1.5,5) rectangle ++(3,-4);
\draw[black,line width=0.4mm] (1.5,6) -- ++(0,-6);
\draw[black,line width=0.4mm] (4.5,6) -- ++(0,-6);
\node at (3,3) {\Large $A^+_{n,k}$};
\end{tikzpicture}
~\end{pmatrix}
$
\\
the $\tau$ method
&&
the $\pi$ method
&&
the $\tau_1$ method
\end{tabular}
\end{center}
\caption{The finite section matrices $A_{n,k}$, $A_{n,k}^{\per, t}$, and $A_{n,k}^+$, given by \eqref{eq:Ank}, \eqref{eq:Ankper}, and \eqref{eq:An+}, respectively, that arise in the $\tau$, $\pi$, and $\tau_1$ methods, respectively.} \label{fig:matrices}
\end{figure}

\paragraph{The $\tau$ method} Defining\footnote{In the following equation $\speps A_{n,k}$ denotes the open $\eps$-pseudospectrum of the operator $A_{n,k}:E_{n,k}\to
E_{n,k}$, where $E_{n,k}\subset E$ is equipped with the norm of $E$ or, what is the same thing, the open $\eps$-pseudospectrum of the matrix \eqref{eq:Ank}, where the matrix norm is that induced by the norm \eqref{eq:norm} on $X^n$. A similar comment holds for the other $\eps$-pseudospectra in \eqref{eq:sigdef} and \eqref{eq:pidef}.}, for $n\in \N$,
\begin{equation} \label{eq:sigdef}
\sigma^n_{\eps}(A)\ :=\ \bigcup_{k\in\Z} \speps A_{n,k}, \quad \eps>0,
\qquad\textrm{and}\qquad \Sigma^{n}_{\eps}(A)\ :=\ \overline{\bigcup_{k\in\Z}
\Speps A_{n,k}},\quad \eps\geq 0,
\end{equation}
the $\tau$-method inclusions sets are the right hand sides of the following inclusions (recall that $\Specn_0 A := \Spec A$ so that taking $\eps=0$ in the first of these inclusions provides an inclusion for $\Spec A$):
\begin{equation} 
\label{incl:met1}
\begin{array}{|c|} \hline \\[-2.5mm]
\Speps A \ \subset\
\Sigma^n_{\eps+\eps_n}(A), \quad \eps \geq 0, \quad\textrm{and}
\quad \speps A\ \subset\ \sigma^n_{\eps+\eps_n}(A), \quad \eps>0. \\[2mm]\hline
\end{array}
\end{equation}
These inclusions  (proved as Theorem \ref{method0}) hold for all $n\in \N$, the second inclusion with no constraint on the Banach space $X$, the first with the constraint that $X$ satisfies Globevnik's property (see \S\ref{sec:pseud}), which holds in particular if $X$ is finite-dimensional or a Hilbert space.  
The term $\eps_n$, defined in Theorem \ref{method0} (and see  Corollary \ref{minimum_weighted_norm_two} for the special case that $A$ is bidiagonal), enters \eqref{incl:met1} as
a ``truncation penalty'' when
passing from the infinite matrix $A$ to a finite
matrix of size $n\times n$.  (Similar comments apply to the terms $\eps'_n$ and $\eps''_n$ in the $\pi$ and $\tau_1$ methods below.)  Corollary \ref{CorolA} provides an upper bound for $\eps_n$  that implies that
\begin{equation} \label{eq:epsn}
0\ \le\ \eps_n \ \le \
2(\|\alpha\|_\infty+\|\gamma\|_\infty)\,\sin\frac{\pi}{2(n+2)}\ \le\
(\|\alpha\|_\infty+\|\gamma\|_\infty)\,\frac{\pi}{n+2}.
\end{equation}


\paragraph{The $\pi$ method} Similarly, defining, for $n\in \N$ and $t\in \T$,
\begin{equation} \label{eq:pidef}
\pi^{n,t}_{\eps}(A)\ :=\ \bigcup_{k\in\Z} \speps A^{\per,t}_{n,k}, \quad \eps>0,
\qquad\textrm{and}\qquad \Pi^{n,t}_{\eps}(A)\ :=\ \overline{\bigcup_{k\in\Z}
\Speps A^{\per,t}_{n,k}}, \quad \eps\geq 0,
\end{equation}
we prove in Corollary \ref{fn_circulant_corollary}, for $n\in \N$ and $t\in \T$, the $\pi$-method inclusions
\begin{equation} 
\label{incl:met1*}
\begin{array}{|c|} \hline \\[-2.5mm]
\Speps A \ \subset\ \Pi^{n,t}_{\eps+\eps'_n}(A), \quad \eps\geq 0, \quad\textrm{and}
\quad \speps A\ \subset\ \pi^{n,t}_{\eps+\eps'_n}(A), \quad \eps>0,\\[2mm]\hline
\end{array}
\end{equation}
where 
\begin{equation} \label{eq:epsnd}
\eps'_n \ := \
2(\|\alpha\|_\infty+\|\gamma\|_\infty)\,\sin\frac{\pi}{2n},
\end{equation}
and the second inclusion in \eqref{incl:met1*} holds whatever the Banach space $X$, while the first requires that $X$ has the Globevnik property. 

Our motivation for the $\pi$ method and the definitions \eqref{eq:pidef} come from consideration of the case when the diagonals of $A$ are constant, i.e., $A$ is a so-called {\em block-Laurent} matrix.  If $A$ is tridiagonal and block-Laurent and $X$ is a finite-dimensional Hilbert space, then $\Speps A = \cup_{t\in \T} \Pi^{n,t}_{\eps}(A)$, for $\eps\geq 0$, and $\speps A = \cup_{t\in \T} \pi^{n,t}_{\eps}(A)$, for $\eps> 0$ (see Theorem \ref{thm:periodic}).


\paragraph{The $\tau_1$ method} The $\tau_1$ method modifies these constructions using ideas from
\cite{Davies1998:Encl,DaviesPlum,HansenJFA08} (see the discussion in \S\ref{sec:ideas}). To see the similarities but distinction between the $\tau$ and $\tau_1$ methods, for $B\in L(E)$ and $n\in \N$ let
\begin{equation} \label{eq:mudag}
\mu^\dag_n(B)\  :=\ \inf_{k\in \Z} \, \mu(P_{n,k}B|_{E_{n,k}})  = \inf_{k\in \Z} \, \min\Big(\nu\left(P_{n,k}B|_{E_{n,k}}\right),\,\nu\left(P_{n,k}B^{*}|_{E_{n,k}}\right)\Big).
\end{equation}
Then, see Proposition \ref{prop:same},
\begin{equation} \label{eq:sigmaAAlt}
\sigma^n_{\eps}(A)\ =\  \{\lambda\in \C:\mu^\dag_n(A-\lambda I)<\eps\} \quad \mbox{and} \quad \Sigma^n_{\eps}(A)\ =\  \{\lambda\in \C:\mu^\dag_n(A-\lambda I)\leq \eps\},
\end{equation} 
for $\eps>0$, the first of these identities holding for every Banach space $X$, the second if $X$ has the Globevnik property.
The operators $P_{n,k}B|_{E_{n,k}}$ are two-sided truncations of $B$, each corresponding to an $n\times n$ matrix. By contrast, in the $\tau_1$ method we make one-sided truncations, dropping the $P_{n,k}$'s in \eqref{eq:mudag} and so replacing $\mu^\dag_n(B)$ by 
$$
\mu_n(B)\   := \inf_{k\in \Z} \min\Big(\nu\left(B|_{E_{n,k}}\right),\,\nu\left(B^{*}|_{E_{n,k}}\right)\Big).
$$
Precisely, for $n\in\N$ let
\begin{equation} \label{eq:gamdef}
\begin{aligned}
\gamma^n_{\eps}(A) &:=  \left\{\lambda\in \C : \mu_n(A-\lambda I)<\eps\right\}, \;\; \eps>0, \;\; \mbox{and} \\
\Gamma^n_{\eps}(A) &:=  \left\{\lambda\in \C : \mu_n(A-\lambda I)\leq \eps\right\}, \;\; \eps\geq 0.
\end{aligned}
\end{equation}
Then the $\tau_1$ method inclusions are
\begin{equation} 
\label{incl:met2}
\begin{array}{|cc|} \hline &\\[-2.5mm]
 \Gamma^n_{\eps}(A)\ \subset\ \Speps A\
\subset\ \Gamma^{n}_{\eps+\eps''_n}(A), \;\; \eps \geq 0,
& \textrm{and}\\ 
\gamma^n_{\eps}(A)\ \subset\ \speps A\
\subset\ \gamma^n_{\eps+\eps''_n}(A), \;\; \eps >0,&\\[2mm]\hline
\end{array}
\end{equation}
which hold for all $n\in \N$ (whatever the Banach space $X$), where
\begin{equation} \label{eq:epsn_method2}
\eps''_n := 2(\|\alpha\|_\infty+\|\gamma\|_\infty) \sin \frac{\pi}{2(n+1)}.
\end{equation}
In contrast to \eqref{incl:met1} and \eqref{incl:met1*}, \eqref{incl:met2} provides two-sided inclusions, which is significant for the convergence of the $\tau_1$ method inclusion sets in the limit $n\to\infty$ (see Theorem \ref{thm:converge}). We will establish the inclusions from the right in \S\ref{sec:tau1a} (see Corollary \ref{cor:one_sided_truncation}), but the inclusions from the left, i.e.\ that $\Gamma^n_{\eps}(A) \subset \Speps A$, for $\eps\geq 0$,  and  $\gamma^n_{\eps}(A) \subset \speps A$, for $\eps>0$, are immediate consequences of the definitions and the first inequality in \eqref{eq:lnProp}.

Like the $\tau$ and $\pi$ method inclusion sets  \eqref{eq:sigdef} and \eqref{eq:pidef}, also the $\tau_1$ method sets \eqref{eq:gamdef} can be expressed in terms of finite submatrices. For $n\in \N$ and $k\in \Z$, $(A-\lambda I)|_{E_{n,k}}$ corresponds to the $\infty\times n$ matrix consisting of columns $k+1$ to $k+n$ of $A$. Let $A^+_{n,k}$ denote the matrix consisting of the $n+2$ rows of this $\infty\times n$ matrix that are non-zero, by the tridiagonal structure of $A$, and let $I^+_n$ denote the corresponding submatrix of the identity operator, so that $A^+_{n,k}$ and $I^+_n$ are the $(n+2)\times n$ matrices
\begin{equation} \label{eq:An+}
\begin{aligned}
A^+_{n,k} &:= \begin{pmatrix}
\gamma_{k+1} & 0&\cdots&0& 0\\\hline
& & A_{n,k}\\\hline
0& 0 &\cdots&0 & \alpha_{k+n}
\end{pmatrix} \quad \mbox{and}\\
 I^+_n &:= \begin{pmatrix}
0&\cdots&0\\\hline
&I_n &\\\hline
0&\cdots&0
\end{pmatrix}, \quad\mbox{where} \quad
I_n := \begin{pmatrix}
I_X\\
&\ddots\\
&&I_X
\end{pmatrix}
\end{aligned}
\end{equation}
is an $n\times n$ identity matrix, and $I_X$ is the identity operator on $X$ (see Figure \ref{fig:matrices} for a visualisation of $A_{n,k}^+$). Then, for $n\in \N$, $k\in \Z$, and $\lambda\in \C$, $\nu((A-\lambda I)|_{E_n,k})=\nu\left(A^+_{n,k}-\lambda I^+_n\right)$ and $\nu((A-\lambda I)^*|_{E_n,k})=\nu\left((A^*)^+_{n,k}-\lambda I^+_n\right)$, so that
\begin{equation} \label{eq:munmat}
\mu_n(A-\lambda I) = \inf_{k\in \Z} \min\Big(\nu\left(A^+_{n,k}-\lambda I^+_n\right),\,\nu\left((A^*)^+_{n,k}-\lambda I^+_n\right)\Big).
\end{equation}

\subsubsection{Our convergence results for the banded case} \label{sec:conv}
The inclusions \eqref{incl:met1}, \eqref{incl:met1*}, \eqref{incl:met2}, for the $\tau$, $\pi$, and $\tau_1$ methods,  are main results of the paper.  A further key result, a  corollary of the two-sided inclusion \eqref{incl:met2}, is the convergence result, stated as Theorem \ref{thm:converge} below, that the $\tau_1$ method inclusion sets,  $\Gamma^n_{\eps+\eps''_n}(A)$ and $\gamma^n_{\eps+\eps''_n}(A)$, converge to the spectral sets that they include as $n\to\infty$.

The one-sided inclusions for the $\tau$ and $\pi$ methods, \eqref{incl:met1} and \eqref{incl:met1*}, do not imply convergence of the corresponding inclusion sets to the spectral sets that they include; indeed we will exhibit examples where this convergence is absent (see Example \ref{ex:shift}, \S\ref{sec:shift}, and \S\ref{sec:perturb_periodic}). But these methods are convergent if they do not suffer from spectral pollution for the particular $A$ in the sense of the following definition\footnote{The standard notion of spectral pollution, e.g.\ \cite{DaviesPlum}, is that a sequence of linear operators $(A_n)$ approximating $A$ {\em suffers from spectral pollution} if there exists a sequence $z_n\in \Spec A_n$ such that $z_n\to z$ and $z\not\in \Spec A$. Spectral pollution is said to be absent if this does not hold. One might, alternatively, require absence of spectral pollution to mean that also every subsequence of $(A_n)$ does not suffer from spectral pollution. Absence of spectral pollution in this stronger sense is equivalent to a requirement that, for some positive null sequence $(\eta_n)$, $\Spec A_n\subset \specn_{\eta_n} A$ for each $n$. Our notion of absence of spectral pollution is a version of this definition that is concerned with  pseudospectra not just spectra, and is uniform with respect to the parameter $k\in \Z$.} (cf.~\cite{DaviesPlum}). This is a strong assumption; indeed, to say that the $\tau$ method does not suffer from spectral pollution is equivalent to saying that, for some positive null sequence $\eta_n$, $\sigma_\eps^n(A)\subset  \specn_{\eps+\eta_n} A$, for $\eps>0$, and similarly for the $\pi$ method, which inclusion takes the place, for the $\tau/\pi$ methods, of the left hand inclusions in \eqref{incl:met2}, for the  $\tau_1$ method. Nevertheless, we will see below, in \S\ref{sec:bL} and \S\ref{sec:pseudoergodic}, important applications where this absence-of-spectral-pollution assumption is satisfied.

\begin{definition}[Absence of spectral pollution] \label{def:specpol} We say that the $\tau$ method (the $\pi$ method for a particular $t\in \T$) {\em does not suffer from spectral pollution} for a particular tridiagonal $A\in L(E)$ if there exists a positive null sequence $(\eta_n)_{n\in \N}$ such that, for every $\eps>0$, $\speps A_{n,k} \subset \specn_{\eps+\eta_n} A$ ($\speps A^{\per,t}_{n,k} \subset \specn_{\eps+ \eta_n} A$) for $n\in \N$ and $k\in \Z$.
\end{definition}

The following three theorems are our main convergence results for the $\tau_1$, $\tau$, and $\pi$ methods. These are theorems  for the case that $A\in L(E)$ is tridiagonal, but recall from \S\ref{sec:main} that every $A\in BO(E)$ can be written in this tridiagonal form. The proofs of these results, in which $\Hto$ denotes the standard Hausdorff convergence of sets introduced in \S\ref{sec:keynot}, are so short that we include them here.

\begin{theorem}[Convergence of the $\tau_1$ method]  \label{thm:converge} Suppose that the matrix representation of $A\in L(E)$ is tridiagonal. Then, as $n\to \infty$,
\begin{equation} \label{eq:Gconv}
\Gamma^n_{\eps+\eps''_n}(A)\Hto \Speps A, \;\; \mbox{for } \eps\geq 0, \quad \mbox{in particular } \quad \Gamma^n_{\eps''_n}(A)\Hto \Spec A,
\end{equation}
If $X$ has Globevnik's property (see \S\ref{sec:pseud}) then also 
\begin{equation} \label{eq:gconv}
\gamma^n_{\eps+\eps''_n}(A)\Hto \speps A, \;\; \mbox{for } \eps> 0.
\end{equation}
\end{theorem}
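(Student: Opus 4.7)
\textbf{Proof proposal for Theorem \ref{thm:converge}.}
The proof is a direct sandwich argument built on the two-sided inclusion \eqref{incl:met2}. I would first observe that combining the right and left inclusions in \eqref{incl:met2}, the latter applied with $\eps$ replaced by $\eps+\eps''_n$, yields, for every $n\in\N$ and every $\eps\geq 0$, the chain
\[
\Speps A\ \subset\ \Gamma^n_{\eps+\eps''_n}(A)\ \subset\ \Specn_{\eps+\eps''_n}A.
\]
By the sandwich principle recalled at the end of \S\ref{sec:keynot} (if $S\subset T_n\subset S_n$ in $\C^B$ with $S_n\Hto S$, then $T_n\Hto S$), proving \eqref{eq:Gconv} reduces to showing that the outer sequence $\Specn_{\eps+\eps''_n}A$ Hausdorff-converges to $\Speps A$ as $n\to\infty$.

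For that auxiliary step, I would exploit that $\eps''_n$ is strictly decreasing with $\eps''_n\to 0$, so that $\Specn_{\eps+\eps''_n}A$ is a decreasing sequence of compact subsets of $\C$ (closed by the remarks in \S\ref{sec:keynot}, bounded because $|\lambda|>\|A\|+\delta$ forces $\lambda\notin\Specn_\delta A$ via $\|(A-\lambda I)^{-1}\|\leq (|\lambda|-\|A\|)^{-1}$). The decreasing-sequence principle of \S\ref{sec:keynot} then gives Hausdorff convergence to the intersection $\bigcap_{n\in\N}\Specn_{\eps+\eps''_n}A = \bigcap_{\eta>0}\Specn_{\eps+\eta}A$. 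For $\eps=0$ this intersection is $\Spec A$, directly by \eqref{eq:speceps}. For $\eps>0$, writing $\Specn_\delta A = f^{-1}([0,\delta])$ with $f(\lambda):=1/\|(A-\lambda I)^{-1}\|$ (using the convention $1/\infty := 0$), the intersection equals $f^{-1}([0,\eps])=\Speps A$, since preimages commute with arbitrary intersections. This proves \eqref{eq:Gconv}.

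For \eqref{eq:gconv}, I would appeal to Globevnik's property, which gives $\Specn_\delta A = \overline{\specn_\delta A}$ for every $\delta>0$ (see the footnote in \S\ref{sec:keynot} and \S\ref{sec:pseud}). Since $d_H$ is invariant under taking closures (\S\ref{sec:keynot}), the convergence $\Specn_{\eps+\eps''_n}A \Hto \Speps A$ just established is equivalent to $\specn_{\eps+\eps''_n}A \Hto \speps A$. Applying the sandwich principle once more, this time to the chain $\speps A \subset \gamma^n_{\eps+\eps''_n}(A) \subset \specn_{\eps+\eps''_n}A$, obtained from \eqref{incl:met2} exactly as above, yields \eqref{eq:gconv}.

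The whole argument is a compact consequence of \eqref{incl:met2} together with elementary facts about pseudospectra and Hausdorff convergence, so I do not foresee a substantive obstacle. If anything deserves attention, it is only the care needed to pass from closed to open $\eps$-pseudospectrum in \eqref{eq:gconv}, where Globevnik's property is precisely the ingredient that makes the closure-invariance of $d_H$ bridge the two notions, and this is exactly why \eqref{eq:gconv} is stated under that hypothesis.
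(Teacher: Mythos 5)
Your proposal is correct and follows essentially the same route as the paper: both sandwich $\Speps A\subset \Gamma^n_{\eps+\eps''_n}(A)\subset \Specn_{\eps+\eps''_n}A$ (and the open analogue with $\gamma^n$ and $\specn$) via \eqref{incl:met2} and then invoke convergence of the outer pseudospectral sets together with the sandwich principle of \S\ref{sec:keynot}. The only difference is that you re-derive the auxiliary fact $\Specn_{\eps+\eps''_n}A\Hto\Speps A$ (and its Globevnik counterpart for the open pseudospectrum) from the decreasing-compact-sets principle and closure-invariance of $d_H$, where the paper simply cites \eqref{eq:HDconv} and the comment below it; this is a sound unpacking of the same ingredient.
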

\begin{proof} It follows from \eqref{incl:met2} that, for $\eps\geq 0$,
\begin{equation} \label{eq:Gamconv}
\Speps A\
\subset\ \Gamma^n_{\eps+\eps''_n}(A)\ \subset\ \Specn_{\eps+\eps''_n} A.
\end{equation}
Since $\eps''_n\to 0$ as $n\to\infty$, \eqref{eq:Gconv} follows from \eqref{eq:HDconv} and the observation at the end of the Hausdorff convergence discussion in \S\ref{sec:keynot}. By \eqref{incl:met2}, for $\eps>0$ the inclusions \eqref{eq:Gamconv} hold also with $\Speps$ replaced by $\speps$ and $\Gamma^n_{\eps+\eps''_n}(A)$  replaced by $\gamma^n_{\eps+\eps''_n}(A)$. Further,  if  $X$ has Globevnik's property then  $\specn_{\eps+\eps''_n} A\Hto \speps A$ (see the comment below \eqref{eq:HDconv}), and \eqref{eq:gconv} follows.
\end{proof}

\begin{theorem}[Convergence of the $\tau$ method]  \label{thm:converge2} Suppose that the matrix representation of $A\in L(E)$ is tridiagonal, the $\tau$ method does not suffer from spectral pollution for $A$, and $X$ has Globevnik's property. Then, as $n\to \infty$,
\begin{equation} \label{eq:Sconv}
\Sigma^n_{\eps+\eps_n}(A)\Hto \Speps A, \;\; \mbox{for } \eps\geq 0, \quad \mbox{in particular } \quad \Sigma^n_{\eps_n}(A)\Hto \Spec A,
\end{equation}
and
\begin{equation} \label{eq:sconv}
\sigma^n_{\eps+\eps_n}(A)\Hto \speps A, \;\; \mbox{for } \eps> 0.
\end{equation}
\end{theorem}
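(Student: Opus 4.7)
The plan is to run the same sandwich argument as in the proof of Theorem \ref{thm:converge}, with \eqref{incl:met1} providing the lower half of the sandwich in place of \eqref{incl:met2}, and the absence-of-spectral-pollution hypothesis supplying the upper half (which in the $\tau_1$ case came for free from the definition of $\Gamma^n_\eps(A)$ and the first inequality in \eqref{eq:lnProp}). Let $(\eta_n)$ be the positive null sequence supplied by Definition \ref{def:specpol} for the $\tau$ method applied to $A$.

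The lower half is immediate from \eqref{incl:met1}: $\Speps A\subset \Sigma^n_{\eps+\eps_n}(A)$ for $\eps\geq 0$, and $\speps A\subset \sigma^n_{\eps+\eps_n}(A)$ for $\eps>0$. For the upper half, the hypothesis reads $\speps A_{n,k}\subset\specn_{\eps+\eta_n}A$, uniformly in $k\in\Z$ and $n\in\N$, for every $\eps>0$; unioning over $k$ yields $\sigma^n_\eps(A)\subset \specn_{\eps+\eta_n}A$. To upgrade this to the closed analogue needed for $\Sigma^n_\eps(A)$, I would invoke Globevnik's property, which the paper flags as holding in particular when $X$ is Hilbert or finite-dimensional, and which is then inherited by each $E_{n,k}\cong X^n$ with the norm \eqref{eq:norm}. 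This gives $\Speps A_{n,k}=\overline{\speps A_{n,k}}$, and, combined with the fact that $\Specn_{\eps+\eta_n}A$ is closed, taking closure of the union over $k$ produces $\Sigma^n_\eps(A)\subset \Specn_{\eps+\eta_n}A$ for $\eps>0$. Replacing $\eps$ by $\eps+\eps_n$ (so that $\eps+\eps_n>0$ for all $\eps\geq 0$, covering also the case $\eps=0$ needed for $\Spec A$), this assembles the sandwich
\[
\Speps A\ \subset\ \Sigma^n_{\eps+\eps_n}(A)\ \subset\ \Specn_{\eps+\eps_n+\eta_n} A, \quad \eps\geq 0,
\]
and its open-pseudospectrum analogue for $\eps>0$.

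Since $\eps_n+\eta_n\to 0$, the Hausdorff continuity $\Specn_{\eps+\delta}A\Hto\Speps A$ (and, under Globevnik, $\specn_{\eps+\delta}A\Hto\speps A$) as $\delta\to 0^+$ referenced via \eqref{eq:HDconv}, together with the sandwich observation at the end of \S\ref{sec:keynot}, delivers \eqref{eq:Sconv} and \eqref{eq:sconv}. The only genuine subtlety is the bookkeeping between the open and closed pseudospectra of the finite sections $A_{n,k}$; this is what forces the Globevnik hypothesis, used precisely to push the closure in the definition of $\Sigma^n_\eps(A)$ inside the union so that it can be related to the open pseudospectra in which absence of spectral pollution is phrased. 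Once this is handled, the proof is essentially a direct transcription of Theorem \ref{thm:converge}'s short argument.
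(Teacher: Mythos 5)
Your proposal is correct in substance and follows essentially the same route as the paper: sandwich $\Speps A$ between the inclusion \eqref{incl:met1} and the absence-of-spectral-pollution hypothesis, then conclude via \eqref{eq:HDconv} and the sandwich observation of \S\ref{sec:keynot}, with Globevnik's property used to replace $\Speps$ by $\speps$ in \eqref{eq:sconv}. The one place you diverge is the closed/open bookkeeping: you pass from $\Sigma^n_\eps(A)$ to $\Specn_{\eps+\eta_n}A$ via the identity $\Sigma^n_\eps(A)=\overline{\sigma^n_\eps(A)}$ (which you justify by inheritance of Globevnik's property by $X^n$; you could instead simply cite Proposition \ref{prop:same}, which states exactly this), whereas the paper inflates the parameter by $1/n$ and uses the chain $\Sigma^n_{\eps+\eps_n}(A)\subset\sigma^n_{\eps+1/n+\eps_n}(A)\subset\specn_{\eps+1/n+\eps_n+\eta_n}A$. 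Your variant gives the marginally sharper inclusion $\Sigma^n_{\eps+\eps_n}(A)\subset\Specn_{\eps+\eps_n+\eta_n}A$, but it leaves one loose end: the parenthetical claim that $\eps+\eps_n>0$ for all $\eps\geq 0$ presumes $\eps_n>0$, which fails exactly when $\|\alpha\|_\infty=\|\gamma\|_\infty=0$ (diagonal $A$), since then $\eps_n=0$ by \eqref{f_n_method0}. In that degenerate case your argument does not cover $\eps=0$, i.e.\ the assertion $\Sigma^n_{\eps_n}(A)\Hto\Spec A$, because both $\sigma^n_{\eps+\eps_n}(A)$ and the spectral-pollution hypothesis of Definition \ref{def:specpol} are only available at strictly positive parameters. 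The paper's extra $1/n$ is inserted precisely to avoid this; adding it to your chain (or treating diagonal $A$ separately) repairs the argument with no other change.
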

\begin{proof} It follows from \eqref{incl:met1} and that the $\tau$ method does not suffer from spectral pollution for $A$ that, for some positive null sequence $(\eta_n)$,
\begin{equation} \label{eq:Sigconv}
\Speps A\
\subset\ \Sigma^n_{\eps+\eps_n}(A)\ \subset\ \sigma^n_{\eps+1/n+\eps_n} A \subset \specn_{\eps+1/n+\eps_n+\eta_n} A\subset \Specn_{\eps+1/n+\eps_n+\eta_n} A,
\end{equation}
for $\eps\geq 0$, so that  \eqref{eq:Sconv} follows from \eqref{eq:HDconv}.  Similarly, for $\eps>0$,
$$
\speps A\
\subset\ \sigma^n_{\eps+\eps_n}(A)\ \subset\ \specn_{\eps+\eps_n+\eta_n} A.
$$
so \eqref{eq:sconv} follows from \eqref{eq:HDconv}.
\end{proof}
Arguing similarly, but using \eqref{incl:met1*} instead of \eqref{incl:met1}, we have an analogous result for the $\pi$ method.
\begin{theorem}[Convergence of the $\pi$ method]  \label{thm:converge3} Suppose that the matrix representation of $A\in L(E)$ is tridiagonal, that the  $\pi$ method does not suffer from spectral pollution for $A$ for some $t\in \T$, and that $X$ has Globevnik's property. Then, as $n\to \infty$,
\begin{equation} \label{eq:Pconv}
\Pi^{n,t}_{\eps+\eps'_n}(A)\Hto \Speps A, \;\; \mbox{for } \eps\geq 0, \quad \mbox{in particular } \quad \Pi^{n,t}_{\eps_n}(A)\Hto \Spec A,
\end{equation}
and
\begin{equation} \label{eq:pconv}
\pi^{n,t}_{\eps+\eps_n}(A)\Hto \speps A, \;\; \mbox{for } \eps> 0.
\end{equation}
\end{theorem}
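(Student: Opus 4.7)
The plan is to follow the proof of Theorem \ref{thm:converge2} essentially verbatim, substituting the $\pi$-method sets $\Pi^{n,t}_\bullet(A)$ and $\pi^{n,t}_\bullet(A)$ for the $\tau$-method sets $\Sigma^n_\bullet(A)$ and $\sigma^n_\bullet(A)$, and using the one-sided inclusions \eqref{incl:met1*} in place of \eqref{incl:met1}. The author flags this explicitly in the remark immediately preceding the statement.

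First, \eqref{incl:met1*} immediately supplies the inner halves of the Hausdorff sandwich: for every $n\in\N$, $\Speps A \subset \Pi^{n,t}_{\eps+\eps'_n}(A)$ for $\eps\geq 0$, and $\speps A \subset \pi^{n,t}_{\eps+\eps'_n}(A)$ for $\eps>0$. For the outer halves, I would invoke the assumed absence of spectral pollution for the $\pi$ method at this particular $t$ (Definition \ref{def:specpol}): there is a positive null sequence $(\eta_n)$ such that $\speps A^{\per,t}_{n,k} \subset \specn_{\eps+\eta_n} A$ for every $\eps>0$, $n\in\N$ and $k\in\Z$. Taking unions over $k$ yields $\pi^{n,t}_\eps(A) \subset \specn_{\eps+\eta_n} A$, handling the open case directly.

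For the closed case, a minor issue of closures arises, which I would address with the same $1/n$-enlargement trick used in \eqref{eq:Sigconv}. Writing $\Pi^{n,t}_\eps(A) = \overline{\bigcup_k \Speps A^{\per,t}_{n,k}}$ and noting that $\Speps A^{\per,t}_{n,k} \subset \specn_{\eps+1/n} A^{\per,t}_{n,k}$ for each $k$, one obtains $\Pi^{n,t}_\eps(A) \subset \overline{\pi^{n,t}_{\eps+1/n}(A)} \subset \specn_{\eps+1/n+\eta_n} A$, the final inclusion being valid because its right-hand side is closed. Combining, for $\eps\geq 0$,
$$\Speps A \ \subset\ \Pi^{n,t}_{\eps+\eps'_n}(A)\ \subset\ \Specn_{\eps+1/n+\eps'_n+\eta_n} A,$$
and analogously for the open case with $\eps>0$ and no $1/n$ term present.

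The final step is to send $n\to\infty$: since $\eps'_n, \eta_n, 1/n \to 0$, the Hausdorff convergence $\Specn_{\eps+\delta_n} A \Hto \Speps A$ as $\delta_n\to 0^+$ (from \eqref{eq:HDconv}), together with, in the open case, the analogous $\specn_{\eps+\delta_n} A \Hto \speps A$ guaranteed by Globevnik's property, and the observation at the end of \S\ref{sec:keynot} that a sequence sandwiched between a fixed set and a sequence converging to it also converges to it, give \eqref{eq:Pconv} and \eqref{eq:pconv}. Since this is a direct transcription of the $\tau$-method argument, I do not expect any substantive obstacle; the only mildly non-obvious point is the handling of the closure in the definition of $\Pi^{n,t}_\eps(A)$, which is dispatched by the $1/n$-enlargement inherited from the proof of Theorem \ref{thm:converge2}.
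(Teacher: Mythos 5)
Your proof is correct and is essentially the paper's own argument: the paper establishes Theorem \ref{thm:converge3} precisely by repeating the proof of Theorem \ref{thm:converge2} with \eqref{incl:met1*} in place of \eqref{incl:met1}, including the $1/n$-enlargement to absorb the closure in the definition of $\Pi^{n,t}_{\eps}(A)$ and the final appeal to \eqref{eq:HDconv} (with the open-pseudospectrum variant, valid under Globevnik's property, for \eqref{eq:pconv}). The only slip is your justification of $\overline{\pi^{n,t}_{\eps+1/n}(A)}\subset\specn_{\eps+1/n+\eta_n}A$ ``because the right-hand side is closed'' --- the open pseudospectrum is open, not closed; instead take closures and absorb into the closed pseudospectrum $\Specn_{\eps+1/n+\eta_n}A$, exactly as in the last inclusion of \eqref{eq:Sigconv}, which is what your final displayed chain in fact uses.
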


\begin{remark}[Estimation of $\eps_n$, $\eps'_n$, and $\eps''_n$] \label{rem:over} It is clear, inspecting the proofs, that Theorems \ref{thm:converge}-\ref{thm:converge3} remain valid if $\eps_n$, $\eps'_n$, and $\eps''_n$ are replaced in the statements of the theorems by upper bounds $\eta_n$, $\eta'_n$, and $\eta''_n$, as long as these upper bounds tend to zero as $n\to\infty$. In particular, Theorem \ref{thm:converge2} remains true with $\eps_n$, as defined in Theorem \ref{method0}, replaced by the explicit upper bounds for $\eps_n$ obtained in Corollaries \ref{CorolA} and \ref{CorolB}.
\end{remark}

\subsubsection{The band-dominated case} \label{sec:bdo}
The above results for banded operators can be extended to the band-dominated case by perturbation arguments, precisely by using \eqref{eq:PseudInc} below, a simple perturbation inclusion for pseudospectra. Perhaps unexpectedly, this leads not just to inclusion sets but also to convergent sequences of approximations, even for the spectrum.

To see how this works, suppose that $A\in BDO(E)$, and let us focus for brevity on the $\tau_1$ method and the case that $X$ is finite-dimensional or a Hilbert space (we prove a closely related results for the general Banach space case in \S\ref{sec:thmgen} as Theorem \ref{thm:fin2}). Then, by definition, there exists a sequence $(A^{(n)})_{n\in \N}\subset BO(E)$ such that $\delta_n:= \|A-A^{(n)}\|\to 0$ as $n\to \infty$. Let  $w_n$ be the bandwidth of $A^{(n)}$ and let $A_n:= \mathcal{I}_{w_n}A^{(n)}\mathcal{I}^{-1}_{w_n}$, where $\mathcal{I}_b$ is defined, for $b\in \N$, as below \eqref{eq:norm}. Then, as discussed in \S\ref{sec:main}, the matrix representation of $A_n$ is tridiagonal. We will apply the inclusion \eqref{incl:met2} to $A_n$. Let us write $\eps''_n$ in that inclusion as $\eps''_n(A)$ to indicate explicitly its dependence on $A$, so that
\begin{equation} \label{eq:epsn''2}
\eps''_n(A) = 2r(A) \sin \frac{\pi}{2(n+1)},
\end{equation}
where $r(A)$ is defined by \eqref{eq:rA}.
Then, for $\eps\geq 0$ and $n\in\N$, using \eqref{incl:met2} and \eqref{eq:PseudInc2}, we see that
\begin{eqnarray*}
\Speps A &\subset &\Specn_{\eps+\delta_n} A^{(n)} = \Specn_{\eps+\delta_n} A_n\\
& \subset & \Gamma^n_{\eps+\delta_n+\eps''_n(A_n)}(A_n) \subset \Specn_{\eps+\delta_n+\eps''_n(A_n)}(A_n) \\
& = & \Specn_{\eps+\delta_n+\eps''_n(A_n)}(A^{(n)}) \subset \Specn_{\eps+2\delta_n+\eps''_n(A_n)}(A).
\end{eqnarray*}
Now $\eps''_n(A_n)\to 0$ as $n\to\infty$, since, by \eqref{eq:normbound}, $r(A_n) \leq 2\|A_n\|=2\|A^{(n)}\|\leq 2(\|A\|+\delta_n)$. Thus, by \eqref{eq:HDconv}, $\Specn_{\eps+2\delta_n+\eps''_n(A_n)}(A)\Hto\Speps A$ as $n\to\infty$, which gives immediately the following result.
\begin{theorem} \label{thm:bdo} Suppose that $A\in BDO(E)$ and $X$ is finite-dimensional or a Hilbert space, and  let $(A^{(n)})_{n\in \N}\subset BO(E)$ be such that $\delta_n:= \|A-A^{(n)}\|\to 0$ as $n\to\infty$, and let $A_n := \mathcal{I}_{w_n}A^{(n)}\mathcal{I}^{-1}_{w_n}$, where $w_n$ is the band-width of $A^{(n)}$, so that $A_n$ is $A^{(n)}$ written in tridiagonal form. Then, for $\eps\geq 0$ and $n\in \N$,
$$
\Speps A \subset  \Gamma^n_{\eps+\delta_n+\eps''_n(A_n)}(A_n) \subset  \Specn_{\eps+2\delta_n+\eps''_n(A_n)}(A),
$$
so that $\Gamma^n_{\eps+\delta_n+\eps''_n(A_n)}(A_n)\Hto \Speps A$ as $n\to\infty$, in particular   $\Gamma^n_{\delta_n+\eps''_n(A_n)}(A_n)$ $\Hto$ $\Spec A$.
\end{theorem}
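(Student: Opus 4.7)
The plan is to sandwich $\Speps A$ between the computable set $\Gamma^n_{\eps+\delta_n+\eps''_n(A_n)}(A_n)$ and a slightly enlarged pseudospectrum of $A$, and then let Hausdorff convergence of pseudospectra do the rest. The key ingredients I will combine are: (i) the standard perturbation inclusion for pseudospectra, $\Speps B\subset \Specn_{\eps+\|B-C\|}C$, which follows directly from \eqref{eq:invNorm} and the inequality $\nu(B-\lambda I)\leq \nu(C-\lambda I)+\|B-C\|$; (ii) the isometric invariance of pseudospectra under the reshaping isomorphism $\mathcal{I}_{w_n}$, which is an isometry precisely because the product norm on $X^b$ is chosen as in \eqref{eq:norm}; and (iii) the $\tau_1$-method inclusion \eqref{incl:met2} applied to the tridiagonal operator $A_n$.

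Chaining these, I would start from $\Speps A\subset \Specn_{\eps+\delta_n}A^{(n)}=\Specn_{\eps+\delta_n}A_n$, where the equality is isometric invariance. Since $A_n$ is tridiagonal, \eqref{incl:met2} with radius $\eps+\delta_n$ in place of $\eps$ gives
$$
\Specn_{\eps+\delta_n}A_n\ \subset\ \Gamma^n_{\eps+\delta_n+\eps''_n(A_n)}(A_n)\ \subset\ \Specn_{\eps+\delta_n+\eps''_n(A_n)}A_n\ =\ \Specn_{\eps+\delta_n+\eps''_n(A_n)}A^{(n)}.
$$
Applying the perturbation estimate once more to pass back from $A^{(n)}$ to $A$ places this last set inside $\Specn_{\eps+2\delta_n+\eps''_n(A_n)}(A)$. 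This produces exactly the two-sided inclusion asserted in the theorem.

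Next I would verify that both penalty terms vanish as $n\to\infty$, so that the outer radius in the sandwich decays to $\eps$. The bound $\delta_n\to 0$ is given. For $\eps''_n(A_n)=2r(A_n)\sin(\pi/(2(n+1)))$, it suffices to show $r(A_n)$ stays bounded; and \eqref{eq:normbound} combined with isometric invariance gives $r(A_n)\leq 2\|A_n\|=2\|A^{(n)}\|\leq 2(\|A\|+\delta_n)$, which is uniformly bounded. The standard fact that $\Specn_{\eps'}A\Hto \Speps A$ as $\eps'\downarrow \eps$, together with the sandwich principle from \S\ref{sec:keynot} (if $S\subset T_n\subset S_n$ and $S_n\Hto S$ then $T_n\Hto S$), then yields $\Gamma^n_{\eps+\delta_n+\eps''_n(A_n)}(A_n)\Hto \Speps A$, and the specialisation $\eps=0$ gives convergence to $\Spec A$. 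I do not expect a genuine obstacle here: the proof is essentially a packaging of already-established results, and the only new observation needed is the uniform control of $r(A_n)$ via \eqref{eq:normbound}, which guarantees that the truncation penalty remains well-behaved even though it is evaluated at the tridiagonal approximants $A_n$ rather than at $A$ itself.
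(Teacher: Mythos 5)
Your proposal is correct and follows essentially the same route as the paper: the identical chain $\Speps A\subset \Specn_{\eps+\delta_n}A^{(n)}=\Specn_{\eps+\delta_n}A_n\subset \Gamma^n_{\eps+\delta_n+\eps''_n(A_n)}(A_n)\subset \Specn_{\eps+\delta_n+\eps''_n(A_n)}A_n\subset\Specn_{\eps+2\delta_n+\eps''_n(A_n)}A$ via the perturbation inclusion, isometric invariance of $\mathcal{I}_{w_n}$, and \eqref{incl:met2}, followed by the same bound $r(A_n)\leq 2\|A^{(n)}\|\leq 2(\|A\|+\delta_n)$ from \eqref{eq:normbound} and the Hausdorff argument via \eqref{eq:HDconv}. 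The only cosmetic difference is that you justify the perturbation step through Lipschitz continuity of the lower norm, which should be phrased for $\mu$ as in \eqref{eq:lnPro2} (the inequality for $\nu$ alone does not control $\Speps$), whereas the paper simply invokes \eqref{eq:PseudInc2}.
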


\begin{remark}[Construction of banded approximations and estimation of $\delta_n$] It is easy to see that the statements in the above theorem remain true (cf.\ Remark \ref{rem:over}) if $\delta_n$ is replaced throughout by any upper bound $\eta_n\geq \delta_n:=\|A-A^{(n)}\|$, as long as $\eta_n\to 0$ as $n\to\infty$; in particular, $\Speps A \subset \Gamma^n_{\eps+\eta_n+\eps''_n(A_n)}(A_n)$ and $\Gamma^n_{\eps+\eta_n+\eps''_n(A_n)}(A_n)\Hto \Speps A$ as $n\to\infty$. But to compute the inclusion set $\Gamma^n_{\eps+\eta_n+\eps''_n(A_n)}(A_n)$ one needs both a concrete banded approximation $A^{(n)}$ to $A$ and the upper bound $\eta_n\geq \delta_n$. Concretely, one can take, for some $\mathfrak{p} \geq 0$, $A^{(n)}\in BO(E)$ to be the matrix with band-width $n$ and entries $[a^{(n)}_{i,j}]$ given by
\begin{equation} \label{eq:Andef}
a^{(n)}_{i,j} := \left\{\begin{array}{ll} \left(1-\frac{|i-j|}{n+1}\right)^\mathfrak{p} a_{i,j}, & |i-j| \leq n,\\ 0, & |i-j|>n,\end{array}
\right.
\end{equation}
for $n\in \N$. Provided $\mathfrak{p}>0$, it holds that $\delta_n=\|A-A^{(n)}\|\to 0$ as $n\to\infty$ (see the proof that (e)$\Rightarrow$(a) in \cite[Theorem 2.1.6]{RaRoSiBook}).
For $\mathfrak{p}=0$, $A^{(n)}$ is the obvious approximation to $A$ with band-width $n$ obtained by simply discarding the matrix entries $A_{ij}$ with $|i-j|>n$. In many cases this approximation is adequate, but it does not hold in this case that $\|A-A^{(n)}\|\to 0$ as $n\to\infty$ for all $A\in BDO(E)$ (see the example in \cite[Remark 1.40]{LiBook}).

One case where the simple choice $\mathfrak{p}=0$ is effective is where $A\in \mathcal{W}(E)$, the so-called {\em Wiener algebra} (e.g., \cite[Definition 1.43]{LiBook}). In terms of the matrix representation $[a_{i,j}]$, $A\in \mathcal{W}(A)$ precisely when there exists $\kappa=(\kappa_i)_{i\in \Z}\in \ell^1(\Z,\R)$ such that $\|a_{i,j}\|_{L(X)}\leq \kappa_{i-j}$, for $i,j\in \Z$. In that case it is clear that (see \cite[Equation (1.27)]{LiBook}), defining $A^{(n)}$ by \eqref{eq:Andef} with $\mathfrak{p}=0$,
$$
\|A-A^{(n)}\| \leq \eta_n := \sum_{|k|>n}\kappa_k, \quad n\in \N,
$$
with $\eta_n\to 0$ as $n\to\infty$.
\end{remark}

\begin{remark}[Proving invertibility of operators] \label{rem:invert} Let us flag one significant application of Theorem \ref{thm:bdo} and our other convergence theorems, Theorems \ref{thm:converge}-\ref{thm:converge3}, when coupled with our inclusion results, \eqref{incl:met1}, \eqref{incl:met1*}, and \eqref{incl:met2}. Suppose that $A\in BDO(E)$ and that $(U_n)_{n\in \N}\subset \C^\C$ is a sequence of compact sets with the properties that: (a) $\Spec A \subset U_n$, for $n\in \N$; (b) $U_n\Hto \Spec A$ as $n\to\infty$. Then it is easy to see that the following claim holds for any $\lambda\in \C$:
\begin{equation} \label{eq:invert}
\lambda I - A \mbox{ is invertible } \quad \Leftrightarrow \quad \lambda \not\in U_n, \mbox{ for some } n\in \N.
\end{equation}
In particular, $A$ is invertible if and only if $0\not\in U_n$, for some $n\in \N$.

Of course, by \eqref{incl:met2} and Theorem \ref{thm:bdo}, if $X$ is finite-dimensional or a Hilbert space, a sequence $(U_n)_{n\in \N}$ with these properties, for any $A\in BDO(E)$, is the sequence $U_n = \Gamma^n_{\eps''_n}(A)$, $n\in \N$. We will see below in Theorem \ref{prop:finite2} that, in the case that $X=\C$, another sequence with properties (a) and (b) is the sequence $U_n=\widehat \Gamma^n_{\mathrm{fin}}(A)$, $n\in \N$. The  attraction of this alternative choice for $(U_n)_{n\in \N}$ is that one can determine whether $\lambda \in U_n$ in only finitely many arithmetic operations (see Theorem \ref{prop:finite2}). Thus, if $\lambda I-A$ is invertible, this can be proved in finite time, by checking whether $\lambda\in U_n$, successively for $n=1,2,\ldots$, stopping when a $U_n$ is found such that $\lambda\not\in U_n$. (The existence of such a $U_n$ is guaranteed by \eqref{eq:invert}.)

At the end of \S\ref{sec:psFinite} we use \eqref{eq:invert}, with $U_n=\Sigma^n_{\eps_n}(A)$, for an operator $A$, the so-called Feinberg-Zee operator, for which this choice of $U_n$ satisfies (a) and (b), to show that a particular $\lambda\not\in \Spec A$, proving that $\Spec A$ is a strict subset of the closure of its numerical range.
\end{remark}

\subsubsection{Computability of our inclusion sets: reduction to finitely many matrices} \label{sec:comput}
A  criticism of the inclusion sets that we have introduced in \S\ref{sec:incl} from the perspective of practical computation is that to determine whether or not a particular $\lambda\in \C$ is in one of these inclusion sets, one has to make a computation for infinitely many finite matrices, indexed by $k\in \Z$. 

 In certain cases this infinite collection of finite matrices is in fact a finite collection. For the inclusion sets with parameter $n\geq 2$, this is the case precisely when the tridiagonal matrix $A$ has only finitely many distinct entries. (If $n=1$ it is enough, for the $\tau$ method, if the set $\{\beta_k:k\in \Z\}$ is finite.) 

When $A$ has infinitely many distinct entries, provided the set of matrix entries is a relatively compact subset of $L(X)$, which is the case, in particular, if $X$ is finite-dimensional, we can approximate the infinite collection arbitrarily closely by a finite subset. We spell out the details in the following theorem for the $\tau_1$ method under the assumption that $X$ is finite-dimensional or a Hilbert space; similar results hold for the $\tau$ and $\pi$ methods. The point of this theorem is that, for every $\eps\geq 0$, it can be established whether or not $\lambda\in\Gamma^{n,\mathrm{fin}}_{\eps}(A)$  by determining whether or not the lower norms of finitely many finite matrices are $\leq \eps$. 
Note that the definition of $\Gamma^{n,\mathrm{fin}}_{\eps}(A)$ in \eqref{eq:findef}  is identical to the definition \eqref{eq:gamdef} of $\Gamma^n_\eps(A)$, except that $\mu^n$, defined by taking an infimum over $\Z$, is replaced by $\mu_n^{\mathrm{fin}}$, defined by taking a minimum over the finite set $K_n$. Thus, for $\eps\geq 0$,
\begin{equation} \label{eq:finGG}
\Gamma^{n,\mathrm{fin}}_{\eps}(A) \subset \Gamma^n_\eps(A), \quad \mbox{indeed also} \quad \Gamma^n_\eps(A) \subset \Gamma^{n,\mathrm{fin}}_{\eps+1/n}(A), 
\end{equation}
as a consequence of \eqref{eq:lnProp}, if \eqref{eq:comp} holds.
This result, indeed an extended version of this result that applies in the general band-dominated case and for every Banach space $X$, is proved in \S\ref{sec:thmgen} as Theorem \ref{thm:fin2}. A version of this theorem for the $\tau$ and $\pi$ methods, under the assumption of absence of spectral pollution (Definition \ref{def:specpol}), is proved as Theorem \ref{thm:fin3}.

\begin{theorem} \label{thm:fin}
Suppose that $X$ is finite-dimensional or a Hilbert space, that $A\in L(E)$ is tridiagonal, and that $\{A_{ij}:i,j\in \Z\}\subset L(X)$ is relatively compact. Then, for every $n\in \N$ there exists a finite set $K_n\subset \Z$ such that:
\begin{equation} \label{eq:comp}
\forall k\in \Z \;\; \exists j\in K_n \mbox{ such that }\|A^+_{n,k}-A^+_{n,j}\|\leq 1/n \mbox{ and } \|(A^*)^+_{n,k}-(A^*)^+_{n,j}\|\leq 1/n.
\end{equation}
Further, for $\eps\geq 0$ and $n\in \N$,
$$
\Gamma^{n,\mathrm{fin}}_{\eps}(A)\ \subset\ \Speps A \subset \Gamma^{n,\mathrm{fin}}_{\eps+\eps''_n + 1/n}(A),
$$
where
\begin{equation} \label{eq:findef}
\begin{aligned}
\Gamma^{n,\mathrm{fin}}_{\eps}(A) & :=  \left\{\lambda\in \C : \mu_n^{\mathrm{fin}}(A-\lambda I)\leq \eps\right\} \quad \mbox{and}\\
\mu_n^{\mathrm{fin}}(B) &:= \min_{k\in K_n} \min\Big(\nu\left(B|_{E_{n,k}}\right),\,\nu\left(B^{*}|_{E_{n,k}}\right)\Big),
\end{aligned}
\end{equation}
for $B\in L(E)$, so that
\begin{equation} \label{eq:mufin}
\mu_n^{\mathrm{fin}}(A-\lambda I)= \min_{k\in K_n} \min\Big(\nu\left(A^+_{n,k}-\lambda I^+_n\right),\,\nu\left((A^*)^+_{n,k}-\lambda I^+_n\right)\Big), \quad \mbox{for }\; \lambda\in \C.
\end{equation}
Further, for $\eps\geq 0$, $\Gamma^{n,\mathrm{fin}}_{\eps+\eps''_n + 1/n}(A)\Hto \Speps A$ as $n\to\infty$, in particular $\Gamma^{n,\mathrm{fin}}_{\eps''_n + 1/n}(A)\Hto$ $\Spec A$.
\end{theorem}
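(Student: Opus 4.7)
The plan is to deduce Theorem \ref{thm:fin} from the two-sided $\tau_1$-method inclusion \eqref{incl:met2} together with a total-boundedness argument that relates $\mu_n^{\mathrm{fin}}$ to $\mu_n$ at a cost of at most $1/n$ in the pseudospectral parameter. Granted the construction of $K_n$ satisfying \eqref{eq:comp}, once one establishes the non-trivial half $\Gamma^n_\eps(A) \subset \Gamma^{n,\mathrm{fin}}_{\eps+1/n}(A)$ of \eqref{eq:finGG}, the claimed two-sided inclusion $\Gamma^{n,\mathrm{fin}}_\eps(A) \subset \Speps A \subset \Gamma^{n,\mathrm{fin}}_{\eps+\eps''_n+1/n}(A)$ follows immediately by chaining with \eqref{incl:met2}, and the identity \eqref{eq:mufin} is immediate from \eqref{eq:findef} together with the already-noted equalities $\nu((A-\lambda I)|_{E_{n,k}})=\nu(A^+_{n,k}-\lambda I^+_n)$ and its adjoint analogue.

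The technical heart is the construction of $K_n$. Fix $n \in \N$. Since the matrix representation is tridiagonal, every entry of $A^+_{n,k}$ lies in $\{a_{i,j}: i,j\in\Z\} \cup \{0\}$, which is relatively compact (hence totally bounded) in $L(X)$ by hypothesis. The set of $(n+2) \times n$ block matrices whose entries are drawn from this totally bounded set is itself totally bounded in the natural product operator norm. Because $X$ is a Hilbert space or is finite-dimensional, relative compactness is preserved under taking adjoints (the adjoint map $a \mapsto a^*$ being an isometry between $L(X)$ and $L(X^*)$), so the same conclusion holds for the matrices $(A^*)^+_{n,k}$. Thus the map $k \mapsto (A^+_{n,k},\,(A^*)^+_{n,k})$ from $\Z$ into the product of two such block-matrix spaces has totally bounded range. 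Extracting a finite $(1/n)$-net for this range and selecting one representative index $k \in \Z$ for each net element produces a finite $K_n \subset \Z$ satisfying \eqref{eq:comp}.

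With $K_n$ in hand, the inclusion $\Gamma^n_\eps(A) \subset \Gamma^{n,\mathrm{fin}}_{\eps+1/n}(A)$ is verified as follows: given $\lambda$ with $\mu_n(A-\lambda I) \leq \eps$, choose for arbitrary $\delta>0$ an index $k\in\Z$ with $\min\bigl(\nu(A^+_{n,k}-\lambda I^+_n),\,\nu((A^*)^+_{n,k}-\lambda I^+_n)\bigr) \leq \eps+\delta$, then pick $j \in K_n$ approximating $k$ as in \eqref{eq:comp}, and apply the Lipschitz estimate \eqref{eq:lnProp} for the lower norm to conclude $\mu_n^{\mathrm{fin}}(A-\lambda I) \leq \eps+\delta+1/n$; letting $\delta \downarrow 0$ completes the step. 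The Hausdorff-convergence statement then follows by sandwiching $\Speps A \subset \Gamma^{n,\mathrm{fin}}_{\eps+\eps''_n+1/n}(A) \subset \Specn_{\eps+\eps''_n+1/n} A$ (the right inclusion applying $\Gamma^{n,\mathrm{fin}} \subset \Gamma^n \subset \Speps$ at the shifted parameter via \eqref{incl:met2}), together with the standard continuity of $\eps \mapsto \Specn_\eps A$ as $\eps''_n+1/n \downarrow 0$. The main obstacle is the $K_n$-construction: the need to approximate $A^+_{n,k}$ and $(A^*)^+_{n,k}$ \emph{simultaneously} from a single finite index set is what forces the relative-compactness hypothesis on the matrix entries and, in this version of the result, the Hilbert-space or finite-dimensional assumption on $X$.
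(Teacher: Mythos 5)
Your proposal is correct and follows essentially the same route as the paper, which proves Theorem \ref{thm:fin} as the tridiagonal special case of Theorem \ref{thm:fin2}: total boundedness of $\{(A^+_{n,k},(A^*)^+_{n,k}):k\in\Z\}$ yields $K_n$, the Lipschitz property \eqref{eq:lnProp} of the lower norm gives $\Gamma^n_\eps(A)\subset\Gamma^{n,\mathrm{fin}}_{\eps+1/n}(A)$ (the paper states this as \eqref{eq:finGG} without spelling out the details you supply), and chaining with \eqref{incl:met2} plus the sandwich $\Speps A\subset\Gamma^{n,\mathrm{fin}}_{\eps+\eps''_n+1/n}(A)\subset\Specn_{\eps+\eps''_n+1/n}A$ and \eqref{eq:HDconv} gives the convergence. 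The only cosmetic point is to make explicit that your $(1/n)$-net is chosen from within the range of $k\mapsto(A^+_{n,k},(A^*)^+_{n,k})$, so that representative indices exist.
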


Of course, a practical issue for computation is how to determine 
the finite set $K_n$. As a concrete, nontrivial example, we demonstrate how to reduce from an infinite set to a finite set for the class of tridiagonal operators that  are pseudoergodic in the sense of Davies \cite{Davies2001:PseudoErg} in \S\ref{sec:psInfinite}.

\subsubsection{An algorithm for computing the spectrum in the case $X=\C^p$ and its solvability complexity index} \label{sec:SCI}

In the finite-dimensional case that $X=\C^p$, for some $p\in \N$, equipped with the usual Euclidean norm, a sequence of approximations converging to $\Spec A$, in the case when $A$ is band-dominated, can be realised with each member of the sequence computed in finitely many operations, provided we have available as inputs to the computation sufficient information about $A$.
We will demonstrate this for the general band-dominated case in \S\ref{sec:scalar}. For the tridiagonal case, with $A$ given by \eqref{eq:A} and $X=\C^p$ (recall that we noted  in \S\ref{sec:main} that  every $A\in BO(E)$ with $X=\C$ can be written in this form), the $\tau_1$ version of the algorithm proceeds as follows. We use in this definition the notation (cf.~\eqref{eq:grignew})
\begin{equation} \label{eq:grid}
\mathrm{Grid}(n,r) := \frac{1}{n}(\Z+ \ri \Z)\cap r\D, \quad n\in \N, \;\; r>0.
\end{equation}

Let $\Omega^p_T$ denote the set of all tridiagonal matrices \eqref{eq:A}, with $\alpha,\beta,\gamma \in \ell^\infty(\Z,X)$ and $X=\C^p$, for some $p\in \N$. Where, as usual, $P(S)$ denotes the power set of a set $S$, the inputs we need are:
\begin{enumerate}
\item A mapping $\mathcal{A}_p:\Omega^p_T\times \N\to P(\Z)$, $(A,n)\mapsto K_n$, where $K_n\subset \Z$ is a finite set such that \eqref{eq:comp} holds.
\item A mapping $\B_p:\Omega^p_T\to \R^3$, $A\mapsto (\alpha_{\max},\beta_{\max},\gamma_{\max})$, such that $\alpha_{\max} \geq \|\alpha\|_\infty$, $\beta_{\max} \geq \|\beta\|_\infty$, $\gamma_{\max} \geq \|\gamma\|_\infty$.
\item A mapping $\cC_p:\Omega^p_T\times \Z \times \N\to (X^{(n+2)\times n})^2$, $(A,n,k)\mapsto (A^+_{n,k},(A^*)^+_{n,k})$.
\end{enumerate}
It is not clear to us how to construct a map $\A_p$ such that $\A_p(A,n)$ can be explicitly computed for general $A\in \Omega^p_T$. Indeed, establishing existence of a map $\A_p:\Omega^p_T\times \N\to P(\Z)$  with the required properties may require an application of the axiom of choice. But, as a non-trivial example, we will construct   in \S\ref{sec:psInfinite} a version of the mapping $\A_p$ for the important subset $\Omega_\Psi\subset \Omega^1_T$ of operators that are pseudoergodic in the sense of Davies \cite{Davies2001:PseudoErg}.

The sequence of approximations to $\Spec A$, each element of which can be computed in finitely many arithmetical operations, given finitely many evaluations of the above input maps, is the sequence $(\Gamma^n_{\mathrm{fin}}(A))_{n\in \N}$, defined using the notation \eqref{eq:findef} by
\begin{equation} \label{eq:finite}
\Gamma^n_{\mathrm{fin}}(A) := \Gamma^{n,\mathrm{fin}}_{\eps^*_n+ 3/n}(A) \cap \mathrm{Grid}(n,R), \qquad n\in \N,
\end{equation}
with (cf.~\eqref{eq:epsn_method2})
\begin{equation} \label{eq:eps*}
\eps_n^*\ :=\ (\alpha_{\max}+\gamma_{\max})\,\frac{22}{7(n+1)}\ \geq\ 2(\alpha_{\max}+\gamma_{\max})\,\sin\frac{\pi}{2(n+1)}\geq \eps''_n,
\end{equation}
$(\alpha_{\max},\beta_{\max},\gamma_{\max}) := \B_p(A)$, $R:= \alpha_{\max}+\beta_{\max}+\gamma_{\max}$ (so that $R$ is an upper bound for $\|A\|$), and $K_n$, in the definition \eqref{eq:findef} and \eqref{eq:mufin}, given by $K_n:= \A_p(A,n)$.

\begin{proposition} \label{prop:finite}
For $A\in \Omega^p_T$ and $n\in \N$, $\Gamma^n_{\mathrm{fin}}(A)$ can be computed in finitely many arithmetic operations, given finitely many evaluations of the functions $\A_p$, $\B_p$, and $\cC_p$, namely the evaluations:  $K_n:= \A_p(A,n)$; $(A_{n,k}^+,(A^*)^+_{n,k}):= \cC_p(A,k,n)$, for $k\in K_n$; and $(\alpha_{\max},\beta_{\max},\gamma_{\max})$ $:=$ $\B_p(A)$.  Further, $\Gamma^n_{\mathrm{fin}}(A)\Hto \Spec A$ as $n\to \infty$, and also
$$
\widehat \Gamma^n_{\mathrm{fin}}(A)\ :=\ \Gamma^n_{\mathrm{fin}}(A) + \frac{2}{n}\overline{\D}\ \Hto\ \Spec A
$$
as $n\to \infty$, with $\Spec A \subset \widehat \Gamma^n_{\mathrm{fin}}(A)$ for each $n\in \N$.
\end{proposition}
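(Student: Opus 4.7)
The plan is to establish, in turn, finite computability of $\Gamma^n_{\mathrm{fin}}(A)$ from the oracle inputs, the containment $\Spec A \subset \widehat\Gamma^n_{\mathrm{fin}}(A)$ for every $n\in\N$, and then both Hausdorff convergences. The key ingredients will be Theorem \ref{thm:fin}, which already furnishes $\Spec A \subset \Gamma^{n,\mathrm{fin}}_{\eps''_n+1/n}(A) \subset \Specn_{\eps''_n+1/n} A$, together with the Lipschitz property of $\mu_n^{\mathrm{fin}}$ in its spectral argument (inherited via \eqref{eq:lnProp} from the lower-norm inequality) and a simple rounding argument onto the finite grid.

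The computability claim amounts to unpacking \eqref{eq:finite}. The set $\mathrm{Grid}(n,R) = \frac{1}{n}(\Z+\ri\Z) \cap R\D$ is finite as a discrete set intersected with a bounded region, $K_n = \A_p(A,n)$ is finite by hypothesis, and $\cC_p(A,k,n)$ supplies, for each $k\in K_n$, the $(n+2)\times n$ matrices $A^+_{n,k}$ and $(A^*)^+_{n,k}$. For each $\lambda$ in the finite enumeration of $\mathrm{Grid}(n,R)$, by \eqref{eq:mufin}, $\mu_n^{\mathrm{fin}}(A - \lambda I)$ is a minimum of finitely many smallest singular values of finite matrices, to be compared against the explicit threshold $\eps^*_n + 3/n$ computed from $\B_p(A)$.

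For $\Spec A \subset \widehat\Gamma^n_{\mathrm{fin}}(A)$, fix $\lambda \in \Spec A$; by \eqref{eq:normbound2}, $|\lambda| \leq \|A\| \leq R$, and Theorem \ref{thm:fin} yields $\mu_n^{\mathrm{fin}}(A - \lambda I) \leq \eps''_n + 1/n$. I produce a suitable $\lambda'\in\mathrm{Grid}(n,R)$ in two stages: first radially shrink $\lambda$ to $\lambda'' := \min\bigl(1,(R-1/n)/|\lambda|\bigr)\,\lambda$ (with $\lambda'' := 0$ when $\lambda = 0$), which satisfies $|\lambda''| \leq R - 1/n$ and $|\lambda - \lambda''| \leq 1/n$; then round $\lambda''$ to the nearest point $\lambda'$ of $\frac{1}{n}(\Z+\ri\Z)$, which gives $|\lambda'' - \lambda'| \leq 1/(n\sqrt{2})$ and hence both $|\lambda'| < R$ and $|\lambda - \lambda'| < 2/n$. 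The Lipschitz estimate $|\mu_n^{\mathrm{fin}}(A - \lambda I) - \mu_n^{\mathrm{fin}}(A - \lambda' I)| \leq |\lambda - \lambda'|$, combined with $\eps^*_n \geq \eps''_n$, gives $\mu_n^{\mathrm{fin}}(A - \lambda' I) < \eps^*_n + 3/n$, so $\lambda' \in \Gamma^n_{\mathrm{fin}}(A)$ and $\lambda \in \widehat\Gamma^n_{\mathrm{fin}}(A)$. The degenerate regime $R \leq 1/n$, where the shrinkage is vacuous, is handled separately using $\lambda' = 0$ together with $\mu_n^{\mathrm{fin}}(A) \leq \mu_n^{\mathrm{fin}}(A-\lambda I) + |\lambda| \leq \eps''_n + 2/n$.

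Both convergences then follow by sandwiching. By Theorem \ref{thm:fin}, $\Gamma^n_{\mathrm{fin}}(A) \subset \Gamma^{n,\mathrm{fin}}_{\eps^*_n+3/n}(A) \subset \Specn_{\eps^*_n+3/n} A$, and hence $\widehat\Gamma^n_{\mathrm{fin}}(A) \subset \Specn_{\eps^*_n+3/n} A + (2/n)\overline{\D} \subset \Specn_{\eps^*_n+5/n} A$ by the standard pseudospectral perturbation inclusion. Combined with $\Spec A \subset \widehat\Gamma^n_{\mathrm{fin}}(A)$ from the previous paragraph and $\Specn_\eta A \Hto \Spec A$ as $\eta\to 0$, this gives $\widehat\Gamma^n_{\mathrm{fin}}(A) \Hto \Spec A$. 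For $\Gamma^n_{\mathrm{fin}}(A) \Hto \Spec A$, the outer inclusion controls $\sup_{\mu\in\Gamma^n_{\mathrm{fin}}(A)}\dist(\mu,\Spec A) \to 0$, while $\Spec A \subset \Gamma^n_{\mathrm{fin}}(A) + (2/n)\overline{\D}$ is exactly $\sup_{\lambda\in\Spec A}\dist(\lambda,\Gamma^n_{\mathrm{fin}}(A)) \leq 2/n \to 0$. The principal obstacle is the inclusion step: ensuring the rounded grid point $\lambda'$ lies strictly inside $R\D$ (since $\mathrm{Grid}(n,R)$ uses the \emph{open} disc) while maintaining $|\lambda - \lambda'| < 2/n$, which forces the two-stage shrink-then-round construction and motivates the factor $2/n$ in the thickening defining $\widehat\Gamma^n_{\mathrm{fin}}$.
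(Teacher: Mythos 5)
Your proof is correct and follows essentially the same route as the paper: deduce finite computability from the definition \eqref{eq:finite}, use Theorem \ref{thm:fin} plus a lattice-rounding argument to get $\Spec A \subset \widehat\Gamma^n_{\mathrm{fin}}(A)$, and then sandwich via $\Gamma^n_{\mathrm{fin}}(A) \subset \Specn_{\eps^*_n+3/n}(A)$ together with \eqref{eq:HDconv}. The only difference is in the rounding step, where the paper asserts (without detail) the existence of a grid point within $\sqrt{2}/n$ of any $\lambda'$ with $|\lambda'| \leq \|A\| \leq R$, whereas your two-stage shrink-then-round construction, with its explicit treatment of the degenerate regime $R \leq 1/n$, spells out more carefully why the rounded point lands strictly inside the open disc $R\D$ while keeping $|\lambda - \lambda'| < 2/n$; both give the needed bound.
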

We give the proof of the above result, and a version for $A\in BDO(E)$, in \S\ref{sec:scalar}.

The above result can be interpreted as a result relating to the solvability complexity index (SCI) of \cite{SCI,ColHan2023}. Let us abbreviate $\Omega_T^1$, $\A_1$, $\B_1$, and $\cC_1$ as $\Omega_T$, $\A$, $\B$, and $\cC$, respectively, so that $\Omega_T$ is the set of tridiagonal matrices with complex number entries. With the {\em evaluation set} (in the sense of  \cite{SCI,ColHan2023})\footnote{Strictly, to fit with the definition in \cite[\S2.1]{ColHan2023}, each element of the evaluation set $\Lambda$ should be a complex-valued function on $\Omega_T$. But it is easy to express our evaluation set in this form, expressing each of our functions in terms of a finite number of complex-valued functions. In particular, where $M:= |K_n|$, the set of integers $K_n=\{k_1,\ldots,k_M\}$, with $k_1<k_2<\ldots<k_M$, which is the output of the mapping $\A(\cdot,n)$ applied to $A$, might be encoded as the complex number $x+\ri y$, with $x:= k_1$ and $y:=\Pi_{i=1}^{M} p_i^{k_i-k_1}$, where $p_i$ is the $i$th prime number.}
\begin{equation} \label{eq:Lambda}
\Lambda:= \{\A(\cdot,n),\B,\cC(\cdot,k,n):k\in \Z, \, n\in \N\},
\end{equation}
and where we equip $\C^C$, the set of compact subsets of $\C$, with the Hausdorff metric (see \S\ref{sec:keynot}),
the mappings 
$$
\Omega_T\to \C^C, \qquad A\mapsto \Gamma^n_{\mathrm{fin}}(A) \quad \mbox{and} \quad A\mapsto \widehat \Gamma^n_{\mathrm{fin}}(A),
$$
are {\em general algorithms} in the sense of \cite{SCI,ColHan2023}, for each $n\in \N$. Further, $\widehat \Gamma^n_{\mathrm{fin}}(A)$ can be computed in finitely many arithmetic operations and specified using finitely many complex numbers (the elements of $\Gamma^n_{\mathrm{fin}}(A)$ and the value of $n$). Thus, where $\Xi:\Omega_T\to \C^C$ is the mapping given by $\Xi(A) := \Spec A$, for $A\in \Omega_T$, the {\em computational problem} $\{\Xi,\Omega_T, \C^C,\Lambda\}$ has  arithmetic SCI, in the sense of \cite{SCI,ColHan2023}, equal to one; more precisely, since also $\Spec A \subset \widehat \Gamma^n_{\mathrm{fin}}(A)$, for each $n\in \N$ and $A\in \Omega_T$, this computational problem is in the class $\Pi_1^A$, as defined in \cite{SCI,ColHan2023}. 

The same observations on SCI classification hold true for the corresponding computational problem with $\Omega_T$ replaced by $\Omega:= BDO(E)$, that we consider in \S\ref{sec:scalar}; again this has arithmetic SCI equal to one. In contrast, as we recall in \S\ref{sec:other}, the computational problem of determining the spectrum of a band-dominated operator, indeed even the restricted problem of determining the spectrum of a tridiagonal matrix,  has been shown in \cite{Matt}, adapting the arguments of \cite{SCI}, to have an SCI of two when the evaluation set, the set of allowed inputs to the computation, is restricted to the mappings $\Omega_T\to \C$, $A\mapsto a_{i,j}$, for $i,j\in \Z$, providing evaluation of the matrix elements.

\subsection{A first example demonstrating the sharpness of our inclusion sets} \label{sec:exam}
We will explore the utility and properties of these new families of inclusion sets in a range of examples in \S\ref{sec:pseudop}. But, to aid comprehension of the $\tau$, $\pi$, and $\tau_1$ inclusion sets  introduced in \S\ref{sec:incl}, and demonstrate their sharpness, let us pause to illustrate them as applied to one of the simplest scalar examples, an example with $X=\C$ so that the matrix entries are just complex numbers. We will return to this example in \S\ref{sec:shift}. 
\begin{example}[The shift operator]\label{ex:shift}
Let $V$ denote the shift operator on $\ell^2(\Z)$, taking $x=(x_j)_{j\in\Z}\in\ell^2(\Z)$ to $y=(y_j)_{j\in\Z}\in\ell^2(\Z)$
with $x_j=y_{j+1}$ for all $j\in\Z$. Its matrix is of the form \eqref{eq:A} with
$\alpha_j=1$, $\beta_j=0$ and $\gamma_j=0$ for all $j\in\Z$.
For $A:=V$, the matrices \eqref{eq:Ank} and \eqref{eq:Ankper} are given by
\begin{equation} \label{eq:Vn}
A_{n,k}=V_{n}\ :=\ \left(\begin{array}{cccc}0&\\
1&0&\\
&\ddots&\ddots&\\
&&1&0
\end{array}\right)_{n\times n}
\ \textrm{and}\quad
A^{\per,t}_{n,k}=V^{\per,t}_{n}\ :=\ \left(\begin{array}{cccc}0&&&t\\
1&0&\\
&\ddots&\ddots&\\
&&1&0
\end{array}\right)_{n\times n},
\end{equation}
for all $n\in\N$, independent of $k\in\Z$.
The circulant matrix $V_n^{\per,1}$ is normal and has spectrum $\spec V^{\per, 1}_n=\T_n=\{z\in\C:z^n=1\}$ (e.g., \cite[Thm.~7.1]{TrefEmbBook}). For $t\in \T$, where $\alpha=t^{1/n}$ denotes any one of the $n$th roots of $t$, and $D_n:= \diag(\alpha^1,\ldots,\alpha^n)$, it holds that $D_n^{-1}(V^{\per,t}_n-\lambda I_n)D_n = \alpha (V^{\per,1}_n-\lambda \bar \alpha I_n)$, so that $V^{\per,t}_n$ is also normal and
\begin{equation} \label{eq:spec_Vpert}
\Spec V^{\per,t}_n=t^{1/n}\Spec V^{\per,1}_n=t^{1/n}\T_n.
\end{equation}
(Note that the set $t^{1/n}\T_n$ is independent of which $n$th root of $t$ we select.)
 Thus, for each $t\in \T$, the spectrum of $V$, $\Spec
V=\T=\{z\in\C:|z|=1\}$, is well approximated as $n\to\infty$ by
$\Spec V^{\per, t}_n$, but clearly not by $\Spec
V_n=\{0\}$. 

\begin{figure}[t]
\begin{center}
\begin{tabular}{|c|cccc|}
\hline
& $n=4$ & $n=8$ & $n=16$ & $n=32$\\
\hline
&&&&\\[-1em]
$\stackrel{\text{$\tau$ method}}{\rule{0pt}{10mm}}$ &
\includegraphics[width=22mm]{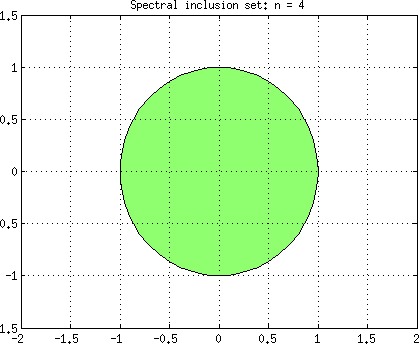} &
\includegraphics[width=22mm]{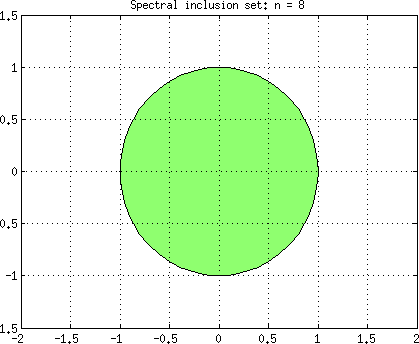} &
\includegraphics[width=22mm]{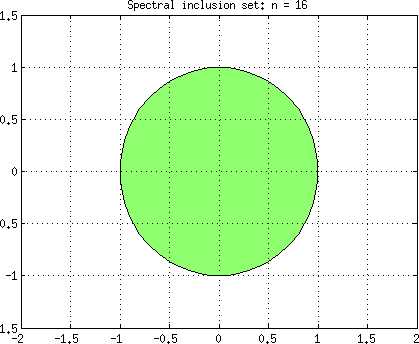} &
\includegraphics[width=22mm]{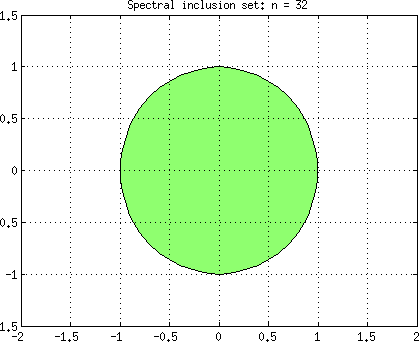}\\
$\stackrel{\begin{tabular}{c}\scriptsize \text{$\pi$ method,}\\\scriptsize\text{with $t=1$}\end{tabular}}{\rule{0mm}{6mm}}$ &
\includegraphics[width=22mm]{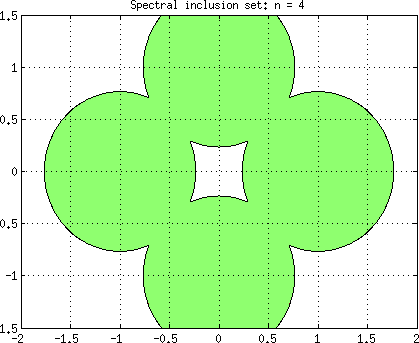} &
\includegraphics[width=22mm]{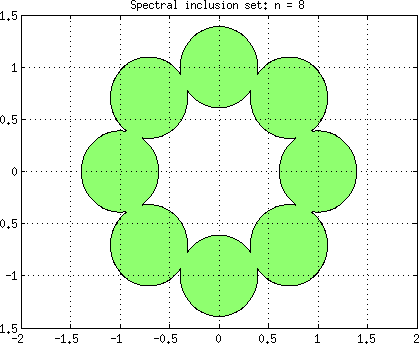} &
\includegraphics[width=22mm]{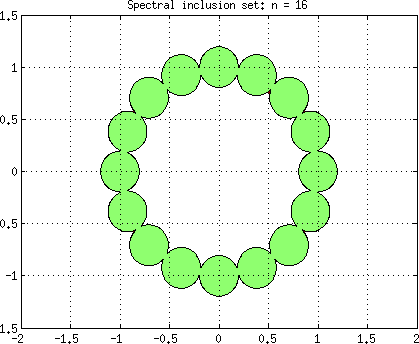} &
\includegraphics[width=22mm]{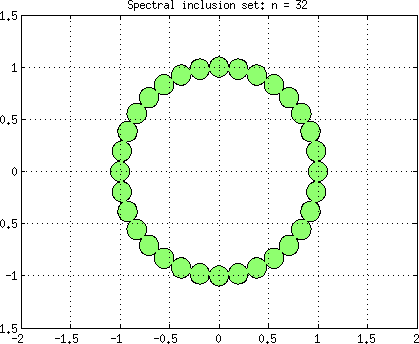}\\
$\stackrel{\text{$\tau_1$ method}}{\rule{0pt}{10mm}}$ &
\includegraphics[width=22mm]{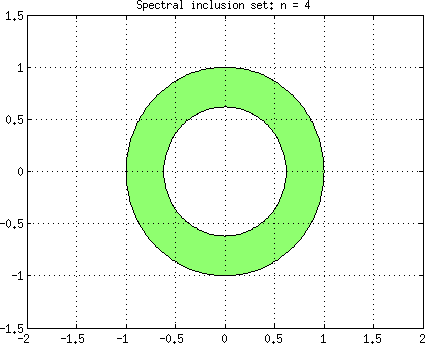} &
\includegraphics[width=22mm]{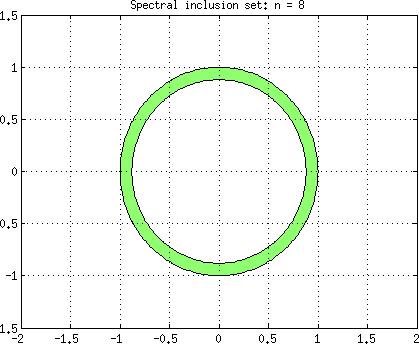} &
\includegraphics[width=22mm]{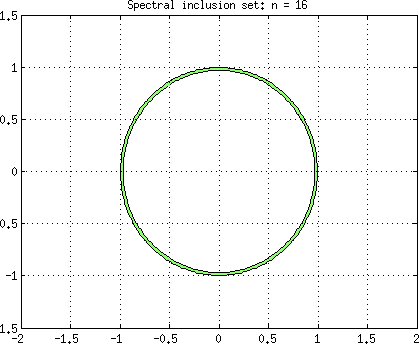} &
\includegraphics[width=22mm]{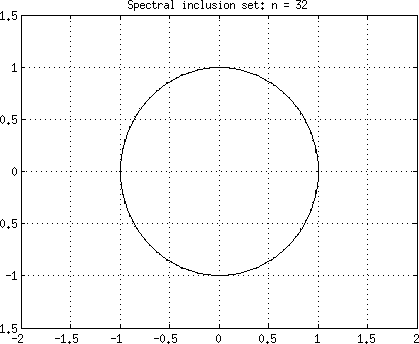}\\
\hline
\end{tabular}
\end{center}
\caption{The $\tau$, $\pi$, and $\tau_1$ inclusion sets, $\Sigma^n_{\eps_n}(V)$, $\Pi^{n,1}_{\eps'_n}(V)$, and $\Gamma^n_{\eps''_n}(V)$, respectively, for $n=4,\,8,\,16$, and $32$,
each an inclusion set for $\Spec V=\T$.} \label{fig:shift}
\end{figure}

The $\tau$ method and $\pi$ method inclusion sets for $\Spec V = \Specn_0 V$, given by \eqref{incl:met1} and
\eqref{incl:met1*}, respectively, reduce in this case to the sets
$$
\Sigma^n_{\eps_n}(V) = \Specn_{\eps_n}(V_n)
$$
and
$$
\Pi^{n, t}_{\eps'_n}(V) = \Specn_{\eps'_n}(V^{\per, t}_n), \quad t\in \T.
$$
Since $\|\alpha\|_\infty = 1$ and $\|\gamma\|_\infty=0$ for this example, we have by Corollary \ref{minimum_weighted_norm_two} and \eqref{eq:epsnd} that
$$
\eps_n = 2\sin\left(\frac{\pi}{4n+2}\right) \quad \mbox{and} \quad \eps'_n = 2\sin\left(\frac{\pi}{2n}\right).
$$
$\Specn_{\eps_n}(V_n)$ and $\Specn_{\eps'_n}(V^{\per, t}_n)$ are neighbourhoods of $\Spec V_n=\{0\}$ and $\Spec
V^{\per_n,t}=t^{1/n}\T_n$, respectively. By  \eqref{incl:met1} and
\eqref{incl:met1*} these neighbourhoods must be large enough to cover
$\Spec V=\T$.  As we will see in \S\ref{sec:shift}, for each $n\in \N$, $\Sigma^n_{\eps_n}(V)$ is precisely the closed unit disc, while (we recall this and other properties of pseudospectra below in \S\ref{sec:pseud}), since $V^{\per, t}_n$ is normal,  $\Specn_{\eps'_n}(V^{\per, t}_n)=\Pi^{n,t}_{\eps'_n}(V)$ is the closed $\eps'_n$ neighbourhood of $t^{1/n}\T_n$. Further, as we will see in \S\ref{sec:shift}, $\Gamma_{\eps''_n}^n(V)$, the $\tau_1$ method inclusion set given by \eqref{incl:met1*}, is the closed annulus with outer radius 1, inner radius $1-(\eps''_n)^2$, where
$$
\eps''_n = 2\sin\left(\frac{\pi}{2(n+1)}\right).
$$

Of course any intersection of these inclusion sets is also an inclusion set. In particular, see \S\ref{sec:shift},
$$
\bigcap_{t\in \T} \Pi^{n, t}_{\eps'_n}(V) = \bigcap_{t\in \T} \left(t^{1/n} \T_n + \eps'_n\, \overline{\D}\right)
$$
is also a closed annulus, with outer radius 1, inner radius $1-(\eps'_n)^2$.
 The $\tau$, $\pi$, and $\tau_1$ inclusion sets for $\spec V = \T$ 
are shown for a range of values of $n$, selecting $t=1$ for the $\pi$ method, in Figure \ref{fig:shift}.
\end{example}

This simple example illustrates the sharpness of the inclusions \eqref{incl:met1}, \eqref{incl:met1*}, and \eqref{incl:met2}. As we will see in \S\ref{sec:shift}, the inclusion sets, which are the closed
disc $\overline{\D}$ for the $\tau$ method, the union $t^{1/n}+\eps_n\overline{\D}$ of closed discs around shifted roots of unity  for the $\pi$ method,
and a closed annulus for the  $\tau_1$ method, do not cover $\spec V=\T$
if $\eps_n$, $\eps'_n$, and $\eps''_n$ are any smaller than the values given above. 

This example also illustrates Theorem \ref{thm:converge} on convergence of the $\tau_1$ method, that $\Gamma^n_{\eps''_n}(V)\Hto \spec V$ as $n\to\infty$.  Since also, for each $t\in \T$, $\Pi^{n, t}_{\eps_n}(V)\Hto\spec V$ as $n\to\infty$, this example also illustrates that the $\pi$ method can be convergent. (Generalising this example, we will show in \S\ref{sec:pseudoergodic}, by application of Theorem \ref{thm:converge3}, that the $\pi$ method is convergent for all tridiagonal $A\in L(E)$ with scalar entries that are pseudoergodic in the sense of Davies \cite{Davies2001:PseudoErg}, this class including all tridiagonal Laurent matrices, where the diagonals $\alpha$, $\beta$, and $\gamma$ are constant.) Of course, this is also an example illustrating that the $\tau$ method need not be convergent, an example where the $\tau$ method suffers from spectral pollution (see Definition \ref{def:specpol}), so that the conditions of Theorem \ref{thm:converge2} are not satisfied.

\subsection{Related work} \label{sec:other}
Let us give more detail about previous work related to this paper, on inclusions sets and approximation algorithms for the spectrum and pseudospectrum of bounded linear operators. See also the recent, overlapping review in \cite{ColHan2023}.

Given a Banach space $Y$, and $A\in L(Y)$, a trivial but important inclusion set for $\Spec A$ is the ball $\|A\|\overline{\D}=\{z\in \C:|z|\leq \|A\|\}$. In the case that $Y$ is a Hilbert space, a well-known, sharper inclusion set is $\overline{\mathrm{Num}\, A}$, the closure of $\mathrm{Num}\, A$, the numerical range of $A$. Important generalisations of these respective inclusion sets are the higher order hull and (in the case that $Y$ is  Hilbert space) the higher order numerical range, denoted $\mathrm{Hull}_n A $ and $\mathrm{Num}_n A$, respectively, and defined by (e.g., \cite[\S9.4]{Davies2007:Book})
\[
\begin{aligned}
\mathrm{Hull}_n A &:= \bigcap_{\mathrm{deg}(p)\leq n} \left\{z:|p(z)|\leq \|p(A)\|\right\} \quad \mbox{and}\\ 
\mathrm{Num}_n A&:= \bigcap_{\mathrm{deg}(p)\leq n} \left\{z:p(z)\in \overline{\mathrm{Num}(p(A))}\right\}, 
\end{aligned}
\]
for $n\in \N$. 
Note that $\mathrm{Num}_1 A=\overline{\mathrm{Num}\,A}$ and, rather surprisingly (see \cite[Thm.~9.4.5]{Davies2007:Book} and \cite{BurkeGreen06}), $\mathrm{Hull}_n A= \mathrm{Num}_n A$, for each $n$, in the case that $Y$ is a Hilbert space. Clearly, the higher order hulls and numerical ranges form decreasing sequences, i.e.\ $\mathrm{Hull}_{n+1} A\subset \mathrm{Hull}_n A$ and $\mathrm{Num}_{n+1} A\subset \mathrm{Num}_n A$, for $n\in \N$, so that
\[
\mathrm{Hull}_n A\Hto \mathrm{Hull}_\infty A := \bigcap_{n\in \N} \mathrm{Hull}_n A \quad \mbox{and} \quad \mathrm{Num}_n A\Hto \mathrm{Num}_\infty A := \bigcap_{n\in \N} \mathrm{Num}_n A,
\]
as $n\to\infty$. (Of course,  $\mathrm{Num}_\infty A$ is defined only in the Hilbert space case, in which case $\mathrm{Hull}_\infty A=\mathrm{Num}_\infty A$, since $\mathrm{Hull}_n A= \mathrm{Num}_n A$ for each $n$.)

These inclusion sets may be sharp or asymptotically sharp as $n\to\infty$.  In particular \cite[Lem.~9.4.4]{Davies2007:Book}, $\mathrm{Hull}_2(A)=\mathrm{Num}_2(A)=\Specn(A)$ in the Hilbert space if $A$ is self-adjoint. Further \cite[Thm.~2.10.3]{NevBook} $\mathrm{Hull}_\infty A=\widehat{\Spec}A$, the complement of the unbounded component of $\C\setminus \Spec A$, so that $\mathrm{Hull}_n A\Hto \Spec A$ as $n\to\infty$ if and only if $\C\setminus \Spec A$ has only one component. But, while these sequences of inclusion sets are of significant theoretical interest and converge in many cases to $\Spec A$, it is unclear, for general operators $A$ and particularly for larger $n$, how to realise these sets computationally. (However, see the related work of Frommer et al.\ \cite{Frommer} on computation of inclusion sets for pseudospectra via approximate computation of numerical ranges of resolvents of the operator, and work of B\"ogli \& Marletta \cite{BoMar} on inclusion sets for spectra via numerical ranges of related operator pencils.)

For finite matrices, as we have noted already in \S\ref{sec:ideas}, a standard inclusion  set for the spectrum (the set of eigenvalues) is provided by Gershgorin's theorem \cite{Gershgorin,Varga}. This result has been generalised, independently in Ostrowski \cite{Ostrowski}, Feingold \& Varga \cite{Feingold}, and Fiedler \& Pt\'ak \cite{Fiedler} (and see \cite[Chapter 6]{Varga}) to cases where the entries of the $N\times N$ matrix $A=[A_{ij}]$ are themselves submatrices\footnote{These papers are arguably the earliest occurrence of pseudospectra in the literature (cf.~\cite[{\S}I.6]{TrefEmbBook}). The Gershgorin-type  enclosure for the spectrum obtained in each of these papers (e.g., \cite[Equation (3.1)]{Feingold}), which is essentially the right-hand-side of \eqref{eq:salas}, is thought of in \cite{Feingold} as the  union of $N$ so-called Gershgorin sets $G_j$, $j=1,\ldots, N$. The $j$th set $G_j$ (though this language is not used), is precisely a (closed) pseudospectrum of $a_{j,j}$, the $j$th matrix on the diagonal.
}. 
A further extension to the case where each entry $A_{ij}$ is an operator between Banach spaces has been made by Salas \cite{Salas}. To make clear the connection with this paper, consider the case where each entry $A_{ij}\in L(X)$, for some Banach space $X$. The main result of \cite{Salas} in that case, expressed in the language of pseudospectra, is that
\begin{equation} \label{eq:salas}
\Spec A \subset \bigcup_{i=1}^N \Specn_{r_i} a_{i,i}, \qquad \mbox{where} \quad r_i := \sum_{j=1, \,j\neq i}^N \|a_{i,j}\|_{L(X)},
\end{equation}
which is close to \eqref{eq:incl} in the case $n=1$, when $\eps_n=r(A)$ and $r(A)$ is an upper bound for $\sup_{i\in \Z} r_i$. Of course, \eqref{eq:salas} is for finite rather than infinite matrices. Several authors have generalised the Gershgorin theorem to infinite matrices with scalar entries, but the focus has been on cases where the infinite matrix $A$ is an unbounded operator with a discrete spectrum; see \cite{Shiv,Farid1,Farid1} and the references therein. A simple generalisation of the Gershgorin theorem to the case where $A$ is a tridiagonal matrix of the form \eqref{eq:A}, with scalar entries, that is a bounded operator on $\ell^2(\Z)$ is proved as \cite[Theorem 2.50]{HengThesis}. This has \eqref{eq:incl} for $n=1$ as an immediate corollary.

Probably the most natural idea to approximate $\speps A$, where $A\in BO(E)$ or $BDO(E)$, with $E=\ell^2(\N)$ or $\ell^2(\Z)$, is to use the pseudospectra, $\speps A_n$, of the finite sections of $A$, i.e., $A_n:=[a_{i,j}]_{i,j=1}^n$ or $A_n:=[a_{i,j}]_{i,j=-n}^n$, respectively.
In some cases, including Toeplitz operators \cite{ReichelTref,BoeSi2} and semi-infinite random Jacobi operators
\cite{CWLi2016:Coburn}, $\speps A_n\Hto\speps A$ as $n\to\infty$. (In some instances where the finite section method is not effective, the pseudospectra of periodised finite sections (cf.~our $\pi$ method) can converge to $\speps A$; see \cite{Colb:PE_pBC} and Remark \ref{rem:Col}.) But in general, as noted already in \S\ref{sec:ideas}, the sequence $\speps A_n$ does not converge to $\speps A$; its cluster points typically contain $\speps A$ but also contain points outside $\Spec A$ (see, e.g., \cite{SeSi:FSBDO}).

Even in the self-adjoint case one observes this effect of so-called spectral pollution; see, e.g., \cite{DaviesPlum}. As we noted in \S\ref{sec:ideas}, Davies \& Plum \cite{DaviesPlum}, building on Davies \cite{Davies1998:Encl},  proposed, as a method to locate eigenvalues of self-adjoint operators in spectral gaps while avoiding spectral pollution, to compute $\nu(P_m(A-\lambda I)|_{E_n})$, with $m>n$, as an approximation to $\nu(A-\lambda I)$ (which coincides with $\mu(A-\lambda I)$, defined by \eqref{eq:invNorm}, in this self-adjoint setting). Here $(P_k)_{k\in \N}$ is any sequence of orthogonal projections onto finite-dimensional subspaces that is strongly convergent to the identity, and $E_n:=P_n(E)$ is the range of $P_n$ The significance of this proposal is that, in this self-adjoint setting: (a) $\Spec A=\{\lambda\in \R:\nu(A-\lambda I)=0\}$ (see \eqref{eq:invNorm}); (b) $\nu(P_m(A-\lambda I)|_{E_n})\to \nu(P_m(A-\lambda I)|_{E_n})$ as $m\to\infty$;  (c)  $\nu((A-\lambda I)|_{E_n})\to \nu(A-\lambda I)$ as $n\to\infty$, uniformly for $\lambda\in \R$; indeed \cite[Lemma 5]{DaviesPlum}, 
\begin{equation} \label{eq:Davies}
\nu(A-\lambda I)|_{E_n} \leq \nu(A-\lambda I) \leq \nu((A-\lambda I)|_{E_n}) + \eta_n, 
\end{equation}
for $\lambda \in \R$, where $\eta_n\to 0$ as $n\to\infty$. To relate this  to our results above, suppose that $E=\ell^2(\Z,X)$ and $P_n:= P_{2n+1,-n-1}$, for $n\in \N$, where $P_{n,k}$ is defined by \eqref{eq:Pdef}. Then, if $A\in L(E)$ is tridiagonal, and using the notations of \S\ref{sec:ideas},
$$
\nu(P_m(A-\lambda I)|_{E_n})=\nu((A-\lambda I)|_{E_n}) = \nu((A-\lambda I)^+_n),
$$
for $m\geq n+1$, so that the second inequality of \eqref{eq:Davies} bears a resemblance to Proposition \ref{prop:gam_main} (except that, importantly, \eqref{eq:Davies} can give no information on the dependence of $\eta_n$ on $A$). 

These ideas are moved to the non-self-adjoint case in   Hansen \cite{HansenJFA08,Hansen:nPseudo} (see also \cite{HeinPotLind08,SCI,ColbRomanHansen,ColHan2023}), where an  approximation $\cS^{n,m}_\eps(A)$, for $n,m\in \N$, $m\geq n$, to $\Speps A$ is proposed. This approximation, written in terms of lower norms using \eqref{eq:lowernorm1}, to match the work of Davies \& Plum \cite{DaviesPlum}, is given by
\begin{equation} \label{eq:Han}
\cS_\eps^{n,m}(A) := \left\{\lambda\in \C: \min\left(\nu\left(P_m(A-\lambda I)|_{E_n}\right),\nu\left(P_m(A-\lambda I)^*|_{E_n}\right)\right)\leq \eps \right\},
\end{equation}
where $E_n=P_n(E)$, $E$ is some separable Hilbert space, and $(P_n)_{n\in \N}$ is a sequence of projection operators converging strongly to the identity. Using \eqref{eq:lowernorm1}, in the case we consider where $E=\ell^2(\Z,X)$, and if we define $P_n:= P_{2n+1,-n-1}$, for $n\in \N$, and if $A\in L(E)$ is tridiagonal, then, if $m\geq n+1$, $\cS_\eps^{n,m}(A)$ is the specific approximation \eqref{eq:specfs}, which we have already contrasted with the new approximations  in this paper in \S\ref{sec:ideas}. Key features of the approximation \eqref{eq:Han} from the perspectives of the current work are: (i) that one can determine whether $\lambda \in \cS_\eps^{n,m}(A)$ in finitely many arithmetic operations (via the characterisations \eqref{eqln3} and \eqref{eq:ispd} in terms of positive definiteness and singular values of rectangular matrices, see \cite{Hansen:nPseudo}); and that: (ii) if $A$ band-dominated and $m=2n$,  then $\cS_\eps^{n,m}(A)\Hto \speps A$ as $n\to\infty$ (see \cite{SCI}).

Hansen \cite{HansenJFA08,Hansen:nPseudo}  extends \eqref{eq:Han}  to provide, in the Hilbert space context,  a sequence of approximations, based on computations with finite rectangular matrices, also to the so-called $(N,\eps)$-pseudospectra, $\specn_{N,\eps} A$, defined in \cite[Definition 1.2]{Hansen:nPseudo}.  These generalise the pseudospectrum (note that $\specn_{0,\eps}A=\speps A$), and are another sequence of inclusions sets: one has that $\Spec A + \eps \D\subset \specn_{N,\eps} A$, for $\eps >0$ and $N\in \N_0:=\N\cup\{0\}$; further, for every $\delta >\eps$, $\specn_{N,\eps} \subset \Spec A + \delta \D$, for all sufficiently large $N$. These approximation ideas are extended to the general Banach space case in Seidel \cite{Seidel:Neps}.

The Solvability Complexity Index (SCI), that we discuss in \S\ref{sec:SCI}, \S\ref{sec:scalar}, and \S\ref{sec:psInfinite}, is introduced in a specific context in Hansen  \cite{Hansen:nPseudo}, and in the general form that we use it in this paper in \cite{SCIshort,SCI}. Building on the work in \cite{Hansen:nPseudo}, 
Ben-Artzi el al.\ \cite{SCI} consider the computation of $\Specn_\eps A$, and hence, via \eqref{eq:speceps}, $\Spec A$, in the case when $A\in L(E)$ and $E$ is a separable Hilbert space. Via an orthonormal basis $(e_i)_{i\in \N}$ for $E$, we may identify $E$ with $\ell^2(\N)$ and $A$ with its matrix representation  $A=[a_{i,j}]_{i,j\in \N}$, where $a_{i,j} := (Ae_j,e_i)$.
 On the assumption that only the values $a_{i,j}$ are available as data, Ben-Artzi el al.\ \cite{SCI}, building on \cite{Hansen:nPseudo}, discuss the existence or otherwise of algorithms for computing $\Specn_\eps A$. In particular,  the authors construct in \cite{SCI}, for each $\eps>0$, the sequence of mappings $(\Gamma_{\eps,n})_{n\in \N}$, from $L(E)$ to $\C^C$, given by 
\begin{equation} \label{eq:grignew}
\Gamma_{\eps,n}(A) := \cS_\eps^{n,2n}(A) \cap \mathrm{Grid}(n),  \quad \mbox{where} \quad \mathrm{Grid}(n) := \frac{1}{n}(\Z+ \ri \Z).
\end{equation}
It is clear, from the above discussion,  that $\Gamma_{\eps,n}(A)$ can be computed for each $n$ from finitely many of the entries $a_{i,j}$ in finitely many operations, and it is not difficult, as a consequence of the properties of $\cS_\eps^{n,2n}(A)$ noted above, to see (cf.~Proposition \ref{prop:finite}) that
 $\Gamma_{\eps,n} (A)\Hto \Specn_\eps A$ as $n\to\infty$ whenever the matrix $A$ is band-dominated. Thus, where $(\eps_m)_{m\in \N}$ is any positive null sequence, $\Spec A$ can be computed as the iterated limit
\begin{equation} \label{eq:double}
\Spec A  = \lim_{m\to\infty} \Specn_{\eps_m} A= \lim_{m\to\infty} \lim_{n\to\infty} \Gamma_{\eps_m,n} (A).
\end{equation}

In the same paper \cite{SCI} (and see \cite{Matt}) they show moreover that this double limit is optimal; it cannot be replaced by a single limit. Precisely, they show that there exists no sequence of algorithms $(\gamma_{n})_{n\in \N}$, from $L(E)$ to $\C^C$, such that $\gamma_n(A)\Hto \Specn(A)$ for every band-dominated matrix $A$ and $\gamma_n(A)$ can be computed, for each $n$, with only finitely many of the matrix entries $a_{i,j}$ as input. 
In the language introduced in \cite{SCI}, the SCI of computing the spectrum of a band-dominated operator from its matrix entries is two.

\section{Pseudospectra and Globevnik's property} \label{sec:pseud}

We have introduced in \S\ref{sec:keynot} the definitions of the open and closed $\eps$-pseudospectra of $B\in L(Y)$, for a Banach space $Y$. A natural question is: what is the relationship between these two definitions? Clearly, for every  $\eps>0$, $\speps B\subset \Speps B$, so that also $\overline{\speps B} \subset \Speps B$ (since $\Speps B$ is closed). It is known  \cite{Globevnik74,Globevnik76,Shargo08} that  equality holds, i.e., $\Speps B = \overline{\speps B}$, if $Y$ has {\sl Globevnik's property}, that is: $Y$ is finite-dimensional or $Y$ or its dual $Y^*$ is a complex uniformly convex Banach space (see \cite[Definition 2.4 (ii)]{Shargo08}). Hilbert spaces have Globevnik's property but also $E=\ell^2(\Z,X)$ has Globevnik's property if $X$ has it \cite{Day}.  On the other hand, Banach spaces $Y$ and operators $B\in L(Y)$ are known \cite{Shargo08,Shargo09,ShargoShkarin09} such that $\overline{\speps B} \subsetneq\Speps B$.

Let us note a few further properties of pseudospectra that we will utilise. For a Banach space $Y$, $B\in L(Y)$, and $\eps>0$, let
\begin{equation} \label{eq:Speps0}
\speps^0 (B) := \{\lambda\in \C: \mbox{ there exists } x\in Y \mbox{ with }\|(A-\lambda I)x\| < \eps\|x\|\},
\end{equation}
and let $\Speps^0B$ denote the right hand side of the above equation with the $<$ replaced by $\leq$.
Then (this is \cite[p.31]{TrefEmbBook} plus \eqref{eq:invNorm}), we have the following useful characterisations of $\speps B$:
\begin{equation} \label{eq:spepspert}
\speps B\ =\ \bigcup_{\|T\|<\eps} \Spec (B+T) = \Spec B \cup \speps^0 B = \speps^0(B) \cup \speps^0(B^*), \quad \eps>0.
\end{equation}
For $\Speps B$ we have  (see \cite{Shargo09} and \eqref{eq:invNorm}) the weaker statement that 
\begin{equation} \label{eq:spepspert2}
\Speps B\ \supset\ \bigcup_{\|T\|\leq\eps} \Spec (B+T) \supset \Spec B \cup \Speps^0 B = \Speps^0(B) \cup \Speps^0(B^*), \quad \eps>0.
\end{equation}
It follows from \eqref{eq:spepspert} and \eqref{eq:spepspert2} that, for $\eps>0$,
$\Spec B+\eps\D\subset\speps B$ and $\Spec B + \eps \overline{\D}\subset \Speps B$. We note that equality holds in these inclusions  (see, e.g., \cite[p.~247]{Davies2007:Book}) if $Y$ is a Hilbert space and $B$ is normal. 
Further, it follows from  the first equality in \eqref{eq:spepspert} that, for $B,T\in L(Y)$ and $\eps, \delta>0$,
\begin{equation} \label{eq:PseudInc}
\speps(B+T) \subset \specn_{\eps+\delta} B \quad \mbox{if} \quad \|T\|\leq \delta.
\end{equation}

If $Y$ is finite dimensional or is a Hilbert space the first $\supset$ in \eqref{eq:spepspert2} can be replaced by $=$ (see \cite[Theorem 3.27]{HaRoSi2} and \cite{Shargo09}), so that we have also that
\begin{equation} \label{eq:PseudInc2}
\Speps(B+T) \subset \Specn_{\eps+\delta} B \quad \mbox{if} \quad \|T\|\leq \delta.
\end{equation}
 But the first $\supset$ in \eqref{eq:spepspert2} cannot be replaced by $=$ for every Banach space $Y$ and every $B\in L(Y)$, even if $Y$ has Globevnik's property, as shown by examples of Shargorodsky \cite{Shargo09}. The second $\supset$  in \eqref{eq:spepspert2} can be replaced by equality if $Y$ is finite dimensional, but not, in general, otherwise. As an example relevant to this paper let $A\in L(E)$ have the matrix representation \eqref{eq:A} with $\alpha=\gamma=0$ and $\beta_k = \tanh(k)$, $k\in \Z$. Then $E= \ell^2(\Z)$ is a Hilbert space so that the first $\supset$ in \eqref{eq:spepspert2}  is equality. However, $\Spec A = \overline{\{\beta_k:k\in\Z\}}$, so that $\pm 1\in \Spec A\subset [-1,1]$, which implies that $\lambda =1+\eps\in \Speps A\setminus \Spec A$. But this $\lambda \not\in \Speps^0 A$, for if $x\in E$ with $x\neq 0$ then
\begin{equation} \label{eq:low}
\|(A-\lambda I)x\|^2 = \sum_{k\in \Z} (1+\eps-\beta_k)^2|x_k|^2 > \eps^2\|x\|,
\end{equation}
since $\beta_k<1$ for all $k\in \Z$.

We have noted in \S\ref{sec:keynot} the simple identity \eqref{eq:speceps}, and the same identity with $\Speps$ replaced by $\speps$. Similarly, immediately from the definitions, for every Banach space $Y$ and $B\in L(Y)$, generalising \eqref{eq:speceps}, we have that
\begin{equation} \label{eq:oc}
\Speps B = \bigcap_{\eps'>\eps} \Specn_{\eps'}B = \bigcap_{\eps'>\eps} \specn_{\eps'}B, \qquad \eps\geq 0.
\end{equation}
Recalling the results on Hausdorff convergence of \S\ref{sec:keynot}, this implies, for every positive null sequence $(\eta_n)$ and every $\eps\geq 0$, that
\begin{equation} \label{eq:HDconv}
\Specn_{\eps+\eta_n} B\Hto \Speps B \qquad \mbox{and} \qquad \specn_{\eps+\eta_n} B\Hto \Speps B.
\end{equation}
Recalling the discussion of Hausdorff convergence in \S\ref{sec:keynot}, if $Y$ has Globevnik's property, so that $\overline{\speps B} = \Speps B$, then we can, if we prefer, write \eqref{eq:HDconv} with $\Speps B$ replaced by $\speps B$.

\section{The $\tau$ method: principal submatrices} \label{sec:tau}

In this section we prove the inclusion \eqref{incl:met1} and compute the optimal value of the
``penalty'' term $\eps_n$. The proof of
\eqref{incl:met1} is so central to this paper that we want to sketch
its main idea, captured in Proposition \ref{prop:tau_main} below, beforehand.

Recalling \eqref{eq:spepspert}, suppose that $\lambda\in\speps A\setminus \Spec A= \speps^0 A$ and $x=(x_j)_{j\in\Z}\in E$ is a
corresponding pseudomode, i.e.
\begin{equation} \label{eq:pseudomode}
\|(A-\lambda I)x\|\ <\ \eps\,\|x\|.
\end{equation}
Consider all finite subvectors of $x$, of some fixed length $n\in\N$, namely
\begin{equation} \label{eq:subvector}
x_{n,k}\ :=\ (x_{k+1},...,x_{k+n})\in X^n,\qquad k\in\Z,
\end{equation}
 and consider the corresponding principal
submatrices $A_{n,k}$ of $A$ introduced in \eqref{eq:Ank}. We claim
that, for at least one $k\in\Z$, it holds, for some $\eps_n>0$ to be specified, that
\begin{equation} \label{eq:pseudomode_nk}
\|(A_{n,k}-\lambda I_n)x_{n,k}\|\ <\ (\eps+\eps_n)\,\|x_{n,k}\|
\end{equation}
and hence
$\lambda\in\specn_{\eps+\eps_n} A_{n,k}$.

One way to  show this is to suppose that, conversely, for all
$k\in\Z$, the opposite of \eqref{eq:pseudomode_nk} holds, and then
to take all these (opposite) inequalities and add their squares up
over $k\in\Z$, which, after some computation, contradicts
\eqref{eq:pseudomode}. Although \eqref{eq:pseudomode_nk}
need not hold for every $k\in\Z$, it does hold ``in the quadratic
mean'' over all $k\in\Z$, and therefore for at least one $k\in\Z$.

This computation reveals that \eqref{eq:pseudomode_nk} holds with $\eps_n=O(n^{-1/2})$ as $n\to\infty$; see \eqref{eq:weights_equal}. However, one can do better: instead of the
``sharp cutoff'' \eqref{eq:subvector}, we can  introduce weights
$w_1,...,w_n>0$ and put
\[
x_{n,k}\ :=\ (w_1x_{k+1},...,w_nx_{k+n})\in X^n,\qquad k\in\Z.
\]
With this modification \eqref{eq:pseudomode_nk} holds for some $k\in\Z$
but now with $\eps_n$ dependent on the choice of $w_1,...,w_n$. By
varying the weight vector $w=(w_1,...,w_n)$ we can minimise
$\eps_n$; the minimal $\eps_n= O(n^{-1})$ as $n\to\infty$.


We structure these arguments as follows. In \S\ref{sec:weight} we prove, as Theorem \ref{weighted_norm_two_refined}, a version of \eqref{incl:met1} with a formula for $\eps_n$ dependent on the choice of $w_1,\ldots,w_n$. The main idea of the proof, outlined above, is captured in
Proposition \ref{prop:tau_main}. In \S\ref{sec:mini} we minimise $\eps_n$ over the choice of the weights $w_1,\ldots, w_n$; the full proof of \eqref{incl:met1}, establishing the minimal formula for $\eps_n$, is given as Theorem \ref{method0}. This formula for $\eps_n$, while amenable to numerical computation, is complex, requiring minimisation of a function of one variable on a finite interval, this function defined implicitly through the solution of a nonlinear equation. Corollary \ref{minimum_weighted_norm_two} states an explicit formula for $\eps_n$ in the case that $A$ is bidiagonal, and Corollaries \ref{CorolA} and \ref{CorolB} provide more explicit upper bounds for $\eps_n$ in the general tridiagonal case. These are of value as discussed in Remark \ref{rem:over}.

Before continuing with the above plan, let us note properties of the sets $\sigma^n_\eps(A)$ and $\Sigma^n_\eps(A)$, defined by \eqref{eq:sigdef}, that appear in \eqref{incl:met1}. Clearly, for all $n\in\N$ (and all tridiagonal $A\in L(E)$), $\sigma^n_\eps(A)$ is open, for $\eps>0$, and $\Sigma^n_\eps(A)$ closed, for $\eps\geq 0$. We have further that
\begin{equation} \label{eq:sigSig}
\overline{\sigma^n_\eps(A)} \subset \Sigma^n_\eps(A), \;\; \eps>0.
\end{equation}
The following proposition is a characterisation of  $\sigma^n_\eps(A)$ and $\Sigma^n_\eps(A)$ that is key to the proof of \eqref{incl:met1}. This uses the notations
\begin{equation} \label{eq:SigW}
\begin{aligned}
\widehat \sigma^n_\eps(A)\ &:=\ \{\lambda\in \C: \mu_n^\dag(A-\lambda I)< \eps\},\;\; \eps>0, \quad \mbox{and}\\ 
\widehat \Sigma^n_\eps(A)\ &:=\ \{\lambda\in \C: \mu_n^\dag(A-\lambda I)\leq \eps\},\;\; \eps\geq 0,
\end{aligned}
\end{equation}
for $n\in \N$, where $\mu_n^\dag$ is defined by \eqref{eq:mudag}. Note that \eqref{eq:lnPro2} and \eqref{eq:mudag} imply that
$$
|\mu_n^\dag(A)-\mu_n^\dag(B)|\leq \|A-B\|,
$$
so that $\widehat \sigma^n_\eps(A)$ is open and $\widehat \Sigma^n_\eps(A)$ is closed. Note also that 
\begin{equation} \label{eq:wideS}
\widehat \Sigma^n_\eps(A)\ =\ \bigcap_{\eps'>\eps}\widehat \sigma^n_{\eps'}(A),  \qquad \eps\geq 0.
\end{equation}

\begin{proposition} \label{prop:same}
For every tridiagonal $A\in L(E)$ and $n\in \N$, it holds that
$\sigma^n_\eps(A) = \widehat \sigma^n_\eps(A)$, for $\eps>0$. If $X$ has the Globevnik property, then also $\overline{\sigma^n_\eps(A)} =\Sigma^n_\eps(A) = \widehat \Sigma^n_\eps(A)$, for $\eps> 0$.
\end{proposition}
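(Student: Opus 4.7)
The proof splits into two statements, one requiring no hypothesis on $X$ and one needing Globevnik's property. For the first, $\sigma^n_\eps(A) = \widehat\sigma^n_\eps(A)$: since $P_{n,k}$ acts as the identity on $E_{n,k}$ we have $A_{n,k}-\lambda I = P_{n,k}(A-\lambda I)|_{E_{n,k}}$ as an operator on $E_{n,k}$, so from the definition \eqref{eq:mudag} of $\mu_n^\dag$ we obtain $\mu_n^\dag(A-\lambda I) = \inf_{k\in\Z}\mu(A_{n,k}-\lambda I)$. Combining this with the characterisation $\speps B = \{\lambda : \mu(B-\lambda I) < \eps\}$ from \eqref{eq:spepsnu} applied to each $A_{n,k}$ and taking the union over $k$ yields the identity immediately. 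Now assume $X$ has Globevnik's property; then $X^n \cong E_{n,k}$ has it too (see \S\ref{sec:pseud}), and hence $\Speps A_{n,k} = \overline{\speps A_{n,k}}$ for every $k$. Using the elementary fact $\overline{\bigcup_k \overline{B_k}} = \overline{\bigcup_k B_k}$, valid for any family of sets, gives
\begin{equation*}
\Sigma^n_\eps(A) = \overline{\bigcup_{k\in\Z} \Speps A_{n,k}} = \overline{\bigcup_{k\in\Z} \overline{\speps A_{n,k}}} = \overline{\bigcup_{k\in\Z} \speps A_{n,k}} = \overline{\sigma^n_\eps(A)},
\end{equation*}
which is the first of the two claimed equalities. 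The inclusion $\Sigma^n_\eps(A) \subset \widehat\Sigma^n_\eps(A)$ is then immediate from $\bigcup_{k\in\Z}\Speps A_{n,k} \subset \widehat\Sigma^n_\eps(A)$ (by the definitions) together with the closedness of $\widehat\Sigma^n_\eps(A)$ (a consequence of the Lipschitz estimate \eqref{eq:lnPro2} for $\mu$).

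The main obstacle is the reverse inclusion $\widehat\Sigma^n_\eps(A) \subset \overline{\sigma^n_\eps(A)}$. I would argue by contradiction: if $\lambda_0 \in \widehat\Sigma^n_\eps(A) \setminus \overline{\sigma^n_\eps(A)}$, then $\mu_n^\dag(A-\lambda_0 I) = \eps$ while $\mu_n^\dag(A-\lambda I) \geq \eps$ throughout some open ball $B(\lambda_0,\eta)$, so $\lambda_0$ is a positive local minimum of $\mu_n^\dag(A-\cdot)$. Equivalently, the function $\Psi(\lambda) := 1/\mu_n^\dag(A-\lambda I) = \sup_{k\in\Z}\|(A_{n,k}-\lambda I)^{-1}\|$ attains a finite local maximum of value $1/\eps$ at $\lambda_0$. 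The plan is to exclude this via the strong maximum principle for subharmonic functions: each resolvent norm $\|(A_{n,k}-\lambda I)^{-1}\|$ is subharmonic in $\lambda$ on the resolvent set of $A_{n,k}$ and decays to $0$ at infinity, so, after upper semi-continuous regularisation, $\Psi$ inherits subharmonicity and decay on $\C\setminus\bigcup_{k\in\Z}\Spec A_{n,k}$, and a non-constant local maximum is incompatible with that decay. The hardest step I anticipate is the case where the infimum defining $\mu_n^\dag(A-\lambda_0 I)$ is not attained at any single $k$, since then the local minimum of $\mu_n^\dag$ need not correspond to a local minimum of any individual $\mu(A_{n,k}-\cdot)$; when $X$ is finite-dimensional this is resolved by extracting a norm-convergent subsequence from the bounded near-minimisers $(A_{n,k_m}) \subset L(X^n)$ and reducing to the attained case, where Globevnik's property for a single operator yields the contradiction, while in the general Globevnik setting one argues via the upper semi-continuous regularisation and verifies that $\{\Psi = 1/\eps\}$ has empty interior.
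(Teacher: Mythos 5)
Your treatment of the easy parts is sound and in places more direct than the paper's: the identity $\sigma^n_\eps(A) = \widehat\sigma^n_\eps(A)$ is obtained essentially as in the paper, and your derivation of $\Sigma^n_\eps(A)=\overline{\sigma^n_\eps(A)}$ via the block-wise Globevnik identity $\Speps A_{n,k}=\overline{\speps A_{n,k}}$ and the set-theoretic fact $\overline{\bigcup_k \overline{B_k}}=\overline{\bigcup_k B_k}$ is correct and arguably cleaner than the way the paper eventually obtains it (as a by-product of a chain of inclusions). The inclusion $\Sigma^n_\eps(A)\subset\widehat\Sigma^n_\eps(A)$ is also fine.

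The gap is in the reverse inclusion $\widehat\Sigma^n_\eps(A)\subset\overline{\sigma^n_\eps(A)}$, which you correctly identify as the crux. Your plan — rule out a positive local minimum of $\mu_n^\dag(A-\cdot)$ by a maximum-principle argument applied to $\Psi(\lambda)=\sup_k\|(A_{n,k}-\lambda I)^{-1}\|$ — is workable when $X$ is finite-dimensional, because then $\{A_{n,k}:k\in\Z\}$ is relatively compact in $L(X^n)$ and you can pass to a norm limit $B$ of near-minimisers, reducing to Globevnik's theorem for the single operator $B$. But for a general Banach space $X$ with Globevnik's property, the set of blocks need not be relatively compact, and "one argues via the upper semi-continuous regularisation and verifies that $\{\Psi = 1/\eps\}$ has empty interior" is not a proof: the u.s.c.\ regularisation of a sup of resolvent norms is subharmonic, but subharmonicity alone does not forbid level sets with non-empty interior — that is precisely the non-trivial content of Globevnik's property, and it is a property of \emph{resolvent norms of a single operator}, not of arbitrary suprema of such norms. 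So the general-Banach case is not covered by your sketch.

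The paper resolves this with a trick that your argument is missing: define the block-diagonal operator $D:=\Diag\{A_{n,k}:k\in\Z\}$ acting on $\ell^2(\Z,X^n)$. Then $\mu(D-\lambda I)=\inf_k\mu(A_{n,k}-\lambda I)=\mu_n^\dag(A-\lambda I)$, so $\widehat\sigma^n_\eps(A)=\speps D$ and $\widehat\Sigma^n_\eps(A)=\Speps D$. Since $\ell^2(\Z,X^n)$ inherits Globevnik's property from $X$, the single-operator identity $\overline{\speps D}=\Speps D$ immediately closes the chain $\overline{\speps D}=\overline{\sigma^n_\eps(A)}\subset\Sigma^n_\eps(A)\subset\widehat\Sigma^n_\eps(A)=\Speps D$, with no compactness or regularisation needed. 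This reduction of the $\sup_k$ statement to a single-operator statement is the key idea you are missing; it converts exactly the step you flag as hardest into an invocation of a cited theorem.
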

\begin{proof} That $\sigma^n_\eps(A)\ =\ \widehat \sigma^n_\eps(A)$, for $\eps>0$, follows easily from \eqref{eq:spepsnu}, recalling that, for every $\eps\in \R$, the infimum of a set of real numbers is $<\eps$ if and only if one of the numbers in the set  is $<\eps$.

Suppose that $\lambda \in \Sigma^n_\eps(A)$. Then there exists a sequence $(\lambda_j)\subset \C$ and a sequence $(k_j)\subset \Z$ such that $\lambda_j\to \lambda$ and $\lambda_j\in \Speps(A_{n,k_j})$, so that $\mu(A_{n,k_j}-\lambda_j I_n)\leq \eps$. By \eqref{eq:lnPro2} it follows that 
$$
\mu^\dag_n(A-\lambda I)\ =\ \inf_{k\in \Z} \mu(A_{n,k}-\lambda I_n)\  \leq\ \mu(A_{n,k_j}-\lambda I_n) \leq \eps + |\lambda-\lambda_j|,
$$
for each $j$, so that $\mu^\dag_n(A-\lambda I)\leq \eps$ and  $\lambda \in  \widehat \Sigma^n_\eps(A)$. Thus $\Sigma^n_\eps(A)\subset \widehat \Sigma^n_\eps(A)$.

Define $D\in L(E)$ by 
$D:=\Diag\{A_{n,k}:k\in\Z\}$, and note that, for all $\lambda\in \C$, 
\begin{equation} \label{eq:muD}
\mu(D-\lambda I)\ =\ \inf_{k\in \Z} \mu(A_{n,k}-\lambda I_n)\ =\ \mu^\dag_n(A-\lambda I),
\end{equation}
so that, by \eqref{eq:spepsnu}, $\widehat \sigma^n_\eps(A)=\speps D$ and $\widehat \Sigma^n_\eps(A)=\Speps D$. Noting also  \eqref{eq:sigSig}, we have shown that $\overline{\speps D} = \overline{\sigma^n_\eps(A)}\subset \Sigma^n_\eps(A)\subset \widehat \Sigma^n_\eps(A)=\Speps D$.
But if $X$ has the Globevnik property, then  $\overline{\speps D} = \Speps D$, completing the proof.
\end{proof}

\subsection{Proof of the inclusion \eqref{incl:met1}} \label{sec:weight}
We start our proof of \eqref{incl:met1} with a very simple lemma
that proves helpful throughout.

\begin{lemma}\label{Simon_theta_inequality}
For $a,b\in\R$ and $\theta > 0$ we have
$(a+b)^{2}\ \le\ a^{2}(1+\theta)\,+\,b^{2}(1+\theta^{-1})$,
with equality if and only if  $a\theta =b$.
\end{lemma}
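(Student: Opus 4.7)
The plan is to prove this by simple algebraic rearrangement into a perfect square; no deep tools are needed.

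First, I would expand both sides and subtract. We have $(a+b)^2 = a^2 + 2ab + b^2$, so
\[
a^2(1+\theta) + b^2(1+\theta^{-1}) - (a+b)^2 \;=\; a^2\theta + b^2\theta^{-1} - 2ab.
\]
Since $\theta>0$, both $\sqrt{\theta}$ and $1/\sqrt{\theta}$ are well-defined real numbers, and I recognise the right-hand side as a perfect square:
\[
a^2\theta + b^2\theta^{-1} - 2ab \;=\; \bigl(a\sqrt{\theta} - b/\sqrt{\theta}\bigr)^2 \;\geq\; 0.
\]
This gives the inequality. For equality, the square vanishes iff $a\sqrt{\theta} = b/\sqrt{\theta}$, i.e.\ iff $a\theta = b$, as claimed.

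There is really no obstacle here; the only ``choice'' in the proof is to notice that $(1+\theta)$ and $(1+\theta^{-1})$ are precisely the weights that, after subtracting $(a+b)^2$, leave the homogeneous quadratic form $a^2\theta + b^2\theta^{-1} - 2ab$, which is a perfect square by AM--GM applied to the positive quantities $a^2\theta$ and $b^2\theta^{-1}$. (An alternative, essentially equivalent route is Cauchy--Schwarz in the form $(a+b)^2 = \bigl(a\sqrt{1+\theta}\cdot (1+\theta)^{-1/2} + b\sqrt{1+\theta^{-1}}\cdot (1+\theta^{-1})^{-1/2}\bigr)^2$, using the identity $(1+\theta)^{-1} + (1+\theta^{-1})^{-1} = 1$; the equality case of Cauchy--Schwarz again gives $a\theta=b$.)
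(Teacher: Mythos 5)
Your proof is correct and is essentially the paper's argument: the paper writes $(a+b)^2 \le (a+b)^2 + \bigl(a\theta^{1/2}-b\theta^{-1/2}\bigr)^2 = a^2(1+\theta)+b^2(1+\theta^{-1})$, which is the same perfect-square identity you obtain by subtracting $(a+b)^2$ from the right-hand side. The equality case is handled identically (the square vanishes iff $a\theta=b$), so there is nothing to add.
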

\begin{proof} Clearly,
$(a+b)^2\, \le\, (a+b)^2+(a\theta^{\frac 12}-b\theta^{-\frac
12})^2\, =\, a^{2}(1+\theta)+b^{2}(1+\theta^{-1})$
with equality if and only if $a\theta^{\frac 12}-b\theta^{-\frac 12}=0$, i.e.
$a\theta =b$.
\end{proof}

The following results lead to Theorem \ref{weighted_norm_two_refined} that provides justification of \eqref{incl:met1}, initially with a formula for $\eps_n$ that depends on general weights $w_1,...,w_n$.

\begin{proposition}\label{prop:tau_main}
Let $\eps>0$ and $n\in\N$, suppose that $w_j\in\R$, for $j=1,...,n$, with
at least one $w_j$ non-zero, and that $A\in L(E)$ is tridiagonal and $\|Ax\|\leq \eps$, for some $x\in E$ with $\|x\|=1$.  Then, for some $k\in \Z$ it holds that
\begin{equation} \label{eq:tau_main}
\nu(A_{n,k}) \leq \eps+\eps_n,
\end{equation}
with
\begin{equation} \label{eq:epsn1}
\eps_n\ :=\
\left\|\alpha\right\|_{\infty}\sqrt{\frac{T_{n}^{-}}{S_{n}}}\ +\
\left\|\gamma\right\|_{\infty}\sqrt{\frac{T_{n}^{+}}{S_{n}}}\,,
\end{equation}
where
\begin{equation*} \label{Sndef}
S_{n}\ :=\ \sum_{j=1}^{n}w_{j}^{2}, \quad T_{n}^{-}\ :=\
\sum_{j=1}^{n}\left(w_{j-1}-w_{j}\right)^{2},\quad\textrm{and}\quad
T_{n}^{+}\ :=\ \sum_{j=1}^{n}\left(w_{j+1}-w_{j}\right)^{2},
\end{equation*}
with $w_0:= 0$ and $w_{n+1}:=0$.
\end{proposition}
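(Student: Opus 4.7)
The plan is to follow the heuristic in the paragraph just before the statement: for each $k\in \Z$, form a weighted window $\xi^{(k)} := (w_1 x_{k+1},\ldots,w_n x_{k+n})\in X^n$, compare $A_{n,k}\xi^{(k)}$ to $\xi^{(k)}$, and then argue by contradiction, summing over $k\in \Z$ in the $\ell^2$ sense. If $\nu(A_{n,k})>\eps+\eps_n$ held for every $k$, I would deduce the pointwise strict inequality $\|A_{n,k}\xi^{(k)}\|^2 > (\eps+\eps_n)^2\|\xi^{(k)}\|^2$ whenever $\xi^{(k)}\neq 0$, and then derive $\sum_k \|A_{n,k}\xi^{(k)}\|^2 > (\eps+\eps_n)^2\sum_k\|\xi^{(k)}\|^2 = (\eps+\eps_n)^2 S_n$. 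The proof will then reduce to establishing the opposite (non-strict) global inequality $\sum_k \|A_{n,k}\xi^{(k)}\|^2 \leq (\eps+\eps_n)^2 S_n$.

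The technical engine is the following algebraic identity. Writing $y=Ax$ and setting $w_0=w_{n+1}=0$, the $i$th row of $A_{n,k}\xi^{(k)}$ is
\[
(A_{n,k}\xi^{(k)})_i \;=\; w_i y_{k+i} \;+\; (w_{i-1}-w_i)\,\alpha_{k+i-1}x_{k+i-1} \;+\; (w_{i+1}-w_i)\,\gamma_{k+i+1}x_{k+i+1},
\]
so that $A_{n,k}\xi^{(k)} = u^{(k)}+v^{(k)}_{-}+v^{(k)}_{+}$ in $X^n$, where $(u^{(k)})_i:=w_iy_{k+i}$ and $v^{(k)}_{\pm}$ capture the two weight-gradient boundary effects. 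The identity is checked by direct substitution, adding and subtracting $w_i\alpha_{k+i-1}x_{k+i-1}$ and $w_i\gamma_{k+i+1}x_{k+i+1}$ to match $w_iy_{k+i}$.

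The next step is to take $\ell^2(\Z)$-Minkowski: the triangle inequality in $X^n$ followed by Minkowski on sequences indexed by $k$ yields
\[
\Bigl(\sum_{k\in\Z}\|A_{n,k}\xi^{(k)}\|^2\Bigr)^{1/2} \;\leq\; \Bigl(\sum_k\|u^{(k)}\|^2\Bigr)^{1/2} + \Bigl(\sum_k\|v^{(k)}_-\|^2\Bigr)^{1/2} + \Bigl(\sum_k\|v^{(k)}_+\|^2\Bigr)^{1/2}.
\]
Each summand will be evaluated by Fubini, interchanging the sum over $k$ and the sum over $i$: the bulk piece gives $\sum_k\|u^{(k)}\|^2 = \sum_i w_i^2\sum_k|y_{k+i}|_X^2 = S_n\|y\|^2 \leq S_n\eps^2$, and the two boundary pieces give (using $\|\alpha_j x\|_X\leq\|\alpha\|_\infty|x|_X$) the bounds $\|\alpha\|_\infty^2 T_n^-$ and $\|\gamma\|_\infty^2 T_n^+$ respectively, since each $\sum_k|x_{k+j}|_X^2=\|x\|^2=1$. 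An identical Fubini computation yields $\sum_k\|\xi^{(k)}\|^2 = S_n$. Collecting the three terms and dividing through by $\sqrt{S_n}$ gives exactly the upper bound $\sqrt{\sum_k\|A_{n,k}\xi^{(k)}\|^2}\leq \sqrt{S_n}\,(\eps+\eps_n)$ needed to close the contradiction.

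I do not expect a serious obstacle. The only care points are: (i) verifying the three-term decomposition above, which is a one-line computation; (ii) justifying Minkowski and Fubini, which is standard in $\ell^2(\Z)$ since all the sums are absolutely convergent in view of $x,y\in E$; and (iii) ensuring the final contradiction step is rigorous. For (iii) one notes that, because $\|x\|=1$ and not all $w_j$ vanish, at least one $\xi^{(k)}$ is nonzero, so if $\nu(A_{n,k})>\eps+\eps_n$ held for \emph{every} $k$, at least one inequality in $\sum_k\|A_{n,k}\xi^{(k)}\|^2\geq (\eps+\eps_n)^2\sum_k\|\xi^{(k)}\|^2$ is strict, giving the contradiction with the Minkowski bound. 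Lemma~\ref{Simon_theta_inequality} is not needed for this proposition itself — it will instead be exploited later when optimising $\eps_n$ over the weights.
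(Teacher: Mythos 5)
Your proof is correct, and it follows the same core strategy as the paper: the same three-term decomposition of $A_{n,k}\xi^{(k)}$ into a bulk piece $w_i y_{k+i}$ and two weight-gradient boundary pieces, the same telescoping Fubini computation over $k$, and the same strict-inequality contradiction to extract a single good $k$. The one place where you diverge is the intermediate estimate. The paper works pointwise: at each index pair $(j,k)$ it bounds the three-term square via two applications of Lemma~\ref{Simon_theta_inequality} (the parameterised inequality $(a+b)^2\le(1+\theta)a^2+(1+\theta^{-1})b^2$), sums over $k$ and $j$, and only then optimises over the auxiliary parameters $\theta,\phi$, which by that same lemma reproduces the Minkowski bound. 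You instead apply Minkowski directly in $\ell^2(\Z,X^n)$ to the sequence $(A_{n,k}\xi^{(k)})_k = U+V_-+V_+$ and evaluate the three $\ell^2$ norms separately. The two routes are algebraically equivalent (Lemma~\ref{Simon_theta_inequality} optimised is exactly the $\ell^2$ triangle inequality in disguise), so they yield the identical bound $\sum_k\|A_{n,k}\xi^{(k)}\|^2\le(\eps+\eps_n)^2 S_n$; yours is arguably cleaner and shorter, while the paper's $\theta$-parameter bookkeeping is more self-contained (no appeal to Minkowski in a vector-valued sequence space) and is reused verbatim in the proofs of Propositions~\ref{prop:pi_main} and~\ref{prop:gam_main}. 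You are also right that Lemma~\ref{Simon_theta_inequality} is not logically required for this proposition, and that it becomes essential in the subsequent optimisation over the weight vector in \S\ref{sec:mini}.

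One small presentational remark: your contradiction step is fine, but note the paper phrases the endgame slightly differently, without explicit contradiction, by observing that if $a_k>(\eps+\eps_n)b_k$ for some $k$ then the summed inequality forces $a_{k'}<(\eps+\eps_n)b_{k'}$ for some other $k'$, whence $b_{k'}>0$; and otherwise $a_k\le(\eps+\eps_n)b_k$ for all $k$, and one simply picks $k$ with $b_k\ne 0$, which exists since $\sum_k b_k^2=S_n>0$. Either phrasing is rigorous.
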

\begin{proof}
 Let $y=Ax$,
so $\left\|y\right\|\leq \eps$.  
For $j=0,\ldots,n$, 
let
\begin{equation}\label{E1}
E_j := |w_{j+1}-w_j|.
\end{equation}
Note that 
\begin{equation} \label{eq:n2}
\sum_{j=1}^{n}E^2_j\ =\ T^{+}_{n} \quad \mbox{ and } \quad
\sum_{j=1}^{n}E_{j-1}^2\ =\ T^{-}_{n}.
\end{equation}
For $k\in\Z$, let
$a_k:=\left\|A_{n,k}\widetilde{x}_{n,k}\right\|$,
where 
$\widetilde{x}_{n,k}=(w_{1}x_{k+1},w_{2}x_{k+2},\ldots,w_{n}x_{k+n})^{T}$,
and let
$b_{k}:=
\left\|\widetilde{x}_{n,k}\right\|$.
We will prove that $a_{k}\leq (\eps +\eps_n)b_{k}$ for some $k\in\Z$ with $b_k\neq 0$,
which will show that $\nu(A_{n,k})\leq \eps+\eps_n$.

Note first that, using the notation (\ref{E1}),
\begin{eqnarray} \label{eq:aj}
a_k^2&=&\sum_{j=1}^{n}\left\|w_{j-1}\alpha_{j+k-1}x_{j+k-1}\ +\ w_j\beta_{j+k}x_{j+k}\ +\ w_{j+1}\gamma_{j+k+1}x_{j+k+1}\right\|_X^{2}\\ \nonumber
&=&\sum_{j=1}^{n}\left\|(w_{j-1}-w_j) \alpha_{j+k-1}x_{j+k-1}\ +\ w_jy_{j+k}\ +\ (w_{j+1}-w_j)\gamma_{j+k+1}x_{j+k+1}\right\|_X^{2}\\ \label{eq:ss}
&\leq& \sum_{j=1}^{n}\left(w_j\|y_{j+k}\|_X\ +\
E_{j-1}\|\alpha\|_\infty \|x_{j+k-1}\|_X+E_j\|\gamma\|_\infty \|x_{j+k+1}\|_X\right)^{2}.
\end{eqnarray}
So, for all $\theta >0$ and $\phi>0$, by Lemma \ref{Simon_theta_inequality}, 
\begin{eqnarray} \nonumber
a_{k}^{2}
&\leq&\sum_{j=1}^{n}\Big[(1+\theta)w_j^{2}\|y_{j+k}\|_X^2\ \\ \nonumber
& & \hspace{4ex} +\ (1+\theta^{-1})\big(E_{j-1}\|\alpha\|_\infty \|x_{j+k-1}\|_X+E_j\|\gamma\|_\infty \|x_{j+k+1}\|_X\big)^{2}\Big]\\ \nonumber
&\leq&\sum_{j=1}^{n}\Big[(1+\theta)w_j^{2}\|y_{j+k}\|_X^2  + (1+\theta^{-1}) \cdot \\ \label{eq:ss2}
& &
\hspace{1ex} \left((1+\phi)E^2_{j-1}\|\alpha\|^2_\infty \|x_{j+k-1}\|_X^2 + (1+\phi^{-1})E^2_j\|\gamma\|^2_\infty \|x_{j+k+1}\|_X^2\right)\Big]. \hspace*{3ex}
\end{eqnarray}
Thus, and recalling \eqref{eq:n2} and that $\|x\| =1$,
\begin{eqnarray*}
\sum_{k\in\Z}a_{k}^{2}
&\leq& \sum_{j=1}^n\Big[(1+\theta)\|y\|^2w_j^{2}   +
(1+\theta^{-1}) \cdot \\
& & \hspace{4ex} \left((1+\phi)E^2_{j-1}\|\alpha\|^2_\infty + (1+\phi^{-1})E^2_j\|\gamma\|^2_\infty \right)\Big]\\
&=&(1+\theta)\left\|y\right\|^{2}S_{n}\ +\ (1+\theta^{-1})\left[(1+\phi)\left\|\alpha\right\|_{\infty}^{2}T_{n}^{-}+(1+\phi^{-1})\left\|\gamma\right\|_{\infty}^{2}T_{n}^{+}\right].
\end{eqnarray*}
Similarly,
$\sum_{k\in\Z}b_{k}^{2}=S_{n}\left\|x\right\|^{2}=S_{n}$. Now, by
Lemma \ref{Simon_theta_inequality} and recalling  \eqref{eq:epsn1},
\begin{equation*} \label{fb}
\inf_{\phi>0}\left[(1+\phi)\left\|\alpha\right\|_{\infty}^{2}T_{n}^{-}+(1+\phi^{-1})\left\|\gamma\right\|_{\infty}^{2}T^{+}_{n}\right]\ \le \varepsilon_n^{2} S_n.
\end{equation*}
Thus
\begin{equation} \label{SB}
\sum_{k\in\Z}a_{k}^{2}\
 \leq\ \Big((1+\theta)\|y\|^{2}\ +\ (1+\theta^{-1})
\eps_n^{2}\Big)\sum_{k\in\Z}b_{k}^{2},
\end{equation}
for all $\theta>0$. Applying Lemma \ref{Simon_theta_inequality}
again, we see that
\begin{equation} \label{sb}
\inf_{\theta>0}\Big((1+\theta)\|y\|^{2}\ +\ (1+\theta^{-1})
\eps_n^{2}\Big)\ =\ (\|y\|\,+\,\eps_n)^{2}\leq (\eps\,+\,\eps_n)^2,
\end{equation}
so that
\[
\sum_{k\in\Z}a_{k}^{2}\ \leq\
\left(\eps+\eps_n\right)^{2}\sum_{k\in\Z}b_{k}^{2}.
\]
Thus, either $a_{k} > \left( \eps+\eps_n\right)b_{k}$, for some $k\in\Z$, in which case also $a_{k}< \left( \eps+\eps_n\right)b_{k}$, for some $k\in \Z$, which implies that $b_k>0$, or $a_{k} \leq  \left( \eps+\eps_n\right)b_{k}$, for all $k\in \Z$, so for some $k$ with $b_k\neq 0$.
\end{proof}

\begin{corollary} \label{cor:nubound}
Let $n\in\N$, suppose that $w_j\in\R$, for $j=1,...,n$, with
at least one $w_j$ non-zero, and that $A\in L(E)$ is tridiagonal.  Then
\begin{equation} \label{eq:infon}
\inf_{k\in \Z} \nu(A_{n,k})\ \leq\ \nu(A)+\eps_n,
\end{equation} 
where $\eps_n$ is given by \eqref{eq:epsn1}.
\end{corollary}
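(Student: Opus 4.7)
The plan is to deduce the corollary directly from Proposition \ref{prop:tau_main} by approximating the infimum defining $\nu(A)$. By definition, $\nu(A)=\inf_{\|x\|=1}\|Ax\|$, so for every $\delta>0$ there exists $x\in E$ with $\|x\|=1$ and $\|Ax\|\leq\nu(A)+\delta$. Setting $\eps:=\nu(A)+\delta$, this $x$ satisfies the hypothesis of Proposition \ref{prop:tau_main}, so there exists $k\in\Z$ (depending on $\delta$) with
\[
\nu(A_{n,k})\ \leq\ \eps+\eps_n\ =\ \nu(A)+\delta+\eps_n.
\]

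Taking the infimum over $k\in\Z$ gives $\inf_{k\in\Z}\nu(A_{n,k})\leq \nu(A)+\delta+\eps_n$, and letting $\delta\to 0^+$ yields \eqref{eq:infon}.

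There is no real obstacle here, since the substantive work — the weighted quadratic averaging argument that produces a single finite section with small defect — has already been carried out in Proposition \ref{prop:tau_main}. The only minor care needed is that $\nu(A)$ may not be attained on the unit sphere, which is handled by the arbitrary-$\delta$ slack above; the corollary's statement with $\leq$ (rather than $<$) is precisely what allows this passage to the limit.
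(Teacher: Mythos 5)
Your proposal is correct and follows essentially the same route as the paper: fix $\delta>0$ (the paper uses $\eta$), pick a unit vector $x$ with $\|Ax\|\leq\nu(A)+\delta$, apply Proposition \ref{prop:tau_main} with $\eps=\nu(A)+\delta$ to get some $k$ with $\nu(A_{n,k})\leq\nu(A)+\delta+\eps_n$, and let $\delta\to 0$. Nothing is missing.
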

\begin{proof}
Let $\eta>0$. By definition of $\nu(A)$ there exists $x\in E$ with $\|x\|=1$ such that $\|Ax\| \leq \nu(A)+\eta$. By Proposition \ref{prop:tau_main}, $\nu(A_{n,k})\leq \nu(A)+\eps_n+\eta$, for some $k\in \Z$. Since this holds for all $\eta>0$ the result follows.
\end{proof}

It is not true, for all tridiagonal $A\in L(E)$, that $\nu(A_{n,k}) \leq \nu(A)+\eps_n$, for some $k\in \Z$ (a strengthened version of \eqref{eq:infon}). For consider $B=A-\lambda I$, where $A$ is as defined below \eqref{eq:PseudInc2}, so that $X=\C$, and choose $\lambda = 1+\eps$, for some $\eps\geq 0$. For $x=(x_k)_{k\in \Z} \in E = \ell^2(\Z)$, $(Bx)_k = (\beta_k-1-\eps)x_k$, $k\in \Z$, with $\beta_k=\tanh(k)$, so that $\nu(B)\leq 1+\eps-\beta_k$, for all $k\in \Z$, i.e., $\nu(B)\leq  \eps$. But (cf.~\eqref{eq:low}), for each $k\in \Z$, $\nu(B_{n,k})\geq 1+\eps-\beta_{k+n}>\eps=\eps+\eps_n$, since $\eps_n=0$ as $B$ is diagonal.

\begin{theorem}\label{weighted_norm_two_refined}
Suppose that $n\in \N$ and $w_j\in\R$, for $j=1,...,n$, with
at least one $w_j$ non-zero. Then
\begin{equation} \label{eq:Mtaud}
 \Speps A \ \subset\
\widehat\Sigma^n_{\eps+\eps_n}(A), \quad \eps \geq 0, \qquad\textrm{and}
\qquad \speps A\ \subset\ \sigma^n_{\eps+\eps_n}(A), \quad \eps>0,
\end{equation}
with $\eps_n$ given by \eqref{eq:epsn1}, in particular the second inclusion of  \eqref{incl:met1} holds with $\eps_n$ given by \eqref{eq:epsn1}. If $X$ has Globevnik's property, then also the first inclusion of \eqref{incl:met1} holds with $\eps_n$ given by \eqref{eq:epsn1}.
\end{theorem}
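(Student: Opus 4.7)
The plan is to reduce both inclusions to Corollary \ref{cor:nubound}, exploiting the dichotomy between the ``primal'' lower norm $\nu(A-\lambda I)$ and the ``dual'' lower norm $\nu((A-\lambda I)^*)$. Combining \eqref{eq:invNorm} with \eqref{eq:spepsnu} gives
\[
\Speps A\ =\ \bigl\{\lambda:\nu(A-\lambda I)\le\eps\bigr\}\ \cup\ \bigl\{\lambda:\nu((A-\lambda I)^*)\le\eps\bigr\},\qquad\eps\ge 0,
\]
and the analogous identity for $\speps A$ with $\le$ replaced by $<$. In parallel, Proposition \ref{prop:same} identifies $\sigma^n_\eps(A)$ with $\widehat\sigma^n_\eps(A)$ (in full generality) and $\Sigma^n_\eps(A)$ with $\widehat\Sigma^n_\eps(A)$ (under Globevnik's property). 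Hence it suffices to prove \eqref{eq:Mtaud}; the statements in \eqref{incl:met1} then follow immediately from Proposition \ref{prop:same}.

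For the $\Speps$ inclusion, take $\lambda\in\Speps A$, so that one of $\nu(A-\lambda I)$, $\nu((A-\lambda I)^*)$ is at most $\eps$. In the first case I apply Corollary \ref{cor:nubound} to the tridiagonal operator $A-\lambda I$ with the given weights $w_1,\dots,w_n$, obtaining some $k$ with $\nu(A_{n,k}-\lambda I_n)\le\eps+\eps_n$; via $\mu\le\nu$ this gives $\mu_n^\dag(A-\lambda I)\le\eps+\eps_n$, i.e.\ $\lambda\in\widehat\Sigma^n_{\eps+\eps_n}(A)$. In the second case I apply Corollary \ref{cor:nubound} to the tridiagonal operator $A^*-\lambda I\in L(E^*)$, whose sub- and super-diagonal norms are $\|\gamma\|_\infty$ and $\|\alpha\|_\infty$ respectively. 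A direct application with the original weights would produce an asymmetric penalty; the fix is to use the reversed weight vector $w'_j:=w_{n+1-j}$. A short re-indexing shows $S_n(w')=S_n(w)$ and $T_n^{\pm}(w')=T_n^{\mp}(w)$, which exactly cancels the swap of sub-/super-diagonal roles and returns the same $\eps_n$ as in \eqref{eq:epsn1}. The corollary then yields some $k$ with $\nu((A^*)_{n,k}-\lambda I_n)\le\eps+\eps_n$, and since $(A^*)_{n,k}=(A_{n,k})^*$, this inequality feeds into $\mu_n^\dag$ just as before. The $\speps$ inclusion follows by the identical argument with strict inequalities throughout.

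The main obstacle is precisely the reversed-weights observation: applying Corollary \ref{cor:nubound} directly to $A^*$ with the original weights produces the ``wrong'' combination $\|\gamma\|_\infty\sqrt{T_n^-/S_n}+\|\alpha\|_\infty\sqrt{T_n^+/S_n}$, and one single $\eps_n$ cannot control both the primal and dual pieces without the symmetry restored by $w'$. Everything else is bookkeeping: matching $(A^*)_{n,k}=(A_{n,k})^*$ with the definition of $\mu_n^\dag$, and invoking Proposition \ref{prop:same} at the end to pass from $\widehat\sigma^n$, $\widehat\Sigma^n$ back to $\sigma^n$, $\Sigma^n$ (the latter only under Globevnik's property), thereby producing the second inclusion of \eqref{incl:met1} unconditionally and the first inclusion under the Globevnik hypothesis.
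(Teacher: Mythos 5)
Your proof is correct and follows essentially the same high-level approach as the paper's: reduce $\Speps A$ and $\speps A$ via \eqref{eq:spepsnu} and \eqref{eq:invNorm} to the two lower norms $\nu(A-\lambda I)$ and $\nu((A-\lambda I)^*)$, apply Corollary~\ref{cor:nubound} to each branch, combine through $\mu^\dag_n$, and pass from $\widehat\sigma^n$, $\widehat\Sigma^n$ to $\sigma^n$, $\Sigma^n$ via Proposition~\ref{prop:same}. Your direct argument for the closed-pseudospectrum inclusion is equivalent to the paper's route via intersections using \eqref{eq:oc} and \eqref{eq:wideS}.

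Your reversed-weights observation is a genuine and worthwhile addition. The paper's proof invokes Corollary~\ref{cor:nubound} for $A^*-\lambda I$ without comment, but applied to $A^*$ with the \emph{same} weight vector $w$ that corollary produces the penalty $\|\gamma\|_\infty\sqrt{T_n^-/S_n}+\|\alpha\|_\infty\sqrt{T_n^+/S_n}$, not the $\eps_n$ of \eqref{eq:epsn1}, since the sub- and super-diagonals of $A^*$ have norms $\|\gamma\|_\infty$ and $\|\alpha\|_\infty$ respectively. Your re-indexing $S_n(w')=S_n(w)$, $T_n^{\mp}(w')=T_n^{\pm}(w)$ for $w'_j:=w_{n+1-j}$ is exactly what is needed to restore the stated $\eps_n$ in the dual branch, and is genuinely required for the theorem as stated (quantified over arbitrary, possibly non-symmetric weight vectors). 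You have made explicit a step that the paper's proof silently skips over. One tiny wording slip: Corollary~\ref{cor:nubound} bounds an infimum over $k$, so you cannot literally extract ``some $k$'' attaining the non-strict bound; this is harmless since the conclusion concerns $\mu_n^\dag$, which is itself an infimum, and the inequality on infima is exactly what you need.
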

\begin{proof}
To see that the second inclusion in \eqref{eq:Mtaud}/\eqref{incl:met1} holds, suppose that $\lambda\in\speps A$. Then, by \eqref{eq:spepsnu} and the definition \eqref{eq:invNorm},  either $\nu(A-\lambda I)<\eps$ or  $\nu(A^*-\lambda I)<\eps$. By Corollary \ref{cor:nubound} it follows that either $\nu(A_{n,k}-\lambda I_n)<\eps+\eps_n$, for some $k\in \Z$, or  $\nu(A^*_{n,k}-\lambda I_n)<\eps+\eps_n$, for some $k\in \Z$. Thus, recalling the notation \eqref{eq:invNorm},  $\mu(A_{n,k}-\lambda I_n)<\eps+\eps_n$, for some $k\in \Z$, so that $\lambda \in \specn_{\eps+\eps_n} A_{n,k}$ by \eqref{eq:spepsnu}, so that $\lambda \in \sigma^n_{\eps+\eps_n}(A)$. The first inclusion in \eqref{eq:Mtaud} follows from the second inclusion by taking intersections, noting \eqref{eq:oc},  the first sentence of Proposition \ref{prop:same}, and \eqref{eq:wideS}. This implies, by the second sentence of Proposition \ref{prop:same}, that the first inclusion of \eqref{incl:met1} holds if $X$ has the Globevnik property.
\end{proof}


If we evaluate \eqref{eq:epsn1} in the case when all weights
$w_1,...,w_n$ are equal to $1$, we get that $S_n=n$ and
$T_n^-=T_n^+=1$, so that
\begin{equation} \label{eq:weights_equal}
\eps_n\ =\ \frac 1{\sqrt n}\
(\|\alpha\|_\infty\,+\,\|\gamma\|_\infty),
\end{equation}
and hence $\eps_n\to 0$ as $n\to\infty$.
In the next subsection we shall see that we can achieve much smaller values for $\eps_n$, smaller by a factor $<2\pi/\sqrt{n}$ for large $n$ (Corollary \ref{CorolA}), by minimising $\eps_n$ as a function of the weight vector $w$.

\subsection{Minimising $\eps_n$ via variation of the weight vector}
\label{sec:mini}
We will now minimise the ``penalty term'' $\eps_n$ as a function,
\eqref{eq:epsn1}, of the weight vector $w=(w_1,...,w_n)^T\in \R^n$. That is, we will compute
$$
\inf_{w\in \R^n,\, w\neq 0} \eps_n = \inf_{w\in \R^n,\, \|w\|=1} \eps_n,
$$
the equality of these two expressions following easily from \eqref{eq:epsn1}. It follows from the compactness of the set $\{w\in \R^n:\, \|w\|=1\}$ that this infimum is achieved by some weight vector $w=(w_1,...,w_n)^T\in \R^n$, so that the infimum is a minimum. This is important because it implies that Theorem \ref{weighted_norm_two_refined} applies with this minimal weight vector and $\eps_n$ taking its minimal value.

Let us abbreviate $\|\alpha\|_{\infty}=:r$
and $\|\gamma\|_{\infty}=:s$, so that
\begin{equation} \label{eq:epsncompl}
\eps_n^2\ =\ \left(r\sqrt{\frac{T_{n}^{-}}{S_{n}}}\ +\
s\sqrt{\frac{T_{n}^{+}}{S_{n}}}\right)^2 \ =\
\inf_{\tau>0}\left(r^{2}(1+\tau)\frac{T_{n}^{-}}{S_{n}}+s^{2}(1+\tau^{-1})\frac{T_{n}^{+}}{S_{n}}\right),
\end{equation}
by \eqref{eq:epsn1} and Lemma \ref{Simon_theta_inequality}. (This step \eqref{eq:epsncompl} may appear to be introducing additional complexity, but we shall see that it helpfully reduces a large part of the minimisation to an eigenvalue computation.) Now put
\[
B_{n}\ :=\ \left(\begin{array}{ccccc}
1   &      &     &    &   \\
 -1 & 1    &     &    &   \\
    & \ddots   & \ddots   &    &  \\
    &      & -1  &1   &   \\
    &      &     &-1  & 1 \end{array}\right)\ \in\ \R^{n\times n}.
\]
Then
\[
T_{n}^{-}\ =\ \|B_{n}w\|^{2}\ =\ (B_{n}w)^{T}(B_{n}w)\ =\
w^{T}B_{n}^{T}B_{n}w,
\]
while
\[
T_{n}^{+}\ =\ \|B_{n}^{T}w\|^{2}\ =\ (B_{n}^Tw)^{T}(B_{n}^Tw)\ =\
w^{T}B_{n}B_{n}^{T}w
\]
and
$S_{n}\ =\ \|w\|^{2}\ =\ w^{T}w$.
So
\begin{equation}\label{red_star}
r^{2}(1+\tau)\frac{T_{n}^{-}}{S_{n}}+s^{2}(1+\tau^{-1})\frac{T_{n}^{+}}{S_{n}}\
=\ \frac{w^{T}D_{n}w}{w^{T}w}
\end{equation}
with
$
D_{n}\ :=\ r^{2}(1+\tau)B_{n}^{T}B_{n}+s^{2}(1+\tau^{-1})B_{n}B_{n}^{T}.
$
It
is easy to see that $D_{n}$ is symmetric, real and positive
definite. In fact,
$
w^{T}D_{n}w\ \ge\ \left[
r^{2}(1+\tau)+s^{2}(1+\tau^{-1})\right]\lambda_{1},
$
where $\lambda_{1}>0$ is the smallest eigenvalue of $B_{n}^{T}B_{n}$
and $B_{n}B_{n}^{T}$. Our aim is to compute the sharpest lower bound on
\eqref{red_star}, i.e. the smallest eigenvalue of $D_n$, since
\begin{equation} \label{aim}
\inf_{w\in \R^n,\, w\neq 0} \eps^2_n = \inf_{\tau > 0} \inf_{w\in \R^n,\, w\neq 0} \frac{w^{T}D_{n}w}{w^{T}w}.
\end{equation}

To compute the infimum \eqref{aim},
 define $E_n(\phi)$, for $0\leq \phi \leq 1$, to be the real, symmetric, positive definite matrix
$
E_{n}(\phi)\ :=\ \phi B_{n}^{T}B_{n}+(1-\phi)B_{n}B_{n}^{T},
$
so that $E_1(\phi)=1$ while, for $n = 2,3,\dots$,
\[
E_{n}(\phi)=\begin{pmatrix}
1+\phi   & -1       &           &    &   \\
 -1 & 2        &    -1     &    &   \\
    & \ddots   & \ddots    & \ddots   &  \\
    &          & -1        &2    & -1  \\
    &          &           &-1   & 2-\phi
\end{pmatrix}\ \in\ \R^{n\times n}.
\]
Noting that
\begin{equation*}\label{DnEphi}
D_{n}\ =\ \left(r^{2}(1+\tau)+s^{2}(1+\tau^{-1})\right)
E_n\left(\frac{r^{2}(1+\tau)}{r^{2}(1+\tau)+s^{2}(1+\tau^{-1})}\right),
\end{equation*}
and where $\varrho_n(\phi):= \lambda_{\sf min}\left(E_{n}(\phi)\right)>0$ denotes
the smallest eigenvalue of $E_{n}(\phi)$, it follows from
(\ref{aim}) that
\begin{equation} \label{aim2}
\inf_{w\in \R^n,\, w\neq 0} \eps^2_n = \inf_{\tau > 0} \left( \left(r^{2}(1+\tau)+s^{2}(1+\tau^{-1})\right)
\varrho_n\left(\frac{r^{2}(1+\tau)}{r^{2}(1+\tau)+s^{2}(1+\tau^{-1})}\right) \right).
\end{equation}
Further, if $\phi:=\frac{r^{2}\left(1+\tau\right)}{r^{2}\left(1+\tau\right)+s^{2}\left(1+\tau^{-1}\right)}$, it follows that
$\tau=\frac{s^{2}\phi}{r^{2}(1-\phi)}$ and that $\{\phi:\tau>0\}=(0,1)$. Thus \eqref{aim2} can be rewritten as
\begin{equation} \label{aim3}
\inf_{w\in \R^n,\, w\neq 0} \eps^2_n\ =\
\inf_{0<\phi<1}\left[\left(\frac{r^{2}}{\phi}+\frac{s^{2}}{1-\phi}\right)\varrho_n(\phi)\right].
\end{equation}
The right hand side of equation (\ref{aim3}) already solves our minimisation problem. But we calculate now a more explicit expression for $\varrho_n(\phi)$, which will make (\ref{aim3}) more easily computable.

Clearly, $\varrho_1(\phi) = 1$. If $\lambda$ is an eigenvalue of $E_n(\phi)$ for $n\geq 2$ then, since $E_n(\phi)$ is positive definite, and applying the Gershgorin circle theorem, we see that $0<\lambda \leq 4$. It is rather straightforward to show that 4 is not an eigenvalue so that $0<\lambda<4$. But, in any case, for $n\geq 2$, $\lambda<4$ is an eigenvalue of $E_n(\phi)$, with eigenvector $v=(v_{n},\ldots,v_{1})^{T}\in \R^n$, if and only if 
\begin{equation} \label{lambda}
\lambda=2-2\cos\theta=4\sin^{2}\frac{\theta}{2}, \quad \mbox{ for some } \theta \in (0,\pi),
\end{equation}
and
\begin{eqnarray}
\label{March2009_2} \left[(1+\phi)-\lambda\right]v_{n}-v_{n-1}&=&0,\\
\label{March2009_3} -v_{j+1}+(2-\lambda) v_{j}-v_{j-1}&=&0,\qquad
\textrm{ for }j= 2,\ldots,n-1,\\
\label{March2009_4} -v_{2}+\left[(2-\phi)-\lambda\right]v_{1}&=&0.
\end{eqnarray}
Equation (\ref{March2009_3}) holds if and only if $v_j$ is a linear combination of $\cos((j-1)\theta)$ and $\sin((j-1)\theta)$. To within multiplication by a constant, the linear combination that also satisfies \eqref{March2009_4} is
\begin{eqnarray} \nonumber
v_{j}&=&\sin\theta\cos\left[(j-1)\theta\right]+\left(\cos\theta - \phi\right)\sin\left[(j-1)\theta\right]\\
&=&\sin(j\theta)-\phi\sin\left((j-1)\theta\right),\quad j=1,\ldots,
n.\;  \label{13March}
\end{eqnarray}
Substituting in
(\ref{March2009_2}), we see that $\lambda<4$ is an eigenvalue of
$E_{n}(\phi)$ for $n\geq 2$ if and only if \eqref{lambda} holds and $F(\theta)=0$, where
\[
F(t)\ :=\ (2\cos t -1)\sin(n t)-\sin((n-1)
t)\,+\,\phi(1-\phi)\sin((n-1) t),\qquad t\geq 0.
\]

Now, for $n\in\N$,
\begin{eqnarray}
F(t)
&=&-\sin(n t)+\sin\left(\left(n+1\right)t\right)+\phi(1-\phi)\sin\left(\left(n-1\right)t\right)\label{useful}\\
&=&\textstyle2\sin\frac{t}{2}\,\cos\left(\left(n+\frac{1}{2}\right)t\right)+\phi(1-\phi)\sin\left(\left(n-1\right)
t\right).\label{useful2}
\end{eqnarray}
Thus, noting that $\phi(1-\phi)\leq 1/4$ and setting $\sigma := \pi/(n+2)\leq \pi/3$, so that $1/2\leq \cos \sigma < 1$, we see from \eqref{useful} that
\begin{eqnarray*}
\textstyle F(\sigma)
\leq\textstyle -\sin 2\sigma + \sin \sigma +\frac{1}{4}\sin3\sigma
= \textstyle \frac{1}{4} \sin \sigma (3 + 4 \cos^2 \sigma - 8 \cos \sigma) \leq 0.
\end{eqnarray*}
Moreover, from \eqref{useful2}, while $F(0)=0$,
$\textstyle F(t)
>\phi(1-\phi)\sin((n-1)t)\ \ge\ 0,
$
for $0 < t < \frac{\pi}{2n+1}$.
Let $\theta_n(\phi)$ denote the smallest positive solution of the equation
$F(t)=0$. Then we have shown that
$\theta_n(\phi) \in [\frac{\pi}{2n+1}, \frac{\pi}{n+2}]\subset(0,\pi)$, and hence that the smallest eigenvalue of $E_n(\phi)$ is in $(0,4)$, precisely
\begin{equation} \label{eq:muphi}
\varrho_n(\phi)\ =\ \lambda_{\sf min}(E_n(\phi))\ =\
4\sin^{2}\left(\frac{\theta_n(\phi)}{2}\right).
\end{equation}
(We have shown this equation for $n\geq 2$, and it holds also for $n=1$ since $\varrho_1(\phi)=1$ and $\theta_1(\phi)=\pi/3$.)
Further, $\theta_n(\phi)$ is the unique solution of $F(t)=0$ in $[\frac{\pi}{2n+1}, \frac{\pi}{n+2}]$. To see this in the case $n\geq 2$,
note that, defining $\eta := \phi(1-\phi)\in [0,1/4]$, it follows from (\ref{useful2}) that
\begin{eqnarray*}
\textstyle F^\prime(t) & = & \textstyle \cos \frac{t}{2} \cos \left(\left(n+\frac{1}{2}\right)t\right) - (2n+1) \sin \frac{t}{2} \sin\left(\left(n+\frac{1}{2}\right)t\right)\\
& & \hspace{4ex}  +\eta(n-1)\cos((n-1)t) \\
&=& \textstyle \cos\left(\left(n+\frac{1}{2}\right)t\right){\mathcal A}_n(t) -\sin\left(\left(n+\frac{1}{2}\right)t\right){\mathcal B}_n(t),
\end{eqnarray*}
where ${\mathcal A}_n(t) := \cos \frac{t}{2} + \eta(n-1) \cos \frac{3t}{2}$ and ${\mathcal B}_n(t) := (2n+1)\sin \frac{t}{2} - \eta(n-1)\sin \frac{3t}{2}$.
Now, for
$n\geq 2$ and $t \in (\frac{\pi}{2n+1}, \frac{\pi}{n+2})$, it holds that $\cos\left(\left(n+\frac{1}{2}\right)t\right)< 0$, $\sin\left(\left(n+\frac{1}{2}\right)t\right)> 0$,  ${\mathcal A}_n(t)>0$, and ${\mathcal B}_n(t)> 0$ (the last inequality holding since $\eta(n-1)\sin \frac{3t}{2} \leq 3\eta (n-1)\sin\frac{t}{2}\leq \frac{3}{4} (n-1)\sin\frac{t}{2}$). Thus $F^\prime(t) < 0$ for $t \in (\frac{\pi}{2n+1}, \frac{\pi}{n+2})$, which implies that $F$ has at most one zero in $[\frac{\pi}{2n+1}, \frac{\pi}{n+2}]$.

So we have shown that $\theta_n(\phi)$ is the unique solution of $F(t)=0$ in $[\frac{\pi}{2n+1}, \frac{\pi}{n+2}]$.
Combining this with
\eqref{aim3} and \eqref{eq:muphi}, we see that we have completed our
aim of minimising $\eps^2_n$, proving the following
corollary of Theorem \ref{weighted_norm_two_refined}.

%
\begin{theorem}\label{method0}
For all $n\in\N$ the second inclusion in \eqref{incl:met1} holds with
\begin{equation} \label{f_n_method0}
\eps_n\ :=\ 2\inf_{0<\phi< 1}
\left(\sqrt{\frac{\|\alpha\|_\infty^{2}}{\phi}+\frac{\|\gamma\|_\infty^{2}}{1-\phi}}
\ \sin\frac{\theta_n(\phi)}{2}\right),
\end{equation}
where $\theta_n(\phi)$ is the unique solution in the range
$\left[\frac{\pi}{2n+1}\,,\,\frac{\pi}{n+2}\right]$
of the equation $F(t)=0$, i.e.\ of
\begin{equation} \label{eq:Ft=0}
2\sin\frac{t}{2}\cos\left(\left(n+\frac{1}{2}\right)t\right)\ +\ \phi(1-\phi)\sin\left(\left(n-1\right) t\right)\ =\ 0.
\end{equation}
If $X$ has the Globevnik property, then also the first inclusion in \eqref{incl:met1} holds with $\eps_n$ given by \eqref{f_n_method0}.
\end{theorem}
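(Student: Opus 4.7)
The plan is to invoke Theorem~\ref{weighted_norm_two_refined} and simply choose the weight vector $w\in\R^n$ that minimises the expression \eqref{eq:epsn1} for $\eps_n$. Since \eqref{eq:epsn1} depends on $w$ only through homogeneous ratios $T_n^\pm/S_n$, the infimum over nonzero $w$ equals the infimum over the unit sphere $\{w\in\R^n:\|w\|=1\}$, which is attained by compactness. The inclusions in \eqref{incl:met1} therefore hold with $\eps_n$ equal to this minimum; the content of the theorem is that this minimum is given by the explicit formula \eqref{f_n_method0}. The second clause of the theorem, under Globevnik's property, follows from the corresponding clause in Theorem~\ref{weighted_norm_two_refined} with no additional work.

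To obtain the explicit formula, I would first collapse the cross term in $\eps_n^2=(r\sqrt{T_n^-/S_n}+s\sqrt{T_n^+/S_n})^2$, where $r:=\|\alpha\|_\infty$, $s:=\|\gamma\|_\infty$, by applying Lemma~\ref{Simon_theta_inequality} to rewrite
\[
\eps_n^{2}\ =\ \inf_{\tau>0}\Big(r^{2}(1+\tau)\,T_{n}^{-}/S_{n}\,+\,s^{2}(1+\tau^{-1})\,T_{n}^{+}/S_{n}\Big).
\]
Recognising the quadratic forms $T_n^-=w^TB_n^TB_nw$, $T_n^+=w^TB_nB_n^Tw$, $S_n=w^Tw$ for the bidiagonal difference matrix $B_n$, the inner $w$-minimisation becomes a Rayleigh quotient for the symmetric positive definite matrix $D_n(\tau)=r^2(1+\tau)B_n^TB_n+s^2(1+\tau^{-1})B_nB_n^T$. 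Substituting $\phi=r^2(1+\tau)/\bigl(r^2(1+\tau)+s^2(1+\tau^{-1})\bigr)\in(0,1)$ (a bijection from $\tau>0$), one has $D_n(\tau)=\bigl(r^2/\phi+s^2/(1-\phi)\bigr)\phi(1-\phi)^{-1}\cdots E_n(\phi)$ up to a positive scalar, and the overall minimisation reduces to \eqref{aim3} with $\varrho_n(\phi)=\lambda_{\min}(E_n(\phi))$.

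The heart of the proof is to compute $\varrho_n(\phi)$ explicitly. I would make the ansatz $\lambda=4\sin^2(\theta/2)$, $\theta\in(0,\pi)$, and solve the three-term recurrence \eqref{March2009_3} for eigenvectors of the tridiagonal matrix $E_n(\phi)$ of the form $v_j=\sin(j\theta)-\phi\sin((j-1)\theta)$, which automatically satisfies the bottom boundary equation \eqref{March2009_4}. Imposing the top boundary equation \eqref{March2009_2} then yields precisely the transcendental equation $F(t)=0$ of \eqref{eq:Ft=0}. The smallest eigenvalue of $E_n(\phi)$ corresponds to the smallest positive root $\theta_n(\phi)$ of this equation, yielding $\varrho_n(\phi)=4\sin^2(\theta_n(\phi)/2)$; combining with \eqref{aim3} then gives \eqref{f_n_method0}.

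The main obstacle I anticipate is pinning $\theta_n(\phi)$ down in the explicit interval $[\pi/(2n+1),\pi/(n+2)]$ and verifying that $F$ has a unique root there, so that $\theta_n(\phi)$ is well-defined and numerically accessible. This will require the trigonometric identities \eqref{useful}--\eqref{useful2} to rewrite $F$ in two equivalent forms, then: (i) bounding $F(\pi/(n+2))\le 0$ using $\phi(1-\phi)\le 1/4$ together with the factorisation $3+4\cos^2\sigma-8\cos\sigma\le 0$ at $\sigma=\pi/(n+2)$; (ii) showing $F(t)>0$ for $t\in(0,\pi/(2n+1))$ directly from \eqref{useful2} since $\cos((n+1/2)t)>0$ there; and (iii) establishing strict monotonicity $F'(t)<0$ on the interior of the target interval by writing $F'(t)=\cos((n+\tfrac12)t)\mathcal{A}_n(t)-\sin((n+\tfrac12)t)\mathcal{B}_n(t)$ and verifying the sign of each of $\mathcal{A}_n,\mathcal{B}_n,\cos((n+\tfrac12)t),\sin((n+\tfrac12)t)$ on that interval (the bound $\eta(n-1)\sin\tfrac{3t}{2}\le\tfrac{3}{4}(n-1)\sin\tfrac{t}{2}$ being the delicate ingredient for $\mathcal{B}_n>0$). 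Everything else is bookkeeping.
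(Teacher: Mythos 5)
Your proposal is correct and follows essentially the same route as the paper's proof in \S\ref{sec:mini}: collapsing the cross term via Lemma \ref{Simon_theta_inequality}, reducing to the Rayleigh quotient for $E_n(\phi)$ via the substitution $\phi=r^2(1+\tau)/(r^2(1+\tau)+s^2(1+\tau^{-1}))$ (under which the scalar prefactor simplifies exactly to $r^2/\phi+s^2/(1-\phi)$, giving \eqref{aim3}), solving the recurrence with the ansatz $v_j=\sin(j\theta)-\phi\sin((j-1)\theta)$ to obtain $F(t)=0$, and then bracketing and isolating the root $\theta_n(\phi)$ in $[\frac{\pi}{2n+1},\frac{\pi}{n+2}]$ by the sign checks and the monotonicity argument $F'<0$ you describe. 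No gaps beyond the minor notational slip in your expression for $D_n$, which your citation of \eqref{aim3} already corrects.
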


The following properties of  $\theta_n$ in Theorem \ref{method0} are
worth noting:
\begin{eqnarray} \label{prop3} \theta_1(\phi) = \frac{\pi}{3}, & \quad 0\le \phi\le 1;
\\\label{prop1}
\theta_n(\phi)=\theta_n(1-\phi), & \quad 0\le \phi\le 1, \;
n\in\N;\\  \label{prop2} \frac{\pi}{2n+1} = \theta_n(0) = \theta_n(1) \leq
\theta_n(\phi) \le \theta_n\left({\textstyle\frac{1}{2}}\right), &
\quad 0\le\phi\le1, \; n\in\N.
\end{eqnarray}
These properties are evident from the definition of $\theta_n$, except for the bound $\theta_n(\phi)\le
\theta_n(\frac{1}{2})$. One way to see this bound is via
\eqref{eq:muphi}, as it follows from $E_{n}(\frac{1}{2})=\frac
12(E_{n}(\phi)+E_{n}(1-\phi))$ that
\begin{eqnarray*}
\textstyle\varrho_n(\frac{1}{2})&=&\frac{1}{2}\min_{\|w\|=1}\left(\phi
w^{T}E_{n}(\phi)w+w^{T}E_{n}(1-\phi)w\right)\\
&\geq&\frac{1}{2}\left[\varrho_n(\phi)+\varrho_n((1-\phi))\right]\,
=\, \varrho_n(\phi),
\end{eqnarray*}
since $\varrho_n(\phi)=\varrho_n(1-\phi)$ as a consequence of \eqref{prop1} and \eqref{eq:muphi}.

We remark also that $\theta_n(\frac{1}{2})$ is the unique solution
of the equation
\begin{equation}\label{equi1s}
G(t):= \textstyle 2\cos\left(\left(\frac{n+1}{2}\right)t\right)-\cos\left(\left(\frac{n-1}{2}\right)t\right)\ =\ 0
\end{equation}
in the interval $(\frac{\pi}{n+3},\frac{\pi}{n+2}]$. To see this
claim it is enough to check that \eqref{equi1s} and \eqref{eq:Ft=0}
have the same solutions in $(0,\frac{\pi}{n+2}]$ when
$\phi=\frac{1}{2}$,  and to show that $\theta_n(\frac{1}{2})>
\frac{\pi}{n+3}$. To see the first statement multiply both sides of
\eqref{equi1s} by the positive term
$2\sin\frac{(n+1)t}{2}-\sin\frac{(n-1)t}{2}$, giving
\[
\textstyle
2\sin\left((n+1)t\right)-2\sin(nt)+\frac{1}{2}\sin\left((n-1)t\right)\
=\ 0.
\]
This is equivalent to \eqref{equi1s} with $\phi=\frac{1}{2}$, so that
$\theta_n(\frac{1}{2})$ is the unique solution of $G(t)=0$ in
$(0, \frac{\pi}{n+2}]$. To see the second statement, observe that
$
\textstyle
G(\frac{\pi}{n+3})\,=\,2\sin\frac{\pi}{n+3}-\sin\frac{2\pi}{n+3}\
>\ 0,
$
whereas, where $\sigma = \frac{\pi}{n+2}$,
$
\textstyle
G(\sigma)\, =\,2\sin\frac{\sigma}{2}-\sin\frac{3\sigma}{2}\, =\,\sin\frac{\sigma}{2}\left(4\sin^2\frac{\sigma}{2}-1\right)
\leq\ 0,
$
since $0<\frac{\sigma}{2} \leq \frac{\pi}{6}$. Thus
$\theta_n(\frac{1}{2})> \frac{\pi}{n+3}$.

In general, it appears not possible to explicitly compute the value
of $\phi$ for which the bracket in \eqref{f_n_method0} is minimised.
But it follows easily from the bound \eqref{prop2} that, in the case
$r = \|\alpha\|_\infty = 0$ the infimum is attained by setting
$\phi=0$, and this infimum is $\eps_n\ =\
2\|\gamma\|_{\infty}\sin\frac{\pi}{4n+2}$. Likewise, when $s =
\|\gamma\|_\infty = 0$ we see that the infimum is attained by
setting $\phi=1$, and this infimum is $\eps_n\ =\
2\|\alpha\|_{\infty}\sin\frac{\pi}{4n+2}$. Thus we have computed the
mimimal value of $\eps_n$ completely explicitly in the case when $A$
is bidiagonal, giving the following corollary.

\begin{corollary}\label{minimum_weighted_norm_two}
If $n\in\N$ and $\gamma=0$ or $\alpha=0$, then
\eqref{f_n_method0} reduces to
\begin{equation} \label{eq:epsn_method1}
\eps_n\ =\
2\left(\|\alpha\|_{\infty}+\|\gamma\|_\infty\right)\sin\left(\frac{\pi}{4n+2}\right).
\end{equation}
\end{corollary}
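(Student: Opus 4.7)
The plan is to substitute the bidiagonal assumption into formula \eqref{f_n_method0} and to evaluate the resulting infimum over $\phi \in (0,1)$ explicitly, using only the bounds \eqref{prop1}--\eqref{prop2} on $\theta_n$. By the symmetry $\theta_n(\phi) = \theta_n(1-\phi)$ in \eqref{prop1}, the substitution $\phi \mapsto 1-\phi$ interchanges the roles of $\|\alpha\|_\infty$ and $\|\gamma\|_\infty$ in the bracketed expression of \eqref{f_n_method0}, so it is enough to treat the case $\gamma = 0$.

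Under this assumption the bracketed expression reduces to $\|\alpha\|_\infty\,\phi^{-1/2}\sin(\theta_n(\phi)/2)$. Both factors are bounded below on $(0,1)$, the infimum being approached as $\phi \to 1^-$. Indeed, $\phi^{-1/2} \geq 1$, with equality in the limit; and by \eqref{prop2} one has $\theta_n(\phi) \geq \theta_n(1) = \pi/(2n+1)$, so since $\theta_n(\phi)/2 \leq \pi/(2(n+2)) < \pi/2$ lies in the monotonicity range of $\sin$, one obtains $\sin(\theta_n(\phi)/2) \geq \sin(\pi/(4n+2))$. This yields the lower bound $\eps_n \geq 2\|\alpha\|_\infty\sin(\pi/(4n+2))$.

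For the matching upper bound I would show that $\theta_n(\phi) \to \pi/(2n+1)$ as $\phi \to 1^-$ by a compactness-and-uniqueness argument: any subsequential limit $\theta^*$ of $\theta_n(\phi_k)$ for a sequence $\phi_k \to 1^-$ lies in $[\pi/(2n+1), \pi/(n+2)]$, and passing to the limit in \eqref{eq:Ft=0} using continuity of $F$ in both arguments gives $2\sin(\theta^*/2)\cos((n+\frac{1}{2})\theta^*) = 0$, whose unique root in that interval is $\pi/(2n+1)$. Hence the infimum equals $\sin(\pi/(4n+2))$, and \eqref{f_n_method0} gives $\eps_n = 2\|\alpha\|_\infty\sin(\pi/(4n+2))$, which equals the right-hand side of \eqref{eq:epsn_method1} because $\|\gamma\|_\infty = 0$. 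I do not foresee any serious obstacle; the only mildly technical point is the continuity of $\theta_n$ at the boundary, which the compactness reasoning sketched above handles cleanly.
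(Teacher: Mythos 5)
Your proof is correct and follows essentially the same route as the paper: the paper likewise evaluates the infimum in \eqref{f_n_method0} using the bound \eqref{prop2}, noting that it is attained in the limit $\phi\to 1$ (resp.\ $\phi\to 0$), where $\theta_n(\phi)\to\pi/(2n+1)$. Your compactness-and-uniqueness argument simply makes rigorous the boundary limit that the paper treats as immediate, and your symmetry reduction via \eqref{prop1} replaces the paper's direct treatment of the two bidiagonal cases.
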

The example of the shift operator (see  Example \ref{ex:shift} and
\S\ref{sec:shift} below) shows that the value of $\eps_n$ in
Corollary~\ref{minimum_weighted_norm_two} is the best possible, in
the sense that, for every $n\in\N$, there exists a bidiagonal $A\in L(E)$ for
which \eqref{incl:met1} is not true for any smaller value for
$\eps_n$.

In the case that $r= \|\alpha\|_\infty\neq 0$ and $s=
\|\gamma\|_\infty\neq 0$ it is not clear what the infimum in
\eqref{f_n_method0} is explicitly when $n\ge 2$. However, since
$\theta_n(\phi)$ depends continuously on $\phi$ and is bounded as a
function of $\phi$ on $(0,1)$, it is clear that the infimum is
attained at some $\phi\in (0,1)$. Further, as a consequence of
\eqref{prop1}, it is easy to see that the infimum is attained
for some $\phi$ in the range $(0,\frac{1}{2})$ if $r<s$, for some
$\phi$ in the range $(\frac{1}{2},1)$ if $r>s$.  A simple choice of
$\phi$ which has these properties and which reduces to the optimal
values of $\phi$, $\phi=0$ and $\phi=1$, respectively, when $r=0$
and $s=0$, is
$\phi=\frac{r}{r+s}=\frac{\|\alpha\|_\infty}{\|\alpha\|_\infty+\|\gamma\|_\infty}$.
This choice is further motivated by the fact that this is the unique
$\phi$ that attains the infimum in \eqref{f_n_method0} in the case
$n=1$ (which is an easy calculation in view of \eqref{prop3}).
 If we evaluate the bracket in \eqref{f_n_method0} for this choice
we obtain the following corollary which includes Corollary \ref{minimum_weighted_norm_two} as a special case.
\begin{corollary}\label{CorolA}
For all $n\in\N$, $\eps_n$ given by \eqref{f_n_method0} satisfies
\[
\eps_n \leq \eta_n := 2\left(\left\|\alpha\right\|_{\infty}+\left\|\gamma\right\|_{\infty}\right)\sin\frac{\theta}{2},
\]
where $\theta$ is the unique solution in the range
$\left[\frac{\pi}{2n+1}\,,\,\frac{\pi}{n+2}\right]$
of the equation
\[
2\sin\Big(\frac{t}{2}\Big)\cos\Big(\Big(n+\frac{1}{2}\Big)t\Big)\ +\ \frac{\|\alpha\|_\infty\|\gamma\|_\infty}{(\|\alpha\|_\infty+\|\gamma\|_\infty)^2} \sin\left(\left(n-1\right) t\right)\ =\ 0.
\]
In particular, $\eta_1=\eps_1 = \|\alpha\|_\infty + \|\gamma_\infty\|_\infty$.
\end{corollary}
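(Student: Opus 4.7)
My plan is to apply Theorem \ref{method0} with the specific choice $\phi = r/(r+s)$, where $r := \|\alpha\|_\infty$ and $s := \|\gamma\|_\infty$, as motivated in the paragraph preceding the corollary. This choice is singled out because it reduces to the optimal $\phi = 0$ when $r=0$ (matching Corollary \ref{minimum_weighted_norm_two}) and to $\phi=1$ when $s=0$, and because for this $\phi$ the two quantities appearing in \eqref{f_n_method0} collapse to clean expressions.

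First I would compute $\phi(1-\phi) = rs/(r+s)^2$, which is exactly the coefficient $\|\alpha\|_\infty\|\gamma\|_\infty/(\|\alpha\|_\infty+\|\gamma\|_\infty)^2$ appearing in the displayed equation of the corollary. With this substitution the transcendental equation \eqref{eq:Ft=0} determining $\theta_n(\phi)$ becomes verbatim the equation determining $\theta$ in the corollary, and Theorem \ref{method0} guarantees a unique solution in $[\pi/(2n+1),\pi/(n+2)]$. Next I would evaluate the prefactor
\[
\frac{r^{2}}{\phi}+\frac{s^{2}}{1-\phi}\ =\ r(r+s)+s(r+s)\ =\ (r+s)^{2},
\]
so that $\sqrt{r^2/\phi+s^2/(1-\phi)} = \|\alpha\|_\infty+\|\gamma\|_\infty$. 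Since the infimum in \eqref{f_n_method0} is bounded above by the value at any single $\phi\in(0,1)$, substituting yields precisely $\eps_n \leq 2(\|\alpha\|_\infty+\|\gamma\|_\infty)\sin(\theta/2) = \eta_n$.

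For the final assertion about $n=1$, I would invoke property \eqref{prop3}, which gives $\theta_1(\phi) = \pi/3$ for \emph{all} $\phi\in[0,1]$, so $\sin(\theta_1(\phi)/2) = 1/2$ is constant in $\phi$. Consequently $\eta_1 = 2(r+s)\cdot\tfrac{1}{2} = \|\alpha\|_\infty+\|\gamma\|_\infty$, and the infimum in \eqref{f_n_method0} reduces to
\[
\eps_1\ =\ \inf_{0<\phi<1}\sqrt{\frac{r^{2}}{\phi}+\frac{s^{2}}{1-\phi}}.
\]
A one-variable calculus argument (differentiating in $\phi$, or invoking Cauchy--Schwarz in the form $(r+s)^2 = (\sqrt{\phi}\cdot r/\sqrt{\phi}+\sqrt{1-\phi}\cdot s/\sqrt{1-\phi})^2 \leq (\phi+(1-\phi))(r^2/\phi+s^2/(1-\phi))$) shows the infimum equals $r+s$, attained precisely at $\phi = r/(r+s)$, giving $\eps_1 = \eta_1$.

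There is really no conceptual obstacle here; the corollary is essentially a bookkeeping exercise that specializes Theorem \ref{method0} at a distinguished value of $\phi$. The only mild subtlety is verifying that the chosen $\phi=r/(r+s)$ lies in $(0,1)$: this requires $r+s>0$, which we may assume since otherwise $A=\beta$ is diagonal and \eqref{incl:met1} is trivial with $\eps_n=0$.
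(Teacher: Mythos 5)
Your proposal is correct and follows exactly the approach the paper takes (which is spelled out in the paragraph immediately preceding the corollary): substitute the distinguished value $\phi = \|\alpha\|_\infty/(\|\alpha\|_\infty+\|\gamma\|_\infty)$ into Theorem~\ref{method0}, observe that $\phi(1-\phi)$ becomes the coefficient in the displayed transcendental equation and the prefactor collapses to $(\|\alpha\|_\infty+\|\gamma\|_\infty)^2$, and for $n=1$ use \eqref{prop3} together with the Cauchy--Schwarz argument (which is just Lemma~\ref{Simon_theta_inequality}) to show the infimum is attained at this $\phi$. The paper leaves the algebra implicit ("if we evaluate the bracket \ldots we obtain the following corollary"); you have filled it in correctly, and your remark about the degenerate case $\|\alpha\|_\infty=\|\gamma\|_\infty=0$ is a reasonable addendum that the paper leaves tacit.
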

The following even more explicit result is obtained if we evaluate
the bracket in \eqref{f_n_method0} for $\phi=\frac 12$, noting the
equivalent characterisation of $\theta_n(\frac{1}{2})$ above, that
it is the unique solution in
$\left(\frac{\pi}{n+3}\,,\,\frac{\pi}{n+2}\right]$ of
\eqref{equi1s}.
\begin{corollary}\label{CorolB}
For all $n\in\N$, $\eps_n$ given by \eqref{f_n_method0} satisfies
\[
\eps_n\ \leq\ \eta'_n\ :=\
2\sqrt{2}\sqrt{\left\|\alpha\right\|^{2}_{\infty}+\left\|\gamma\right\|^{2}_{\infty}}\sin\frac{\theta}{2},
\]
where $\theta$ is the unique solution in the range
$\left(\frac{\pi}{n+3}\,,\,\frac{\pi}{n+2}\right]$ of \eqref{equi1s}.
\end{corollary}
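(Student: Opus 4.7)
The plan is entirely straightforward: this corollary is obtained by simply specialising the infimum in \eqref{f_n_method0} to the choice $\phi = 1/2$ and invoking the alternative characterisation of $\theta_n(1/2)$ already established in the paragraph following Theorem \ref{method0}.

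In detail, I would first bound $\eps_n$ by evaluating the expression in brackets in \eqref{f_n_method0} at $\phi = 1/2$, giving
\[
\eps_n\ \leq\ 2\sqrt{\frac{\|\alpha\|_\infty^{2}}{1/2}+\frac{\|\gamma\|_\infty^{2}}{1/2}}\,\sin\frac{\theta_n(1/2)}{2}\ =\ 2\sqrt{2}\sqrt{\|\alpha\|_\infty^{2}+\|\gamma\|_\infty^{2}}\,\sin\frac{\theta_n(1/2)}{2}.
\]
It then remains to identify $\theta_n(1/2)$ with the quantity $\theta$ appearing in the statement, namely the unique solution of \eqref{equi1s} in $(\pi/(n+3),\pi/(n+2)]$. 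But this identification has already been carried out in the discussion between Theorem \ref{method0} and Corollary \ref{CorolA}: substituting $\phi = 1/2$ into the defining equation \eqref{eq:Ft=0}, multiplying through by the (strictly positive) factor $2\sin((n+1)t/2) - \sin((n-1)t/2)$, and using elementary product-to-sum identities converts \eqref{eq:Ft=0} into \eqref{equi1s}, while the localisation to the interval $(\pi/(n+3),\pi/(n+2)]$ follows from the sign computations $G(\pi/(n+3)) > 0$ and $G(\pi/(n+2)) \leq 0$ given there.

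Since no new inequality or eigenvalue estimate is required beyond what is already in place, there is really no principal obstacle; the only subtlety is bookkeeping, namely checking that the intermediate multiplicative factor is indeed strictly positive on the relevant range of $t$, so that the transformation between \eqref{eq:Ft=0} and \eqref{equi1s} preserves the location of the root. Once this is noted, the chain of equalities above completes the proof.
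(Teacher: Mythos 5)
Your proposal is correct and matches the paper's proof exactly: the paper likewise obtains Corollary \ref{CorolB} by evaluating the bracket in \eqref{f_n_method0} at $\phi=\tfrac{1}{2}$ and invoking the characterisation of $\theta_n(\tfrac{1}{2})$ as the unique root of \eqref{equi1s} in $(\tfrac{\pi}{n+3},\tfrac{\pi}{n+2}]$, established in the discussion preceding Corollary \ref{CorolA}. Your remark about verifying positivity of the factor $2\sin\tfrac{(n+1)t}{2}-\sin\tfrac{(n-1)t}{2}$ is exactly the bookkeeping point the paper also flags when transforming between \eqref{eq:Ft=0} and \eqref{equi1s}.
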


Note that Corollaries \ref{CorolA} and \ref{CorolB} give the same upper bound for $\eps_n$ when $\|\alpha\|_\infty= \|\gamma\|_\infty$, but when
$\|\alpha\|_\infty\neq  \|\gamma\|_\infty$, as a consequence of
\eqref{prop2}, Corollary \ref{CorolA} is sharper ($\eta_n< \eta'_n$).
\section{The $\pi$ method: periodised principal submatrices} \label{sec:piproof}

In this section we prove the inclusion \eqref{incl:met1*} that is the
equivalent of \eqref{incl:met1} but with periodised submatrices
\eqref{eq:Ankper} instead of \eqref{eq:Ank}.
We start with Proposition \ref{prop:pi_main}, the equivalent of Proposition~\ref{prop:tau_main} for the $\pi$ method. The matrix referenced in  this proposition is a generalisation of the $\pi$-method matrix $A_{n,k}^{\per,t}$ of \eqref{eq:Ankper}, defined, for any $\mathfrak{a},\mathfrak{c}\in \ell^\infty(\Z,L(X))$, by
$$
A_{n,k}^{\mathfrak{a},\mathfrak{c}}\ :=\  A_{n,k}+B_{n,k}^{\mathfrak{a},\mathfrak{c}},  \qquad k\in \Z, \;\; n\in \N,
$$
where $B_{n,k}^{\mathfrak{a},\mathfrak{c}}$ is the $n\times n$ matrix whose entry in row $i$, column $j$ is $\delta_{i,1}\delta_{j,n}  \mathfrak{a}_k+ \delta_{i,n}\delta_{j,1}\mathfrak{c}_k$. Thus, for $n\geq 3$,
$A^{\mathfrak{a},\mathfrak{c}}_{n,k}$ takes the form \eqref{eq:Ankper}, but with the top right entry $t\alpha_k$ replaced by $\mathfrak{a}_k$, and the bottom left entry $\bar t \gamma_{k+n+1}$ replaced by $\mathfrak{c}_k$. Importantly,
\begin{equation} \label{eq:special}
A_{n,k}^{\per,t} = A_{n,k}^{\mathfrak{a},\mathfrak{c}} \qquad \mbox{if} \qquad \mathfrak{a}_k:=t\alpha _k \mbox{ and }\mathfrak{c}_k:=\bar t \gamma_{k+n+1}, \quad k\in \Z.
\end{equation}
 (Of course, also $A_{n,k}^{\mathfrak{a},\mathfrak{c}}=A_{n,k}$, if $\mathfrak{a}=\mathfrak{c}=0$.)

\begin{proposition}\label{prop:pi_main}
Let $\eps>0$ and $n\in\N$, suppose that $w_j\in\R$, for $j=1,...,n$, with
at least one $w_j$ non-zero, that $\mathfrak{a},\mathfrak{c}\in \ell^\infty(\Z,L(X))$, with $\|\mathfrak{a}\|_\infty \leq \|\alpha\|_\infty$ and $\|\mathfrak{c}\|_\infty \leq \|\gamma\|_\infty$, and that $A\in L(E)$ is tridiagonal and $\|Ax\|\leq \eps$, for some $x\in E$ with $\|x\|=1$.  Then, for some $k\in \Z$ it holds that
\begin{equation} \label{eq:tau_main}
\nu(A^{\mathfrak{a},\mathfrak{c}}_{n,k}) \leq \eps+\eps'_n,
\end{equation}
with
\begin{equation} \label{eq:epsn2}
\eps'_n\ :=\ \left(
\left\|\alpha\right\|_{\infty}+\left\|\gamma\right\|_{\infty}\right)\sqrt{\frac{T_{n}}{S_{n}}},
\end{equation}
where
\[
S_{n}\ =\ \sum_{j=1}^{n}w_{j}^{2} \qquad\textrm{and}\qquad T_{n}\ =\
(w_{1}+w_{n})^{2}+\sum_{j=1}^{n-1}(w_{j+1}-w_{j})^{2},
\]
with $w_0:= 0$ and $w_{n+1}:=0$.
\end{proposition}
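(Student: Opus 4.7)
The plan is to closely mirror the proof of Proposition \ref{prop:tau_main}, making the appropriate modifications to handle the two extra non-zero entries $\mathfrak{a}_k$ (top-right) and $\mathfrak{c}_k$ (bottom-left) of the periodised matrix $A^{\mathfrak{a},\mathfrak{c}}_{n,k}$. I would start by setting $y := Ax$, so that $\|y\| \leq \eps$, defining the weighted subvector $\widetilde{x}_{n,k} := (w_1 x_{k+1}, \ldots, w_n x_{k+n})^T$, and putting $a_k := \|A^{\mathfrak{a},\mathfrak{c}}_{n,k}\widetilde{x}_{n,k}\|$ and $b_k := \|\widetilde{x}_{n,k}\|$. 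The goal, exactly as in Proposition \ref{prop:tau_main}, is to establish the inequality $\sum_{k\in\Z} a_k^2 \leq (\eps + \eps'_n)^2 \sum_{k\in\Z} b_k^2$, from which (since $\sum_k b_k^2 = S_n\|x\|^2 = S_n \neq 0$) we deduce $a_k \leq (\eps+\eps'_n) b_k$ for some $k\in\Z$ with $b_k>0$, giving \eqref{eq:tau_main}.

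Next I would expand $A^{\mathfrak{a},\mathfrak{c}}_{n,k}\widetilde{x}_{n,k}$ row by row. For rows $2\le j\le n-1$ the calculation is identical to that in Proposition \ref{prop:tau_main}: the $j$th entry equals $w_j y_{k+j} + (w_{j-1}-w_j)\alpha_{k+j-1}x_{k+j-1} + (w_{j+1}-w_j)\gamma_{k+j+1}x_{k+j+1}$. The first row acquires the additional periodic contribution $\mathfrak{a}_k w_n x_{k+n}$, so it becomes $w_1 y_{k+1} + \bigl[-w_1 \alpha_k x_k + \mathfrak{a}_k w_n x_{k+n}\bigr] + (w_2-w_1)\gamma_{k+2}x_{k+2}$; symmetrically, the $n$th row becomes $w_n y_{k+n} + (w_{n-1}-w_n)\alpha_{k+n-1}x_{k+n-1} + \bigl[\mathfrak{c}_k w_1 x_{k+1} - w_n \gamma_{k+n+1}x_{k+n+1}\bigr]$. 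The crucial point is that in row 1 both bracketed terms are $\alpha$-type and are jointly controlled by $\|\alpha\|_\infty$ (using $\|\mathfrak{a}\|_\infty\leq\|\alpha\|_\infty$), while in row $n$ both bracketed terms are $\gamma$-type and are jointly controlled by $\|\gamma\|_\infty$ (using $\|\mathfrak{c}\|_\infty\leq\|\gamma\|_\infty$).

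The main technical step, and the one substantive novelty compared with Proposition \ref{prop:tau_main}, is to show that these combined boundary-wrap contributions give rise to a $(w_1+w_n)^2$ summand in $T_n$ rather than the $w_1^2+w_n^2$ one obtains from the sharp cut-off in the tridiagonal case. Writing $p_k := w_1\|x_k\|_X + w_n\|x_{k+n}\|_X$, I would expand $\sum_{k\in\Z} p_k^2 = (w_1^2+w_n^2)\|x\|^2 + 2 w_1 w_n \sum_{k\in\Z} \|x_k\|_X\|x_{k+n}\|_X$ and bound the cross sum by $\|x\|^2$ via the Cauchy--Schwarz inequality, yielding $\sum_{k\in\Z} p_k^2 \leq (w_1+w_n)^2 \|x\|^2$. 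This is exactly the contribution that upgrades the $\tau$-method constants $T_n^\pm$ to the single constant $T_n$ of \eqref{eq:epsn2}.

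With this estimate at hand the remainder is bookkeeping. A first application of Lemma \ref{Simon_theta_inequality} with parameter $\phi>0$ splits each row error into an $\alpha$-part and a $\gamma$-part; summing their squares over $j=1,\dots,n$ and over $k\in\Z$, using the boundary estimate of the previous paragraph for rows $1$ and $n$ and the routine $|w_{j\pm1}-w_j|$-estimates for the interior rows, gives $\sum_k \sum_j (\text{error}_j(k))^2 \leq \bigl[(1+\phi)\|\alpha\|_\infty^2 + (1+\phi^{-1})\|\gamma\|_\infty^2\bigr]\, T_n\|x\|^2$, whose infimum over $\phi>0$ is $(\|\alpha\|_\infty+\|\gamma\|_\infty)^2 T_n$ by Lemma \ref{Simon_theta_inequality} again. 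A final use of Lemma \ref{Simon_theta_inequality} with parameter $\theta>0$, this time to separate the $w_j y_{k+j}$ contribution from the error contribution in each row, followed by the bound $\|y\|\leq\eps$, mimics the concluding steps of Proposition \ref{prop:tau_main} and yields $\sum_k a_k^2 \leq (\eps+\eps'_n)^2\sum_k b_k^2$. Beyond the Cauchy--Schwarz combining step I do not anticipate any real obstacle; the only delicate point is the recognition that the two $\alpha$-type (respectively $\gamma$-type) error contributions in row $1$ (respectively row $n$), which live at different indices $x_k$ and $x_{k+n}$, interact through a single cross term that Cauchy--Schwarz pins down sharply as $2w_1 w_n\|x\|^2$.
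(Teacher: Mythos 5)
Your proposal follows the paper's proof essentially step by step: same weighted subvector, same row-by-row expansion isolating the two wrap-around terms with the $\alpha$-type and $\gamma$-type boundary contributions, the same Cauchy--Schwarz estimate yielding the $(w_1+w_n)^2$ cross-term bound, and the same double application of Lemma~\ref{Simon_theta_inequality} before summing over $k$. The only omission is the case $n=1$, for which the three-way split into first/interior/last rows is vacuous and the single row is $w_1\bigl(y_{k+1}+\mathfrak{a}_k x_{k+1}-\alpha_k x_k+\mathfrak{c}_k x_{k+1}-\gamma_{k+2}x_{k+2}\bigr)$ with $T_1=4w_1^2$; the paper notes this separately, and your argument would need the same brief remark.
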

\begin{proof}
 Let $y=Ax$,
so $\left\|y\right\|\leq \eps$. We use the notation $E_j$, for $j=0,\ldots,n$, defined in \eqref{E1}
and, where
$\tilde{x}_{n,k}:=(w_{1}x_{k+1},w_{2}x_{k+2},\ldots,w_{n}x_{k+n})^{T}$, put
$a_{k}:=\|A^{\mathfrak{a},\mathfrak{c}}_{n,k}\tilde x_{n,k}\|$ and $b_k := \|\tilde x_{n,k}\|$, for
$k\in\Z$.
As in the proof of Proposition \ref{prop:tau_main}, we have that $\sum_{k\in \Z} b_k^2 = S_n$. Further, for $n\geq 2$ and  $k\in \Z$, using (\ref{E1}) and the Kronecker delta notation,
\begin{eqnarray*}
a_{k}^{2}&=& \left\|w_{1}y_{k+1}+w_{n}\mathfrak{a}_{k} x_{k+n}-w_{1}\alpha_k x_{k}+(w_{2}-w_{1})\gamma_{k+2}x_{k+2}\right\|_X^{2}\\
&&+\sum_{j=2}^{n-1}\left\|w_{j}y_{k+j}+(w_{j-1}-w_{j})\alpha_{k+j-1}x_{k+j-1}+(w_{j+1}-w_{j})\gamma_{k+j+1}x_{k+j+1}\right\|_X^{2}\\
&&+\left\|w_{n}y_{k+n}+(w_{n-1}-w_{n})\alpha_{k+n-1}x_{k+n-1}+w_{1}\mathfrak{c}_k  x_{k+1}-w_{n}\gamma_{k+n+1}x_{k+n+1}\right\|_X^{2}\\
&\leq& \sum_{j=1}^{n}\left(w_j\left\|y_{k+j}\right\|_X+ \|\alpha\|_\infty c_{j,k}
+\|\gamma\|_\infty d_{j,k}\right)^{2},
\end{eqnarray*}
(cf.~\eqref{eq:ss}) where, for $j=1,\ldots,n$ and $k\in \Z$,
\begin{equation}\label{method1_AB}
c_{j,k} := \delta_{j,1}w_n\|x_{k+n}\|_X  + E_{j-1}\|x_{k+j-1}\|_X \quad \mbox{and} \quad d_{j,k} := \delta_{j,n}w_1\|x_{k+1}\|_X + E_{j}\|x_{k+j+1}\|_X.
\end{equation}
After applying Lemma \ref{Simon_theta_inequality} twice, we get (cf.~\eqref{eq:ss2}) that,
for all $\theta
>0$ and $\phi>0$, 
\begin{eqnarray*}
a_{k}^{2} &\leq&\sum_{j=1}^{n}\Big[(1+\theta)w^2_{j}\left\|y_{k+j}\right\|_X^2 +(1+\theta^{-1}) \cdot \\
& & \hspace{4ex} \big( (1+\phi) \|\alpha\|^{2}_\infty c_{j,k}^2
 +
(1+\phi^{-1})\|\gamma\|^{2}_\infty d_{j,k}^2\big)\Big].
\end{eqnarray*}
Now,
recalling that $\|x\|=1$,
\begin{eqnarray*}
\sum_{k\in \Z} c_{1,k}^2 &=& \sum_{k\in \Z} \left(w_n \|x_{k+n}\|_X+w_1\|x_{k}\|_X\right)^{2}\\
&\leq&
w_{n}^{2}+2w_{1}w_{n}\sum_{k\in\Z}\left\|x_{k+n}x_{k}\right\|_X+w_{1}^{2}\,\leq\,
(w_{1}+w_{n})^{2},
\end{eqnarray*}
so that
$$
\sum_{j=1}^n\sum_{k\in \Z} c_{j,k}^2 \leq (w_1+w_n)^2 +\sum_{j=2}^n\sum_{k\in \Z} E_{j-1}^2 |x_{k+j-1}|^2 = T_n.
$$
Similarly,
$$
\sum_{k\in \Z} d_{n,k}^2 \leq
(w_{1}+w_{n})^{2}\quad \mbox{so that} \quad \sum_{j=1}^n\sum_{k\in \Z} d_{j,k}^2 \leq  T_n.
$$
Thus, applying Lemma \ref{Simon_theta_inequality} (cf.~\eqref{SB} and \eqref{sb}),
\begin{eqnarray} \nonumber
\sum_{k\in\Z}a_{k}^{2}
&\leq& \inf_{\theta,\phi>0} \Big[
(1+\theta)S_n \|y\|^2
+(1+\theta^{-1})T_n \cdot \\  \label{eq:eq1}
& & \hspace{4ex} \big[(1+\phi)\|\alpha\|_{\infty}^{2}
+(1+\phi^{-1})\|\gamma\|_{\infty}^{2}\big]\Big]\\ \nonumber
&=&\inf_{\theta>0} \left[(1+\theta)\|y\|^{2}S_{n}+(1+\theta^{-1})T_n\left(\|\alpha\|_{\infty}+\|\gamma\|_{\infty}\right)^{2}\right]\\ \nonumber
&\leq&\inf_{\theta>0} \Bigg[(1+\theta)\eps^{2}+(1+\theta^{-1})
\Bigg((\|\alpha\|_{\infty}+\|\gamma\|_{\infty})\sqrt{\frac{T_n}{S_n}}\Bigg)^{2}\Bigg]S_{n}\\ \nonumber
&=&\inf_{\theta>0} \Big[\big((1+\theta)\eps^{2}+(1+\theta^{-1})
{\eps'_n}^2\big)\Big]\ S_n\\ \label{eq:eq2}
&=&\big(\eps+\eps'_n\big)^{2} \sum_{k\in\Z}b_k^2.
\end{eqnarray}
Arguing as at the end of Proposition \ref{prop:tau_main} we deduce that 
 $a_{k}\leq (
\eps+\eps'_n)b_k$, for some $k\in\Z$ with $b_k\neq 0$.
We reach the same conclusion,  via  the same bounds \eqref{eq:eq1} and \eqref{eq:eq2}, also in the case $n=1$ when $S_n=w_1^2$, $T_n=4w_1^2$, and
$$
a_k^2 = w_1^2\left\|y_{k+1} + \mathfrak{a}_k x_{k+1}-\alpha_k x_k + \mathfrak{c}_{k}x_{k+1}-\gamma_{k+2}x_{k+2}\right\|_X^2, \qquad k\in \Z.
$$
Thus $\nu(A^{\mathfrak{a},\mathfrak{c}}_{n,k}) \leq \eps+\eps'_n$.
\end{proof}

This is the $\pi$ version of Corollary \ref{cor:nubound}, which has an identical proof.
\begin{corollary}\label{cor:nb2}
Let $n\in\N$, suppose that $w_j\in\R$, for $j=1,...,n$, with
at least one $w_j$ non-zero, that $\mathfrak{a},\mathfrak{c}\in \ell^\infty(\Z,L(X))$, with $\|\mathfrak{a}\|_\infty \leq \|\alpha\|_\infty$ and $\|\mathfrak{c}\|_\infty \leq \|\gamma\|_\infty$, and that $A\in L(E)$ is tridiagonal.  Then
\begin{equation} \label{eq:tau_main}
\inf_{k\in \Z}\, \nu(A^{\mathfrak{a},\mathfrak{c}}_{n,k}) \leq \nu(A)+\eps'_n,
\end{equation}
where $\eps'_n$ is given by \eqref{eq:epsn2}.
\end{corollary}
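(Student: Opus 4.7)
My plan is to mimic exactly the short proof given for Corollary~\ref{cor:nubound}, which deduced an analogous bound from Proposition~\ref{prop:tau_main}: here the corresponding input is Proposition~\ref{prop:pi_main}, and everything else is essentially an $\eta$-argument at the definition of the lower norm.

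First I would fix an arbitrary $\eta>0$. By the definition of $\nu(A)=\inf_{\|x\|=1}\|Ax\|$, there exists $x\in E$ with $\|x\|=1$ such that $\|Ax\|\leq \nu(A)+\eta=:\eps$. Then the hypotheses of Proposition~\ref{prop:pi_main} are satisfied with this $\eps$ (the conditions on the weight vector $w$ and on $\mathfrak{a},\mathfrak{c}$ are exactly those assumed in the corollary), so the proposition yields some $k\in\Z$ with
\[
\nu\bigl(A^{\mathfrak{a},\mathfrak{c}}_{n,k}\bigr)\ \leq\ \eps+\eps'_n\ =\ \nu(A)+\eta+\eps'_n.
\]

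Taking the infimum over $k\in\Z$ on the left and then letting $\eta\downarrow 0$ gives $\inf_{k\in\Z}\nu(A^{\mathfrak{a},\mathfrak{c}}_{n,k})\leq \nu(A)+\eps'_n$, which is the claim. There is essentially no obstacle; the whole content is already packed into Proposition~\ref{prop:pi_main}. The only thing to verify is that one is allowed to apply that proposition when $\nu(A)=0$ and $\eta=0$ would give $\eps=0$, but since we only ever apply it with $\eps=\nu(A)+\eta>0$ and let $\eta\downarrow 0$ at the end, the strict positivity hypothesis $\eps>0$ in Proposition~\ref{prop:pi_main} is met throughout.
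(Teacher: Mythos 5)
Your proof is correct and is essentially identical to the paper's: the paper simply notes that Corollary~\ref{cor:nb2} is ``the $\pi$ version of Corollary~\ref{cor:nubound}, which has an identical proof,'' and that proof is precisely the $\eta$-argument you give, invoking Proposition~\ref{prop:pi_main} with $\eps = \nu(A)+\eta > 0$ and then letting $\eta\downarrow 0$.
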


To state the main theorem of this section we introduce a generalisation of the notation \eqref{eq:pidef}. For $n\in \N$ and $\mathfrak{a},\mathfrak{c}\in \ell^\infty(\Z,L(X))$, let
\begin{equation} \label{eq:pidefgen}
\pi^{n,\mathfrak{a},\mathfrak{c}}_{\eps}(A)\ :=\ \bigcup_{k\in\Z} \speps A^{\mathfrak{a},\mathfrak{c}}_{n,k}, \quad \eps>0,
\quad\textrm{and}\quad \Pi^{n,\mathfrak{a},\mathfrak{c}}_{\eps}(A)\ :=\ \overline{\bigcup_{k\in\Z}
\Speps A^{\mathfrak{a},\mathfrak{c}}_{n,k}}, \quad \eps\geq 0.
\end{equation}
Clearly, for $n\in \N$ and $t\in \T$,
\begin{equation} \label{eq:pisp}
\pi^{n,t}_{\eps}(A)\ =\ \pi^{n,\mathfrak{a},\mathfrak{c}}_{\eps}(A), \quad \eps>0,
\qquad\textrm{and}\qquad \Pi^{n,t}_{\eps}(A)\ =\ \Pi^{n,\mathfrak{a},\mathfrak{c}}_{\eps}(A), \quad \eps\geq 0,
\end{equation}
if $\mathfrak{a}$ and $\mathfrak{c}$ are given by \eqref{eq:special}. Clearly, for all $n\in \N$ and $\mathfrak{a},\mathfrak{c}\in \ell^\infty(\Z,L(X))$, $\pi^{n,\mathfrak{a},\mathfrak{c}}_{\eps}(A)$ is open and  $\Pi^{n,\mathfrak{a},\mathfrak{c}}_{\eps}(A)$ is closed. Further, defining
\begin{equation} \label{eq:muac}
\mu^{\mathfrak{a},\mathfrak{c}}_n(B)\  :=\ \inf_{k\in \Z} \, \mu(A^{\mathfrak{a},\mathfrak{c}}_{n,k})  = \inf_{k\in \Z} \, \min\Big(\nu\left(A^{\mathfrak{a},\mathfrak{c}}_{n,k}\right),\,\nu\left((A^{\mathfrak{a},\mathfrak{c}}_{n,k})^*\right)\Big),
\end{equation}
it holds, for $n\in \N$ and $\mathfrak{a},\mathfrak{c}\in \ell^\infty(\Z,L(X))$, by arguing as in the proof of Proposition \ref{prop:same}, that
\begin{equation} \label{eq:piAAlt}
\pi^{n,\mathfrak{a},\mathfrak{c}}_{\eps}(A)\ =\  \{\lambda\in \C:\mu^{\mathfrak{a},\mathfrak{c}}_n(A-\lambda I)<\eps\}, \qquad \eps>0, 
\end{equation} 
and that, if $X$ has the Globevnik property, also
\begin{equation} \label{eq:piAAlt2}
\overline{\pi^{n,\mathfrak{a},\mathfrak{c}}_{\eps}(A)}\ =\  \Pi^{n,\mathfrak{a},\mathfrak{c}}_{\eps}(A)\ =\  \{\lambda\in \C:\mu^{\mathfrak{a},\mathfrak{c}}_n(A-\lambda I)\leq \eps\}, \qquad \eps> 0.
\end{equation} 
Thus, if $X$ has the Globevnik property,
\begin{equation} \label{eq:piPi}
\Pi^{n,\mathfrak{a},\mathfrak{c}}_{\eps}(A)\ =\ \bigcap_{\eps'>\eps}\pi^{n,\mathfrak{a},\mathfrak{c}}_{\eps'}(A), \qquad \eps>0.
\end{equation}
In relation to the definition \eqref{eq:muac}, note that
\begin{equation} \label{eq:swap}
(A^{\mathfrak{a},\mathfrak{c}}_{n,k})^* = (A^*)^{\mathfrak{c},\mathfrak{a}}_{n,k}.
\end{equation}

\begin{theorem}\label{thm:Mpi}
Let $n\in\N$, and suppose that $w_j\geq 0$, for $j=1,\ldots,n$, with at least one $w_j$ non-zero, and that $\mathfrak{a},\mathfrak{c}\in \ell^\infty(\Z,L(X))$, with $\|\mathfrak{a}\|_\infty \leq \|\alpha\|_\infty$ and $\|\mathfrak{c}\|_\infty \leq \|\gamma\|_\infty$,
Then, where  $\eps'_n$ is given by \eqref{eq:epsn2},
\begin{equation} \label{eq:pgen}
\speps A\ \subset\ \pi^{n,\mathfrak{a},\mathfrak{c}}_{\eps+\eps'_n}(A), \quad \eps>0.  \qquad\textrm{Moreover,}
\qquad \Speps A \ \subset\ \Pi^{n,\mathfrak{a},\mathfrak{c}}_{\eps+\eps'_n}(A), \quad \eps\geq 0,
\end{equation}
provided $X$ has the Globevnik property.
\end{theorem}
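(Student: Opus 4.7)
The plan is to mirror the structure of the proof of Theorem~\ref{weighted_norm_two_refined}, substituting the $\pi$-method variant Corollary~\ref{cor:nb2} for Corollary~\ref{cor:nubound}. Starting from $\lambda\in\speps A$, I would use the characterisation in \eqref{eq:spepspert} to split into two cases: either $\nu(A-\lambda I)<\eps$ or $\nu(A^{*}-\lambda I)<\eps$.

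In the first case, Corollary~\ref{cor:nb2} applied to $A-\lambda I$ gives $\inf_{k}\nu\bigl((A-\lambda I)^{\mathfrak{a},\mathfrak{c}}_{n,k}\bigr)<\eps+\eps'_n$. Since the periodising modification only touches the two corner entries of the submatrix, $(A-\lambda I)^{\mathfrak{a},\mathfrak{c}}_{n,k}=A^{\mathfrak{a},\mathfrak{c}}_{n,k}-\lambda I_n$, whence $\inf_k\nu(A^{\mathfrak{a},\mathfrak{c}}_{n,k}-\lambda I_n)<\eps+\eps'_n$.

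The second case is handled by applying Corollary~\ref{cor:nb2} to $A^{*}-\lambda I$, which is tridiagonal with diagonals $\alpha'_i=\gamma^{*}_{i+1}$, $\beta'_i=\beta^{*}_i$, $\gamma'_i=\alpha^{*}_{i-1}$; in particular $\|\alpha'\|_\infty=\|\gamma\|_\infty$ and $\|\gamma'\|_\infty=\|\alpha\|_\infty$, so the $\pi$-method constant $\eps'_n$ produced by \eqref{eq:epsn2} for $A^{*}$ coincides with the one for $A$. As periodisation data for $A^{*}$ I would take the entry-wise adjoints $\mathfrak{a}':=\mathfrak{c}^{*}$ and $\mathfrak{c}':=\mathfrak{a}^{*}$, which satisfy the required bounds $\|\mathfrak{a}'\|_\infty\leq\|\alpha'\|_\infty$ and $\|\mathfrak{c}'\|_\infty\leq\|\gamma'\|_\infty$. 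Corollary~\ref{cor:nb2} then yields $\inf_k\nu\bigl((A^{*})^{\mathfrak{c}^{*},\mathfrak{a}^{*}}_{n,k}-\lambda I_n\bigr)<\eps+\eps'_n$, and the identity \eqref{eq:swap} identifies $(A^{*})^{\mathfrak{c}^{*},\mathfrak{a}^{*}}_{n,k}$ with $(A^{\mathfrak{a},\mathfrak{c}}_{n,k})^{*}$.

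Combining the two cases and recalling \eqref{eq:muac} gives $\mu^{\mathfrak{a},\mathfrak{c}}_n(A-\lambda I)<\eps+\eps'_n$, so by the characterisation \eqref{eq:piAAlt} we obtain $\lambda\in\pi^{n,\mathfrak{a},\mathfrak{c}}_{\eps+\eps'_n}(A)$, proving the open inclusion. For the closed inclusion when $X$ has Globevnik's property, I would apply the open inclusion with $\eps'>\eps$ in place of $\eps$ and intersect over all such $\eps'$: the left hand side contracts to $\Speps A$ by \eqref{eq:oc}, and the right hand side contracts to $\Pi^{n,\mathfrak{a},\mathfrak{c}}_{\eps+\eps'_n}(A)$ by \eqref{eq:piPi}. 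There is no serious obstacle; the only point requiring care is the adjoint bookkeeping in the second case, namely verifying that the Corollary applied to $A^{*}$ with the swapped choice $(\mathfrak{c}^{*},\mathfrak{a}^{*})$, together with \eqref{eq:swap}, produces precisely the adjoints of the same matrices $A^{\mathfrak{a},\mathfrak{c}}_{n,k}$ that featured in the first case.
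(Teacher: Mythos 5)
Your proposal is correct and follows essentially the same route as the paper's own proof: the case split via \eqref{eq:spepspert}, the application of Corollary \ref{cor:nb2} to $A-\lambda I$ and (with the swapped, adjointed corner data and the identity \eqref{eq:swap}) to $A^*-\lambda I$, and the passage to the closed inclusion by intersecting over $\eps'>\eps$ using \eqref{eq:oc} and \eqref{eq:piPi}. The only difference is that you spell out the adjoint bookkeeping for $A^*$ more explicitly than the paper does, which is a welcome clarification rather than a deviation.
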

\begin{proof}
To see that the first inclusion in \eqref{eq:pgen} holds, suppose that $\lambda\in\speps A$. Then, by \eqref{eq:spepspert},  either $\nu(A-\lambda I)<\eps$ or  $\nu(A^*-\lambda I)<\eps$. By applications of Corollary \ref{cor:nb2}, it follows, in the case $\nu(A-\lambda I)<\eps$, that $\nu(A^{\mathfrak{a},\mathfrak{c}}_{n,k}-\lambda I_n)<\eps+\eps_n$, for some $k\in \Z$, and, in the case $\nu(A^*-\lambda I)<\eps$, recalling \eqref{eq:swap}, that  $\nu((A^{\mathfrak{a},\mathfrak{c}}_{n,k})^*-\lambda I_n)=\nu((A^*)^{\mathfrak{c},\mathfrak{a}}_{n,k}-\lambda I_n)<\eps+\eps_n$, for some $k\in \Z$. Thus, recalling the notation \eqref{eq:invNorm},  $\mu(A^{\mathfrak{a},\mathfrak{c}}_{n,k}-\lambda I_n)<\eps+\eps_n$, for some $k\in \Z$, so that $\lambda \in \specn_{\eps+\eps_n} A^{\mathfrak{a},\mathfrak{c}}_{n,k}$ by \eqref{eq:spepsnu}, so that $\lambda \in \pi^{n,\mathfrak{a},\mathfrak{c}}_{\eps+\eps_n}(A)$. The second inclusion in \eqref{eq:pgen} follows from the first inclusion by taking intersections, noting \eqref{eq:oc} and \eqref{eq:piPi}.
\end{proof}

In particular, Theorem \ref{thm:Mpi} holds with $\mathfrak{a}$ and $\mathfrak{c}$ given by \eqref{eq:special}, giving the following corollary.

\begin{corollary}\label{weighted_norm_two_refined_circulant}
Let $n\in\N$, and suppose that $w_j\geq 0$, for $j=1,\ldots,n$, with at least one $w_j$ non-zero, and that $t\in \T$.
Then the second inclusion of \eqref{incl:met1*} holds with $\eps'_n$ given by \eqref{eq:epsn2}. If $X$ has the Globevnik property, then also the first  inclusion of \eqref{incl:met1*} holds with $\eps'_n$ given by \eqref{eq:epsn2}.
\end{corollary}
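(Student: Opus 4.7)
The plan is to observe that this corollary is a direct specialisation of Theorem \ref{thm:Mpi}, obtained by choosing the operator-valued sequences $\mathfrak{a}, \mathfrak{c} \in \ell^\infty(\Z, L(X))$ in the special form \eqref{eq:special} that recovers the circulant periodisation parametrised by $t \in \T$.

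Concretely, given $t \in \T$, I would set $\mathfrak{a}_k := t\alpha_k$ and $\mathfrak{c}_k := \bar t\,\gamma_{k+n+1}$ for $k \in \Z$. Since $|t|=1$, we immediately have $\|\mathfrak{a}\|_\infty = \|\alpha\|_\infty$ and $\|\mathfrak{c}\|_\infty \leq \|\gamma\|_\infty$, so the hypotheses of Theorem \ref{thm:Mpi} are satisfied with these choices and with the same weights $w_1,\ldots,w_n$ assumed in the corollary. Moreover, by \eqref{eq:special}, the finite matrices coincide: $A^{\per,t}_{n,k} = A^{\mathfrak{a},\mathfrak{c}}_{n,k}$ for every $k\in\Z$ and every $n\in\N$. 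Consequently, by \eqref{eq:pisp}, the sets defined in \eqref{eq:pidef} and \eqref{eq:pidefgen} agree, i.e.\ $\pi^{n,t}_\eps(A) = \pi^{n,\mathfrak{a},\mathfrak{c}}_\eps(A)$ for $\eps > 0$ and $\Pi^{n,t}_\eps(A) = \Pi^{n,\mathfrak{a},\mathfrak{c}}_\eps(A)$ for $\eps \geq 0$.

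With these identifications in hand, the second inclusion of \eqref{incl:met1*} (valid for every Banach space $X$) follows at once from the first inclusion of \eqref{eq:pgen} in Theorem \ref{thm:Mpi}, while the first inclusion of \eqref{incl:met1*} (valid whenever $X$ has the Globevnik property) follows from the second inclusion of \eqref{eq:pgen}. There is no real obstacle to overcome here: the substance of the proof is entirely contained in Theorem \ref{thm:Mpi} and the bookkeeping identities \eqref{eq:special} and \eqref{eq:pisp}; the corollary is just the observation that the circulant periodisation $A_{n,k}^{\per,t}$ is an instance of the generalised construction $A_{n,k}^{\mathfrak{a},\mathfrak{c}}$ that preserves the bounds on corner-entry norms needed to apply Proposition~\ref{prop:pi_main}.
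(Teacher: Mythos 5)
Your proposal is correct and follows exactly the same route as the paper: the paper introduces Corollary~\ref{weighted_norm_two_refined_circulant} by the one-line observation that Theorem~\ref{thm:Mpi} holds with $\mathfrak{a}$ and $\mathfrak{c}$ given by \eqref{eq:special}, and then invokes \eqref{eq:pisp} to translate $\pi^{n,\mathfrak{a},\mathfrak{c}}_\eps$, $\Pi^{n,\mathfrak{a},\mathfrak{c}}_\eps$ into $\pi^{n,t}_\eps$, $\Pi^{n,t}_\eps$. Your check that $|t|=1$ gives $\|\mathfrak{a}\|_\infty = \|\alpha\|_\infty$ and $\|\mathfrak{c}\|_\infty \leq \|\gamma\|_\infty$ (in fact with equality, since $k\mapsto k+n+1$ is a bijection of $\Z$) is exactly the verification that makes the hypotheses of Theorem~\ref{thm:Mpi} hold.
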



As we did above for the $\tau$ method, we will now minimize the penalty term $\eps'_n$ as a
function, \eqref{eq:epsn2}, of the weight vector
$w=(w_1,...,w_n)^T$. In the cases $n=1$ and $2$ it is easy to see that $\eps_n$ is minimized by taking $w_1=\cdots=w_n=1$, giving $\eps_1=2(\|\alpha\|_\infty+\|\gamma\|_\infty)$ and $\eps_2=\sqrt{2}(\|\alpha\|_\infty+\|\gamma\|_\infty)$. For $n\geq 3$, from the definitions of $S_{n}$ and $T_{n}$ in
Theorem \ref{weighted_norm_two_refined_circulant}, we know that
$S_{n} = \|w\|_{2}^{2}$ and $T_{n} = \|B_{n}w\|_{2}^{2}$, where
\[
B_{n}=\begin{pmatrix}
1   & -1     &            &          &   \\
    & 1      & -1         &          &   \\
    &        & \ddots     & \ddots   &  \\
    &        &            & 1        & -1   \\
1   &        &            &          & 1
\end{pmatrix}
\quad\textrm{so that}\quad B_{n}^TB_{n}=\begin{pmatrix}
2   & -1       &           &    & 1  \\
 -1 & 2        &    -1     &    &   \\
    & \ddots   & \ddots    & \ddots   &  \\
    &          & -1        &2    & -1  \\
1   &          &           &-1   & 2
\end{pmatrix}.
\]
Let us seek
\begin{equation} \label{eq:infepsn}
\inf_{w\in \R^n, \,w\neq 0}\sqrt{\frac{T_{n}}{S_{n}}}\ =\ \inf_{w\in \R^n, \,w\neq
0}\sqrt{\frac{\|B_{n}w\|_{2}^2}{\|w\|_{2}^2}}\ =\ \min_{w\in \R^n, \,\|w\|=1}\sqrt{\frac{w^{T}B_n^{T}B_nw}{w^{T}w}}\ =\ \sqrt{\lambda_{\sf
min}(B^{T}_{n}B_{n})},
\end{equation}
noting that a vector $w$ achieves this minimum if and only if it is an eigenvector of $B_n^TB_n$ corresponding to the eigenvalue $\lambda_{\sf
min}(B^{T}_{n}B_{n})$. It is clear that this minimum is strictly positive and that (taking $w_1=w_2=\cdots = w_n$) it is no larger than $2/\sqrt{n}$. Further, $\lambda\in (0,4)$ is an eigenvalue of $B_n^TB_n$ with
eigenvector $v=(v_1,...,v_n)^T$ if and only if
\begin{equation} \label{eq:lambda1}
\lambda \ =\ 2(1-\cos\theta)\ =\ 4\sin^{2}\frac{\theta}{2}, \quad \mbox{for some } \theta\in (0,\pi),
\end{equation}
and
\begin{eqnarray}
\label{Feb2009_4} v_{n}+(2-\lambda)v_{1}-v_{2}&=&0,\\
\label{Feb2009_3} -v_{i+1}+(2-\lambda)v_{i}-v_{i-1}&=&0,\qquad
\textrm{ for }i= 2,\ldots,n-1,\\
\label{Feb2009_2} -v_{n-1}+(2-\lambda)v_{n}+v_{1}&=&0.
\end{eqnarray}
Note that if $(\lambda,v)$ satisfies \eqref{Feb2009_4}-\eqref{Feb2009_2} then so does $(\lambda,\tilde v)$, where $\tilde v:= (v_n,\ldots,v_1)$. Further, $(\lambda,v^\dag)$, where $v^\dag:=v+\tilde v$ satisfies $v^\dag_j= v^\dag_{n+1-j}$, $j=1,...,n$.  Thus, to compute $\lambda$ and a corresponding eigenvector $v$ from \eqref{Feb2009_4}-\eqref{Feb2009_2}, we may assume that
\begin{equation} \label{eq:symm}
v_j= v_{n+1-j}, \quad j=1,...,n.
\end{equation}
 As discussed around \eqref{March2009_3}, (\ref{Feb2009_3}) holds if and only if $v_j$ is a linear combination of $\cos((j-1)\theta)$ and $\sin((j-1)\theta)$. To within multiplication by a constant, the linear combination that satisfies \eqref{eq:symm} is
\begin{equation} \label{eq:vsymm}
v_{j}\ =\ \sin((j-1)\theta) + \sin((n-j)\theta),\qquad
j=1,2,\ldots,n,
\end{equation}
and this satisfies \eqref{Feb2009_4} and \eqref{Feb2009_2} if and only if
$F_\pi(\theta)=0$ where, for $t\in \R$,
\begin{eqnarray*}
F_\pi(t) &:=& \sin((n-1)t) + 2\cos(t)\sin((n-1)t)-\sin(t)-\sin((n-2)t)\\
& =& \sin((n-1)t)+\sin(nt)-\sin(t) = 2\sin((n-2)t/2)\cos(nt/2)+\sin(nt).
\end{eqnarray*}
From this last expression for $F_\pi(t)$ it is clear that $F_\pi(\pi/n)=0$ and that $F_\pi(t)>0$ for $0<t<\pi/n$. Thus $\lambda_{\sf min}(B^{T}_{n}B_{n})$ is given by \eqref{eq:lambda1} with $\theta=\frac{\pi}{n}$. Since the corresponding eigenvector given by \eqref{eq:vsymm} satisfies $v_j>0$, $j=1,\ldots, n$, the following corollary  follows from
Theorem \ref{weighted_norm_two_refined_circulant} and
\eqref{eq:infepsn} and the observations above about the cases $n=1$ and $2$.

\begin{corollary}\label{fn_circulant_corollary}
The conclusions of Theorem \ref{thm:Mpi} hold with 
\begin{equation} \label{eq:epsn_method1*}
\eps'_n\ :=\
2(\|\alpha\|_{\infty}+\|\gamma\|_{\infty})\sin\frac{\pi}{2n}.
\end{equation}
In particular,  for $n\in \N$, the second inclusion of \eqref{incl:met1*}
holds with $\eps'_n$ given by \eqref{eq:epsn_method1*}, as does the first inclusion of \eqref{incl:met1*} if $X$ has Globevnik's property.
\end{corollary}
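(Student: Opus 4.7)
My plan is to deduce the corollary from Theorem~\ref{thm:Mpi} by making the optimal choice of the weight vector $w=(w_1,\ldots,w_n)^T$. First, to match the $A^{\per,t}_{n,k}$ appearing in \eqref{incl:met1*} with the $A^{\mathfrak{a},\mathfrak{c}}_{n,k}$ appearing in Theorem~\ref{thm:Mpi}, I would set $\mathfrak{a}_k:=t\alpha_k$ and $\mathfrak{c}_k:=\bar t\gamma_{k+n+1}$ for $k\in\Z$, as in \eqref{eq:special}. Since $|t|=1$, this choice satisfies $\|\mathfrak{a}\|_\infty\le\|\alpha\|_\infty$ and $\|\mathfrak{c}\|_\infty\le\|\gamma\|_\infty$, so Theorem~\ref{thm:Mpi} is applicable; and by \eqref{eq:pisp} the resulting sets $\pi^{n,\mathfrak{a},\mathfrak{c}}_\eps(A)$, $\Pi^{n,\mathfrak{a},\mathfrak{c}}_\eps(A)$ coincide with $\pi^{n,t}_\eps(A)$, $\Pi^{n,t}_\eps(A)$. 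Thus Theorem~\ref{thm:Mpi} immediately gives the inclusions of \eqref{incl:met1*} with the weight-dependent value \eqref{eq:epsn2} for $\eps'_n$, and the task is reduced to minimising $\sqrt{T_n/S_n}$ over admissible weight vectors $w\in\R^n_{\ge 0}\setminus\{0\}$.

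Next I would recognise, as suggested by the notation set up just before the corollary, that $S_n=\|w\|_2^2$ and $T_n=\|B_nw\|_2^2$, where $B_n$ is the displayed $n\times n$ cyclic difference matrix. Dropping temporarily the non-negativity constraint, the Rayleigh quotient principle gives
\[
\inf_{w\ne 0}\sqrt{T_n/S_n}\ =\ \sqrt{\lambda_{\min}(B_n^TB_n)}.
\]
For $n\ge 3$ I would compute this smallest eigenvalue explicitly using the trigonometric ansatz $v_j=\sin((j-1)\theta)+\sin((n-j)\theta)$ together with $\lambda=4\sin^2(\theta/2)$, $\theta\in(0,\pi)$. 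This ansatz automatically solves the interior recurrence \eqref{Feb2009_3} and exploits the palindromic symmetry $v_j=v_{n+1-j}$ that one may assume without loss of generality because $B_n^TB_n$ commutes with the order-reversing involution. Substituting into the two cyclic boundary equations \eqref{Feb2009_4}, \eqref{Feb2009_2} collapses them to a single trigonometric equation $F_\pi(\theta)=0$, which I expect to simplify, via sum-to-product, to $2\sin((n{-}2)\theta/2)\cos(n\theta/2)+\sin(n\theta)=0$. A short analysis shows the smallest positive root is $\theta=\pi/n$: on $(0,\pi/n)$ each summand is non-negative and $\sin(n\theta)>0$, while $\theta=\pi/n$ is a root by direct inspection. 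This yields $\lambda_{\min}(B_n^TB_n)=4\sin^2(\pi/(2n))$ and hence the claimed value $\eps'_n=2(\|\alpha\|_\infty+\|\gamma\|_\infty)\sin(\pi/(2n))$. The small cases $n=1,2$ I would treat separately by taking $w_1=\cdots=w_n=1$, checking that the value of \eqref{eq:epsn2} agrees with \eqref{eq:epsn_method1*}.

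The main obstacle I anticipate is the non-negativity constraint $w_j\ge 0$ demanded by Theorem~\ref{thm:Mpi}: the Rayleigh quotient minimisation is a priori over all non-zero $w$, and one must verify that the infimum is attained on the non-negative cone. This is precisely the point at which the particular ansatz pays off: evaluating the eigenvector \eqref{eq:vsymm} at $\theta=\pi/n$ gives $v_j=\sin((j{-}1)\pi/n)+\sin((n{-}j)\pi/n)$, which is a sum of two non-negative sines for every $j\in\{1,\ldots,n\}$, hence non-negative (strictly positive for $j\in\{1,\ldots,n\}$ when $n\ge 2$). With this verification in hand, plugging this weight vector into Theorem~\ref{thm:Mpi} closes the argument and yields both inclusions in \eqref{incl:met1*}, the second inclusion requiring only the bare hypothesis of the theorem and the first additionally requiring Globevnik's property on $X$.
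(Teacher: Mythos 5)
Your proof is correct and follows essentially the same route as the paper: invoke Theorem~\ref{thm:Mpi} with $\mathfrak{a},\mathfrak{c}$ as in \eqref{eq:special}, reduce to minimising $T_n/S_n$, identify it as the Rayleigh quotient of $B_n^TB_n$, use the palindromic trigonometric ansatz to locate the smallest eigenvalue $4\sin^2(\pi/(2n))$ at $\theta=\pi/n$, verify the corresponding eigenvector has non-negative entries, and handle $n=1,2$ directly. The paper makes exactly these moves, including the same verification that the optimal weight vector is non-negative and the same reasoning that $F_\pi$ is positive on $(0,\pi/n)$.
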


Again, as in Corollary~\ref{minimum_weighted_norm_two}, the example
of the shift operator (see  Example \ref{ex:shift} and \S\ref{sec:shift} below) shows that this value of $\eps'_n$ is the best
possible.

\section{The $\tau_1$ method: one-sided truncations} \label{sec:tau1}
\subsection{Proof of the inclusions \eqref{incl:met2}} \label{sec:tau1a}
In this section, which follows the patterns of \S\ref{sec:tau} and \S\ref{sec:piproof}, we establish the inclusions \eqref{incl:met2}. 

\begin{proposition} \label{prop:gam_main}
Let $\eps>0$ and $n\in \N$, suppose that $w_j\in \R$, for $j=1,\ldots,n$, with at least one $w_j$ non-zero, and that $A\in L(E)$ is tridiagonal and $\|Ax\|\leq \eps$, for some $x\in E$ with $\|x\|=1$. Then, for some $k\in \Z$, where $A^+_{n,k}$ is defined by \eqref{eq:An+}, it holds that
\begin{equation} \label{eq:gam_main}
\nu(A^+_{n,k}) \leq \eps+\eps''_n,
\end{equation} 
where
\begin{equation} \label{eq:eps''}
\eps''_n\ :=\ \left(
\left\|\alpha\right\|_{\infty}+\left\|\gamma\right\|_{\infty}\right)\sqrt{\displaystyle\frac{T_{n}}{S_{n}}}
\end{equation}
with
\[
S_{n}\ =\ \sum_{j=1}^{n}w_{j}^{2}\qquad\textrm{and}\qquad T_{n}\ =\
w_{1}^{2}+w_{n}^{2}+\sum_{j=1}^{n-1}\left(w_{i+1}-w_{i}\right)^{2}.
\]
\end{proposition}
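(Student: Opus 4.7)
The plan is to adapt the template established by Propositions \ref{prop:tau_main} and \ref{prop:pi_main}: set $y = Ax$ with $\|y\| \leq \varepsilon$, form the weighted window vector $\widetilde{x}_{n,k} := (w_1 x_{k+1}, \ldots, w_n x_{k+n})^T$, put $a_k := \|A^+_{n,k} \widetilde{x}_{n,k}\|$ and $b_k := \|\widetilde{x}_{n,k}\|$, and show that $\sum_{k \in \Z} a_k^2 \leq (\varepsilon + \varepsilon''_n)^2 \sum_{k \in \Z} b_k^2$, which will force $a_k \leq (\varepsilon + \varepsilon''_n) b_k$ for some $k \in \Z$ with $b_k > 0$, yielding $\nu(A^+_{n,k}) \leq \varepsilon + \varepsilon''_n$.

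The key new feature, compared to the $\tau$ and $\pi$ methods, is that $A^+_{n,k} \widetilde{x}_{n,k}$ is an $(n+2)$-vector whose components are naturally indexed by $j = 0, 1, \ldots, n+1$: the top ($j=0$) component is $w_1 \gamma_{k+1} x_{k+1}$ and the bottom ($j=n+1$) component is $w_n \alpha_{k+n} x_{k+n}$, while the middle components ($1 \leq j \leq n$) are identical to those of $A_{n,k} \widetilde{x}_{n,k}$. The main idea is to extend the convention on the weights to $w_{-1} = w_0 = w_{n+1} = w_{n+2} := 0$ and then to verify that \emph{for every} $j \in \{0, 1, \ldots, n+1\}$ one has the unified identity
\[
(A^+_{n,k} \widetilde{x}_{n,k})_{j}\ =\ (w_{j-1} - w_{j})\alpha_{k+j-1} x_{k+j-1}\ +\ w_j\, y_{k+j}\ +\ (w_{j+1} - w_j)\gamma_{k+j+1} x_{k+j+1},
\]
obtained, as in the proof of Proposition \ref{prop:tau_main}, by substituting the tridiagonal relation $y_{k+j} = \alpha_{k+j-1} x_{k+j-1} + \beta_{k+j} x_{k+j} + \gamma_{k+j+1} x_{k+j+1}$ in the middle entries, and observing that the extended convention turns the boundary entries $j=0$ and $j=n+1$ into correct special cases (with the $y_{k+j}$ term vanishing due to $w_0 = w_{n+1} = 0$).

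From this unified identity, and setting $E_j := |w_{j+1}-w_j|$ for $j = -1, 0, \ldots, n+1$, the triangle inequality gives, for each $j$,
\[
\|(A^+_{n,k}\widetilde{x}_{n,k})_j\|_X\ \leq\ E_{j-1}\|\alpha\|_\infty \|x_{k+j-1}\|_X\ +\ w_j \|y_{k+j}\|_X\ +\ E_j \|\gamma\|_\infty \|x_{k+j+1}\|_X,
\]
and then two applications of Lemma \ref{Simon_theta_inequality} (with parameters $\theta, \phi > 0$) bound $a_k^2$ by a sum of squares, exactly as in the proofs of Propositions \ref{prop:tau_main} and \ref{prop:pi_main}. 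Summing over $k \in \Z$ and using $\|x\| = \|y\| \leq \varepsilon$ (well, $\|x\|=1$, $\|y\|\leq\varepsilon$), together with the identity
\[
\sum_{j=0}^{n} E_j^2\ =\ w_1^2\ +\ \sum_{j=1}^{n-1}(w_{j+1}-w_j)^2\ +\ w_n^2\ =\ T_n
\]
(which is where the two boundary terms $w_1^2$ and $w_n^2$ in the definition of $T_n$ come from, and which replaces $T_n^-$ and $T_n^+$ of the $\tau$ method), one obtains
\[
\sum_{k \in \Z} a_k^2\ \leq\ (1+\theta)S_n \|y\|^2\ +\ (1+\theta^{-1}) T_n \bigl[(1+\phi)\|\alpha\|_\infty^2 + (1+\phi^{-1}) \|\gamma\|_\infty^2\bigr].
\]
Optimising first over $\phi > 0$ produces the factor $(\|\alpha\|_\infty + \|\gamma\|_\infty)^2$ and then optimising over $\theta > 0$, together with $\sum_k b_k^2 = S_n$ and $\|y\| \leq \varepsilon$, yields $\sum_k a_k^2 \leq (\varepsilon + \varepsilon''_n)^2 \sum_k b_k^2$; concluding is then identical to the end of the proof of Proposition \ref{prop:tau_main}.

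The main subtlety, rather than obstacle, is the careful bookkeeping in step one: checking that the extended weight convention really does produce the same linear-combination formula at the boundary rows $j = 0$ and $j = n+1$ as in the interior, so that the entire estimate can proceed inside a single sum $\sum_{j=0}^{n+1}$; once this is in place the computation is mechanically parallel to the $\tau$ and $\pi$ cases, with $T_n^\pm$ replaced by $T_n$ and the off-diagonal penalty $(1+\phi)\|\alpha\|^2 + (1+\phi^{-1})\|\gamma\|^2$ handled by exactly the same use of Lemma \ref{Simon_theta_inequality}.
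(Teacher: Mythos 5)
Your proposal is correct and follows essentially the same route as the paper's proof: same window vector $\widetilde{x}_{n,k}$, same extended weight convention $w_{-1}=w_0=w_{n+1}=w_{n+2}:=0$, same two-fold application of Lemma \ref{Simon_theta_inequality}, and same optimisation over $\theta$ and $\phi$, with $T_n$ arising exactly as you describe from $\sum_{j=0}^{n}E_j^2$. The only cosmetic difference is that the paper first writes the two boundary terms of $a_k^2$ (rows $j=0$ and $j=n+1$) separately before merging everything into a single sum $\sum_{j=0}^{n+1}$, whereas you go directly to the unified identity; the content is identical.
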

\begin{proof}
Let $y=Ax$, so $\|y\|\leq \eps$. We argue as in the proof of Proposition \ref{prop:tau_main}, using the notation $E_j$, defined in \eqref{E1},  for $j=-1,0,\ldots,n,n+1$, with $w_{-1}:=w_0:=w_{n+1}:= w_{n+2}:= 0$.
Where
$\tilde{x}_{n,k}:=(w_{1}x_{k+1},w_{2}x_{k+2},\ldots,w_{n}x_{k+n})^{T}$, put
$a_{k}:=\|A^{+}_{n,k}\tilde x_{n,k}\|$ and $b_k := \|\tilde x_{n,k}\|$, for
$k\in\Z$.
As in the proof of Proposition \ref{prop:tau_main}, we have that $\sum_{k\in \Z} b_k^2 = S_n$. Further, cf.~\eqref{eq:aj} and \eqref{eq:ss},
\begin{eqnarray*}
a_k^2&=&  \|w_1\gamma_{k+1}x_{k+1}\|_X^2+ \|w_n\alpha_{k+n}x_{k+n}\|_X^2\\
& & ~\rule{5mm}{0pt} + \sum_{j=1}^{n}\|\alpha_{j+k-1}w_{j-1}x_{j+k-1}\ +\ \beta_{j+k}w_jx_{j+k}\ +\ \gamma_{j+k+1}w_{j+1}x_{j+k+1}\|_X^{2}
\\
&\leq& \sum_{j=0}^{n+1}\left(w_j\|y_{j+k}\|_X +
E_{j-1}\|\alpha\|_\infty \|x_{j+k-1}\|_X+E_j\|\gamma\|_\infty \|x_{j+k+1}\|_X\right)^{2} .
\end{eqnarray*}
So, for all $\theta >0$ and $\phi>0$, using Lemma
\ref{Simon_theta_inequality} twice,
\begin{eqnarray*}
a_{k}^{2}
&\leq&\sum_{j=0}^{n+1}\Big[(1+\theta)\big( w_j\|y_{j+k}\|_X\big)^{2}+(1+\theta^{-1})\cdot \\
& & \hspace{1ex} \Big( (1+\phi) \big|\alpha_{i-1}\big|^{2} \big(E_{j-1}\|\alpha\|_\infty \|x_{j+k-1}\|_X\big)^{2} + (1+\phi^{-1})\big(E_j\|\gamma\|_\infty \|x_{j+k+1}\|_X\big)^2\Big)\Big]
\end{eqnarray*}
Thus, for all $\theta,\phi>0$,
\[
\sum_{k\in\Z}a_{k}^{2}\ \leq\
(1+\theta)S_n\|y\|^{2}
+(1+\theta^{-1})\left[(1+\phi)T_n\|\alpha\|_{\infty}^{2}
+(1+\phi^{-1})T_n\|\gamma\|_{\infty}^{2}\right].
\]
Arguing as in the proof of Proposition \ref{prop:tau_main} (cf.~\eqref{SB} and \eqref{sb}) it follows that
\[
\sum_{k\in\Z}a_{k}^{2}
\leq\left(\eps+\eps''_n\right)^{2}\sum_{k\in\Z}b_{k}^{2},
\]
so that $a_{k}\leq (\eps+\eps''_n)b_{k}$, for some $k\in\Z$ with $b_k\neq 0$, which implies that $\nu(A_{n,k}^+)\leq \eps+\eps''_n$.
\end{proof}

The above result has the following straightforward corollary (cf.~Corollary \ref{cor:nubound}).

\begin{corollary} \label{cor:Gbound}
Let $n\in\N$, suppose that $w_j\in\R$, for $j=1,...,n$, with
at least one $w_j$ non-zero, and that $A\in L(E)$ is tridiagonal.  Then
\begin{equation} \label{eq:infonG}
\inf_{k\in \Z} \nu(A^+_{n,k})\ \leq\ \nu(A)+\eps''_n,
\end{equation} 
where $\eps''_n$ is given by \eqref{eq:eps''}.
\end{corollary}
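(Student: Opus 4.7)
The plan is to mimic the passage from Proposition~\ref{prop:tau_main} to Corollary~\ref{cor:nubound}, and from Proposition~\ref{prop:pi_main} to Corollary~\ref{cor:nb2}; nothing substantively new is needed, since Proposition~\ref{prop:gam_main} is precisely the $\tau_1$-analogue of those propositions. The only quantity we have available to link $\nu(A)$ to $\nu(A_{n,k}^+)$ is the infimum defining $\nu(A)$, and we exploit this via an $\eta$-approximate minimiser.

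Concretely, let $\eta>0$ be arbitrary. By definition of the lower norm, $\nu(A)=\inf_{\|x\|=1}\|Ax\|$, so there exists $x\in E$ with $\|x\|=1$ and
\[
\|Ax\|\ \leq\ \nu(A)+\eta.
\]
Since $\nu(A)+\eta>0$, we may apply Proposition~\ref{prop:gam_main} with $\eps:=\nu(A)+\eta$ (and the same weight vector $w$ as in the statement of the corollary). The conclusion of that proposition yields a $k\in\Z$ with
\[
\nu(A^+_{n,k})\ \leq\ \eps+\eps''_n\ =\ \nu(A)+\eta+\eps''_n,
\]
and taking the infimum over $k$ gives
\[
\inf_{k\in\Z}\nu(A^+_{n,k})\ \leq\ \nu(A)+\eta+\eps''_n.
\]

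As the left-hand side is independent of $\eta$, letting $\eta\to 0^+$ produces the stated inequality. I do not anticipate any obstacles: the argument is a verbatim transcription of the proofs of Corollaries~\ref{cor:nubound} and \ref{cor:nb2}, and the only small point worth flagging is that Proposition~\ref{prop:gam_main} is stated for $\eps>0$, which is automatic here because $\eta>0$ (so we never need to consider the pathological case $\nu(A)=0$ separately).
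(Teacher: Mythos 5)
Your argument is correct and is essentially the paper's own: the paper proves this corollary exactly as it proves Corollary \ref{cor:nubound}, namely by picking an $\eta$-approximate minimiser $x$ of the lower norm, applying the relevant proposition (here Proposition \ref{prop:gam_main}) with $\eps = \nu(A)+\eta$, and letting $\eta\to 0^+$. Your remark that $\eps>0$ is automatic because $\eta>0$ is also consistent with how the paper handles this point.
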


The  proof of the following main theorem is  very similar to those of Theorems
\ref{weighted_norm_two_refined} and
\ref{thm:Mpi}.
\begin{theorem}\label{thm_method2}
Suppose that $n\in\N$ and $w_j\in \R$, for $j=1,\ldots,n$, with at least one $w_j$ non-zero. Then \eqref{incl:met2} holds with $\eps''_n$ given by \eqref{eq:eps''}.
\end{theorem}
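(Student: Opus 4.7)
The plan is to obtain Theorem \ref{thm_method2} from Proposition \ref{prop:gam_main}, or more precisely from its consequence Corollary \ref{cor:Gbound}, by exactly the same argument template used to derive Theorem \ref{weighted_norm_two_refined} from Corollary \ref{cor:nubound}, and Theorem \ref{thm:Mpi} from Corollary \ref{cor:nb2}. Since the left-hand inclusions $\Gamma^n_\eps(A)\subset\Speps A$ (for $\eps\geq 0$) and $\gamma^n_\eps(A)\subset\speps A$ (for $\eps>0$) have already been established as immediate consequences of \eqref{eq:lnProp} and the definitions, only the right-hand inclusions remain.

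First I would treat the open pseudospectrum. Fix $\eps>0$ and suppose $\lambda\in\speps A$. By \eqref{eq:spepspert} and \eqref{eq:invNorm}, either $\nu(A-\lambda I)<\eps$ or $\nu(A^*-\lambda I)<\eps$. Applying Corollary \ref{cor:Gbound} to $A-\lambda I$ in the first case, and to $A^*-\lambda I$ in the second case (noting that the tridiagonal structure is inherited by $A^*$ after reindexing, and that the role of $\|\alpha\|_\infty$ and $\|\gamma\|_\infty$ simply swaps, leaving $\eps''_n$ unchanged), we obtain some $k\in\Z$ with either $\nu((A-\lambda I)^+_{n,k})<\eps+\eps''_n$ or $\nu((A^*-\lambda I)^+_{n,k})<\eps+\eps''_n$. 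Strict inequality here is secured by picking an $\eta>0$ small enough so that $\nu(A-\lambda I)+\eta+\eps''_n<\eps+\eps''_n$, then using the definition of the infimum to choose a suitable $k$. Using the identifications $\nu((A-\lambda I)|_{E_{n,k}})=\nu(A^+_{n,k}-\lambda I^+_n)$ and $\nu((A-\lambda I)^*|_{E_{n,k}})=\nu((A^*)^+_{n,k}-\lambda I^+_n)$ recorded just above \eqref{eq:munmat}, we conclude
\[
\mu_n(A-\lambda I)\ \leq\ \min\bigl(\nu((A-\lambda I)|_{E_{n,k}}),\,\nu((A-\lambda I)^*|_{E_{n,k}})\bigr)\ <\ \eps+\eps''_n,
\]
so $\lambda\in\gamma^n_{\eps+\eps''_n}(A)$. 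This proves $\speps A\subset\gamma^n_{\eps+\eps''_n}(A)$.

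For the closed pseudospectrum inclusion, I would simply pass to the limit. Using \eqref{eq:oc} and the inclusion just proved (applied with $\eps'$ in place of $\eps$),
\[
\Speps A\ =\ \bigcap_{\eps'>\eps}\specn_{\eps'}A\ \subset\ \bigcap_{\eps'>\eps}\gamma^n_{\eps'+\eps''_n}(A)\ =\ \bigl\{\lambda\in\C:\mu_n(A-\lambda I)\leq\eps+\eps''_n\bigr\}\ =\ \Gamma^n_{\eps+\eps''_n}(A),
\]
valid for all $\eps\geq 0$; the last equality is immediate from the definition \eqref{eq:gamdef} and the characterisation of an infimum of a bounded-below real function as $\leq c$ iff it is $<c'$ for every $c'>c$. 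In particular, specialising to $\eps=0$ recovers $\Spec A\subset\Gamma^n_{\eps''_n}(A)$.

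I do not anticipate any serious obstacle: all the analytic work is already packaged inside Proposition \ref{prop:gam_main}, whose proof resembles those of Propositions \ref{prop:tau_main} and \ref{prop:pi_main}, with the modification that both boundary terms $w_1^2$ and $w_n^2$ appear in $T_n$ (because $A^+_{n,k}$ keeps the off-diagonal entries $\gamma_{k+1}$ and $\alpha_{k+n}$ but lacks the circulant wrap-around of the $\pi$ method). The one mildly delicate point, worth flagging for clarity in the writeup, is justifying that the Corollary \ref{cor:Gbound} bound, which is non-strict, transfers to a strict bound when the input inequality is strict; this is handled by the standard $\eta$-perturbation argument sketched above, and matches the treatment in Theorem \ref{weighted_norm_two_refined}.
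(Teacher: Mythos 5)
Your proposal is correct and follows essentially the same route as the paper: the right-hand inclusions are deduced from Corollary \ref{cor:Gbound} applied to $A-\lambda I$ and $A^*-\lambda I$ (with $\eps''_n$ unchanged by the symmetry of \eqref{eq:eps''} in $\|\alpha\|_\infty$ and $\|\gamma\|_\infty$), giving $\mu_n(A-\lambda I)<\eps+\eps''_n$ via \eqref{eq:munmat}, and the closed-pseudospectrum inclusion then follows by intersecting over $\eps'>\eps$ using \eqref{eq:oc}, exactly as in the paper. The strictness point you flag is handled automatically since the corollary bounds the infimum by $\nu(A-\lambda I)+\eps''_n<\eps+\eps''_n$, so some $k$ achieves a value below $\eps+\eps''_n$; your $\eta$-argument is an equivalent way of saying this.
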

\begin{proof} We have justified already, below \eqref{eq:epsn_method2}, the inclusions from the left in \eqref{incl:met2}.
To see that the second inclusion from the right in \eqref{incl:met2} holds, 
let
$\lambda\in\speps (A)$. Then either $\nu(A-\lambda I) <\eps$ or $\nu(A^*-\lambda I) < \eps$. By Corollary \ref{cor:Gbound} it follows that either $\nu(A^+_{n,k}-\lambda I_n^+) <\eps+\eps''_n$, for some $k\in \Z$, or $\nu((A^*)^+_{n,k}-\lambda I_n^+) <\eps+\eps''_n$, for some $k\in \Z$. Thus, by \eqref{eq:munmat}, $\mu_n(A-\lambda I) < \eps+\eps_n''$, so that $\lambda \in \gamma^n_{\eps+\eps''_n}(A)$. The first inclusion from the right in \eqref{incl:met2} follows by taking intersections, recalling \eqref{eq:oc}.
\end{proof}

Following the pattern of \S\ref{sec:mini} and \S\ref{sec:piproof}, we now minimise $\eps''_n$ as a function of the weight vector $w=(w_1,\ldots,w_n)^T\in \R^n$ over $\R^n\setminus \{0\}$, or equivalently over $\{w\in \R^n:\|w\|=1\}$. In order to minimise
$\eps''_n$, given by \eqref{eq:eps''}, 
we need to minimise $T_{n}/S_{n}$, where
\begin{eqnarray*}
S_{n}\ =\ \|w\|^2 \quad \mbox{and} \quad T_{n}\ =\ w_{1}^{2}+(w_{2}-w_{1})^{2}+\cdots+(w_{n}-w_{n-1})^{2}+w_{n}^{2}\ =\ \|Bw\|^2,
\end{eqnarray*}
with
\[
B=\begin{pmatrix}
1   &         &            &          &   \\
-1  & 1       &            &          &   \\
    &  \ddots & \ddots     &          &  \\
    &        &           -1& 1        &    \\
    &        &            &         -1 & 1 \\
    &        &             &          & -1
\end{pmatrix}_{(n+1)\times n}
\textrm{so that}\quad B^{T}B=\begin{pmatrix}
2   &  -1       &            &          &   \\
-1  &2        &  -1          &          &   \\
    &  \ddots & \ddots     & \ddots         &  \\
    &        &           -1& 2        &  -1  \\
    &        &            &  -1        & 2
 \end{pmatrix}_{n\times n}.
\]
Clearly,
\[
\inf_{\|w\|\neq 0}\frac{T_{n}}{S_{n}}\ =\ \inf_{\|w\|\neq
0}\frac{\|Bw\|^{2}}{\|w\|^{2}}\ =\ \inf_{\|w\|\neq
0}\frac{w^T B^T Bw}{w^Tw}\ =\ \lambda_{\sf min}(B^{T}B),
\]
the smallest eigenvalue of $B^T B$. One could now compute the eigenvalues of $B^{T}B$ as in the previous
sections \S\ref{sec:mini} and \S\ref{sec:piproof}. In this simple case ($-B^TB$ is the discrete Laplacian),
this is a standard result (e.g. \cite{BoeGru}). We have that
\begin{equation} \label{eq:spec_laplace}
\spec(B^TB)\ =\
\left\{\lambda_j:=2-2\cos\frac{j\pi}{(n+1)}=4\sin^{2}\frac{j\pi}{2(n+1)}\
:\ j\in\{1,\ldots, n\}\right\},
\end{equation}
so that $\lambda_{\sf min}(B^{T}B)=\lambda_1$. Hence the minimal
value for $\eps''_n$ is
\[
\eps''_n\ :=\
(\|\alpha\|_{\infty}+\|\gamma\|_{\infty})\sqrt{\frac{T_{n}}{S_{n}}}\
=\ (\|\alpha\|_{\infty}+\|\gamma\|_{\infty})\sqrt{\lambda_{1}}\ =\
2(\|\alpha\|_{\infty}+\|\gamma\|_{\infty})\sin\frac{\pi}{2(n+1)}.
\]
This minimum is realised \cite{BoeGru} by the choice 
$w=(\sin\frac{j\pi}{n+1})_{j=1}^{n}$ for the weight vector in
Theorem \ref{thm_method2}.

\begin{corollary}\label{cor:one_sided_truncation}
For all $n\in\N$, the inclusions \eqref{incl:met2} hold
with
\begin{equation}\label{eq:one_sided_truncation}
\eps''_n\ :=\
2(\|\alpha\|_{\infty}+\|\gamma\|_{\infty})\sin\frac{\pi}{2(n+1)}.
\end{equation}
\end{corollary}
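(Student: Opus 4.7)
The plan is to apply Theorem \ref{thm_method2} with a particular choice of weight vector that minimises the expression \eqref{eq:eps''} for $\eps''_n$ over all nonzero $w\in\R^n$. Since the prefactor $\|\alpha\|_\infty+\|\gamma\|_\infty$ in \eqref{eq:eps''} is independent of $w$, the minimisation reduces to computing $\inf_{w\neq 0} T_n/S_n$. As already set up in the displays preceding the corollary, $S_n = w^T w$ and $T_n = \|Bw\|^2 = w^T B^T B w$, where $B$ is the explicit bidiagonal $(n+1)\times n$ matrix written there, so by the Rayleigh quotient characterisation
\[
\inf_{w\neq 0}\frac{T_n}{S_n}\ =\ \lambda_{\sf min}(B^T B),
\]
with the infimum attained on the corresponding eigenspace.

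The next step is to identify $\spec(B^T B)$ explicitly. Because $B^T B$ is the symmetric tridiagonal $n\times n$ matrix with diagonal entries $2$ and off-diagonal entries $-1$ (i.e., the standard Dirichlet discrete Laplacian on $n$ interior points), the eigenpairs can be written down directly: for $j=1,\ldots,n$, the vector $v^{(j)} := (\sin(jk\pi/(n+1)))_{k=1}^n$ is an eigenvector with eigenvalue $\lambda_j = 2-2\cos(j\pi/(n+1)) = 4\sin^2(j\pi/(2(n+1)))$. This is a textbook computation (and is exactly what the text cites from \cite{BoeGru}, displayed as \eqref{eq:spec_laplace}). In particular $\lambda_{\sf min}(B^T B) = \lambda_1 = 4\sin^2(\pi/(2(n+1)))$.

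Substituting this minimal value into \eqref{eq:eps''} yields the claimed formula \eqref{eq:one_sided_truncation}. An extremal weight vector attaining the minimum is $w_j := \sin(j\pi/(n+1))$ for $j=1,\ldots,n$, which is strictly positive and hence certainly satisfies the hypothesis of Theorem \ref{thm_method2} that at least one $w_j$ be nonzero; applying Theorem \ref{thm_method2} with this choice then gives \eqref{incl:met2} with the stated value of $\eps''_n$. There is essentially no obstacle here beyond recognising $B^T B$ as the standard discrete Laplacian and invoking its known spectrum; the rest of the work has already been packaged into Theorem \ref{thm_method2}.
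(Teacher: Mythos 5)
Your proposal is correct and follows essentially the same route as the paper: reduce the minimisation of $\eps''_n$ over weight vectors to the Rayleigh quotient $\inf_{w\neq 0} T_n/S_n = \lambda_{\sf min}(B^TB)$, recognise $B^TB$ as the standard discrete Dirichlet Laplacian with known eigenvalues $4\sin^2(j\pi/(2(n+1)))$, and apply Theorem \ref{thm_method2} with the extremal weight vector $w_j=\sin(j\pi/(n+1))$. Nothing is missing.
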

The example of the shift operator  (Example \ref{ex:shift} and \S\ref{sec:shift}) shows that the above formula is sharp; \eqref{incl:met2} does not hold for all tridiagonal $A\in L(E)$ if  $\eps''_n$ is any smaller than the above value.

\subsection{Does an analogue of Proposition \ref{prop:same} hold for the $\tau_1$ method?} \label{sec:rect}
For operators $B\in L(X)$  (in particular when $X=\C^n$ and  $B$ is a finite square matrix), the resolvent norm
\[
\lambda\ \mapsto\ \|(B-\lambda I)^{-1}\|
\]
on $\rho(B) := \C\setminus \Spec B$ is a subharmonic function, subject to a maximum principle; it  cannot have a local maximum in $\rho(B)$. Likewise, its reciprocal, which (recall \eqref{eq:invNorm} and \eqref{eq:lnPro2}) is the Lipschitz continuous function
\[
f_B:\ \lambda\ \mapsto\ \mu(B-\lambda I),
\]
cannot have a local minimum on $\rho(B)$, indeed cannot have a local minimum on $\C$, except that it takes the minimum value zero on $\Spec B$.

The proof of the identity $\overline{\sigma^n_\eps(A)}=\Sigma_\eps^n(A)=\widehat \Sigma_\eps^n(A)$ for the $\tau$ method in Proposition \ref{prop:same}  rests indirectly on this property, and the stronger result that $f_B$ cannot be locally constant on any open subset of $\rho(B)$, which holds if $X$ has the Globevnik property. The same is true for the proof of the corresponding property \eqref{eq:piAAlt2} for the $\pi$ method. But the corresponding relationship for the $\tau_1$ method, relating the two $\tau_1$ sets defined in \eqref{eq:gamdef}, that $\overline{\gamma^n_\eps(A)}=\Gamma_\eps^n(A)$, does not hold for every $n\in \N$, $\eps>0$, and every tridiagonal $A$, as the example we give below shows.

The issue is that the appropriate version of the mapping $f_B$ can have local (non-zero) minima, in the case that $B$ is a finite rectangular matrix, as discussed in \cite[{\S}X.46]{TrefEmbBook} and in \cite{TorgePhD}. An example that illustrates this is the 
 $4\times 2$ matrix
$$
B:=\begin{pmatrix}\delta&0\\0&0\\1&1\\0&\delta\end{pmatrix}\quad\text{with}\qquad \delta\in(0,\textstyle \frac 12).
$$
The relevant version of the mapping $f_B$,  namely $f_B(\lambda):=\nu(B-\lambda I_2^+)$, where $I_2^+$ is as defined above \eqref{eq:munmat} (with $X=\C$),
has global minima (over all $\lambda\in\mathbb C$) at $0$ and $1$ (see the calculations in Lemma \ref{lem:B2dominant} below), with the values
$$
f(0)=f(1)=\nu(B)=\delta>0.
$$

To obtain our counterexample to $\overline{\gamma^n_\eps(A)}=\Gamma_\eps^n(A)$ we build a tridiagonal bi-infinite matrix around $B$, given by\footnote{This is an example of a so-called {\em paired Laurent operator}, a class for which the spectrum can be computed explicitly. See, e.g., \cite[\S 4.4.1]{HaRoSi2}, \cite[\S3.7.3]{LiBook}.}
\begin{equation} \label{eq:exA}
A:=\left(
\begin{array}{cccccccc}
\ddots&\ddots & & & \\, 
\ddots&0&\delta & & \\
      &1&0&\cellcolor{black!10}\delta&\cellcolor{black!10} \\
      & &1&\cellcolor{black!10}0&\cellcolor{black!10} \\
      & & &\cellcolor{black!10}1&\cellcolor{black!10}1\\
      & & &\cellcolor{black!10} &\cellcolor{black!10}\delta&1\\
      & & & &  &\delta &1\\
      & & & &  & &\ddots &\ddots
\end{array}
\right).
\end{equation}
The sets $\gamma_\eps^n(A)$ and $\Gamma_\eps^n(A)$ are defined by \eqref{eq:gamdef} in terms of $\mu_n(A-\lambda I)$, which in turn is expressed in terms of lower norms of rectangular matrices in \eqref{eq:munmat}. For the case $n=2$ there are only three distinct matrices $A^+_{n,k}$ as $k$ ranges over $\Z$, namely
\[
B_1:=\begin{pmatrix}\delta&\\0&\delta\\1&0\\&1\end{pmatrix},\quad
B_2:=B = \begin{pmatrix}\delta&\\0&0\\1&1\\&\delta\end{pmatrix},\quad\text{and}\quad
B_3:=\begin{pmatrix}0&\\1&0\\\delta&1\\&\delta\end{pmatrix},
\]
and only  five different matrices $(A^*)^+_{n,k}$ with $n=2$ and $k\in\mathbb Z$, namely
\[
C_1=\begin{pmatrix}1&\\0&1\\\delta&0\\&\delta\end{pmatrix},\quad
C_2=\begin{pmatrix}1&\\0&1\\\delta&0\\&0\end{pmatrix},\quad
C_3=\begin{pmatrix}1&\\0&1\\0&1\\&0\end{pmatrix},\quad
C_4=\begin{pmatrix}1&\\1&\delta\\0&1\\&0\end{pmatrix},\quad\text{and}\quad
C_5=\begin{pmatrix}\delta&\\1&\delta\\0&1\\&0\end{pmatrix}.
\]
Putting, for $j\in\{1,2,3\}$ and $k\in\{1,\dots,5\}$,
\[
f_j(\lambda)\ :=\ \nu(B_j-\lambda I_2^+)\qquad\text{and}\qquad g_k(\lambda)\ :=\ \nu(C_k-\lambda I_2^+)
,\qquad \lambda\in\C,
\]
we have
\begin{equation} \label{eq:def_h}
\mu_n(A-\lambda I)\ =\ \min\left\{\,f_j(\lambda)\,,\,g_k(\lambda): j\in \{1,2,3\}, \, k\in \{1,\ldots,5\}\right\}\ =:\ h(\lambda).
\end{equation}
\begin{lemma} \label{lem:B2dominant}
Let $\delta\in (0,\frac 12)$. Then:
\begin{enumerate}[label=\bf\alph*),topsep=0pt]
\item $f_2(\lambda)\ge \delta$ for all $\lambda\in\C$;
\item $0<\delta=f_2(0)<\frac 12< f_3(0)=g_5(0)< g_2(0)=g_3(0)=g_4(0)=1 < f_1(0)=g_1(0)$.
\end{enumerate}
As a consequence, the function $f_2$, and so also the function $h(\lambda)=\mu_n(A-\lambda I)$, have positive local minima at zero, namely
\[
\mu_n(A)\ =\ h(0)\ =\ f_2(0)\ =\ \delta.
\]
\end{lemma}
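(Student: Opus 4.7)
The plan is to prove both parts by reducing every lower norm to the smallest eigenvalue of a $2\times 2$ Hermitian matrix: since for any $4\times 2$ matrix $M$ we have $\nu(M)^2=\lambda_{\min}(M^*M)$, and $M^*M$ is a $2\times 2$ positive semidefinite matrix whose eigenvalues are given by an explicit quadratic formula.

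For part (a), I will take $M:=B_2-\lambda I_2^+$ and compute
\[
M^*M\ =\ \begin{pmatrix}|\lambda|^2+1+\delta^2 & 1-\lambda\\ \overline{1-\lambda} & |1-\lambda|^2+\delta^2\end{pmatrix}.
\]
Writing $a,b$ for the diagonal entries, the smallest eigenvalue is $\frac{a+b}{2}-\sqrt{(\tfrac{a-b}{2})^2+|1-\lambda|^2}$. The key simplification is that $a-b=2\Re\lambda$, after which the inequality $f_2(\lambda)^2\ge \delta^2$ becomes, on squaring and rearranging, the trivial statement $|\lambda|^2|1-\lambda|^2\ge 0$. (As a by-product, equality holds exactly at $\lambda=0$ and $\lambda=1$, so $f_2(0)=\delta$ is one of two global minima of $f_2$.) I anticipate this step to be the most technical, but it is a short direct computation and not a real obstacle.

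For part (b), I simply list the eight matrices $B_j^*B_j$ and $C_k^*C_k$ at $\lambda=0$. Each is a $2\times 2$ Hermitian matrix for which the eigenvalues can be read off (the matrices are either diagonal, or of the form $\bigl(\begin{smallmatrix}s&\delta\\\delta&s\end{smallmatrix}\bigr)$ with eigenvalues $s\pm\delta$). This yields
\[
f_2(0)=\delta,\qquad f_3(0)=g_5(0)=\sqrt{1-\delta+\delta^2},\qquad g_2(0)=g_3(0)=g_4(0)=1,\qquad f_1(0)=g_1(0)=\sqrt{1+\delta^2}.
\]
The claimed inequalities then follow from $\delta\in(0,\tfrac12)$: in particular $1-\delta+\delta^2=\tfrac{3}{4}+(\delta-\tfrac12)^2\in(\tfrac34,1)$, which gives $\tfrac12<f_3(0)=g_5(0)<1$, while $\sqrt{1+\delta^2}>1$.

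Finally, for the ``consequence'' it suffices to combine (a) with the continuity estimate $|\nu(M-\lambda I_2^+)-\nu(M-\lambda' I_2^+)|\le |\lambda-\lambda'|$ (which follows from the second part of \eqref{eq:lnProp} applied to the difference). Part (a) shows that $f_2\ge\delta$ globally with equality at $\lambda=0$. All of $f_1(0),f_3(0),g_1(0),\dots,g_5(0)$ strictly exceed $\tfrac12$, hence also exceed $\delta$, so by the Lipschitz bound they remain $>\delta$ on some neighbourhood $U$ of $0$. On $U$ we therefore have $h(\lambda)=\min\{f_j(\lambda),g_k(\lambda)\}\ge\delta$, with equality at $\lambda=0$, proving that both $f_2$ and $h$ have positive local minima at $0$ with value $\delta=\mu_n(A)$.
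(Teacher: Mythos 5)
Your proposal is correct, but it takes a different route from the paper. The paper proves part (a) by a one-line norm estimate: for $x={x_1\choose x_2}$, the first and last components of $(B_2-\lambda I_2^+)x$ are $\delta x_1$ and $\delta x_2$ regardless of $\lambda$, so $\|(B_2-\lambda I_2^+)x\|\ge\delta\|x\|$; and it handles part (b) by similar componentwise lower bounds together with explicit vectors attaining them, plus the flip isometry $J_4 B_1 J_2=C_1$, $J_4B_3J_2=C_5$ to avoid recomputing $g_1(0)$ and $g_5(0)$ (it only bounds $f_3(0)\in[1-\delta,1)$ rather than computing it). You instead reduce everything to smallest eigenvalues of the $2\times 2$ Gram matrices. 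I checked your computations: $(B_2-\lambda I_2^+)^*(B_2-\lambda I_2^+)$ is as you state, $a-b=2\Re\lambda$, the relevant quantity $\tfrac{a+b}{2}-\delta^2=\tfrac12(|\lambda|^2+1+|1-\lambda|^2)>0$ so squaring is legitimate, and the squared inequality does reduce to $|\lambda|^2|1-\lambda|^2\ge0$; your exact values at $\lambda=0$, including $f_3(0)=g_5(0)=\sqrt{1-\delta+\delta^2}$, agree with (and sharpen) the paper's bounds, and your Lipschitz argument for the "consequence" (via the second inequality of \eqref{eq:lnProp}) is exactly the right way to make the local-minimum claim precise — the paper leaves this step implicit. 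What your approach buys is exact values and, as a bonus, the characterisation that $f_2(\lambda)=\delta$ only at $\lambda\in\{0,1\}$ (stated but not fully justified in the paper's surrounding text); what the paper's approach buys is brevity, avoiding Gram-matrix algebra. One small slip in your write-up: $C_4^*C_4=\left(\begin{smallmatrix}2&\delta\\\delta&1+\delta^2\end{smallmatrix}\right)$ is neither diagonal nor of the form $\left(\begin{smallmatrix}s&\delta\\\delta&s\end{smallmatrix}\right)$, so your blanket description of the eight Gram matrices is not quite accurate; its eigenvalues are nevertheless $1$ and $2+\delta^2$ (the discriminant is $(1+\delta^2)^2$), so the claimed value $g_4(0)=1$ stands and the slip is cosmetic.
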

\begin{proof}

{\bf a) } For all $x={x_1\choose x_2}\in\C^2$, we have
\[
\|(B_2-\lambda I)x\|
\ =\ \left\|\begin{pmatrix}\delta x_1\\-\lambda x_1\\x_1+(1-\lambda)x_2\\\delta x_2\end{pmatrix}\right\|
\ \ge\ \left\|\begin{pmatrix}\delta x_1\\0\\0\\\delta x_2\end{pmatrix}\right\|\ =\ \delta\|x\|,
\]
so that $f_2(\lambda)=\nu(B_2-\lambda I)\ge\delta$ for all $\lambda\in\C$.

{\bf b) } We start with the computation of $f_j(0)$ for $j\in\{1,2,3\}$. Firstly,
$f_1(0)=\nu(B_1)=\sqrt{1+\delta^2}>1$, since $\|B_1x\|^2=(1+\delta^2)\|x\|^2$ for all $x\in\C^2$. Secondly,
$f_2(0)=\nu(B_2)\ge\delta$ holds by a). To see equality, observe that $\|B_2x\|=\delta\|x\|$ for $x={1\choose -1}$. Thirdly,
$f_3(0)=\nu(B_3)\in [1-\delta,1)$, since
$\|B_3x-\delta(0,0,x_1,x_2)^\top\|=\|x\|$, so that $\|B_3x\|\ge \|x\|-\delta\|x\|$ for all $x={x_1\choose x_2}\in\C^2$, whence $\nu(B_3)\ge 1-\delta$. Computation of $\|B_3x\|$ with $x={1\choose -\delta}$ shows that $\nu(B_3)<1$.

Carrying on to the $g_k(0)$ with $k\in\{1,\dots,5\}$, again writing $x\in\C^2$ as $x={x_1\choose x_2}$, we get:
$g_2(0)=\nu(C_2)=1$, since $\|C_2x\|\ge\|(x_1,x_2,0,0)^\top\|=\|x\|$ for all $x\in\C^2$, with equality for $x={0\choose 1}$;
$g_3(0)=\nu(C_3)=1$, since $\|C_3x\|\ge\|(x_1,x_2,0,0)^\top\|=\|x\|$ for all $x\in\C^2$, with equality for $x={1\choose 0}$;
$g_4(0)=\nu(C_4)=1$, since $\|C_4x\|\ge\|(x_1,0,x_2,0)^\top\|=\|x\|$ for all $x\in\C^2$, with equality for $x={\delta\choose -1}$.
Finally, denoting the flip isometry $(x_1,\dots,x_m)\mapsto (x_m,\dots, x_1)$ on $\C^m$ by $J_m$, we have $C_1=J_4B_1J_2$ and $C_5=J_4B_3J_2$, showing that $g_1(0)=f_1(0)$ and $g_5(0)=f_3(0)$.
\end{proof}

By Lemma \ref{lem:B2dominant}, the tridiagonal matrix \eqref{eq:exA} has the properties that, for $n=2$ and $0<\delta<1/2$,
$0\in \Gamma_\delta^n(A)=\{\lambda\in\C:\mu_n(A-\lambda I)\le\delta\}$ but $\gamma_\delta^n(A)=\{\lambda\in\C:\mu_n(A-\lambda I)<\delta\}$ does not intersect $\eps\D$, for some $\eps>0$. Thus $0\not \in \overline{\gamma_\delta^n(A)}$, and 
\begin{equation} \label{eq:neq}
\overline{\gamma_\delta^n(A)} \subsetneq  \Gamma^n_{\delta}(A).
\end{equation}

\section{Proof of Theorem \ref{thm:fin} and a band-dominated generalisation} \label{sec:thmgen}

The following result is a generalisation of Theorem \ref{thm:fin} to the band-dominated case. It reduces to Theorem \ref{thm:fin} if $A$ is tridiagonal (in which case $A^{(n)}=A$, for each $n$, so that $w_n=1$ and also $A_n=A$ and $\delta_n=0$), provided also $X$ is finite-dimensional or a Hilbert space (in which case $\eta_n=0$). Note that the requirement that $\{A_{ij}:i,j\in \Z\}\subset L(X)$ be relatively compact is satisfied automatically if $X$ is finite-dimensional.
\begin{theorem} \label{thm:fin2}
Suppose that $A\in BDO(E)$ and that $\{a_{i,j}:i,j\in \Z\}\subset L(X)$ is relatively compact. Let $A^{(n)}\in BO(E)$ be defined, for $n\in \N$, by $A^{(n)} := A$ if $A$ if tridiagonal, otherwise by \eqref{eq:Andef} for some $\mathfrak{p}\geq 0$ such that
$\delta_n:= \|A-A^{(n)}\|\to 0$ as $n\to\infty$, and let $A_n:= \cI_{w_n}A^{(n)}\cI^{-1}_{w_n}$, where $w_n$ is the band-width of $A^{(n)}$ and $\cI_b$ is defined as above \eqref{eq:norm}. In the case that $X$ is finite-dimensional or a Hilbert space set $\eta_n:=0$, for $n\in \N$, otherwise let $(\eta_n)_{n\in \N}$ denote any positive null sequence. Then, for every $n\in \N$ there exists a finite set $K_n\subset \Z$ such that:
\begin{equation} \label{eq:comp2}
\begin{aligned}
& \forall k\in \Z \;\; \exists j\in K_n \mbox{ such that }\\
& \|(A_n)^+_{n,k}-(A_n)^+_{n,j}\|\leq 1/n \mbox{ and } \|((A_n)^*)^+_{n,k}-((A_n)^*)^+_{n,j}\|\leq 1/n.
\end{aligned}
\end{equation}
Further, for $\eps\geq 0$ and $n\in \N$, using the notations \eqref{eq:findef} and \eqref{eq:epsn''2},
\begin{equation} \label{eq:genn}
\Gamma^{n,\mathrm{fin}}_{\eps}(A_n)\ \subset\ \Specn_{\eps+\delta_n+\eta_n} A \quad \mbox{and} \quad \Speps A\ \subset \Gamma^{n,\mathrm{fin}}_{\eps+\delta_n+\eta_n+\eps''_n(A_n) + 1/n}(A_n),
\end{equation}
where
\begin{equation} \label{eq:findef2}
\Gamma^{n,\mathrm{fin}}_{\eps}(A_n) =  \left\{\lambda\in \C : \mu_n^{\mathrm{fin}}(A_n-\lambda I)\leq \eps\right\} 
\end{equation}
with
\begin{equation} \label{eq:mufin2}
\mu_n^{\mathrm{fin}}(A_n-\lambda I)= \min_{j\in K_n} \min\Big(\nu\left((A_n)^+_{n,j}-\lambda I^+_n\right),\,\nu\left(((A_n)^*)^+_{n,j}-\lambda I^+_n\right)\Big), \quad \mbox{for }\; \lambda\in \C.
\end{equation}
Moreover, for $\eps\geq 0$, $\Gamma^{n,\mathrm{fin}}_{\eps+2\delta_n+2\eta_n+\eps''_n(A_n) + 1/n}(A_n)\Hto \Speps A$ as $n\to\infty$, in particular\\ 
$\Gamma^{n,\mathrm{fin}}_{\eps''_n(A_n) + 2\delta_n+2\eta_n+1/n}(A_n)\Hto \Spec A$.
\end{theorem}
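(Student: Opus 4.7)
The plan is to establish the three claims of the theorem in their natural order: first the existence of the finite indexing set $K_n$, next the two-sided enclosure \eqref{eq:genn}, and finally the Hausdorff convergence. For the existence of $K_n$, I would argue that relative compactness of $\{a_{i,j} : i,j\in\Z\}$ in $L(X)$ propagates forward through the construction. The entries $a^{(n)}_{i,j}$ of $A^{(n)}$ from \eqref{eq:Andef} are bounded scalar multiples of the $a_{i,j}$, so they form a relatively compact subset of $L(X)$. After the isometric reshuffling by $\mathcal{I}_{w_n}$, each block entry of $A_n$ is a $w_n\times w_n$ array of such entries and thus lies in a relatively compact subset of $L(X^{w_n})$. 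The finite section $(A_n)^+_{n,k}$ is an $(n{+}2)\times n$ block matrix with a fixed tridiagonal nonzero pattern whose blocks are drawn from this relatively compact set; the same holds for $((A_n)^*)^+_{n,k}$. Operator-norm continuity then makes the joint family $\{((A_n)^+_{n,k},((A_n)^*)^+_{n,k}) : k\in\Z\}$ relatively compact in the appropriate product of operator-norm spaces, so covering its closure by finitely many $1/n$-balls and choosing one representative index from each nonempty preimage yields a finite $K_n\subset\Z$ satisfying \eqref{eq:comp2}.

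For the left inclusion in \eqref{eq:genn} I would chain three facts: passing from $\inf_{k\in\Z}$ to $\min_{k\in K_n}$ only enlarges values, so $\Gamma^{n,\mathrm{fin}}_{\eps}(A_n)\subset\Gamma^n_{\eps}(A_n)$; the first inequality in \eqref{eq:lnProp} combined with \eqref{eq:spepsnu} gives the ``easy'' inclusion $\Gamma^n_{\eps}(A_n)\subset\Speps A_n = \Speps A^{(n)}$ (the equality via the isometric isomorphism $\mathcal{I}_{w_n}$); and finally $\Speps A^{(n)}\subset \Specn_{\eps+\delta_n+\eta_n}A$, obtained in the Globevnik case from \eqref{eq:PseudInc2} with $\eta_n=0$, and in general by combining \eqref{eq:oc} (which gives $\Speps A^{(n)}\subset\specn_{\eps+\eta_n}A^{(n)}$ for any $\eta_n>0$) with \eqref{eq:PseudInc}. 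For the right inclusion I would reverse the chain: $\Speps A\subset\Specn_{\eps+\delta_n+\eta_n}A^{(n)} = \Specn_{\eps+\delta_n+\eta_n}A_n$ by the same perturbation argument; then apply the right-hand half of \eqref{incl:met2} to the tridiagonal operator $A_n$, landing in $\Gamma^n_{\eps+\delta_n+\eta_n+\eps''_n(A_n)}(A_n)$; and finally use \eqref{eq:comp2} together with the Lipschitz estimate in \eqref{eq:lnProp} to upgrade the infimum over $\Z$ in the definition of $\mu_n$ to the minimum over $K_n$ in the definition of $\mu_n^{\mathrm{fin}}$ at additive cost $1/n$, i.e.\ $\Gamma^n_{\eps'}(A_n)\subset\Gamma^{n,\mathrm{fin}}_{\eps'+1/n}(A_n)$.

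Chaining the two inclusions of \eqref{eq:genn} then gives, for each $\eps\geq 0$ and $n\in\N$,
\[
\Speps A\ \subset\ \Gamma^{n,\mathrm{fin}}_{\eps+\delta_n+\eta_n+\eps''_n(A_n)+1/n}(A_n)\ \subset\ \Specn_{\eps+2\delta_n+2\eta_n+\eps''_n(A_n)+1/n}A,
\]
so the Hausdorff convergence follows from \eqref{eq:HDconv} once I verify that the extra penalty $\eps''_n(A_n)\to 0$. By \eqref{eq:normbound} and $\|A_n\|=\|A^{(n)}\|$, one has $r(A_n)\leq 2\|A^{(n)}\|\leq 2(\|A\|+\delta_n)$, so $\eps''_n(A_n) = 2r(A_n)\sin(\pi/(2(n+1)))\to 0$. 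The main obstacle I anticipate is the bookkeeping in the first step: transferring relative compactness of a family of \emph{scalar-position} entries in $L(X)$ through the scaling in \eqref{eq:Andef} and through the block-reshaping $\mathcal{I}_{w_n}$ into relative compactness of a family of $(n{+}2)\times n$ block \emph{matrices} in operator norm. Everything after that is a fairly mechanical assembly of \eqref{incl:met2}, the perturbation inclusions \eqref{eq:PseudInc}--\eqref{eq:PseudInc2}, and \eqref{eq:HDconv}, with $\eta_n$ playing the purely technical role of absorbing the possibly strict gap $\overline{\speps B}\subsetneq\Speps B$ that can occur outside the Globevnik setting.
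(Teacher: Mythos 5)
Your proposal is correct and follows essentially the same route as the paper's proof: relative compactness of the entries of $A$ propagates to the family of finite sections $\{((A_n)^+_{n,k},((A_n)^*)^+_{n,k}):k\in\Z\}$, whose total boundedness yields $K_n$; the enclosure \eqref{eq:genn} is assembled exactly as in the paper from \eqref{eq:finGG}, \eqref{incl:met2}, and the perturbation inclusions \eqref{eq:PseudInc}/\eqref{eq:PseudInc2} (with \eqref{eq:oc} and $\eta_n$ in the general Banach space case); and the convergence follows from the resulting sandwich, $\eps''_n(A_n)\to 0$, and \eqref{eq:HDconv}. One small caution: the choice $\eta_n=0$ rests on $X$ being finite-dimensional or a Hilbert space, so that \eqref{eq:PseudInc2} applies, not merely on Globevnik's property, which by itself does not guarantee equality in the first inclusion of \eqref{eq:spepspert2}.
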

\begin{proof} Fix $n\in \N$. To see \eqref{eq:comp2} note first that the relative compactness of the matrix entries of $A$ implies, by the definition of $A_n$, relative compactness also of the entries of $A_n$. Further, since the set of matrix entries of $A_n$ is relatively compact, so that every sequence has a convergent subsequence, the same is true for 
\begin{equation} \label{eq:Sdef}
\cS_n := \{((A_n)^+_{n,k},((A_n)^*)^+_{n,k}):k\in \Z\}\subset ((X^{w_n})^{(n+2)\times n})^2. 
\end{equation}
Thus there exists $K_n\subset \Z$ such that \eqref{eq:comp2} holds since relatively compact sets are totally bounded. 

To see \eqref{eq:genn}, note that it follows from \eqref{eq:finGG}, \eqref{incl:met2}, and \eqref{eq:PseudInc} (if $\eta_n>0$) or \eqref{eq:PseudInc2} (if $\eta_n=0$) that $\Gamma^{n,\mathrm{fin}}_{\eps}(A_n) \subset \Speps A_n \subset \Specn_{\eps+\delta_n+\eta_n} A$. Again applying \eqref{eq:PseudInc}/\eqref{eq:PseudInc2}, 
$$
\Specn_{\eps} A\ \subset\ \Specn_{\eps+\delta_n+\eta_n} A_n\ \subset\ \Gamma^{n}_{\eps+\delta_n+\eta_n+\eps''_n(A_n)}(A_n)\ \subset\ \Gamma^{n,\mathrm{fin}}_{\eps+\delta_n+\eta_n+\eps''_n(A_n)+1/n}(A_n),
$$
by \eqref{incl:met2} and \eqref{eq:finGG}.
Thus, for $n\in \N$ and $\eps\geq 0$,
$$
\Speps A\ \subset \Gamma^{n,\mathrm{fin}}_{\eps+\delta_n+\eta_n+\eps''_n(A_n) + 1/n}(A_n)\ \subset\ \Specn_{\eps+2\delta_n+2\eta_n+\eps''_n(A_n)+1/n} A.
$$
Arguing as above Theorem \ref{thm:bdo}, we have that $\eps''_n(A_n) \to 0$ as $n\to\infty$. This implies that  $\Gamma^{n,\mathrm{fin}}_{\eps+2\delta_n+2\eta_n+\eps''_n(A_n) + 1/n}(A_n)$ $\Hto \Speps A$ as $n\to\infty$, by \eqref{eq:HDconv}.
\end{proof}

\begin{remark} \label{rem:remalt} In the theorem above we note that there exists a finite set $K_n\subset \Z$ such that \eqref{eq:comp2} holds. An equivalent statement is to say that there exists a finite set $S_n\subset \cS_n$, where $\cS_n$ is defined by \eqref{eq:Sdef}, such that 
\begin{equation} \label{eq:comp3}
\forall k\in \Z \;\; \exists (B_1,B_2)\in S_n \mbox{ such that }\|(A_n)^+_{n,k}-B_1\|\leq 1/n \mbox{ and } \|((A_n)^*)^+_{n,k}-B_2\|\leq 1/n.
\end{equation}
The above theorem thus remains true if we replace \eqref{eq:comp2} with \eqref{eq:comp3} and replace \eqref{eq:mufin2} with the formula
\begin{equation} \label{eq:mufin3}
\mu_n^{\mathrm{fin}}(A_n-\lambda I)= \min_{(B_1,B_2)\in S_n} \min\Big(\nu\left(B_1-\lambda I^+_n\right),\,\nu\left(B_2-\lambda I^+_n\right)\Big), \quad \mbox{for }\; \lambda\in \C.
\end{equation}
Importantly, with these changes, the proof still holds if, instead of requiring that $S_n\subset \cS_n$, we make the weaker requirement  that $S_n\subset \overline{\cS_n}$, the closure of $\cS_n$ in  $((X^{w_n})^{(n+2)\times n})^2$: the elements of $S_n$ need not be in $\cS_n$ only in $\overline{\cS_n}$. This observation is helpful because there are instances where it is much easier to identify $\overline{\cS_n}$ than $\cS_n$.
\end{remark}

Theorem \ref{thm:fin} and the above result are both  based on the $\tau_1$ method. Versions of these results hold also for the $\tau$ and $\pi$ methods, at least in the case that $A$ is tridiagonal, provided (this is a substantial assumption) these methods do not suffer from spectral pollution for the particular operator $A$. Here is a version (cf.~Theorem \ref{thm:fin}) for the case that $A$ is tridiagonal (recall from \S\ref{sec:main} that every banded $A$ can be written in tridiagonal form), written in the way suggested by the above remark. We use in this theorem the notation that, for tridiagonal $B\in L(E)$, 
\begin{equation} \label{eq:gennot}
B^{(M)}_{n,k} := \left\{\begin{array}{ll} B_{n,k}, & \mbox{if $M=\tau$},\\ B^{\per,t}_{n,k}, & \mbox{if $M=\pi$},\end{array}\right.
\end{equation}
where  $t$ in \eqref{eq:gennot} is some fixed value with $t\in \T$, and $B_{n,k}$ and $B_{n,k}^{\per,t}$ are as defined in \eqref{eq:Ank} and \eqref{eq:Ankper}. 

\begin{theorem} \label{thm:fin3}
Suppose that $X$ satisfies Globevnik's property, that $A\in L(E)$ is tridiagonal, and that $\{a_{i,j}:i,j\in \Z\}\subset L(X)$ is relatively compact,  and, where $M=\tau$ or $\pi$, suppose that the $M$ method does not suffer from spectral pollution for $A$ in the sense of Definition \ref{def:specpol}. Let $(\eta_n)_{n\in \N}$ be the null sequence from that definition, and $(\eta'_n)_{n\in \N}$ be any other positive null sequence. Then, for every $n\in \N$ there exists a finite set $S_n\subset \overline{\cS_n^{(M)}}$, where
\begin{equation} \label{eq:cS}
\cS_n^{(M)}\ :=\ \left\{A^{(M)}_{n,k}:k\in \Z\right\}\subset X^{n\times n},
\end{equation}
such that:
\begin{equation} \label{eq:comp4}
\forall k\in \Z \;\; \exists B\in S_n \mbox{ such that }\|A^{(M)}_{n,k}-B\|\leq 1/n. 
\end{equation}
Further, for $\eps\geq 0$ and $n\in \N$,
\begin{equation} \label{eq:genn2}
\Delta^{n,\mathrm{fin},M}_{\eps}(A)\ \subset\ \specn_{\eps+\eta_n+\eta'_n} A \quad \mbox{and} \quad \Speps A\ \subset \Delta^{n,\mathrm{fin},M}_{\eps+\eps^{(M)}_n + 1/n}(A),
\end{equation}
where $\eps^{(M)}_n:= \eps_n$ for $M=\tau$, $:= \eps'_n$ for $M=\pi$, with $\eps_n$ and $\eps'_n$ defined as in \eqref{method0} and \eqref{eq:epsnd}, and
\begin{equation} \label{eq:findef4}
\Delta^{n,\mathrm{fin},M}_{\eps}(A) :=  \left\{\lambda\in \C : \min_{B\in S_n} \mu\left(B-\lambda I_n\right)\leq \eps\right\}.
\end{equation}
Moreover, for $\eps\geq 0$, $\Delta^{n,\mathrm{fin},M}_{\eps+\eps^{(M)}_n + 1/n}(A)\Hto \Speps A$ as $n\to\infty$, in particular $\Delta^{n,\mathrm{fin},M}_{\eps^{(M)}_n+1/n}(A)$ $\Hto \Spec A$.
\end{theorem}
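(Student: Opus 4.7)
The proof plan is to follow the template of Theorem \ref{thm:fin2} (and Remark \ref{rem:remalt}), adapted to the $\tau$/$\pi$ setting. The four ingredients are: construction of the finite $1/n$-net $S_n$; the left inclusion in \eqref{eq:genn2} via absence of spectral pollution; the right inclusion via the $\tau$/$\pi$ enclosures \eqref{incl:met1}/\eqref{incl:met1*} together with the Lipschitz bound \eqref{eq:lnPro2} on $\mu$; and finally Hausdorff convergence by a sandwich argument.

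For the existence of $S_n$, I would observe that since $\{a_{i,j}:i,j\in\Z\}$ is relatively compact in $L(X)$ and each $A^{(M)}_{n,k}$ is assembled with uniformly bounded linear structure from at most $3n+2$ such entries, the set $\cS_n^{(M)}$ is relatively compact in $X^{n\times n}$ with the operator norm. Its closure is then compact, hence totally bounded, so for every $n$ a finite $S_n\subset\overline{\cS_n^{(M)}}$ satisfying \eqref{eq:comp4} exists. For the left inclusion of \eqref{eq:genn2}, suppose $\lambda\in\Delta^{n,\mathrm{fin},M}_\eps(A)$, so that $\mu(B-\lambda I_n)\le\eps$ for some $B\in S_n\subset\overline{\cS_n^{(M)}}$. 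Approximating $B$ in operator norm by some $A^{(M)}_{n,k}$ within prescribed tolerance $\delta>0$, \eqref{eq:lnPro2} yields $\mu(A^{(M)}_{n,k}-\lambda I_n)<\eps+\delta$, so by \eqref{eq:spepsnu} $\lambda\in\specn_{\eps'} A^{(M)}_{n,k}$ for every $\eps'>\eps+\delta$. Absence of spectral pollution then places $\lambda$ in $\specn_{\eps'+\eta_n}A$, and letting $\eps'\downarrow\eps+\delta$, $\delta\downarrow 0$ and invoking \eqref{eq:oc} yields $\lambda\in\Specn_{\eps+\eta_n}A\subset\specn_{\eps+\eta_n+\eta'_n}A$.

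For the right inclusion of \eqref{eq:genn2}, given $\lambda\in\Speps A$ I would apply \eqref{incl:met1} together with Proposition \ref{prop:same} (case $M=\tau$), or \eqref{incl:met1*} together with \eqref{eq:piAAlt2} (case $M=\pi$), both of which hinge on the Globevnik hypothesis. Either route gives $\inf_{k\in\Z}\mu(A^{(M)}_{n,k}-\lambda I_n)\le\eps+\eps^{(M)}_n$. For any $\delta>0$, pick $k\in\Z$ with $\mu(A^{(M)}_{n,k}-\lambda I_n)<\eps+\eps^{(M)}_n+\delta$ and $B\in S_n$ with $\|A^{(M)}_{n,k}-B\|\le 1/n$; then \eqref{eq:lnPro2} gives $\mu(B-\lambda I_n)<\eps+\eps^{(M)}_n+1/n+\delta$. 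Since $S_n$ is finite the minimum is attained, and letting $\delta\downarrow 0$ shows $\lambda\in\Delta^{n,\mathrm{fin},M}_{\eps+\eps^{(M)}_n+1/n}(A)$.

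Combining the two inclusions sandwiches $\Delta^{n,\mathrm{fin},M}_{\eps+\eps^{(M)}_n+1/n}(A)$ between $\Speps A$ and $\specn_{\eps+\eps^{(M)}_n+1/n+\eta_n+\eta'_n}A$; Hausdorff convergence then drops out of \eqref{eq:HDconv} and the sandwich principle noted in \S\ref{sec:keynot}. The chief subtlety I anticipate is the bookkeeping of open versus closed pseudospectra: absence of spectral pollution is stated in terms of $\speps A^{(M)}_{n,k}\subset\specn A$, whereas \eqref{incl:met1} and \eqref{incl:met1*} produce $\Speps A$. The padding $\eta'_n>0$ has no purpose beyond converting closed inclusions back to open ones without spoiling the null-sequence rate of the error.
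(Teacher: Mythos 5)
Your proposal is correct and follows essentially the same route as the paper's own proof: total boundedness of $\overline{\cS_n^{(M)}}$ for the net $S_n$, the Lipschitz bound \eqref{eq:lnPro2} to pass between $S_n$ and the actual submatrices, absence of spectral pollution for the left inclusion, the Globevnik-based enclosures \eqref{incl:met1}/\eqref{incl:met1*} with Proposition \ref{prop:same}/\eqref{eq:piAAlt2} for the right inclusion, and the sandwich with \eqref{eq:HDconv} for convergence. The only difference is cosmetic: you argue pointwise with $\delta$-limits and \eqref{eq:oc}, where the paper chains the same facts as set inclusions through $\Sigma^n_\eps(A)$, $\sigma^n_{\eps+\eta'_n}(A)$ and their $\pi$ analogues.
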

\begin{proof} That \eqref{eq:comp4} holds follows from the relative compactness of $\{a_{i,j}:i,j\in \Z\}\subset L(X)$, arguing as in the proof of Theorem \ref{thm:fin} (and see Remark \ref{rem:remalt}). Now, by  \eqref{eq:sigdef} and \eqref{eq:spepsnu},
$$
\Sigma_\eps^n(A) = \left\{\lambda\in \C: \inf_{B\in S^{(\tau)}} \mu\left(B-\lambda I_n\right)\leq \eps\right\}= \left\{\lambda\in \C: \inf_{B\in \overline{S^{(\tau)}}} \mu\left(B-\lambda I_n\right)\leq \eps\right\},
$$
by \eqref{eq:lnPro2}, so that $\Delta^{n,\mathrm{fin},\tau}_{\eps}(A) \subset \Sigma_\eps^n(A)$. Similarly, $\Delta^{n,\mathrm{fin},\pi}_{\eps}(A) \subset \Pi_\eps^{n,t}(A)$ by \eqref{eq:pidef}, \eqref{eq:spepsnu}, and \eqref{eq:lnPro2}. Since $\Sigma_\eps^n(A)\subset \sigma_{\eps+\eta_n'}^n(A)$ and $\Pi_\eps^n(A)\subset \pi_{\eps+\eta_n'}^n(A)$, the first inclusion in \eqref{eq:genn2} follows from the assumption of the absence of spectral pollution. Note that (cf.~\eqref{eq:finGG}), if \eqref{eq:comp4} holds, then, for $\eps\geq0$,
$$
\Sigma_\eps^n(A)\ \subset \Delta^{n,\mathrm{fin},\tau}_{\eps+1/n}(A) \quad \mbox{and} \quad \Pi_\eps^{n,t}(A)\ \subset \Delta^{n,\mathrm{fin},\pi}_{\eps+1/n}(A),
$$
as a consequence of  \eqref{eq:lnPro2} and since
$$
\sigma_\eps^n(A) = \left\{\lambda\in \C: \inf_{B\in S^{(\tau)}} \mu\left(B-\lambda I_n\right)< \eps\right\},
$$
and the same representation holds for $\pi^{n,t}_\eps(A)$, with $S^{(\tau)}$ replaced by $S^{(\pi)}$.
These inclusions and \eqref{incl:met1} and  \eqref{incl:met1*} imply the second inclusion in \eqref{eq:genn2}. 
Thus, for $n\in \N$ and $\eps\geq 0$,
$$
\Speps A\ \subset \Delta^{n,\mathrm{fin},M}_{\eps+\eps^{(M)}_n + 1/n}(A)\ \subset\ \specn_{\eps+\eps^{(M)}_n+\eta_n+\eta'_n+1/n} A,
$$
so that $\Delta^{n,\mathrm{fin},M}_{\eps+\eps^{(M)}_n+ 1/n}(A)\Hto \Speps A$ as $n\to\infty$, by \eqref{eq:HDconv}.
\end{proof}

\section{The scalar case: computational aspects and the solvability complexity index} \label{sec:scalar}
In this section we focus on the case that, for some $p\in \N$, $X = \C^p$, which we equip with the Euclidean norm (the $2$-norm), so that $X$ and  $E=\ell^2(\Z,X)=\ell^2(\Z,\C^p)$ are Hilbert spaces, and the entries of the matrix representation $[a_{i,j}]$ of $A\in L(E)$ are $p\times p$ complex matrices, i.e., $L(X)=\C^{p\times p}$. This includes the special case that $p=1$, so that $E=\ell^2(\Z)$ and the entries $a_{i,j}$ are just complex numbers\footnote{This is a special case, but recall, as noted at the top of the paper, that, given a separable Hilbert space $Y$, our results for this special case apply to any $A\in L(Y)$ that, with respect to some orthonormal basis $(e_i)_{i\in\Z}\subset Y$, has a matrix representation $a_{i,j}=(Ae_j,e_i)\in \C$ that is banded or  band-dominated.}. 

Our goals in this section are to provide computational details regarding the determination of membership   of our $\tau$, $\pi$, and $\tau_1$ inclusion sets  in this case, to prove Proposition \ref{prop:finite} and an extension of this result to the band-dominated case, and to note implications regarding the solvability complex index. In particular, a key conclusion in this section is that the computational problem of determining the spectrum of a band-dominated operator, given the inputs that we assume, has solvability complexity index SCI$_A=1$, indeed is in the class $\Pi_1^A$ (our notations are those of \cite{SCI,ColHan2023}).


In the above set up, in which $X$ and $E$ are Hilbert spaces, standard formulae are available for computing the lower norms that our methods require. For any Hilbert spaces $Y$ and $Z$ and any $B\in L(Y,Z)$ we have (see, e.g., \cite[\S1.3]{Kurbatov} or \cite[\S2.4]{LiBook})
\begin{equation} \label{eq:lowernorm1}
(\nu(B))^2 = \nu(B'B) = \min \Spec(B'B) = \left(s_{\min}(B)\right)^2,
\end{equation}
where $B'\in L(Z,Y)$ denotes the Hilbert space adjoint of $B$ and its smallest singular value is denoted by $s_{\min}(B):=(\min \Spec(B'B))^{1/2}$, so that (recall our notation \eqref{eq:invNorm})
\begin{equation} \label{eq:lowernorm2}
\mu(B) = \left(\min(\nu(B'B),\nu(BB'))\right)^{1/2} = \min(s_{\min}(B),s_{\min}(B')).
\end{equation}
In particular, the above holds when $Y=\C^M$ and $Z=\C^N$, for some $M,N\in \N$, in which case $B$ is an $N\times M$ matrix and $B'$ is the conjugate transpose of $B$. In particular (see the discussion below \eqref{eq:invNorm}), if $M=N$, so that $Y=Z$ and $B$ is a square matrix, then
\begin{equation} \label{eqln3}
\mu(B)=\nu(B)=s_{\min}(B)= (\min \Spec(B'B))^{1/2}=(\nu(B'B))^{1/2}.
\end{equation}

To decide whether a particular $\lambda\in \C$ is contained in one of our inclusion sets it is not necessary to actually compute any lower norms, it is enough  to decide whether or not $\nu(B)\geq \eps$ or is $>\eps$, for some non-negative threshold $\eps$, and for $B\in \B$, where $\B$ is some set of $N\times M$ matrices, depending on which inclusion set method we choose ($\tau$, $\pi$, or $\tau_1$). By \eqref{eq:lowernorm1}, this is equivalent to deciding whether the  $M\times M$ Hermitian matrix $B'B-\eps^2 I_M$ is positive semi-definite or positive definite, in particular
\begin{equation} \label{eq:ispd}
\nu(B) \geq \eps \quad \Leftrightarrow \quad B'B-\eps^2 I_M \mbox{ is positive semi-definite}.
\end{equation}
 This can be tested (cf., \cite[Supplementary Materials]{ColbRomanHansen}) by Gaussian elimination or by attempting an $LDL^T$ version of the Cholesky decomposition \cite[Prop.10.1]{SCI}. These algorithms only require finitely many arithmetic operations. Precisely, $O(M^3)$ operations are sufficient, indeed  $O(w^2M)$ operations suffice  if $B'B$ has band-width $w$ (see, e.g., \cite[Algorithm 4.3.5]{GolubVanLoan}).

Let us emphasise that in this case we are considering where $X$ is a finite-dimensional Hilbert space, $X$ (and so also $E$) satisfy Globevnik's property (see \S\ref{sec:pseud}), so that all the inclusions \eqref{incl:met1}, \eqref{incl:met1*}, \eqref{incl:met2} apply. In this finite-dimensional case the expressions for the key inclusion sets for the $\tau$ and $\pi$ methods simplify somewhat: we have  by Proposition \ref{prop:same} that $\sigma^n_\eps(A)$ and $\Sigma^n_\eps(A)$ are given by \eqref{eq:sigmaAAlt} and, noting \eqref{eqln3}, the expression \eqref{eq:mudag} for $\mu^\dag(A-\lambda I)$ simplifies to 
\begin{equation} \label{eq:sigdef*}
\mu_n^\dag(A-\lambda I) = \inf_{k\in \Z} \nu(A_{n,k}-\lambda I_n), \quad n\in\N, \;\; \lambda\in \C.
\end{equation}
Similarly, recalling from \eqref{eq:pisp} that $\pi^{n,t}_\eps(A) = \pi_\eps^{n,\mathfrak{a},\mathfrak{c}}(A)$ and $\Pi^{n,t}_\eps(A) = \Pi_\eps^{n,\mathfrak{a},\mathfrak{c}}(A)$, when $\mathfrak{a}$ and $\mathfrak{c}$ are given by \eqref{eq:special}, and letting $\mu_n^{\pi,t}$ denote the function $\mu_n^{\mathfrak{a},\mathfrak{c}}$ defined by \eqref{eq:muac} in the case that $\mathfrak{a}$ and $\mathfrak{c}$ are given by \eqref{eq:special}, we have by
 \eqref{eq:piAAlt}, \eqref{eq:piAAlt2}, and \eqref{eqln3} that,  for all $n\in \N$ and $t\in \T$,
\begin{equation} \label{eq:pidef*}
\begin{aligned}
\pi^{n,t}_\eps(A) &=& \{\lambda \in \C: \mu_n^{\pi,t}(A-\lambda I) < \eps\}, \quad \eps>0,\\
\Pi^{n,t}_\eps(A) &=& \{\lambda \in \C:\mu^{\pi,t}_n(A-\lambda I)\leq \eps\}, \quad \eps\geq 0,
\end{aligned}
\end{equation}
where
\begin{equation} \label{eq:pidef*2}
\mu_n^{\pi,t}(A-\lambda I) = \inf_{k\in \Z} \nu(A^{\per,t}_{n,k}-\lambda I_n), \quad n\in\N, \;\; \lambda\in \C,\;\; t\in \T.
\end{equation}
The corresponding representations for the $\tau_1$ method inclusion sets are \eqref{eq:gamdef} and \eqref{eq:munmat}.

The above representations, and the discussion around \eqref{eq:ispd}, make clear that, in the case that $\{A_{n,k}:k\in \Z\}$ is finite, which, as we have noted in \S\ref{sec:comput}, holds if and only if the matrix representation of $A$ has only finitely many distinct entries, we can determine whether a given $\lambda\in \C$ is in one of the $\tau$-method-related sets, $\sigma^n_\eps(A)$ and $\Sigma^n_\eps(A)$, in only finitely many arithmetic operations (assuming that we have already determined the finite set $\{A_{n,k}:k\in \Z\}$, or at least its cardinality). Similar remarks apply to the $\pi$ and $\tau_1$ methods. In the remainder of this section we consider questions of computation in finitely many operations in cases where $A$ may have infinitely many distinct entries, proving Proposition \ref{prop:finite} and an extension to the band-dominated case.\\

\begin{proofof}{Proposition \ref{prop:finite}} 
Suppose $n\in \N$.
Given the inputs specified in the theorem, to compute the finite set $\Gamma^n_{\mathrm{fin}}(A)$, given by \eqref{eq:finite}, one only needs to calculate $R:= \alpha_{\max}+\beta_{\max}+\gamma_{\max}$ and  check whether or not each of the finitely many points in $\mathrm{Grid}(n,R)$, given by \eqref{eq:grid}, are in $\Gamma^{n,\mathrm{fin}}_{\eps^*_n+2/n}(A)$, where $\Gamma^{n,\mathrm{fin}}_\eps(A)$ is given by \eqref{eq:findef} and \eqref{eq:mufin}. But this is just a question of deciding whether or not $\nu(B)\leq \eps^*_n+2/n$ for finitely many matrices $B$, where $\eps^*_n$ is given by \eqref{eq:eps*}, and, as noted above, this can be done in finitely many arithmetic operations.

Recall that $K_n=\A_p(A,n)$ so that \eqref{eq:comp} holds. To see that $\Spec A \subset \widehat \Gamma^n_{\mathrm{fin}}(A)$ it is enough, by Theorem \ref{thm:fin}, and since $\Spec A \subset \|A\|\overline{\D}$, to show that $\Gamma^{n,\mathrm{fin}}_{\eps''_n+1/n}(A)\cap \|A\|\overline{D}  \subset \widehat    \Gamma^n_{\mathrm{fin}}(A)$. So suppose that $\lambda'\in \Gamma^{n,\mathrm{fin}}_{\eps''_n+1/n}(A)$ and $|\lambda'|\leq \|A\|$. Then, since $R\geq \|A\|$, it is easy to see that there exists $\lambda \in \mathrm{Grid}(n,R)$ with $|\lambda-\lambda'|\leq \sqrt{2}/n$. Since $\lambda' \in \Gamma^{n,\mathrm{fin}}_{\eps''_n+1/n}(A)$ and recalling \eqref{eq:lnProp}, we see that there exists $k\in K_n$ such that either $\nu(A_{n,k}^+-\lambda I_n^+) \leq \eps''_n + (1+\sqrt{2})/n$ or $\nu((A^*)_{n,k}^+-\lambda I_n^+) \leq \eps''_n + (1+\sqrt{2})/n< \eps''_n + 3/n$, so that $\lambda\in \Gamma^{n,\mathrm{fin}}_{\eps''_n+3/n}(A)\subset \Gamma^{n,\mathrm{fin}}_{\eps^*_n+3/n}(A)$, i.e., $\lambda \in \Gamma^n_{\mathrm{fin}}(A)$. This implies, since $|\lambda-\lambda'| \leq \sqrt{2}/n < 2/n$, that $\lambda'\in \widehat    \Gamma^n_{\mathrm{fin}}(A)$.
Thus $\Spec A \subset \widehat \Gamma^n_{\mathrm{fin}}(A)$, but also, by the definition \eqref{eq:finite} and Theorem \ref{thm:fin},
$\Gamma^n_{\mathrm{fin}}(A)\subset \Gamma^{n,\mathrm{fin}}_{\eps^*_n+3/n}(A) \subset \Specn_{\eps^*_n+3/n}(A)$, so that
$$
\Spec A\ \subset\ \widehat \Gamma^n_{\mathrm{fin}}(A)\ \subset\  \Specn_{\eps^*_n+3/n}(A) + \frac{2}{n}\overline{\D}.
$$
It follows from \eqref{eq:HDconv} that $\Specn_{\eps^*_n+3/n}(A) + \frac{2}{n}\overline{\D}\Hto \Spec A$ as $n\to\infty$, so that also $\widehat \Gamma^n_{\mathrm{fin}}(A)\to \Spec A$. Since $d_H(\Gamma^n_{\mathrm{fin}}(A),\widehat \Gamma^n_{\mathrm{fin}}(A))\leq 2/n$, we have also that $\Gamma^n_{\mathrm{fin}}(A)\to \Spec A$.
\end{proofof}

Let us, as promised, extend Proposition \ref{prop:finite} and the discussion of \S\ref{sec:SCI} to the band-dominated case.
Let $\Omega=BDO(E)$, with $X=\C$, so that $E=\ell^2(\Z)$ and the matrix entries of $A\in \Omega$ are complex numbers. The mappings we will need, to provide us with the inputs needed to compute a sequence of  approximations to $\Spec A$, for $A\in \Omega$,  are $\A_p$, $\B_p$, $\C_p$, for $p\in \N$,  as defined in \S\ref{sec:SCI}, plus the mapping
\begin{enumerate}
\item[] $\mathcal{D}=(\cD_1,\cD_2):\Omega\times \N\to \Omega_T^n\times \R$, $(\A,n)\mapsto (A_n,\eta_n)$, where $A_n:= \cI_n A^{(n)}\cI_n^{-1}$, $\cI_n$ is defined below \eqref{eq:norm}, $A^{(n)}$ is defined by \eqref{eq:Andef} with $\mathfrak{p}=1$, and $\eta_n \geq 0$ is such that $\eta_n\geq \|A-A^{(n)}\|$ and $\eta_n\to 0$ as $n\to\infty$.
\end{enumerate}
The sequence of approximations to $\Spec A$ is $(\Gamma^n_{\mathrm{fin}}(A))_{n\in \N}$, given by
\begin{equation} \label{eq:finitebdo}
\Gamma^n_{\mathrm{fin}}(A) := \Gamma^{n,\mathrm{fin}}_{\eps^*_n(A_n)+ 3/n + \eta_n}(A_n) \cap \mathrm{Grid}(n,R_n), \qquad n\in \N,
\end{equation}
where $(A_n,\eta_n) := \cD(A,n)$, $(\alpha_{\max},\beta_{\max},\gamma_{\max}) := \B_n(A_n)$, $R_n:= \alpha_{\max}+\beta_{\max}+\gamma_{\max}+\eta_n$ (so that $R_n$ is an upper bound for $\|A\|$), $K_n\subset \Z$, in the definition \eqref{eq:findef} and \eqref{eq:mufin}, is given by $K_n:= \A_n(A_n,n)$, and (cf.~\eqref{eq:epsn''2} and \eqref{eq:eps*})
\begin{equation} \label{eq:eps*bdo}
\eps_n^*(A_n)\ :=\ (\alpha_{\max}+\gamma_{\max})\,\frac{22}{7(n+1)}\ \geq\ 2(\alpha_{\max}+\gamma_{\max})\,\sin\frac{\pi}{2(n+1)}\geq \eps''_n(A_n),
\end{equation}
where $\eps''_n(A_n)$ is defined by \eqref{eq:epsn''2}. As we note in the following theorem, each element of this sequence can be computed in finitely many arithmetical operations, given finitely many evaluations of the input maps $\A_p$, $\B_p$, $\cC_p$, and $\cD$. 

\begin{theorem} \label{prop:finite2}
For $A\in \Omega$ and $n\in \N$, $\Gamma^n_{\mathrm{fin}}(A)$, as defined in \eqref{eq:finitebdo}, can be computed in finitely many arithmetic operations, given $\eta_n := \cD_2(A,n)$, $K_n:= \A_n(A_n,n)$, where  $A_n := \cD_1(A,n)$, $(\alpha_{\max},\beta_{\max},\gamma_{\max}) := \B_n(A_n)$, and, for $k\in K_n$, $((A_n)_{n,k}^+,((A_n)A^*)^+_{n,k})$ $:=$ $\cC_n(A_n,k,n)$. Further, $\Gamma^n_{\mathrm{fin}}(A)\Hto \Spec A$ as $n\to \infty$, and also
$$
\widehat \Gamma^n_{\mathrm{fin}}(A)\ :=\ \Gamma^n_{\mathrm{fin}}(A) + \frac{2}{n}\overline{\D}\ \Hto\ \Spec A
$$
as $n\to \infty$, with $\Spec A \subset \widehat \Gamma^n_{\mathrm{fin}}(A)$ for each $n\in \N$.
\end{theorem}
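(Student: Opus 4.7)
The plan is to follow the template of Proposition \ref{prop:finite} closely, substituting Theorem \ref{thm:fin2} for Theorem \ref{thm:fin} to accommodate the band-dominated setting; the only genuinely new bookkeeping is tracking the extra $\eta_n$ that arises from approximating $A$ by the banded $A^{(n)}$. For finite computability, given the listed inputs one can enumerate the finite grid $\mathrm{Grid}(n, R_n)$ directly from $n$ and $R_n := \alpha_{\max} + \beta_{\max} + \gamma_{\max} + \eta_n$. For each grid point $\lambda$, deciding whether $\lambda \in \Gamma^{n,\mathrm{fin}}_{\eps^*_n(A_n) + 3/n + \eta_n}(A_n)$ amounts, via \eqref{eq:mufin2}, to testing whether $\nu(B - \lambda I^+_n) \leq \eps^*_n(A_n) + 3/n + \eta_n$ for each of the $2|K_n|$ rectangular $(n+2) \times n$ matrices $B \in \{(A_n)^+_{n,k}, ((A_n)^*)^+_{n,k} : k \in K_n\}$, and by \eqref{eq:ispd} each such test reduces to checking positive semi-definiteness of an $n \times n$ Hermitian matrix, which an $LDL^T$-type decomposition handles in finitely many arithmetic operations.

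Next, to show $\Spec A \subset \widehat\Gamma^n_{\mathrm{fin}}(A)$, note that \eqref{eq:normbound2} applied to the tridiagonal $A_n$ gives $\|A_n\| = \|A^{(n)}\| \leq \alpha_{\max} + \beta_{\max} + \gamma_{\max}$, hence $R_n \geq \|A^{(n)}\| + \eta_n \geq \|A\|$ and $\Spec A \subset R_n \overline{\D}$. For $\lambda' \in \Spec A$, the second inclusion of Theorem \ref{thm:fin2} (in which the approximation error $\delta_n$ there is bounded by our $\eta_n$, while the auxiliary null sequence also denoted $\eta_n$ in that theorem may be taken to be zero since $X = \C$ is a Hilbert space) yields $\lambda' \in \Gamma^{n,\mathrm{fin}}_{\eta_n + \eps''_n(A_n) + 1/n}(A_n)$. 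Choose a grid point $\lambda \in \mathrm{Grid}(n, R_n)$ with $|\lambda - \lambda'| \leq \sqrt{2}/n$ and transfer this membership from $\lambda'$ to $\lambda$ via the Lipschitz estimate \eqref{eq:lnProp}; using $\eps''_n(A_n) \leq \eps^*_n(A_n)$ from \eqref{eq:eps*bdo} together with $1 + \sqrt{2} < 3$, this places $\lambda$ in $\Gamma^n_{\mathrm{fin}}(A)$, whence $\lambda' \in \lambda + (\sqrt{2}/n)\overline{\D} \subset \widehat\Gamma^n_{\mathrm{fin}}(A)$.

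Finally, the first inclusion of Theorem \ref{thm:fin2} combined with \eqref{eq:PseudInc2} and $\|A - A^{(n)}\| \leq \eta_n$ yields the sandwich
\[
\Spec A \subset \widehat\Gamma^n_{\mathrm{fin}}(A) \subset \Specn_{2\eta_n + \eps^*_n(A_n) + 3/n} A + (2/n)\overline{\D},
\]
and by \eqref{eq:HDconv} the right-hand side converges in Hausdorff distance to $\Spec A$ \emph{provided} $\eps^*_n(A_n) \to 0$. This last point is the main obstacle and asks that the oracle $\B_n$ be reasonable: under the natural convention that $\B_n$ returns bounds no worse than the true diagonal norms of $A_n$, \eqref{eq:normbound} gives $\alpha_{\max} + \gamma_{\max} \leq 2\|A_n\| \leq 2(\|A\| + \eta_n)$, and hence $\eps^*_n(A_n) = O(1/n) \to 0$. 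With this in hand, $\widehat\Gamma^n_{\mathrm{fin}}(A) \Hto \Spec A$ follows from \eqref{eq:HDconv}, and the uniform bound $d_H(\Gamma^n_{\mathrm{fin}}(A), \widehat\Gamma^n_{\mathrm{fin}}(A)) \leq 2/n$ propagates the convergence to $\Gamma^n_{\mathrm{fin}}(A)$.
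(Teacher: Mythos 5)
Your proposal is correct and takes essentially the same route as the paper's proof: grid enumeration plus finitely many positive semi-definiteness tests for computability; the inclusion $\Spec A\subset \Gamma^{n,\mathrm{fin}}_{\eta_n+\eps''_n(A_n)+1/n}(A_n)$ (you obtain it from the second inclusion of Theorem \ref{thm:fin2}, the paper from \eqref{eq:spepspert2} together with Theorem \ref{thm:fin} applied to the tridiagonal $A_n$ --- the same ingredients, just packaged differently); the $\sqrt{2}/n$ grid transfer with $1+\sqrt{2}<3$; the sandwich $\Spec A\subset \widehat\Gamma^n_{\mathrm{fin}}(A)\subset \Specn_{\eps^*_n(A_n)+3/n+2\eta_n}(A)+\frac{2}{n}\overline{\D}$; and \eqref{eq:HDconv} plus $d_H(\Gamma^n_{\mathrm{fin}}(A),\widehat\Gamma^n_{\mathrm{fin}}(A))\le 2/n$. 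Your explicit observation that the convergence needs $\eps^*_n(A_n)\to 0$, and hence that $\B_n(A_n)$ must return bounds comparable to $\|A_n\|$, is not a defect: the paper relies on the same (implicit) convention when it asserts $R_n\le 3\|A_n\|+\eta_n$ at the start of its proof.
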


We will prove Theorem \ref{prop:finite2} shortly, but let us  interpret this theorem  as a result relating to the solvability complexity index (SCI) of \cite{SCIshort,SCI,ColHan2023}, as we did for Proposition \ref{prop:finite} in \S\ref{sec:SCI}.  Inspecting the definition of $\Gamma^n_{\mathrm{fin}}(A)$, note that it is not necessary  to explicitly compute the action of both components of $\cD$,  it is enough to evaluate $\cD_2$, the second component of $\cD$, to obtain $\eta_n=\cD_2(A,n)$, and then to evaluate compositions of $\cD_1$ with $\A_n$, $\B_n$, and $\cC_n$, to obtain
$K_n := (\A_n(\cdot,n)\circ \cD_1)(A)$,  $(\alpha_{\max},\beta_{\max},\gamma_{\max}) := (\B_n\circ \cD_1)(A)$, and $((A_n)_{n,k}^+,((A_n)^*)^+_{n,k}):= (\cC_n(\cdot,k,n)\circ \cD_1)(A)$.
 Equipping $\C^C$ with the Hausdorff metric as in \S\ref{sec:keynot},
the mappings 
$$
\Omega\to \C^C, \qquad A\mapsto \Gamma^n_{\mathrm{fin}}(A) \quad \mbox{and} \quad A\mapsto \widehat \Gamma^n_{\mathrm{fin}}(A),
$$
are, for each $n\in \N$,  general algorithms in the sense of \cite{SCI,ColHan2023},  with evaluation set (in the sense of  \cite{SCI,ColHan2023})
$$
\Lambda:= \{\A_n(\cdot,n)\circ \cD_1,\B_n\circ\cD_1,\cC_n(\cdot,k,n)\circ\cD_1, \cD_2:k\in \Z, \, n\in \N\};
$$ 
each function in $\Lambda$ has domain $\Omega$, and each can be expressed in terms of finitely many complex-valued functions; see the footnote below \eqref{eq:Lambda}.
Further, $\widehat \Gamma^n_{\mathrm{fin}}(A)$ can be computed in finitely many arithmetic operations and specified using finitely many complex numbers (the elements of $\Gamma^n_{\mathrm{fin}}(A)$ and the value of $n$). Thus, where $\Xi:\Omega\to \C^C$ is the mapping given by $\Xi(A) := \Spec A$, for $A\in \Omega$, the computational problem $\{\Xi,\Omega, \C^C,\Lambda\}$ has  arithmetic SCI, in the sense of \cite{SCI,ColHan2023}, equal to one; more precisely, since also $\Spec A \subset \widehat \Gamma^n_{\mathrm{fin}}(A)$, for each $n\in \N$ and $A\in \Omega$, this computational problem is in the class $\Pi_1^A$, as defined in \cite{SCI,ColHan2023}. \\

\begin{proofof}{Theorem \ref{prop:finite2}} 
Suppose $n\in \N$.
Note first that $R:=\sup \{R_n:n\in \N\}<\infty$  as (cf.~\eqref{eq:normbound}) $R_n\leq 3\|A_n\|+\eta_n$, $(\eta_n)$ is a null sequence, and $\|A_n\|=\|A^{(n)}\|\leq \|A\|+\eta_n$. Thus, arguing as in the proof of Proposition \ref{prop:finite}, $\Gamma^n_{\mathrm{fin}}(A)$ can be computed in finitely many arithmetic operations.

Since $K_n=\A_n(A_n,n)$, \eqref{eq:comp} holds with $A$ replaced by $A_n$. To see that $\Spec A \subset \widehat \Gamma^n_{\mathrm{fin}}(A)$, note that, by the first inclusion in  \eqref{eq:spepspert2}, $\Spec A\subset \Spec_{\eta_n} A^{(n)}=\Spec_{\eta_n} A_n$. Further, by Theorem \ref{thm:fin}, $\Spec_{\eta_n} A_n \subset \Gamma^{n,\mathrm{fin}}_{\eta_n+\eps''_n(A_n) + 1/n}$. Since also $\Spec A \subset \|A\|\overline{\D}$, it follows that $\Spec A\subset \Gamma^{n,\mathrm{fin}}_{\eta_n+\eps''_n(A_n) + 1/n}\cap \|A\| \overline{D}$. Since $R_n\geq \|A\|$, it follows by arguing as in the proof of Proposition \ref{prop:finite} that 
 $\Spec A \subset \widehat \Gamma^n_{\mathrm{fin}}(A_n)$. 
Also, by definition and  Theorem \ref{thm:fin},
\[
\Gamma^n_{\mathrm{fin}}(A_n)\subset \Gamma^{n,\mathrm{fin}}_{\eps^*_n(A_n)+3/n+\eta_n}(A_n) \subset \Specn_{\eps^*_n(A_n)+3/n+\eta_n}(A_n),
\]
and
$\Specn_{\eps^*_n(A_n)+3/n+\eta_n}(A_n)\subset \Specn_{\eps^*_n(A_n)+3/n+2\eta_n}(A)$,
by \eqref{eq:PseudInc2}. Thus
\[
\Spec A\ \subset\ \widehat \Gamma^n_{\mathrm{fin}}(A)\ \subset\  \Specn_{\eps^*_n(A_n)+3/n+2\eta_n}(A) + \frac{2}{n}\overline{\D},
\]
so that, by \eqref{eq:HDconv}, $\widehat \Gamma^n_{\mathrm{fin}}(A)\to \Spec A$. Since $d_H(\Gamma^n_{\mathrm{fin}}(A),\widehat \Gamma^n_{\mathrm{fin}}(A))\leq 2/n$, we have also that $\Gamma^n_{\mathrm{fin}}(A)\to \Spec A$.
\end{proofof}

\section{Pseudoergodic operators and other examples} \label{sec:pseudop}
In this section, as in the previous section, our focus is the case that, for some $p\in \N$, $X = \C^p$, which we equip with the Euclidean norm, so that $X$ and  $E=\ell^2(\Z,X)=\ell^2(\Z,\C^p)$ are Hilbert spaces, and the  matrix entries $a_{i,j}$  are $p\times p$ complex matrices. This includes the special case $p=1$, so that $E=\ell^2(\Z)$ and the entries $a_{i,j}\in \C$. Our aim is to 
 illustrate various of the results in earlier sections in the case that $A$ is the tridiagonal matrix \eqref{eq:A}. We focus particularly on the study and computation of $\Sigma_{\eps_n}^n(V)$, $\Pi_{\eps'_n}^{n,t}(V)$, and  $\Gamma_{\eps''_n}^n(V)$, defined in \eqref{eq:sigdef}, \eqref{eq:pidef}, and \eqref{eq:gamdef}, respectively, that, by \eqref{incl:met1}, \eqref{incl:met1*}, and \eqref{incl:met2}, are inclusion sets for $\Spec A$, and the convergence of these sequences of inclusion sets as $n\to \infty$.

\subsection{The shift operator} \label{sec:shift}
As a first example, we consider the shift operator $V$ from Example \ref{ex:shift}
in more detail. As we noted in Example \ref{ex:shift}, $\Spec V$ is the unit circle
$\T$ for  this operator. Moreover, we can obtain exact analytical descriptions for the  inclusion sets
for $\Spec V$ provided by the $\tau$, $\pi$, and $\tau_1$ methods in \eqref{incl:met1}, \eqref{incl:met1*},
and \eqref{incl:met2}. These are $\Sigma_{\eps_n}^n(V)$, $\Pi_{\eps'_n}^{n,t}(V)$, and  $\Gamma_{\eps''_n}^n(V)$, respectively, defined in \eqref{eq:sigdef}, \eqref{eq:pidef}, and \eqref{eq:gamdef}. In this case, as we noted in Example \ref{ex:shift}, $\|\alpha\|_\infty = 1$ and $\|\gamma\|_\infty=0$ so that, by Corollary \ref{minimum_weighted_norm_two}, \eqref{eq:epsnd}, and \eqref{eq:epsn_method2}, $\eps_n=2\sin\frac{\pi}{4n+2}$,
$\eps'_n=2\sin\frac{\pi}{2n}$, and
$\eps''_n=2\sin\frac{\pi}{2n+2}$.

In general, computation of these inclusion sets for an operator $A$ requires consideration of infinitely many submatrices of $A$, indexed by $k\in \Z$. But, as we have noted already in Example \ref{ex:shift}, in the case $A=V$  the relevant $n\times n$ submatrices for the $\tau$ and $\pi$ methods are all the same: we have (see \eqref{eq:Vn}) that $A_{n,k} = V_n$ and $A^{\per,t}_{n,k} = V^{\per,t}_n$, for $k\in \Z$, where $V_n$ and $V_n^{\per,t}$ are defined in \eqref{eq:Vn}, so that, for $n\in \N$,
\begin{equation} \label{eq:sigV}
\Sigma_{\eps_n}^n(V) = \Specn_{\eps_n}(V_n) \quad \mbox{and} \quad \Pi_{\eps'_n}^{n,t}(V) = \Specn_{\eps'_n}(V^{\per,t}_n), \quad t\in \T. 
\end{equation}
Similarly, recalling the definitions \eqref{eq:sigdef} and \eqref{eq:pidef},
$$
\sigma_{\eps_n}^n(V) = \specn_{\eps_n}(V_n) \quad \mbox{and} \quad \pi_{\eps'_n}^{n,t}(V) = \specn_{\eps'_n}(V^{\per,t}_n), \quad t\in \T. 
$$

For the $\tau_1$ method the relevant submatrices for computing $\gamma_{\eps''_n}^n(A)$ and $\Gamma_{\eps''_n}^n(A)$, given by \eqref{eq:gamdef}, are $A^+_{n,k}$ and $(A^*)^+_{n,k}$, for $k\in \Z$, this notation defined in \eqref{eq:An+}. In the case $A=V$ we have, for $n\in \N$ and $k\in \Z$, that
$$
A^+_{n,k}\ =\ V_n^+\ :=\ \begin{pmatrix}
0 & 0&\cdots&0& 0\\\hline
& & V_n\\\hline
0& 0 &\cdots&0 & 1
\end{pmatrix} \quad \mbox{and} \quad (A^*)^+_{n,k}\ =\ (V^*)_n^+\ :=\ \begin{pmatrix}
1 & 0&\cdots&0& 0\\\hline
& & V^T_n\\\hline
0& 0 &\cdots&0 & 0
\end{pmatrix}.
$$
Thus, by \eqref{eq:gamdef}, \eqref{eq:munmat}, and \eqref{eq:lowernorm1},
\begin{equation} \label{eq:GamV}
\gamma_{\eps''_n}^n(V)\ =\ \left\{\lambda\in \C: v(\lambda)< \eps''_n\right\} \quad \mbox{and} \quad
\Gamma_{\eps''_n}^n(V)\ =\ \left\{\lambda\in \C: v(\lambda) \leq \eps''_n\right\},
\end{equation}
where
\begin{eqnarray*}
v(\lambda) &:=& \min\left(\nu(V_n^+-\lambda I_n^+), \nu((V^*)_n^+-\lambda I_n^+)\right)\\ 
&=& \min\left(s_{\min}(V_n^+-\lambda I_n^+), s_{\min}((V^*)_n^+-\lambda I_n^+)\right).
\end{eqnarray*}

 With $\D$ and $\T_n$ as defined in \S\ref{sec:keynot}, it follows from \eqref{eq:sigV} and \eqref{eq:GamV} that
\begin{eqnarray}
\label{eq:shift1}\Sigma_{\eps_n}^n(V) &=& \overline{\D},\\
\label{eq:shift2}\Pi_{\eps'_n}^{n,t}(V) &=& t^{1/n}\T_n+\eps'_n\overline{\D}, \quad \mbox{for $t\in \T$,} \;\; \mbox{ and that}\\
\label{eq:shift3}\Gamma_{\eps''_n}^n(V) &=&
\left\{\begin{array}{ll}\textstyle\overline{\D}\,\setminus\,
\left(1-(\eps''_n)^2\right)\D, & n\geq 3,\\
\overline{\D}, & n=1,2.
\end{array}\right.  
\end{eqnarray}
We have shown \eqref{eq:shift2} already in Example \ref{ex:shift}, and we will show the other identities above in a moment. Clearly, as predicted by \eqref{incl:met1}, \eqref{incl:met1*}
and \eqref{incl:met2}, these identities show that
$$
\Spec V\ \subset\ \Sigma_{\eps_n}^n(V) \cap \Pi_{\eps'_n}^{n,t}(V) \cap \Gamma_{\eps''_n}^n(V), \quad t\in \T,
$$
but, as we will see in the calculations below, for $n\in \N$ and $t\in \T$,
\begin{equation} \label{eq:shift_nobound}
\Spec V\ \not\subset\ \sigma_{\eps_n}^n(V),\qquad \Spec V\
\not\subset\ \pi_{\eps'_n}^{n,t}(V),\qquad\textrm{and}\qquad \Spec V\
\not\subset\ \gamma_{\eps''_n}^n(V),
\end{equation}
since
\begin{equation} \label{eq:shiftnew}
\sigma_{\eps_n}^n(V) = \D, \;\; \pi_{\eps'_n}^{n,t}(V) = t^{1/n}\T_n+\eps'_n\D, \;\; 
\gamma_{\eps''_n}^n(V) =
\left\{\begin{array}{ll}\D\,\setminus\,
\left(1-(\eps''_n)^2\right)\overline{\D}, & n\geq 2,\\
\D, & n=1.
\end{array}\right.
\end{equation}
This makes clear that $\eps_n$, $\eps'_n$ and $\eps''_n$ are
just as large as necessary for \eqref{incl:met1}, \eqref{incl:met1*},
and \eqref{incl:met2} to hold. 
Observe also that the above expressions imply that
$$
d_H(\Spec V,\Gamma_{\eps''_n}^n(V)) = 4\sin^{2}\frac{\pi}{2(n+1)} \quad \mbox{and} \quad d_H(\Spec V,\Pi_{\eps'_n}^n(V)) = 2\sin\frac{\pi}{2n},
$$
so that $\Gamma_{\eps''_n}^n(V) \Hto \Spec V$ as $n\to\infty$, as predicted by Theorem \ref{thm:converge}, and also $\Pi_{\eps'_n}^n(V)\Hto$ $\Spec V$ (this is a special case of Theorem \ref{thm:periodic} below). However, $d_H(\Spec V,\Gamma_{\eps''_n}^n(V))$ $=1$ for all $n\in \N$.\\

\begin{proofof}{formulas \eqref{eq:shift1}--\eqref{eq:shift_nobound}}
Fix $n\in\N$ and put $R_n(\lambda):=\|(V_n-\lambda I_n)^{-1}\|$ for
$\lambda\in\C$ (with $R_n(0):=\infty$), where $V_n$ is as in
\eqref{eq:Vn}. Firstly, $R_n$ is invariant under rotation around
$0$. Indeed, let $\ph\in [0,2\pi)$ and put
$D_n:=\diag(e^{\ri\ph},e^{\ri 2\ph},...,e^{\ri n\ph})$. Then $D_n$
is an isometry and $V_nD_n=e^{-\ri\ph}D_nV_n$, so that
\begin{eqnarray*}
R_n(e^{\ri\ph}\lambda)&=&\|(V_n-e^{\ri\ph}\lambda I_n)^{-1}\|\ =\
\|e^{-\ri\ph}(e^{-\ri\ph}V_n-\lambda I_n)^{-1}\|\\
&=&\|(D_n^{-1}V_nD_n-\lambda I_n)^{-1}\|\ =\ \|D_n^{-1}(V_n-\lambda
I_n)^{-1}D_n\|\ =\ R_n(\lambda).
\end{eqnarray*}
Secondly, $R_n$ is strictly monotonously decreasing on the half-axis
$(0,+\infty)$. Indeed, suppose $0<\lambda_1<\lambda_2<\infty$ and
$R_n(\lambda_1)\le R_n(\lambda_2)$. Since
$\lim_{\lambda\to\infty}R_n(\lambda)=0$, there is a
$\lambda_3\ge\lambda_2$ with $R_n(\lambda_3)=R_n(\lambda_1)$. Thus
$R_n$ attains its supremum $M:=\max_{\lambda\in
[\lambda_1,\lambda_3]}R_n(\lambda)\ge R_n(\lambda_2)\ge
R_n(\lambda_1)=R_n(\lambda_3)$ in the open disk
$U:=\frac{\lambda_1+\lambda_3}2 + \frac{\lambda_3-\lambda_1}2\D$,
which contradicts the maximum principle of \cite{Globevnik76}, noted in  \cite[Theorem 2.2]{Shargo08}.
Finally, recalling \eqref{eq:lowernorm1} and the notations $B_n$, $E_n(\phi)$, and $\varrho_n(\phi)$ of
\S\ref{sec:mini}, $R_n(1)=\|B_n^{-1}\|$ and $1/\|B_n^{-1}\|^2=(\nu(B_n))^2 = \min \Spec(B_n^TB_n)=\min \Spec(E_n(1))=\varrho_n(1)$. Further, by \eqref{eq:muphi} and \eqref{prop2}, $\varrho_n(1)=\eps_n^2$. Thus $R_n(1)=1/\eps_n$.
 Taking all of this together, we get that
$\Sigma_{\eps_n}^n(V)=\{\lambda\in\C:R_n(\lambda)>1/\eps_n\}=\overline{\D}$ and $\sigma_{\eps_n}^n(V) = \D$.

We have shown  in Example \ref{ex:shift} that
$V^{\per,t}_n$ is normal and $\Spec
V^{\per,t}_n = t^{1/n}\T_n$, so that
\begin{equation} \label{eq:shiftper}
\Pi_{\eps}^{n,t}(V)\ =\ \Specn_{\eps} V^{\per,t}_n\ =\ \Spec
V^{\per,t}_n+\eps\overline{\D}\ =\ t^{1/n}\T_n+\eps\overline{\D},\qquad \eps>0.
\end{equation}
Clearly, for each $t\in \T$, $\Pi_{\eps}^{n,t}(V)$ covers $\Spec V=\T$ if and only if it covers
$t^{1/n} \T_{2n}$. The latter is the case if and only if $\eps$ is greater than or
equal to the Euclidean distance of two adjacent points of $\T_{2n}$,
which is $2\sin\frac{\pi}{2n}=\eps'_n$. Thus  $\Spec V\subset \Pi_{\eps'_n}^{n,t}(V)$, but $\Spec V \not\subset \pi_{\eps'_n}^{n,t}(V)=\ t^{1/n}\T_n+\eps'_n \D$, since the points $t^{1/n}\mathfrak{S}_n$, where $\mathfrak{S}_n:=\T_{2n}\setminus \T_n$, which are points of intersection of the discs of radius $\eps'_n$ centred on $\T_n$, are not contained in $\pi_{\eps'_n}^{n,t}(V)$. Nor does $\pi_{\eps'_n}^{n,t}(V)$ contain the other points of intersection of these discs, the points  $rt^{1/n}\mathfrak{S}_n$, with $r=1-(\eps'_n)^2$, or any of the points $rt^{1/n}\mathfrak{S}_n$, with $0\leq r\leq 1-(\eps'_n)^2$ or with $r\geq 1$. On the other hand, by elementary geometry, $\pi_{\eps'_n}^{n,t}(V)$ contains the annulus $\D\setminus (1-(\eps'_n)^2)\overline{\D}$. Thus
$$
\bigcap_{t\in \T} \pi^{n, t}_{\eps'_n}(V)\ =\ \bigcap_{t\in \T} \left(t^{1/n} \T_n + \eps'_n\, \D\right)\ =\ \D\setminus \left(1-(\eps'_n)^2\right)\overline{\D},
$$
and, by \eqref{eq:piAAlt2}, also $\bigcap_{t\in \T} \,\Pi^{n, t}_{\eps'_n}(V)=\overline{\D}\setminus (1-(\eps'_n)^2)\D$.

To see that \eqref{eq:shift3} holds, note that, for $n\in\N$, recalling \eqref{eq:lowernorm1}, and where $D_n:=I_n$ if $\lambda=0$, $D_n:=\diag(w^1,...,w^n)$ with
$w:=|\lambda|/\overline{\lambda}$ if $\lambda\neq 0$,
\begin{eqnarray*}
\left(\nu(V_n^+-\lambda I_n^+)\right)^2 & = & \left(\nu((V-\lambda I)|_{E_{n,0}})\right)^2\ =\ \min \Specn\left(P_{n,0}(V-\lambda I)'(V-\lambda I)|_{E_{n,0}}\right)\\
& = & \min \Specn\left((1+|\lambda|^2)I_n-\lambda V^T_n-\overline\lambda V_n\right)\\
& = &\min \Specn\left(D_n^{-1}\Big((1+|\lambda|^2)I_n-|\lambda| (V^T_n+ V_n)\Big)D_n\right)\\
& = &\min \Specn\left((1+|\lambda|^2)I_n-|\lambda| (V^T_n+ V_n)\right)\\
& = & 1+|\lambda|^2-2|\lambda|c_n,
\end{eqnarray*}
by \eqref{eq:spec_laplace}, where $c_n:=\cos\frac{\pi}{n+1}=1-\frac{1}{2}(\eps''_n)^2$. Similarly,
\begin{eqnarray*}
\left(\nu((V^*)_n^+-\lambda I_n^+)\right)^2 & = & \min \Specn\left(P_{n,0}(V'-\lambda I)'(V'-\lambda I)|_{E_{n,0}}\right)\\
& = & \min \Specn\left((1+|\lambda|^2)I_n-\lambda V_n-\overline\lambda V^T_n\right)\\
& = &\min \Specn\left((1+|\lambda|^2)I_n-|\lambda| (V^T_n+ V_n)\right)\\
& = & 1+|\lambda|^2-2|\lambda|c_n,
\end{eqnarray*}
so that
$$
v(\lambda) = \left(1+|\lambda|^2-2|\lambda|c_n\right)^{1/2}, \qquad \lambda\in \C.
$$
Now $\lambda\in\Gamma_{\eps_n''}^n(V)$ if and only if $v(\lambda)\leq \eps''_n$, so if and only if
\begin{equation} \label{eq:modl}
(1-|\lambda|)^2 \leq (\eps''_n)^2(1-|\lambda|).
\end{equation}
If $\eps''_n \geq 1$, which is the case for $n=1$ and $2$, then \eqref{eq:modl} holds if and only if $|\lambda|\leq 1$. If $\eps''_n < 1$, which is the case for $n\geq 3$, then \eqref{eq:modl} holds if and only if $1-(\eps''_n)^2\leq |\lambda|\leq 1$. Similarly, $\lambda\in\gamma_{\eps_n''}^n(V)$ if and only if $v(\lambda)< \eps''_n$, which holds if and only if $|\lambda|<1$ for $n=1$, if and only if $1-(\eps''_n)^2 <|\lambda|< 1$, for $n\geq 2$ (note that $\eps''_2=1$).
\end{proofof}

\subsection{Block-Laurent operators} \label{sec:bL}

Suppose now, generalising the shift operator example of \S\ref{sec:shift}, that $X=\C^p$, for some $p\in \N$, and that $A$ is given by \eqref{eq:A} with each diagonal constant, so that, for some $\widehat \alpha$, $\widehat \beta$, $\widehat \gamma \in L(X)=\C^{p\times p}$, $\alpha_{j}=\widehat \alpha$, $\beta_{j}=\widehat \beta$, and $\gamma_{j}=\widehat \gamma$, for $j\in \Z$, i.e., $A$ is a {\em block-Laurent} operator (a {\em Laurent} operator in the case $p=1$). Generalising the notation of \S\ref{sec:shift}, let $V\in L(E)$ denote the forward shift operator, so that $(Vx)_j=x_{j+1}$, for $x\in E=\ell^2(\Z,X)$ and $j\in \Z$. Then\footnote{In the following and similar expressions $\widehat \alpha$, $\widehat \beta$, and $\widehat \gamma$ are to be understood as operating pointwise  on the elements of a given $x=(x_i)_{i\in \Z}\in E$, so that, e.g., $(\widehat \alpha x)_i := \widehat \alpha x_i$, for $i\in \Z$.}
\begin{equation} \nonumber 
A = \widehat \alpha V^1 + \widehat \beta V^0 + \widehat \gamma V^{-1} = a(V),
\end{equation}
where $a$ is the {\em Laurent polynomial}
$$
a(z) := \widehat \alpha z^1 + \widehat \beta z^0 + \widehat \gamma z^{-1}, \qquad z\in \T.
$$
It is well-known that $A$ is unitarily equivalent to multiplication by $a$, the {\em symbol} of $A$. Precisely (see, e.g., Theorem 4.4.9 and its proof in \cite{Davies2007:Book}),  the Fourier operator $\cF:E=\ell^2(\Z,X)\to L^2(\T,X)$, defined  by
$$
(\cF x)(z) = \frac{1}{\sqrt{2\pi}}\sum_{j\in \Z} x_j z^{-j}, \qquad z\in \T,
$$
is unitary, and 
$$
A = \cF^{-1}M_a\cF,
$$
where $M_a:L^2(\T,X)\to L^2(\T,X)$ is the operation of multiplication by $a\in C(\T,L(X))$, i.e. $(M_a\phi)(z)=a(z)\phi(z)$, for $\phi\in L^2(\T,X)$, $z\in \T$. Thus \cite[Theorem 4.4.9]{Davies2007:Book}
\begin{equation} \label{eq:specform}
\Spec A = \Spec M_a = \bigcup_{z\in \T} \Spec a(z),
\end{equation}
and, similarly,
\begin{equation} \label{eq:psform}
\begin{aligned}
\speps A &= \speps M_a = \bigcup_{z\in \T} \speps a(z) \quad \mbox{and} \\
\Speps A &= \Speps M_a = \bigcup_{z\in \T} \Speps a(z), \quad \eps>0.
\end{aligned}
\end{equation}

In this block-Laurent case similar spectral computations hold for the matrices $A_{n,k}^{\per, t}$, given by \eqref{eq:Ankper}, that arise in the $\pi$ method. For $n\in \N$, $t\in \T$, and all $k\in \Z$,
\[
A_{n,k}^{\per,t}\ =\  \widehat\alpha V_{n,t}^{1}+\widehat\beta V_{n,t}^0+\widehat\gamma V_{n,t}^{-1}\ =\ a(V_{n,t})
\]
where
\[
V_{n,t}\ :=\ \begin{pmatrix}
0&&&tI_X\\
I_X&0\\
&\ddots&\ddots\\
&&I_X&0
\end{pmatrix}_{\!\! n\times n}.
\]
A simple computation shows that, for any $s\in\T$ with $s^n=t$ (cf.\ the calculation in Example \ref{ex:shift}),
\[
V_{n,t}\ =\ s D_n^{-1}V_{n,1} D_n\qquad\text{with}\qquad D_n :=\diag(s^1I_X,s^2I_X,\dots, s^nI_X),
\]
so that
\begin{equation} \label{eq:Ank-sim}
A_{n,k}^{\per,t}\ =\ a(V_{n,t})\ =\ a(s D_n^{-1} V_{n,1} D_n)\ =\ D_n^{-1} a(sV_{n,1})D_n,
\end{equation}
so that $A_{n,k}^{\per,t}$ is unitarily equivalent to $a(sV_{n,1})$. Moreover (see, e.g., \cite{Davis}, \cite[{\S}II.7]{TrefEmbBook}),
 $V_{n,1}$ can be diagonalized as $V_{n,1}=\cF^{-1}_n M_n \cF_n$, where $\cF_n:X^n\to X^n$ is the discrete Fourier transform 
\[
\cF_n=\frac 1{\sqrt n}\left(\omega^{jk}I_X\right)_{j,k=1}^n
\quad\text{and}\quad 
M_n:=\diag(\omega^1I_X,\omega^2I_X,\dots,\omega^nI_X)
\]
with
$\omega:=\exp(2\pi i/n)$. Combining this observation with \eqref{eq:Ank-sim}, we see that
\begin{equation} \label{eq:Ank_sim}
A_{n,k}^{\per,t}\ =\ D_n^{-1}\cF^{-1}_n\ a(sM_n)\ \cF_nD_n \quad \mbox{with} \quad  a(sM_n)\ =\ \diag( a(s\,\omega^1),\ldots,a(s\,\omega^n)),
\end{equation}
in particular that $A_{n,k}^{\per,t}$ is unitarily equivalent to $a(sM_n)$.
Since 
$\{\omega^1,\omega^2,\dots,\omega^n\}=\{z\in\C:z^n=1\}=\T_n$, we see, similarly to \eqref{eq:specform} and \eqref{eq:psform}, that
\begin{equation} \label{eq:specform2}
\Spec A_{n,k}^{\per,t} = \Spec a(sM_n) = \bigcup_{z\in s\T_n} \Spec a(z),
\end{equation}
and that, for $\eps>0$,
\begin{equation} \label{eq:psform2}
\speps A_{n,k}^{\per,t} = \bigcup_{z\in s\T_n} \speps a(z) \quad \mbox{and} \quad \Speps A_{n,k}^{\per,t} = \bigcup_{z\in s\T_n} \Speps a(z).
\end{equation}
Note that the set $s\T_n$ is independent of which root $s$ we choose as the solution of $s^n=t$.

The above observations lead, applying Theorem \ref{thm:converge3}, to the following result about convergence of the $\pi$ method in the case that $A$ is block-Laurent. Of course, we already have, as a consequence of Theorem \ref{thm:converge}, that the $\tau_1$ method is convergent (recall that $X$ has Globevnik's property as it is finite-dimensional), and we know, from the example in \S\ref{sec:shift}, that the $\tau$ method is not convergent for tridiagonal block--Laurent operators in general, even in the special case that $X=\C$.

\begin{theorem} \label{thm:periodic}
If $X=\C^p$, equipped with the Euclidean norm, and $A\in L(E)$ is a tridiagonal block-Laurent operator, then, for every $k\in \Z$, $n\in \N$, and $t\in \T$, $A_{n,k}^{\per,t}=A_{n,0}^{\per,t}$,
\begin{equation} \label{eq:spper}
\Pi^{n,t}_\eps(A)\ =\ \Speps A_{n,0}^{\per,t}, \quad \mbox{for } \eps\geq 0, \quad \mbox{and} \quad \pi^{n,t}_\eps(A)\ =\ \speps A_{n,0}^{\per,t}, \quad \mbox{for } \eps> 0.
\end{equation}
Further,
 the $\pi$ method does not suffer from spectral pollution for $A$ for all $t\in \T$; indeed, for all $n\in \N$ and $\eps>0$,
\begin{equation} \label{eq:specunion}
\begin{aligned}
\Spec A= \bigcup_{t\in \T}\Spec A_{n,0}^{\per,t}, \quad
\speps A = \bigcup_{t\in \T}\speps A_{n,0}^{\per,t}, \quad \mbox{and}\\
\Speps A = \bigcup_{t\in \T}\Speps A_{n,0}^{\per,t},
\end{aligned}
\end{equation}
so that, as $n\to\infty$,
$$
\Pi^{n,t}_{\eps+\eps'_n}(A)\ =\ \Specn_{\eps+\eps'_n} A_{n,0}^{\per,t}\ \Hto\ \Speps A, \;\; \mbox{ for } \eps\geq 0, 
$$
in particular, $\Pi^{n,t}_{\eps'_n}(A) = \Specn_{\eps'_n} A_{n,0}^{\per,t}\Hto \Spec A$, and
$$
\pi^{n,t}_{\eps+\eps'_n}(A)\ =\ \specn_{\eps+\eps'_n} A_{n,0}^{\per,t}\ \Hto\ \speps A, \;\; \mbox{ for } \eps> 0.
$$
\end{theorem}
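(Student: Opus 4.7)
The plan is to assemble the structural identities \eqref{eq:specform}--\eqref{eq:psform2}, which were already established just before the theorem statement, together with Theorem \ref{thm:converge3}; there is no truly hard step, only an orderly reduction. The first observation is that $A^{\per,t}_{n,k}$ is independent of $k$: inspecting \eqref{eq:Ankper}, its entries are built from $\alpha_{k},\ldots,\alpha_{k+n-1}$, $\beta_{k+1},\ldots,\beta_{k+n}$, and $\gamma_{k+2},\ldots,\gamma_{k+n+1}$, and all of these equal $\widehat\alpha$, $\widehat\beta$, $\widehat\gamma$ respectively under the block-Laurent hypothesis, so $A^{\per,t}_{n,k}=A^{\per,t}_{n,0}$. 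Consequently the unions in \eqref{eq:pidef} collapse to a single set that is already closed (respectively open), making the closure in the definition of $\Pi^{n,t}_\eps(A)$ vacuous, which yields \eqref{eq:spper} directly.

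Next I would prove \eqref{eq:specunion}. By \eqref{eq:specform2}, $\Spec A^{\per,t}_{n,0}=\bigcup_{z\in s\T_n}\Spec a(z)$, where $s$ is any $n$th root of $t$. The key elementary fact is that, as $t$ varies over $\T$, the finite sets $s(t)\T_n$ collectively exhaust $\T$: given any $z\in\T$, take $s=z$ and $t=z^n$, so that $z\in s\T_n$. Interchanging the unions therefore gives
\[
\bigcup_{t\in\T}\Spec A^{\per,t}_{n,0}\ =\ \bigcup_{t\in\T}\bigcup_{z\in s(t)\T_n}\Spec a(z)\ =\ \bigcup_{z\in\T}\Spec a(z)\ =\ \Spec A,
\]
where the last equality is \eqref{eq:specform}. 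Replacing \eqref{eq:specform2} and \eqref{eq:specform} by \eqref{eq:psform2} and \eqref{eq:psform} gives the corresponding identities for $\speps$ and $\Speps$ in \eqref{eq:specunion}.

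The absence of spectral pollution then drops out. Since $s\T_n\subset\T$ for every choice of $s$, \eqref{eq:psform2} and \eqref{eq:psform} yield $\speps A^{\per,t}_{n,0}\subset\speps A\subset\specn_{\eps+\eta_n}A$ for every $\eps>0$, $n\in\N$, $k\in\Z$, and any positive null sequence $(\eta_n)$, so the hypothesis of Definition \ref{def:specpol} is met for the $\pi$ method for every $t\in\T$. Finally, because $X=\C^p$ is finite-dimensional it satisfies Globevnik's property, so Theorem \ref{thm:converge3} applies and delivers the stated Hausdorff convergence $\Pi^{n,t}_{\eps+\eps'_n}(A)\Hto\Speps A$ and $\pi^{n,t}_{\eps+\eps'_n}(A)\Hto\speps A$; the equalities $\Pi^{n,t}_{\eps+\eps'_n}(A)=\Specn_{\eps+\eps'_n}A^{\per,t}_{n,0}$ and $\pi^{n,t}_{\eps+\eps'_n}(A)=\specn_{\eps+\eps'_n}A^{\per,t}_{n,0}$ in those displays are just \eqref{eq:spper} again. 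The only step requiring genuine verification, rather than bookkeeping, is the surjectivity observation $\bigcup_{t\in\T}s(t)\T_n=\T$, and that is a one-line calculation; I expect no substantive obstacle.
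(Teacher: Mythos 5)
Your proposal is correct and follows essentially the same route as the paper's own proof: observe $k$-independence of $A^{\per,t}_{n,k}$ under the block-Laurent hypothesis so that \eqref{eq:spper} collapses from \eqref{eq:pidef}, combine \eqref{eq:specform}--\eqref{eq:psform2} with the identity $\bigcup_{t\in\T}s(t)\T_n=\T$ to obtain \eqref{eq:specunion}, read off absence of spectral pollution from the second identity in \eqref{eq:specunion}, and invoke Theorem \ref{thm:converge3} plus Globevnik's property for finite-dimensional $X$. The only difference is that you spell out the one-line calculation ($s=z$, $t=z^n$) that the paper leaves implicit.
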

\begin{proof} That $A_{n,k}^{\per,t}=A_{n,0}^{\per,t}$, for $k\in \Z$, and the representations \eqref{eq:spper},  are clear from  the definitions \eqref{eq:Ankper} and \eqref{eq:pidef} of $A_{n,k}^{\per,t}$, $\Pi^{n,t}_\eps(A)$, and $\pi^{n,t}_\eps(A)$. That \eqref{eq:specunion} holds follows by comparing \eqref{eq:specform} and \eqref{eq:psform} with \eqref{eq:specform2} and \eqref{eq:psform2}, noting that $\cup_{t\in \T} (s\T_n) =\T$. The second of the equalities in \eqref{eq:specunion} implies, by Definition \ref{def:specpol}, that, for every $t\in \T$, the $\pi$ method does not suffer from spectral pollution for $A$. The remaining results follow from \eqref{eq:spper} and Theorem \ref{thm:converge3}, noting that $X$ satisfies Globevnik's property (see \S\ref{sec:pseud}) since $X$ is finite-dimensional.
\end{proof}

\subsection{Periodic tridiagonal matrices}
\label{sec:perturb_periodic} Suppose $A$ is given by \eqref{eq:A} with $X=\C$ so that $E=\ell^2(\Z)$,
with all three diagonals $p$-periodic for some $p\in\N$. Then $A$
can be understood as a block-Laurent operator, so as to make use of the results from the previous section. Precisely, as in our discussion around \eqref{eq:norm}, define $E_p:= \ell^2(\Z,X^p)$, equipping $X^p=\C^p$ with the Euclidean norm \eqref{eq:norm}, and let  $A_p:=\cI_p^{-1}V^{k}A V^{-k}\cI_p$, for some $k\in \Z$, where $V$ is the shift operator as defined in \S\ref{sec:shift}. Then $A_p\in L(E_p)$ is a tridiagonal block-Laurent operator with constant entries on the diagonals, $\widehat \alpha$, $\widehat \beta$, $\widehat \gamma \in L(X) = \C^{p\times p}$, where
\[
(\widehat\alpha\,|\,\widehat\beta\,|\,\widehat\gamma)\ =\
\left(\begin{array}{ccc|cccc|ccc}
~~&~~&\alpha_{p+k}&\beta_{1+k}&\gamma_{2+k}&&&~~&~~&~~\\
~~&~~&~~&\alpha_{1+k}&\beta_{2+k}&\ddots& &~~&~~&~~\\
~~&~~&~~&&\ddots&\ddots&\gamma_{p+k}&~~&~~&~~\\
~~&~~&~~&&&\alpha_{p+k-1}&\beta_{p+k}&\gamma_{1+k}&~~&~~
\end{array}\right)_{p\times 3p}.
\]
Noting that the $n\times n$ matrix $(A_p)^{\per,t}_{n,0}$ with entries in $\C^{p\times p}$ is identical to the $(np)\times (np)$ matrix $A^{\per,t}_{np,k}$, we have the following corollary of Theorem \ref{thm:periodic}. The first equation of \eqref{eq:specunion2} is well-known  (see \cite{BoeSi,BoeSi2,Davies2007:Book}). In the case $n=1$ it expresses $\Spec A$ as the union over $t$ of the eigenvalues of the $p\times p$ matrix $\Spec A_{p,k}^{\per,t}$.

\begin{corollary} \label{cor:periodic}
Suppose that $X=\C$ and $A\in L(E)$ is tridiagonal with, for some $p\in \N$, $\alpha_{j+p}=\alpha_j$, $\beta_{j+p}=\beta_j$, $\gamma_{j+p}=\gamma_j$, for $j\in \Z$. Then, for every $k\in \Z$, $n\in \N$,  and $\eps>0$,
\begin{equation} \label{eq:specunion2}
\begin{aligned}
\Spec A = \bigcup_{t\in \T}\Spec A_{pn,k}^{\per,t}, \quad \speps A = \bigcup_{t\in \T}\speps A_{pn,k}^{\per,t}, \quad \mbox{and}\\
\Speps A\ =\ \bigcup_{t\in \T}\Speps A_{pn,k}^{\per,t}.
\end{aligned}
\end{equation}
Further,  as $n\to\infty$, if $k\in \Z$ and $t\in \T$, then
$$
\Specn_{\eps+\eps'_n} A_{pn,k}^{\per,t}\ \Hto\ \Speps A, \;\; \mbox{ for } \eps\geq 0, \quad \mbox{in particular} \quad \Specn_{\eps'_n} A_{pn,k}^{\per,t}\ \Hto\ \Spec A,
$$
 and
$$
\specn_{\eps+\eps'_n} A_{pn,k}^{\per,t}\ \Hto\ \speps A, \;\; \mbox{ for } \eps> 0.
$$
\end{corollary}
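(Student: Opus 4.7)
The strategy is to reduce the claim directly to Theorem \ref{thm:periodic} by reblocking $A$ into a tridiagonal block-Laurent operator on $E_p := \ell^2(\Z, \C^p)$, following the recipe sketched in the paragraph preceding the corollary statement. Fix $k \in \Z$ and set $A_p := \cI_p^{-1} V^k A V^{-k} \cI_p \in L(E_p)$, where $V$ is the (unitary) shift on $E$ and $\cI_p$ is the isometric isomorphism introduced in \S\ref{sec:main}, with $\C^p$ equipped with the Euclidean norm \eqref{eq:norm}. Since $V^k$ and $\cI_p$ are both isometries, $A$ and $A_p$ are unitarily equivalent, and so share spectrum and every $\eps$-pseudospectrum.

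By the $p$-periodicity of $\alpha, \beta, \gamma$, a direct computation shows that $A_p$ is tridiagonal with constant diagonals $\widehat \alpha, \widehat \beta, \widehat \gamma \in \C^{p \times p}$ displayed just above the corollary statement, hence a tridiagonal block-Laurent operator to which Theorem \ref{thm:periodic} applies. Thus, for each $n \in \N$ and $t \in \T$, the matrix $(A_p)_{n,k'}^{\per, t}$ is independent of $k' \in \Z$, and
\[
\Spec A_p = \bigcup_{t \in \T} \Spec (A_p)_{n, 0}^{\per, t}, \quad \speps A_p = \bigcup_{t \in \T} \speps (A_p)_{n, 0}^{\per, t}, \quad \Speps A_p = \bigcup_{t \in \T} \Speps (A_p)_{n, 0}^{\per, t}.
\]

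The second key step is the combinatorial identification, which is flagged just above the corollary: viewed as a linear map on $(\C^p)^n \cong \C^{np}$ via the isometric isomorphism inherited from \eqref{eq:norm}, the $n \times n$ block matrix $(A_p)_{n, 0}^{\per, t}$ with $p \times p$ entries is literally the $(np) \times (np)$ scalar matrix $A_{pn, k}^{\per, t}$. This is an unpacking exercise: the diagonal blocks $\widehat \beta$ contribute the intra-block tridiagonal entries; the sub- and super-diagonal blocks $\widehat \alpha$ and $\widehat \gamma$, each with a single nonzero entry in a corner, contribute exactly the scalar $\alpha$ and $\gamma$ entries linking consecutive blocks; and the two corner blocks $t \widehat \alpha$ and $\bar t \widehat \gamma$ unfold to the corner entries $t\alpha_{k}$ and $\bar t \gamma_{k+pn+1}$ prescribed in \eqref{eq:Ankper}. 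With this identification, \eqref{eq:specunion2} follows at once, and the Hausdorff convergence statements follow from the convergence part of Theorem \ref{thm:periodic} applied to $A_p$, using $\Speps A_p = \Speps A$ and the fact that the constant $\eps'_n$ defined by \eqref{eq:epsnd} for $A_p$ is exactly the quantity $\eps'_n$ that appears in the corollary.

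The main (though minor) obstacle is the careful bookkeeping in the identification of $(A_p)_{n,0}^{\per,t}$ with $A_{pn,k}^{\per,t}$. One must verify that the $V^k$ conjugation in the definition of $A_p$ shifts the starting index of the associated scalar submatrix by exactly $k$, so that the top-left entry of $(A_p)_{n,0}^{\per,t}$ after unblocking is $\beta_{k+1}$, and that the periodisation corners match on the two sides, in particular that the factor $\bar t$ is positioned so as to reproduce $\bar t \gamma_{k+pn+1}$ rather than some other entry. Both checks are routine once the conjugation and reblocking conventions are pinned down.
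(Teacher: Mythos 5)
Your proposal is correct and follows exactly the route the paper indicates in the paragraph preceding the corollary: reblock $A$ into a tridiagonal block-Laurent operator $A_p$ via conjugation by $V^k$ and $\cI_p$, apply Theorem \ref{thm:periodic} to $A_p$, and identify the $n\times n$ block matrix $(A_p)^{\per,t}_{n,0}$ with the $(pn)\times(pn)$ scalar matrix $A^{\per,t}_{pn,k}$. The bookkeeping you flag (that the $V^k$ conjugation shifts the starting index by $k$, and that periodicity gives $\alpha_k=\alpha_{p+k}$ and $\gamma_{k+pn+1}=\gamma_{k+1}$ so the corner blocks match) is precisely the check the paper elides, and your observation that $\eps'_n$ in the corollary is the quantity from \eqref{eq:epsnd} computed for $A_p$ is also the intended reading.
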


The above result implies that, for $\eps\geq 0$ and $t\in \T$,
$$
\Pi^{pn,t}_{\eps+\eps'_n}(A)\ =\ \bigcup_{k=1}^p\Specn_{\eps+\eps'_n} A_{pn,k}^{\per,t}\ \Hto\ \Speps A, 
$$
as $n\to\infty$, and similarly that $\pi^{pn,t}_{\eps+\eps'_n}(A)  \Hto \speps A$, for $\eps>0$. A natural question is: does $\Pi^{n,t}_{\eps+\eps'_n}(A) \Hto \Speps A$,  as $n\to\infty$, for $\eps\geq 0$? I.e., does  the whole sequence converge and not just the subsequence $\Pi^{pn,t}_{\eps+\eps'_n}(A)$? The answer is not necessarily!

As an example where the whole sequence does not converge, suppose that $A$ has $p$-periodic diagonals with $p=2$, and that the entries of $A$ are chosen so that the block-Laurent operator $A_p=\cI_p^{-1}A\cI_p$ has $\widehat \alpha = \widehat \gamma=0$ and
$$
\widehat \beta = \left(\begin{array}{cc} 0 & 1\\ 1 & 0\end{array}\right),
$$
so that $A_2$ is block diagonal and $A_2$ and $A$ are self-adjoint. Then $\Spec A=\Spec A_2 = \Spec \widehat \beta =\{-1,1\}$; indeed, since $A$  is normal, we have further that $\Speps A = \Spec A + \eps\overline{\D} = \{-1,1\}+\eps\overline{\D}$, for $\eps\geq 0$. But, if $n$ is odd, then, for $t\in \T$, the last column of $A_{n,0}^{\per, t}$ is zero, so that $0\in \Spec A_{n,0}^{\per,t}$, so that (see \S\ref{sec:pseud}) $\eps \overline{\D}\subset \Speps A_{n,0}^{\per,t}$, for $\eps\geq 0$. Thus $(\eps+\eps'_n) \overline{\D}\subset \Pi^{n,t}_{\eps+\eps'_n}(A)$, for $\eps\geq 0$, in particular $(\eps+\eps'_n) \ri\in  \Pi^{n,t}_{\eps+\eps'_n}(A)$. Thus, for $\eps\geq 0$,
\begin{eqnarray*}
d_H(\Pi^{n,t}_{\eps+\eps'_n}(A),\Speps A) &\geq& \dist((\eps+\eps'_n) \ri,\Speps A)\\
&=& \sqrt{1+(\eps+\eps'_n)^2}-\eps\ \geq\ \sqrt{1+\eps^2}-\eps,
\end{eqnarray*}
so $\Pi^{n,t}_{\eps+\eps'_n}(A)$ does not converge to $\Speps A$ as $n\to\infty$.

On the other hand, the numerical results in Figures \ref{fig:3per1} and \ref{fig:3per2}, both examples with $p=3$, suggest that there are periodic operators for which the whole sequence does converge. It appears in both figures that $\Pi^{n,t}_{\eps'_n}(A)\Hto \Spec A$ as $n\to\infty$ (note that the values of $n$ we choose are not multiples of 3). Note also, as predicted by Theorem \ref{thm:converge}, that $\Gamma^n_{\eps''_n} \Hto \Spec A$ as $n\to\infty$. Further, it appears in these two examples (and also in Example \ref{ex:shift}) that $\Sigma^n_{\eps_n}(A)$, the $\tau$ method inclusion set,  converges to $\widehat{\Spec} A$, as $n\to\infty$, where $\widehat \Spec A$ denotes the complement of the unbounded component of $\C\setminus \Spec A$.

\begin{figure}[h]
\begin{center}
\begin{tabular}{|c|cccc|}
\hline
& $n=16$ & $n=32$ & $n=64$ & $n=128$\\
\hline
&&&&\\[-1em]
$\stackrel{\text{$\tau$ method}}{\rule{0pt}{10mm}}$ &
\includegraphics[width=22mm]{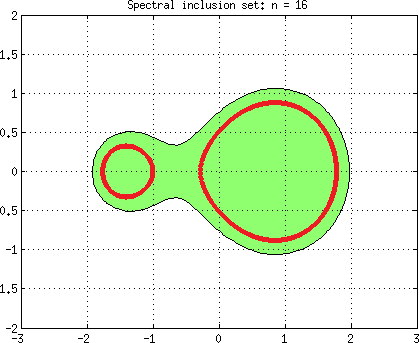} &
\includegraphics[width=22mm]{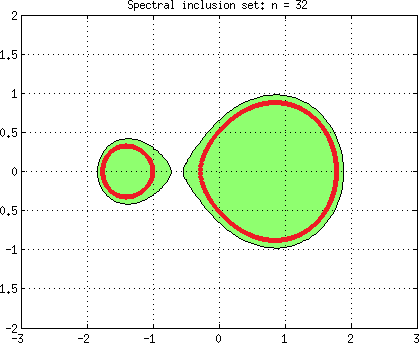} &
\includegraphics[width=22mm]{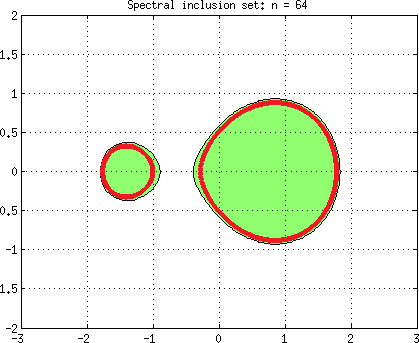} &
\includegraphics[width=22mm]{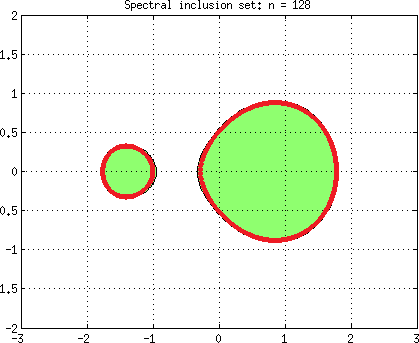}\\
$\stackrel{\text{$\pi$ method}}{\rule{0pt}{10mm}}$ &
\includegraphics[width=22mm]{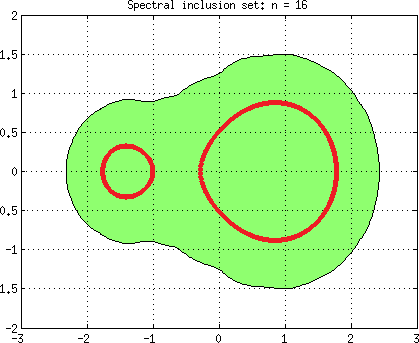} &
\includegraphics[width=22mm]{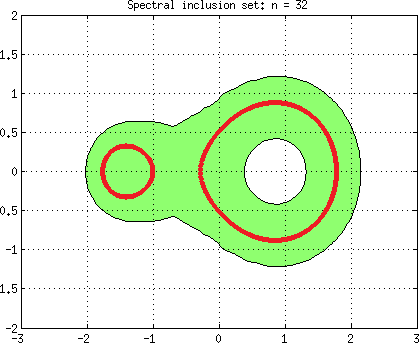} &
\includegraphics[width=22mm]{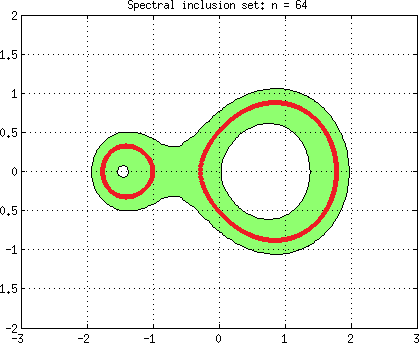} &
\includegraphics[width=22mm]{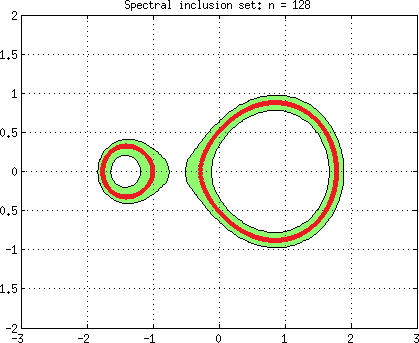}\\
$\stackrel{\text{$\tau_1$ method}}{\rule{0pt}{10mm}}$ &
\includegraphics[width=22mm]{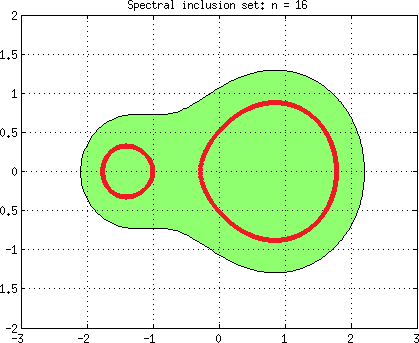} &
\includegraphics[width=22mm]{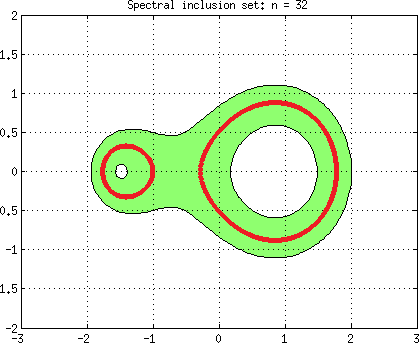} &
\includegraphics[width=22mm]{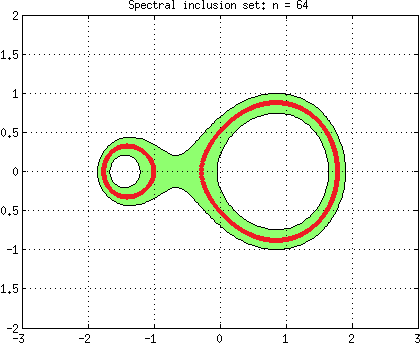} &
\includegraphics[width=22mm]{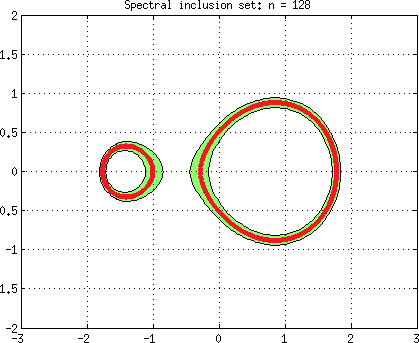}\\
\hline
\end{tabular}
\end{center}
\caption{The $\tau$, $\pi$, and $\tau_1$ inclusion sets, $\Sigma^n_{\eps_n}(A)$, $\Pi^{n,1}_{\eps'_n}(A)$, and $\Gamma^n_{\eps''_n}(A)$, respectively, for $n=16,\,32,\,64$, and $128$, in the case that $\alpha=(\cdots,0,0,0,\cdots)$,
$\beta=(\cdots,-\frac32,1,1,\cdots)$ and $\gamma=(\cdots,1,2,1,\cdots)$ are $3$-periodic. These are each inclusion sets for $\Spec A$, given by the first equation of \eqref{eq:specunion2} with $n=1$, and shown in each figure as the red curve.} \label{fig:3per1}
\end{figure}

Figures \ref{fig:3per1} and \ref{fig:3per2} are plotted in Matlab, using Matlab's inbuilt contouring routine. In particular, the plots of $\Sigma^n_{\eps_n}(A)$ are produced by noting that the boundary of  $\Sigma^n_{\eps_n}(A)$ is, by Proposition \ref{prop:same} and \eqref{eq:SigW}, and recalling \eqref{eq:sigdef*}, the contour on which
\begin{equation} \label{eq:cont}
\begin{aligned}
\mu_n^\dag(A-\lambda I) &= \min_{k=1,\ldots,p} \nu(A_{n,k}-\lambda I_n)\\
 &= \left(\min_{k=1,\ldots,p} \min \Specn\left((A_{n,k}-\lambda I_n)'(A_{n,k}-\lambda I_n)\right)\right)^{1/2}
\end{aligned} 
\end{equation}
has the value $\eps_n$. This contour is determined by Matlab's contouring routine, interpolating values of the right hand side of \eqref{eq:cont} plotted on a fine grid of values of $\lambda$ covering the respective regions shown in Figures  \ref{fig:3per1} and \ref{fig:3per2}.  $\Pi^{n,1}_{\eps'_n}(A)$ and  $\Gamma^n_{\eps''_n}(A)$ are plotted similarly, starting from the representations \eqref{eq:pidef*} and \eqref{eq:pidef*2} for  $\Pi^{n,1}_{\eps'_n}(A)$, and \eqref{eq:gamdef} and \eqref{eq:munmat} for $\Gamma^n_{\eps''_n}(A)$.

\begin{figure}[t]
\begin{center}
\begin{tabular}{|c|cccc|}
\hline
& $n=4$ & $n=8$ & $n=16$ & $n=32$\\
\hline
&&&&\\[-1em]
$\stackrel{\text{$\tau$ method}}{\rule{0pt}{10mm}}$ &
\includegraphics[width=22mm]{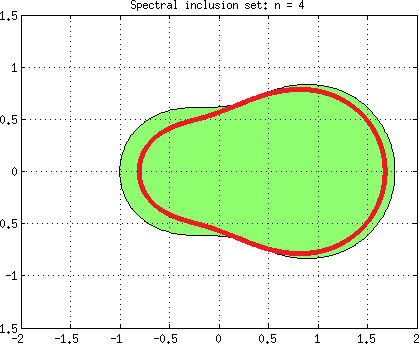} &
\includegraphics[width=22mm]{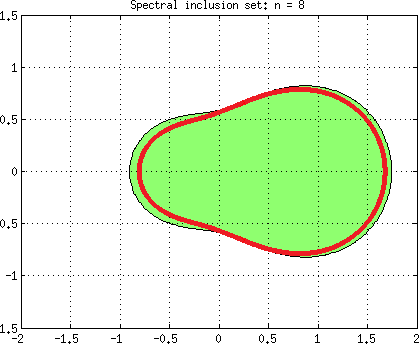} &
\includegraphics[width=22mm]{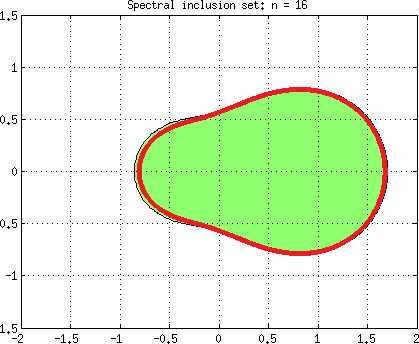} &
\includegraphics[width=22mm]{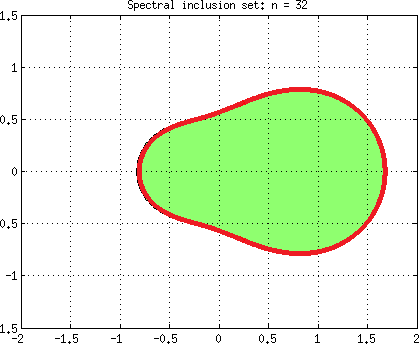}\\
$\stackrel{\text{$\pi$ method}}{\rule{0pt}{10mm}}$ &
\includegraphics[width=22mm]{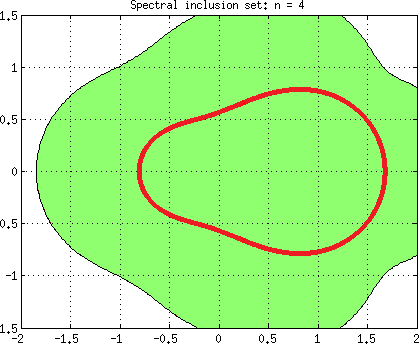} &
\includegraphics[width=22mm]{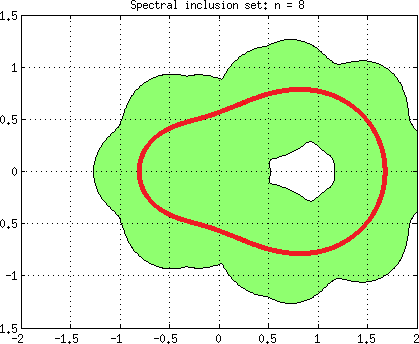} &
\includegraphics[width=22mm]{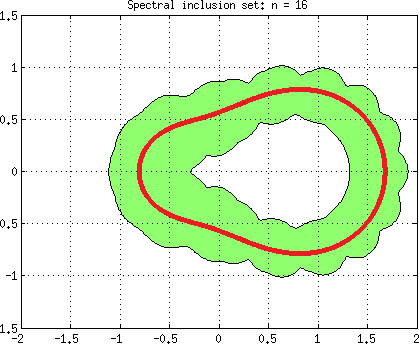} &
\includegraphics[width=22mm]{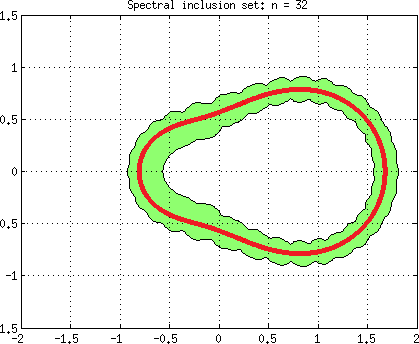}\\
$\stackrel{\text{$\tau_1$ method}}{\rule{0pt}{10mm}}$ &
\includegraphics[width=22mm]{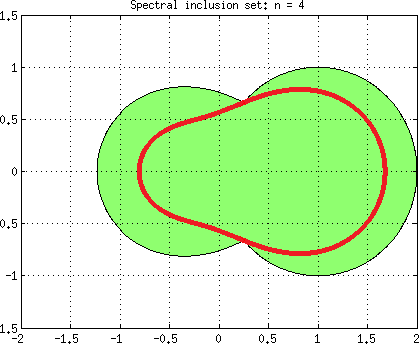} &
\includegraphics[width=22mm]{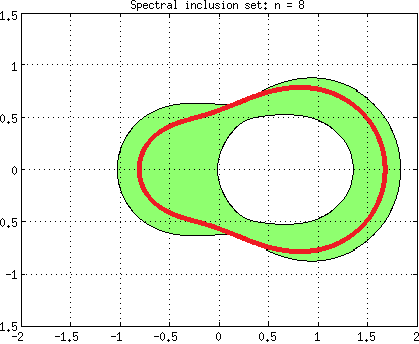} &
\includegraphics[width=22mm]{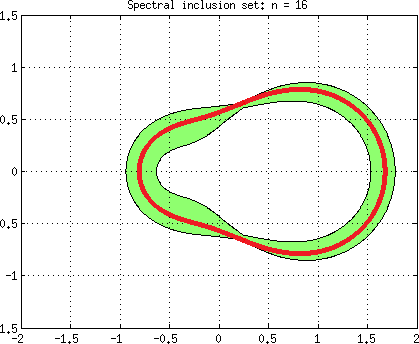} &
\includegraphics[width=22mm]{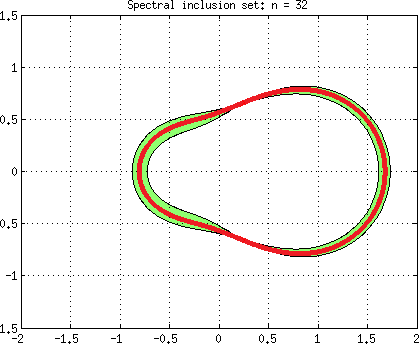}\\
\hline
\end{tabular}
\end{center}
\caption{The $\tau$, $\pi$, and $\tau_1$ inclusion sets, $\Sigma^n_{\eps_n}(A)$, $\Pi^{n,1}_{\eps'_n}(A)$, and $\Gamma^n_{\eps''_n}(A)$, respectively, for  $n=4,\,8,\,16$, and $32$, in the case that  $\alpha=(\cdots,0,0,0,\cdots)$,
$\beta=(\cdots,-\frac12,1,1,\cdots)$ and $\gamma=(\cdots,1,1,1,\cdots)$ are $3$-periodic. These are each inclusion sets for $\Spec A$, given by the first equation of \eqref{eq:specunion2} with $n=1$, and shown in each figure as the red curve.} \label{fig:3per2}
\end{figure}


\subsection{Pseudoergodic and random operators}
\label{sec:pseudoergodic} Let $\Sigma_\alpha$, $\Sigma_\beta$ and
$\Sigma_\gamma$ be nonempty compact subsets of $\C$ and let $A$ be
of the form \eqref{eq:A} with $\alpha_j\in\Sigma_\alpha$,
$\beta_j\in\Sigma_\beta$ and $\gamma_j\in\Sigma_\gamma$ for all
$j\in\Z$. Following Davies \cite{Davies2001:PseudoErg} (and see \cite{CWLi2016:Coburn} or \cite[Definition 1.1]{Colb:PE_pBC}) we say that $A$ is {\sl pseudoergodic with respect to $\Sigma_\alpha$, $\Sigma_\beta$, and $\Sigma_\gamma$}
 if, for all $\eps>0$, $n\in\N$ and all tridiagonal 
matrices $B_n\in\C^{n\times n}$ with subdiagonal entries in
$\Sigma_\alpha$, main diagonal entries in $\Sigma_\beta$,
and superdiagonal entries in $\Sigma_\gamma$,
there exists a $k\in\Z$ such that $\|A_{n,k}-B_n\|<\eps$. We say that $A$ is {\em pseudoergodic} if, for some non-empty compact $\Sigma_\alpha$, $\Sigma_\beta$, and $\Sigma_\gamma\subset \C$, $A$ is pseudoergodic with respect to $\Sigma_\alpha$, $\Sigma_\beta$, and $\Sigma_\gamma$.
 Pseudoergodicity was introduced by Davies \cite{Davies2001:PseudoErg} to study spectral properties of random operators while eliminating probabilistic arguments. Indeed (see \cite[\S5]{Davies2001:PseudoErg}), if the  matrix entries in \eqref{eq:A} are independent (or at least not fully correlated) random variables with probability measures whose supports are $\Sigma_\alpha$, $\Sigma_\beta$, and $\Sigma_\gamma$, then by the usual probabilistic arguments (sometimes termed the {\em infinite monkey theorem}) the operator $A$ is pseudoergodic almost surely.

The study of random and pseudoergodic operators, even restricting attention to the tridiagonal case that we consider here, has a large literature, both in the self-adjoint and, more recently, in the non-self-adjoint case. A readable, short introduction is \cite[{\S}36, 37]{TrefEmbBook}, and see \cite{CWLi2016:Coburn} and the references therein; see also \cite[{\S}3.4.10]{LiBook} or \cite{CW.Heng.ML:tridiag} for pseudoergodicity interpreted in the language of limit operators. Falling within the scope of the results of this section are: 
\begin{enumerate}
\item Random (self-adjoint, bi-infinite) Jacobi operators, including discrete versions of Schr\"odinger operators  with random bounded potentials (so $\Sigma_\alpha=\Sigma_\gamma = \{1\}$ and $\Sigma_\beta\subset \R$), including cases exhibiting Anderson localisation  (see, e.g., \cite{PasturFigotin} and the references therein).
\item  The (bi-infinite) non-self-adjoint version of the Anderson model,
with applications to superconductors and biological growth problems, introduced by Hatano \& Nelson
 \cite{HatanoNelson96,HatanoNelson97,HatanoNelson98}. In this  case $\Sigma_\alpha$ and $\Sigma_\gamma$ are real singleton sets (the $\alpha$ and $\gamma$ diagonals are constant) and $\Sigma_\beta\subset \R$. For this model the behaviour of  $\Spec A_{n,k}^{\per,1}$ as $n\to\infty$ has been described by Goldsheid \& Khoruzhenko
\cite{GK,GoldKoru}, $\Spec A$ in this case has been studied by a variety of methods in Davies
\cite{Davies2001:SpecNSA,Davies2001:PseudoErg,Davies2005:HigherNumRanges}
and Mart\'inez \cite{MartinezThesis,MartinezHN}. Moreover,  approximations to $\Speps A$ have been computed by Colbrook \cite{Colb:PE_pBC}, via the representation he establishes that $\Speps A =$ $\lim_{n\to\infty}$ $\Speps A_{2n+1,-n-1}^{\per,1}$.
\item The so-called Feinberg-Zee random hopping matrix, the case $\Sigma_\alpha=\{-1,1\}$, $\Sigma_\beta=\{0\}$,
$\Sigma_\gamma=\{1\}$, which has been studied in \cite{FeinZee99a,FeinZee99b,HolzOrlandZee,CW.Heng.ML:Sierp,CW.Heng.ML:tridiag,CW.Davies,Hagger:NumRange,Hagger:symmetries,AmHaNe,CW.Hagger,ColHan2019}, and which we will return to below; also, its generalisation  to the case $\Sigma_\alpha=\{\pm \sigma\}$, for some $\sigma\in (0,1]$, in Chandler-Wilde \& Davies \cite{CW.Davies} (and see \cite{AmHaNe,ColHan2019}).
\item The special case when $A$ is bidiagonal, meaning that $\Sigma_\alpha=\{0\}$ or $\Sigma_\gamma=\{0\}$; see \cite{TrefContEmb,Li:BiDiag}.
\end{enumerate}

Given non-empty compact sets $\Sigma_\alpha$, $\Sigma_\beta$, $\Sigma_\gamma\subset \C$, let $M(\Sigma_\alpha,\Sigma_\beta,\Sigma_\gamma)$ denote the set of operators $A$ of the form \eqref{eq:A} such that $\alpha_j\in \Sigma_\alpha$, $\beta_j\in \Sigma_\beta$, $\gamma_j\in \Sigma_\gamma$, for $j\in \Z$. The key result in the theory of pseudoergodic operators is the following: if $A$ is pseudoergodic with respect to $\Sigma_\alpha$, $\Sigma_\beta$, and $\Sigma_\gamma$, then
\begin{equation} \label{eq:specpse}
\begin{aligned}
\Speps A\ &=\ \bigcup_{B\in M(\Sigma_\alpha,\Sigma_\beta,\Sigma_\gamma)} \Speps B, \quad \eps \geq 0, 
\\ \speps A\ &=\ \bigcup_{B\in M(\Sigma_\alpha,\Sigma_\beta,\Sigma_\gamma)} \speps B, \quad \eps>0.
\end{aligned}
\end{equation}
The first of these equalities in the case $\eps=0$ dates back to \cite{Davies2001:PseudoErg}; the second is shown analogously as \cite[Corollary 4.12]{CWLi2016:Coburn}; the first for $\eps>0$ then follows by taking closures in the second identity\footnote{In more detail, taking closures in the second identity, since $\overline{\speps B} = \Speps B$, for any tridiagonal $B$ with scalar entries, we see that $\bigcup_{B\in M(\Sigma_\alpha,\Sigma_\beta,\Sigma_\gamma)} \Speps B \subset \overline{\bigcup_{B\in M(\Sigma_\alpha,\Sigma_\beta,\Sigma_\gamma)} \speps B} = \Speps A$. Since $A\in M(\Sigma_\alpha,\Sigma_\beta,\Sigma_\gamma)$, also $\Speps A \subset \bigcup_{B\in M(\Sigma_\alpha,\Sigma_\beta,\Sigma_\gamma)} \Speps B$.}. 

Clearly, \eqref{eq:specpse} implies that if $B\in M(\Sigma_\alpha,\Sigma_\beta,\Sigma_\gamma)$ then $\Speps B\subset \Speps A$, for $\eps\geq 0$, and  $\speps B\subset \speps A$, for $\eps> 0$. In particular this is true if $B$ is a Laurent operator, i.e.\ if, for some $\widehat \alpha\ \in \Sigma_\alpha$, $\widehat \beta\in \Sigma_\beta$, and $\widehat \gamma \in \Sigma_\gamma$, $\alpha_j=\widehat \alpha$, $\beta_j = \widehat \beta$, $\gamma_j = \widehat \gamma$, for $j\in \Z$, in which case, by \eqref{eq:specunion} with $n=1$,
$$
\Spec B = \cE(\widehat \alpha,\widehat \beta, \widehat \gamma) := \{z\widehat \alpha+\widehat \beta + z^{-1}\widehat \gamma:z\in \T\},
$$
which is an ellipse in the complex plane (degenerating to a line if $|\widehat \alpha| = |\widehat \gamma|$). It will be key to the proof that the $\pi$ method does not suffer from spectral pollution,  in Theorem \ref{thm:pseud} below, that also $\speps B\subset \speps A$, if $B\in M(\Sigma_\alpha,\Sigma_\beta,\Sigma_\gamma)$ and the coefficients $\alpha$, $\beta$, and $\gamma$ of $B$ are periodic, so that Corollary \ref{cor:periodic} applies. 

Let $\cE_\cap$ denote the set of $z\in \C$ which are in the interior of  $\cE(\widehat \alpha,\widehat \beta, \widehat \gamma)$, for every $\widehat \alpha\ \in \Sigma_\alpha$, $\widehat \beta\in \Sigma_\beta$, and $\widehat \gamma \in \Sigma_\gamma$, and let
\begin{equation} \label{eq:alpha*}
\alpha_* := \min_{z\in \Sigma_\alpha} |z|, \;\; \alpha^* := \max_{z\in \Sigma_\alpha} |z|, \;\; \gamma_* := \min_{z\in \Sigma_\gamma} |z|, \;\; \gamma^* := \max_{z\in \Sigma_\gamma} |z|.
\end{equation}
The main theoretical result of this section is the following. Recall, as is the case for any tridiagonal $A\in L(E)$ (irrespective of pseudoergodicity), that, in addition to the results shown in this theorem, the inclusions \eqref{incl:met1}, \eqref{incl:met1*}, and \eqref{incl:met2} apply to $A$.

\begin{theorem} \label{thm:pseud}
 Suppose that $X=\C$, so that $E=\ell^2(\Z)$, that $\Sigma_\alpha$, $\Sigma_\beta$, $\Sigma_\gamma\subset \C$ are non-empty and compact, and that $A$  is pseudoergodic with respect to $\Sigma_\alpha$, $\Sigma_\beta$, and $\Sigma_\gamma$. Then the $\pi$ method does not suffer from spectral pollution for $A$ for any $t\in \T$. Further, the $\tau$ method does not suffer from spectral pollution for $A$ if either: (a) $\gamma_* \leq \alpha^*$ and $\alpha_* \leq \gamma^*$; or (b) $E_\cap$ is empty. Moreover, as $n\to \infty$,
\begin{equation} \label{eq:c1}
\Gamma^n_{\eps+\eps''_n}(A)\ \Hto\ \Speps A, \quad \eps\geq 0, \quad \mbox{and} \quad \gamma^n_{\eps+\eps''_n}(A)\ \Hto\ \speps A, \quad \eps> 0;
\end{equation}
if $t\in \T$, also
\begin{equation} \label{eq:c2}
\Pi^{n,t}_{\eps+\eps'_n}(A)\ \Hto\ \Speps A, \quad \eps\geq 0, \quad \mbox{and} \quad \pi^{n,t}_{\eps+\eps'_n}(A)\ \Hto\ \speps A, \quad \eps> 0.
\end{equation}
Further, if (a) or (b) holds, also
\begin{equation} \label{eq:c3}
\Sigma^n_{\eps+\eps_n}(A)\ \Hto\ \Speps A, \quad \eps\geq 0, \quad \mbox{and} \quad \sigma^n_{\eps+\eps_n}(A)\ \Hto\ \speps A, \quad \eps> 0.
\end{equation}
\end{theorem}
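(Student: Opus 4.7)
The convergence statements \eqref{eq:c1} follow immediately from Theorem~\ref{thm:converge}, since $A$ is tridiagonal and $X=\C$ satisfies Globevnik's property (being finite-dimensional). By Theorems~\ref{thm:converge3} and~\ref{thm:converge2}, the claims \eqref{eq:c2} and \eqref{eq:c3} reduce, respectively, to proving that the $\pi$ method (for every $t\in\T$) and, under hypothesis (a) or (b), the $\tau$ method, do not suffer from spectral pollution for $A$ in the sense of Definition~\ref{def:specpol}. These two non-pollution statements are the heart of the theorem.

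For the $\pi$ method, my plan is a direct periodic-extension construction. Given $k\in\Z$, $n\in\N$, and $t\in\T$, I would define an $n$-periodic operator $B\in L(E)$ whose three diagonals are the $n$-periodic extensions of the finite strings $(\alpha_{k+1},\ldots,\alpha_{k+n-1},\alpha_k)$, $(\beta_{k+1},\ldots,\beta_{k+n})$, and $(\gamma_{k+n+1},\gamma_{k+2},\ldots,\gamma_{k+n})$, aligned so that a direct matrix comparison gives $B^{\per,t}_{n,0}=A^{\per,t}_{n,k}$. Since every diagonal entry of $B$ is a diagonal entry of $A$, we have $B\in M(\Sigma_\alpha,\Sigma_\beta,\Sigma_\gamma)$. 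Corollary~\ref{cor:periodic} applied with $p=n$ yields $\speps B^{\per,t}_{n,0}\subset\speps B$, while the pseudoergodic identity \eqref{eq:specpse} gives $\speps B\subset\speps A$. Composing, $\speps A^{\per,t}_{n,k}\subset\speps A$, so Definition~\ref{def:specpol} is satisfied with, say, $\eta_n:=1/n$.

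The $\tau$-method non-pollution is the main obstacle. Given $\lambda\in\speps A_{n,k}$ with pseudomode $y\in\C^n$ satisfying $\|(A_{n,k}-\lambda I_n)y\|\le\eps\|y\|$, the plan is to exhibit $\widetilde B\in M(\Sigma_\alpha,\Sigma_\beta,\Sigma_\gamma)$ with $\lambda\in\specn_{\eps+\eta_n}\widetilde B$; the conclusion then follows from \eqref{eq:specpse}. The naive extension---embedding $y$ as a zero-padded $\tilde y\in\ell^2(\Z)$ and letting $\widetilde B$ agree with $A$ everywhere---produces boundary residuals of size $\ge\max(\alpha_*,\gamma_*)$, which need not vanish with $n$. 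My plan is instead to choose Laurent triples $(\widehat\alpha_\pm,\widehat\beta_\pm,\widehat\gamma_\pm)\in\Sigma_\alpha\times\Sigma_\beta\times\Sigma_\gamma$ and prolong $y$ on either side of the block $[k{+}1,k{+}n]$ by decaying solutions of the half-line characteristic recurrences
\[
\widehat\alpha_\pm z^{-1}+(\widehat\beta_\pm-\lambda)+\widehat\gamma_\pm z \;=\; 0,
\]
with roots $z_\pm\in\D$, amplitudes matched to the endpoint values of $y$. Define $\widetilde B$ with these Laurent coefficients on the two half-lines and with $A$'s entries on the block; then $(\widetilde B-\lambda I)\tilde y$ is supported on the two links at the boundary of the block, with size controllable by the coefficient mismatch there, yielding---after optimising the tail decay rates---a $(\eps+\eta_n)$-pseudomode for a null sequence $\eta_n$ independent of $k$ and $\lambda$.

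The technical crux is the existence of such Laurent triples for every $\lambda$ that could arise from $\speps A_{n,k}$, and this is exactly what (a) and (b) are designed to guarantee. A point $\lambda$ admits a triple whose characteristic equation has a root in $\D$ precisely when $\lambda$ lies outside $\mathrm{int}\,\cE(\widehat\alpha,\widehat\beta,\widehat\gamma)$ for some such triple. Under (b), emptiness of $E_\cap$ gives this for every $\lambda\in\C$ by definition. Under (a), the coexistence of triples with $|\widehat\alpha|\ge|\widehat\gamma|$ and with $|\widehat\alpha|\le|\widehat\gamma|$ produces a family of ellipses whose eccentricities sweep from one extreme to the other, and a continuity/deformation argument then shows that every $\lambda$ lies outside at least one ellipse interior from this family. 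Executing this extension uniformly in $\lambda$, $k$ and $\eps$ is the main difficulty, but once in hand, \eqref{eq:c3} follows from Theorem~\ref{thm:converge2} and \eqref{eq:c2} from Theorem~\ref{thm:converge3}, completing the proof.
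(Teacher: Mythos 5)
Your high-level structure is correct and follows the paper: once the two non-pollution claims are established, \eqref{eq:c1}, \eqref{eq:c2}, \eqref{eq:c3} follow from Theorems~\ref{thm:converge}, \ref{thm:converge3}, \ref{thm:converge2} exactly as you say, since $X=\C$ has the Globevnik property. Your $\pi$-method argument is also essentially the paper's: you build the unique $n$-periodic $B\in M(\Sigma_\alpha,\Sigma_\beta,\Sigma_\gamma)$ with $B^{\per,t}_{n,0}=A^{\per,t}_{n,k}$, use Corollary~\ref{cor:periodic} to get $\speps A^{\per,t}_{n,k}\subset\speps B$, and then \eqref{eq:specpse} to get $\speps B\subset\speps A$; this gives Definition~\ref{def:specpol} for any positive null $(\eta_n)$.

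The $\tau$-method non-pollution is where you diverge and where there is a genuine gap. The paper does not construct anything: it cites \cite[Corollary 4.14~b)]{CWLi2016:Coburn} for the inclusion $\speps A_{n,k}\subset\speps A_+$ (with $A_+$ a semi-infinite pseudoergodic operator), and \cite[Proposition 4.13~c)]{CWLi2016:Coburn} for the identity $\speps A_+=\speps A$ under hypothesis (a) or (b); the combination finishes the claim in two lines. Your plan---prolonging the finite pseudomode by decaying tails governed by characteristic roots of suitably chosen Laurent triples---is in fact close in spirit to how such results are established, but it is not executed. Specifically: (i) you leave unresolved the quantitative estimate showing that the two boundary residuals can be made to vanish as $n\to\infty$ \emph{uniformly} in $k$, $\eps$, and $\lambda$, which is exactly where all the work lies; (ii) the claimed equivalence ``root in $\D$'' $\iff$ ``$\lambda$ outside the ellipse interior'' needs care (the location of the roots of the characteristic quadratic depends on the ratio $|\widehat\alpha|/|\widehat\gamma|$ as well as on $\lambda$, and you need one root strictly in $\D$ for the right tail \emph{and}, possibly for a different triple, one root strictly outside $\overline\D$ for the left tail; the ``continuity/deformation argument'' is too vague to certify this for every $\lambda$); and (iii) your direct two-sided construction would prove $\speps A_{n,k}\subset\speps A$ in one step, which is stronger than what the paper's route needs, and hence at least as hard as re-deriving the cited results. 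Without either importing the results from \cite{CWLi2016:Coburn} or completing the extension argument with explicit bounds, the $\tau$-method non-pollution remains unestablished.
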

\begin{proof} For $k\in \Z$, $n\in \N$, $t\in \T$, and  $\eps>0$, applying Corollary \ref{cor:periodic}, we see that $\speps A_{n,k}^{\per,t}$ $\subset \speps B$ for the unique $B\in M(\Sigma_\alpha,\Sigma_\beta,\Sigma_\gamma)$ with $n$-periodic coefficients that has $B_{n,k}^{\per,t}=A_{n,k}^{\per,t}$. But $\speps B\subset \speps A$ by \eqref{eq:specpse}. Thus the $\pi$ method does not suffer from spectral pollution for $A$ for any $t\in \T$. Let $E_+:= \ell_2(\N,\C)$ and suppose that $A_+\in L(E_+)$ is tridiagonal and pseudoergodic,  i.e., $A_+\in \Psi E_+(\Sigma_\alpha,\Sigma_\beta,\Sigma_\gamma)$, in the notation of \cite[{\S}1]{CWLi2016:Coburn}. Then, for $k\in \Z$, $n\in \N$, and  $\eps>0$, $\speps A_{n,k} \subset \speps A_+$, by \cite[Corollary 4.14 b)]{CWLi2016:Coburn}. Moreoever, $\speps A_+=\speps A$, by \cite[Proposition 4.13 c)]{CWLi2016:Coburn}, if (a) or (b) hold. Thus, if (a) or (b) hold, the $\tau$ method does not suffer from spectral pollution for $A$.

Noting that $X=\C$ trivially has the Globevnik property, \eqref{eq:c1} follows from Theorem \ref{thm:converge}, \eqref{eq:c2} follows from Theorem \ref{thm:converge3}, and, provided that (a) or (b) holds, \eqref{eq:c3} follows from Theorem \ref{thm:converge2}.
\end{proof}

\begin{remark} \label{rem:simon}
 In the case of the Feinberg-Zee random hopping matrix, i.e.\ the case that $\Sigma_\alpha=\{-1,1\}$, $\Sigma_\beta=\{0\}$, $\Sigma_\gamma = \{1\}$, we have that $\alpha_*=\alpha^*=\gamma_*=\gamma^*=1$, so that (a) holds in the above theorem and $\Sigma^n_{\eps+\eps_n}(A)\ \Hto\ \Speps A$, for $\eps \geq 0$. This result, for the specific Feinberg-Zee random hopping case, was shown previously, starting from \eqref{incl:met1}, in \cite[Theorem 4.9]{CW.Heng.ML:tridiag}.
\end{remark}

\begin{remark} \label{rem:Col} We note that Colbrook in \cite{Colb:PE_pBC} has proved results similar to the results in the above theorem for the $\pi$ method, by similar arguments. He shows in \cite{Colb:PE_pBC},  for very general classes of pseudoergodic operators $A$, including tridiagonal pseudoergodic operators as defined here, that the pseudospectra of periodised finite section approximations converge to those of $A$, precisely that
\begin{equation} \label{eq:col2}
\Speps A_{2n+1,-n-1}^{\per,1} \Hto \Speps A, \quad \mbox{as} \quad n\to\infty,
\end{equation}
for every $\eps>0$.
\end{remark}

\subsubsection{The case that $\Sigma_\alpha$, $\Sigma_\beta$, and $\Sigma_\gamma$ are finite sets} \label{sec:psFinite}
When $\Sigma_\alpha$, $\Sigma_\beta$, and $\Sigma_\gamma$ are finite sets, as holds, for example, in the Feinberg-Zee case, then the infinite collections of finite matrices, indexed by $k\in \Z$, in the definitions \eqref{eq:sigdef}, \eqref{eq:pidef}, and \eqref{eq:gamdef}/\eqref{eq:munmat}
of the 
inclusion sets $\Sigma^n_{\eps+\eps_n}(A)$, $\sigma^n_{\eps+\eps_n}(A)$, $\Pi^{n,t}_{\eps+\eps'_n}(A)$, $\pi^n_{\eps+\eps'_n}(A)$, $\Gamma^n_{\eps+\eps''_n}(A)$, and $\gamma^n_{\eps+\eps''_n}(A)$, are in fact (large) finite sets of matrices. Let $M_n(\Sigma_\alpha, \Sigma_\beta,\Sigma_\gamma)$ denote the set of all $n\times n$ tridiagonal matrices with subdiagonal entries in $\Sigma_\alpha$, diagonal entries in $\Sigma_\beta$, and superdiagonal entries in $\Sigma_\gamma$. This  set has cardinality $N_\alpha^{n-1}N_\beta^nN_\gamma^{n-1}$, where 
$N_\alpha:=|\Sigma_\alpha|$, $N_\beta:=|\Sigma_\beta|$, and $N_\gamma:=|\Sigma_\gamma|$. Further, recalling the notation \eqref{eq:cS}, it is easy to see that if $A$ is pseudoergodic with respect to $\Sigma_\alpha$, $\Sigma_\beta$, and $\Sigma_\gamma$, then
\begin{equation} \label{eq:seteq}
\cS^{(\tau)}_n := \{A_{n,k}:k\in \Z\} = M_n(\Sigma_\alpha, \Sigma_\beta,\Sigma_\gamma).
\end{equation}
Similarly, where $B^t_n(\Sigma_\alpha, \Sigma_\gamma)$, for some fixed $t\in \T$, denotes the set of all $n\times n$ matrices $B$ with entries $b_{i,j}=\delta_{i,1}\delta_{j,n}t\widehat \alpha + \delta_{i,n}\delta_{j,1}\overline{t} \widehat \gamma$, $i,j=1,\ldots,n$, for some $\widehat \alpha\in \Sigma_\alpha$ and $\widehat \gamma\in \Sigma_\gamma$, it holds that
\begin{equation} \label{eq:seteq2}
\cS^{(\pi)}_n := \{A^{\per,t}_{n,k}:k\in \Z\} = M_n(\Sigma_\alpha, \Sigma_\beta,\Sigma_\gamma) + B^t_n(\Sigma_\alpha,\Sigma_\gamma),
\end{equation}
which set has cardinality no larger than $N_\alpha^{n}N_\beta^nN_\gamma^{n}$, indeed exactly this cardinality for $n\geq 3$.

In the Feinberg-Zee case these sets have cardinalities $|\cS^{(\tau)}_n|=2^{n-1}$ and $|\cS^{(\pi)}_n|=2^n$, respectively. Thus, given $\lambda\in \C$, determining whether $\lambda \in \Sigma^n_{\eps+\eps_n}(A)$ requires half the computation of determining whether $\lambda \in \Pi^n_{\eps+\eps_n}(A)$. Since both these inclusion sets converge to $\Speps A$, for $\eps\geq 0$, by Theorem \ref{thm:pseud} and Remark \ref{rem:simon}, this suggests computing $\Sigma^n_{\eps+\eps_n}(A)$ rather than $\Pi^n_{\eps+\eps_n}(A)$, as an approximation to $\Speps A$. 

In Figure \ref{fig:FZ}, taken from \cite{CW.Heng.ML:tridiag}, we plot $\Sigma^n_{\eps_n}(A)\supset \Spec A$ for $n=6$, $12$, and $18$. Concretely, by \eqref{eq:sigmaAAlt}, \eqref{eq:sigdef*},
 and \eqref{eq:seteq},
$$
\Sigma^n_{\eps_n}(A) = \left\{\lambda\in \C: \min_{B\in M_n(\{\pm1\}, \{0\},\{1\})}\nu(B-\lambda I_n)\leq \eps_n\right\}
$$
and, cf.~\eqref{eq:ispd},
$$
\nu(B-\lambda I_n) > \eps_n \quad \Leftrightarrow \quad (B-\lambda I_n)'(B-\lambda I_n)-\eps_n^2I_n \;\; \mbox{ is positive definite}.
$$
Thus, to decide whether $\lambda \in \Sigma^n_{\eps_n}(A)$ one has to decide whether or not $2^{n-1}$  Hermitian matrices of size $n\times n$ are positive definite. For $n=18$ this requires consideration of the positive definiteness of 131,072  matrices of size $18\times 18$ for each $\lambda$ on some grid covering the domain of interest.

\begin{figure}[t]
\begin{center}
\includegraphics[width=1.\textwidth]{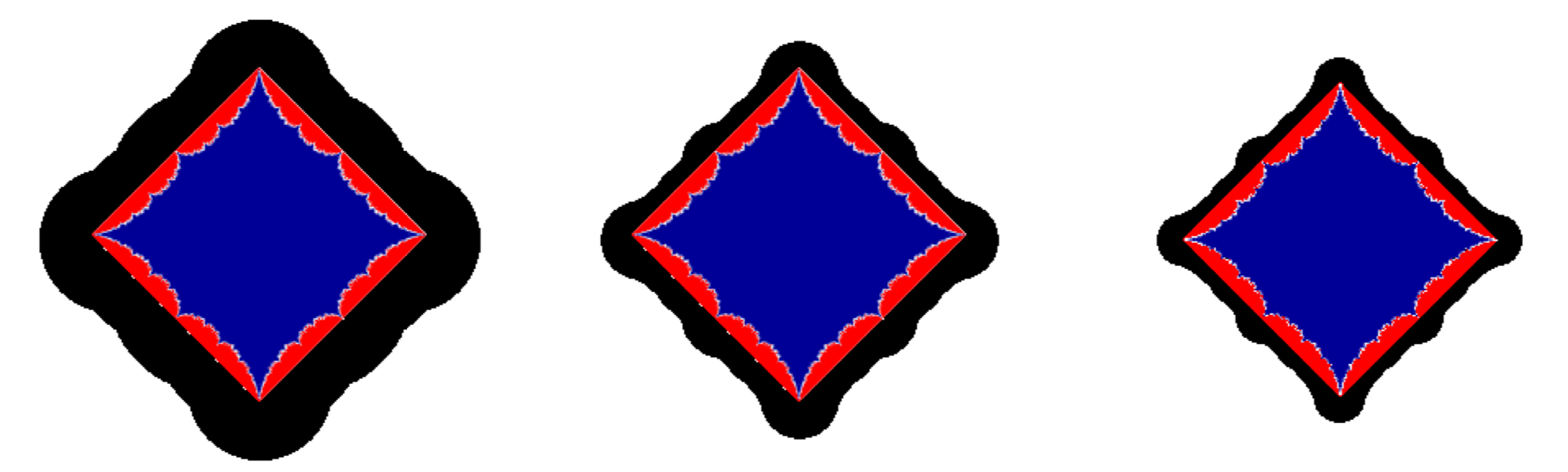}
\end{center}
\caption{Plots in black of $\Sigma^n_{\eps_n}(A)$, for $n=6$, $12$, and $18$, in the Feinberg-Zee case that  $A$ is pseudoergodic with respect to $\Sigma_\alpha=\{-1,1\}$, $\Sigma_\beta=\{0\}$, and $\Sigma_\gamma=\{1\}$. $\Sigma^n_{\eps_n}(A)$ is an inclusion set for $\Spec A$ for each $n$, by \eqref{incl:met1}, and $\Sigma^n_{\eps_n}(A)\Hto \Spec A$ as $n\to\infty$, by Theorem \ref{thm:pseud} and Remark \ref{rem:simon}. Shown in red in each plot  is the closure of the numerical range of $A$, which (see \cite[Lemma 3.1]{CW.Heng.ML:tridiag}) is the square with corners at $\pm 2$ and $\pm 2\ri$. This is also an inclusion set for $\Spec A$ (see \S\ref{sec:other}). Overlaid  in blue in each plot is the set $\pi_{30}\cup \D$, where $\pi_n$ (which can be computed by Corollary \ref{cor:periodic}) is the union of $\Spec B$ over all $B\in M(\{-1,1\},\{0\},\{1\})$ that are periodic with period $p\leq n$. For each $n\in \N$, $\pi_n\subset \Spec A$ by \eqref{eq:specpse}, and $\D\subset \Spec A$ by \cite[Theorem 2.3]{CW.Heng.ML:Sierp}.} \label{fig:FZ}
\end{figure}

In Figure \ref{fig:FZ} we also plot $\overline{\mathrm{Num}\, A}$, the closure of the numerical range of $A$, computed in \cite{CW.Heng.ML:tridiag} to be the square $S=\conv\{\pm 2,\pm 2\ri\}$. Disappointingly,  $\overline{\mathrm{Num}\, A}$ is a sharper inclusion set than $\Sigma^n_{\eps_n}(A)$, at least for these modest values of $n$, i.e.\ $\Sigma^n_{\eps_n}(A) \supset S=\overline{\mathrm{Num}\, A}\supset \Spec A$, for $n=6$, $12$, $18$. If the inclusion $\Sigma^n_{\eps_n}(A) \supset \overline{\mathrm{Num}\, A}$ held for all $n\in\N$ it would follow, since  $\Spec A \subset \overline{\mathrm{Num}\, A}$ and $\Sigma^n_{\eps_n}(A)\Hto \Spec A$ as $n\to\infty$, that $\Spec A =\overline{\mathrm{Num}\, A}=S$. To show that this is not the case we carried out calculations in \cite{CW.Heng.ML:tridiag} to demonstrate  that  $\lambda \not \in \Sigma_{\eps_{n}}^{n}(A)$ for $n=34$ and $\lambda=\frac 32 + \frac 12\ri$, a point on the boundary of the square $S$. This calculation is a matter of checking that $2^{33}\approx 8.6 \times 10^9$  matrices of size $34\times 34$ are positive definite. This is a large calculation, but one suited to a massively parallel computer architecture, with each processor assigned a subset of the nearly 9 billion matrices to check their positive definiteness. We note also that this calculation is an example of using our inclusion sets to show that a particular operator is invertible (see Remark \ref{rem:invert}). These calculations, showing that, for this particular choice of $\lambda$, $\lambda\not \in \Sigma_{\eps_{n}}^{n}(A)$ for $n=34$, prove that $\lambda\not \in \Spec A$, i.e.\ $\lambda I-A$ is invertible.
For more information about these calculations and those in Figure \ref{fig:FZ}, see \cite{CW.Heng.ML:tridiag}. For a review of what is known about $\Spec A$ for this Feinberg-Zee case, and a list of challenging open problems, see  \cite{CW.Hagger}.

\subsubsection{The case that $\Sigma_\alpha$, $\Sigma_\beta$, and $\Sigma_\gamma$ are infinite sets, and the SCI}  \label{sec:psInfinite}

Consider now the case that the sets $\Sigma_\alpha$, $\Sigma_\beta$, and $\Sigma_\gamma$ are not necessarily finite. (We have in mind applications where one or more of these sets is infinite, or all the sets are finite but the cardinality of $M_n(\Sigma_\alpha, \Sigma_\beta,\Sigma_\gamma)$ is unfeasibly large for computation as in the previous subsection.) It is easy to see that, if $A$ is pseudoergodic with respect to $\Sigma_\alpha$, $\Sigma_\beta$, and $\Sigma_\gamma$, then, where $\cS^{(\tau)}$ and $\cS^{(\pi)}$ are defined by \eqref{eq:seteq} and \eqref{eq:seteq2},
\begin{equation} \label{eq:seteqinf}
\overline{\cS^{(\tau)}_n}=\overline{\{A_{n,k}:k\in \Z\}} = M_n(\Sigma_\alpha, \Sigma_\beta,\Sigma_\gamma)
\end{equation}
and, for the chosen fixed $t\in \T$,
\begin{equation} \label{eq:seteq2inf}
\overline{\cS^{(\pi)}_n}=\overline{\{A^{\per,t}_{n,k}:k\in \Z\}} = M_n(\Sigma_\alpha, \Sigma_\beta,\Sigma_\gamma) + B^t_n(\Sigma_\alpha,\Sigma_\gamma).
\end{equation}
(These equations are equivalent to \eqref{eq:seteq} and \eqref{eq:seteq2} in the case that $\Sigma_\alpha$, $\Sigma_\beta$, and $\Sigma_\gamma$ are all finite.)

This is a case where it is relatively clear how to select a finite set $S_n\subset \overline{\cS^{(M)}_n}$, for $M=\tau$ or $\pi$, such that \eqref{eq:comp4} holds, so that the approximations to $\Speps A$ proposed in Theorem \ref{thm:fin3} can be computed. Indeed, suppose that we can find sequences of finite sets $(\Sigma^n_\alpha)_{n\in \N}\subset \Sigma_\alpha$, $(\Sigma^n_\beta)_{n\in \N}\subset \Sigma_\beta$, and $(\Sigma^n_\gamma)_{n\in \N}\subset \Sigma_\gamma$ such that
\begin{equation} \label{eq:dHSig}
d_H(\Sigma_G,\Sigma^n_G) \leq \frac{1}{3n},  \qquad \mbox{for } \;\; n\in \N, \;\; \mbox{ and } G\in \{\alpha,\beta, \gamma\}.
\end{equation}
Then $S_n=S_n^{(M)}$, as defined in the following lemma, is  finite  and satisfies \eqref{eq:comp4}.

\begin{lemma} \label{lem:Sn} Suppose that $A$ is pseudoergodic with respect to $\Sigma_\alpha$, $\Sigma_\beta$, and $\Sigma_\gamma$, and set, for $n\in \N$,
$$
S^{(M)}_n := \left\{\begin{array}{ll} M_n(\Sigma^n_\alpha, \Sigma^n_\beta,\Sigma^n_\gamma),& \mbox{if $M=\tau$},\\
M_n(\Sigma^n_\alpha, \Sigma^n_\beta,\Sigma^n_\gamma) +B^t_n(\Sigma^n_\alpha,\Sigma^n_\gamma), & \mbox{if $M=\pi$.}
\end{array}\right.
$$
Then $S^{(M)}_n$ is a finite subset of $\overline{\cS_n^{(M)}}$ and \eqref{eq:comp4} holds with $S_n=S_n^{(M)}$.
\end{lemma}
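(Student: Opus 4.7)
The plan is to verify the three claims in sequence: that $S_n^{(M)}$ is finite, that it is contained in $\overline{\cS_n^{(M)}}$, and that the approximation property \eqref{eq:comp4} holds.

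Finiteness is immediate. For $M=\tau$, $S_n^{(\tau)}=M_n(\Sigma^n_\alpha,\Sigma^n_\beta,\Sigma^n_\gamma)$ has cardinality $|\Sigma^n_\alpha|^{n-1}|\Sigma^n_\beta|^n|\Sigma^n_\gamma|^{n-1}<\infty$, since each $\Sigma^n_G$ is finite by hypothesis. For $M=\pi$, one further adds the finite set $B^t_n(\Sigma^n_\alpha,\Sigma^n_\gamma)$, which has cardinality $|\Sigma^n_\alpha||\Sigma^n_\gamma|$, so $|S_n^{(\pi)}|\leq|\Sigma^n_\alpha|^n|\Sigma^n_\beta|^n|\Sigma^n_\gamma|^n<\infty$. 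Containment $S_n^{(M)}\subset\overline{\cS_n^{(M)}}$ is then immediate from the pseudoergodicity identities \eqref{eq:seteqinf} and \eqref{eq:seteq2inf}: since $\Sigma^n_G\subset\Sigma_G$ for each $G\in\{\alpha,\beta,\gamma\}$, we have $M_n(\Sigma^n_\alpha,\Sigma^n_\beta,\Sigma^n_\gamma)\subset M_n(\Sigma_\alpha,\Sigma_\beta,\Sigma_\gamma)=\overline{\cS_n^{(\tau)}}$, and similarly for the $\pi$ case.

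The substance of the lemma is \eqref{eq:comp4}. Fix $k\in\Z$. Using \eqref{eq:dHSig}, for each relevant index $j$ choose $\tilde\alpha_j\in\Sigma^n_\alpha$, $\tilde\beta_j\in\Sigma^n_\beta$, $\tilde\gamma_j\in\Sigma^n_\gamma$ with $|\alpha_j-\tilde\alpha_j|$, $|\beta_j-\tilde\beta_j|$, $|\gamma_j-\tilde\gamma_j|\leq 1/(3n)$. Build $\tilde A_{n,k}^{(M)}$ from these approximating entries using the same formula \eqref{eq:Ank} or \eqref{eq:Ankper} as appropriate; then $\tilde A_{n,k}^{(M)}\in S_n^{(M)}$. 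The task is to bound $\|A_{n,k}^{(M)}-\tilde A_{n,k}^{(M)}\|\leq 1/n$.

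To do this I would decompose the error matrix as $E=E_\alpha+E_\beta+E_\gamma$, grouping according to which of the three diagonals the difference lies on (with the $\pi$-method top-right entry grouped into $E_\alpha$ and the bottom-left into $E_\gamma$). The key observation, and the only real calculation, is that each $E_G$ is a \emph{weighted shift} (a diagonal matrix composed with a permutation): $E_\beta$ is diagonal, $E_\alpha$ sends standard basis vector $e_j\mapsto\epsilon^\alpha_j e_{j+1}$ for $j<n$ (and $e_n\mapsto t\epsilon^\alpha_0 e_1$ in the $\pi$ case), and similarly for $E_\gamma$. For such an operator a direct computation gives
\begin{equation*}
\|E_G x\|^2=\sum_j|\epsilon^G_j x_j|^2\leq\bigl(\max_j|\epsilon^G_j|\bigr)^2\|x\|^2,
\end{equation*}
so that $\|E_G\|\leq\max_j|\epsilon^G_j|\leq 1/(3n)$ (the factor $|t|=1$ is absorbed harmlessly in the $\pi$ case). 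By the triangle inequality, $\|E\|\leq\|E_\alpha\|+\|E_\beta\|+\|E_\gamma\|\leq 1/n$, as required. The main thing to be careful about is not losing factors when the $\pi$-method corner entries are in play; grouping them with the corresponding diagonal preserves the weighted-shift structure and keeps each $\|E_G\|\leq 1/(3n)$, which is why the bound \eqref{eq:dHSig} is stated with the factor $1/3$.
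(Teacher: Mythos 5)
Your proof is correct and follows essentially the same route as the paper's: finiteness and containment are read off from \eqref{eq:seteqinf}--\eqref{eq:seteq2inf}, and the substance is that the $1/(3n)$ entrywise approximation granted by \eqref{eq:dHSig}, combined with the fact that only three diagonals (including the $\pi$-method corner entries) can differ, yields $\|A^{(M)}_{n,k}-B\|\le 1/n$. The only (harmless) difference is the final norm estimate: you split the error into three weighted shifts of norm at most $1/(3n)$ each and apply the triangle inequality, whereas the paper bounds the maximal row and column sums of the error by $1/n$ and invokes the standard $\ell^1$--$\ell^\infty$ bound on the $\ell^2$ operator norm.
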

\begin{proof}
Let $M=\tau$ or $\pi$ and let $S_n := S_n^{(M)}$. It is clear that $S_n$ is finite, since $\Sigma^n_\alpha$, $\Sigma^n_\beta$, and $\Sigma^n_\gamma$ are finite. Further, for  $G=\alpha$, $\beta$, or $\gamma$, and every  $n\in \N$ and $z\in \Sigma_G$, there exists $z_n\in \Sigma^n_G$ such that $|z-z_n|\leq 1/(3n)$. But this implies, for every $B\in \cS_n^{(M)}=\{A_{n,k}^{(M)}:k\in \Z\}$, where $A_{n,k}^{(M)}$ is defined by \eqref{eq:gennot}, that there exists $B^n\in S_n$ such that $|b_{i,j}-b^n_{i,j}|\leq 1/(3n)$, for $i,j=1,\ldots,n$, where $b_{i,j}$ and $b^n_{i,j}$ denote the elements in row $i$ and column $j$ of $B$ and $B^n$, respectively. But this implies that $\sum_{j=1}^n|b_{i,j}-b^n_{i,j}|\leq 1/n$ and  $\sum_{i=1}^n|b_{i,j}-b^n_{i,j}|\leq 1/n$, which imply that $\|B-B^n\|\leq 1/n$.
\end{proof}

Combining the above lemma with Theorem \ref{thm:fin3} and with the results on absence of spectral pollution in Theorem \ref{thm:pseud}, we have immediately the following result, noting that the definition \eqref{eq:findefps} coincides with \eqref{eq:findef4} by \eqref{eqln3}.

\begin{corollary} \label{cor:pseud}
Suppose that $A$ is pseudoergodic with respect to $\Sigma_\alpha$, $\Sigma_\beta$, and $\Sigma_\gamma$, and that $S^{(M)}_n$, for $n\in \N$ and $M=\tau$ or $\pi$, is defined as in Lemma \ref{lem:Sn}, where $(\Sigma^n_\alpha)_{n\in \N}\subset \Sigma_\alpha$, $(\Sigma^n_\beta)_{n\in \N}\subset \Sigma_\beta$, and $(\Sigma^n_\gamma)_{n\in \N}\subset \Sigma_\gamma$ are finite sets satisfying \eqref{eq:dHSig}. Further, let
\begin{equation} \label{eq:findefps}
\Delta^{n,\mathrm{fin},M}_{\eps}(A) :=  \left\{\lambda\in \C : \min_{B\in S^{(M)}_n} \nu\left(B-\lambda I_n\right)\leq \eps\right\},
\end{equation}
 for $n\in \N$, $M=\tau$ or $\pi$, and $\eps\geq 0$. Then, for $\eps\geq 0$ and $n\in \N$,
\begin{equation} \label{eq:genn2p}
\Speps A\ \subset \Delta^{n,\mathrm{fin},\pi}_{\eps+\eps'_n + 1/n}(A),
\end{equation}
 and
 $\Delta^{n,\mathrm{fin},\pi}_{\eps+\eps'_n + 1/n}(A)\Hto \Speps A$ as $n\to\infty$, in particular $\Delta^{n,\mathrm{fin},\pi}_{\eps'_n+1/n}(A)\Hto \Spec A$. If, moreover, conditions (a) or (b) in Theorem \ref{thm:pseud} are satisfied, than also 
\begin{equation} \label{eq:genn2p2}
\Speps A\ \subset \Delta^{n,\mathrm{fin},\tau}_{\eps+\eps_n + 1/n}(A),
\end{equation}
 for $\eps\geq 0$ and $n\in \N$, and
 $\Delta^{n,\mathrm{fin},\tau}_{\eps+\eps_n + 1/n}(A)\Hto \Speps A$ as $n\to\infty$, in particular $\Delta^{n,\mathrm{fin},\tau}_{\eps_n+1/n}(A)$ $\Hto \Spec A$.
\end{corollary}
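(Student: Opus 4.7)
The plan is to derive the corollary as a straightforward application of Theorem \ref{thm:fin3}, with Lemma \ref{lem:Sn} supplying the finite sets required by that theorem and Theorem \ref{thm:pseud} supplying the absence-of-spectral-pollution hypothesis. The substantive work has been done elsewhere; only a short assembly is needed.

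First I would verify the blanket hypotheses of Theorem \ref{thm:fin3}. Since $X=\C$ is finite-dimensional, it trivially has Globevnik's property, and $A$ is tridiagonal by assumption. The set $\{a_{i,j}:i,j\in\Z\}$ lies in the compact set $\Sigma_\alpha\cup\Sigma_\beta\cup\Sigma_\gamma\subset\C$, so it is relatively compact. Lemma \ref{lem:Sn} then gives, for each $n\in\N$ and each $M\in\{\tau,\pi\}$, a finite set $S^{(M)}_n\subset\overline{\cS^{(M)}_n}$ satisfying \eqref{eq:comp4}, which is precisely the existence statement on finite sets required by Theorem \ref{thm:fin3}. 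Moreover, since each $B-\lambda I_n$ appearing in \eqref{eq:findef4} is a square matrix on the finite-dimensional space $X^n$, hence Fredholm of index zero, the equality $\mu(B-\lambda I_n)=\nu(B-\lambda I_n)$ noted below \eqref{eq:invNorm} shows that the sets $\Delta^{n,\mathrm{fin},M}_\eps(A)$ defined in \eqref{eq:findefps} coincide with those defined in \eqref{eq:findef4}.

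Next I would appeal to Theorem \ref{thm:pseud} to supply the absence-of-spectral-pollution hypothesis of Theorem \ref{thm:fin3}. That theorem gives such absence for the $\pi$ method unconditionally for every $t\in\T$, and for the $\tau$ method precisely under condition (a) or (b). Inserting these facts into Theorem \ref{thm:fin3}, with $\eps^{(\pi)}_n=\eps'_n$ and $\eps^{(\tau)}_n=\eps_n$, together with the choice of $(\eta_n')$ as an arbitrary positive null sequence, immediately yields the inclusion \eqref{eq:genn2p} and the Hausdorff convergence $\Delta^{n,\mathrm{fin},\pi}_{\eps+\eps'_n+1/n}(A)\Hto\Speps A$; and, under (a) or (b), the corresponding statements \eqref{eq:genn2p2} and $\Delta^{n,\mathrm{fin},\tau}_{\eps+\eps_n+1/n}(A)\Hto\Speps A$ for the $\tau$ method. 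No serious obstacle arises since every ingredient is already proved; the only minor verification beyond direct quotation is the matching of the two definitions of $\Delta^{n,\mathrm{fin},M}_\eps(A)$, which is immediate.
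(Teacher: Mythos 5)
Your proposal is correct and takes essentially the same route as the paper, which likewise assembles the corollary from Lemma \ref{lem:Sn} (supplying the finite sets satisfying \eqref{eq:comp4}), Theorem \ref{thm:pseud} (supplying absence of spectral pollution for the $\pi$ method unconditionally and for the $\tau$ method under (a) or (b)), and Theorem \ref{thm:fin3}. The only cosmetic difference is that the paper cites \eqref{eqln3} to identify the definitions \eqref{eq:findefps} and \eqref{eq:findef4}, whereas you invoke the equivalent Fredholm-index observation below \eqref{eq:invNorm}; both give $\mu(B-\lambda I_n)=\nu(B-\lambda I_n)$ for square matrices over $\C$.
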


As we did in \S\ref{sec:SCI} and \S\ref{sec:scalar}, we can go beyond the above corollary and define a sequence of approximations $(\Pi^n_{\mathrm{fin}}(A))_{n\in \N}$ to $\Spec A$, when $A$ is pseudoergodic,  such that each $\Pi^n_{\mathrm{fin}}(A)$ is a finite set that can be computed in finitely many arithmetic operations, and $\Pi^n_{\mathrm{fin}}(A)\Hto \Spec A$ as $n\to\infty$.

Let $\Omega_\Psi$ denote the set of all tridiagonal matrices \eqref{eq:A}, with $\alpha,\beta,\gamma\in \ell^{\infty}(\Z,\C)$, that are pseudoergodic. For a given $A\in \Omega_\Psi$, with coefficients $\alpha,\beta,\gamma\in \ell^{\infty}(\Z,\C)$, let 
$$
\Sigma_\alpha := \overline{\{\alpha_j:j\in \Z\}}, \quad \Sigma_\beta := \overline{\{\beta_j:j\in \Z\}}, \quad \Sigma_\gamma := \overline{\{\gamma_j:j\in \Z\}}.
$$ 
Note that, with these definitions, $A$ is pseudoergodic with respect to $\Sigma_\alpha$, $\Sigma_\beta$, and $\Sigma_\gamma$.
As in \S\ref{sec:SCI}, let $P(S)$ denote the power set of a set $S$. The inputs we need to compute $\Pi^n_{\mathrm{fin}}(A)$, for $n\in \N$, are provided by the following mappings (cf.~\S\ref{sec:SCI}):
\begin{enumerate}
\item A mapping $\B:\Omega_\Psi\to \R^3$, $A\mapsto (\alpha_{\max},\beta_{\max},\gamma_{\max})$, such that $\alpha_{\max} \geq \|\alpha\|_\infty$, $\beta_{\max} \geq \|\beta\|_\infty$,$\gamma_{\max} \geq \|\gamma\|_\infty$.
\item A mapping $\cE:\Omega_\Psi\times \N \to (P(\C))^3$, $(A,n)\mapsto (\Sigma_\alpha^n,\Sigma_\beta^n,\Sigma_\gamma^n)$, such that  $\Sigma^n_\alpha\subset \Sigma_\alpha$, $\Sigma^n_\beta\subset \Sigma_\beta$, and $\Sigma^n_\gamma\subset \Sigma_\gamma$ are finite sets satisfying \eqref{eq:dHSig}.
\end{enumerate}
Note that $\B$ is the mapping $\B_1$ of \S\ref{sec:SCI}, restricted to $\Omega_\Psi\subset \Omega_T^1$. 

Given $A\in \Omega_\Psi$, for $n\in \N$ let (cf.~\eqref{eq:epsn_method1*})
\begin{equation} \label{eq:epsdag}
\eps_n^\dag\ :=\ (\alpha_{\max}+\gamma_{\max})\,\frac{22}{7n}\ \geq\ 2(\alpha_{\max}+\gamma_{\max})\,\sin\frac{\pi}{2n}\geq \eps'_n,
\end{equation}
where
$(\alpha_{\max},\beta_{\max},\gamma_{\max}) := \B(A)$. 
Recalling the definition \eqref{eq:grid}, define the sequence $(\Pi^n_{\mathrm{fin}}(A))_{n\in \N}$ by
\begin{equation} \label{eq:finite3}
\Pi^n_{\mathrm{fin}}(A) := \Delta^{n,\mathrm{fin},\pi}_{\eps^\dag_n+3/n}(A)\cap \mathrm{Grid}(n,R), \qquad n\in \N,
\end{equation}
where $R:= \alpha_{\max}+\beta_{\max}+\gamma_{\max}$ (so that $R$ is an upper bound for $\|A\|$), and where $\Delta^{n,\mathrm{fin},\pi}_{\eps}(A)$, for $n\in \N$ and $\eps\geq 0$, is defined by \eqref{eq:findefps}, with $S_n^{(\pi)}$ defined as in Lemma \ref{lem:Sn}, for some $t\in \T$, with $(\Sigma_\alpha^n,\Sigma_\beta^n,\Sigma_\gamma^n):= \cE(A,n)$.

\begin{proposition} \label{prop:finiteps}
For $A\in \Omega_\Psi$ and $n\in \N$, $\Pi^n_{\mathrm{fin}}(A)$ can be computed in finitely many arithmetic operations, given  $(\alpha_{\max},\beta_{\max},\gamma_{\max}) := \B(A)$ and $(\Sigma_\alpha^n,\Sigma_\beta^n,\Sigma_\gamma^n):= \cE(A,n)$. Further, $\Pi^n_{\mathrm{fin}}(A)\Hto \Spec A$ as $n\to \infty$, and also
$$
\widehat \Pi^n_{\mathrm{fin}}(A)\ :=\ \Pi^n_{\mathrm{fin}}(A) + \frac{2}{n}\overline{\D}\ \Hto\ \Spec A
$$
as $n\to \infty$, with $\Spec A \subset \widehat \Pi^n_{\mathrm{fin}}(A)\subset \Specn_{\eps^\dag_n+3/n}(A) + \frac{2}{n}\overline{\D}$ for each $n\in \N$.
\end{proposition}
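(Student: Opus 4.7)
The plan is to model the proof on those of Proposition \ref{prop:finite} and Theorem \ref{prop:finite2}, using Corollary \ref{cor:pseud} in place of Theorem \ref{thm:fin} and exploiting the pseudoergodicity of $A$ to upgrade the one-sided inclusion in that corollary to a clean two-sided enclosure. The computability claim is routine: given $\B(A)$ one obtains the finite grid $\mathrm{Grid}(n,R)$; given $\cE(A,n)$ the set $S^{(\pi)}_n$ defined in Lemma \ref{lem:Sn} is finite and explicitly constructible; and for each $\lambda$ in the grid and each $B\in S^{(\pi)}_n$ the test $\nu(B-\lambda I_n)\le \eps_n^\dag+3/n$ reduces, via \eqref{eqln3} and \eqref{eq:ispd}, to checking positive semi-definiteness of the Hermitian matrix $(B-\lambda I_n)'(B-\lambda I_n)-(\eps_n^\dag+3/n)^2 I_n$, which can be done in finitely many arithmetic operations.

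For the lower inclusion $\Spec A\subset\widehat\Pi^n_{\mathrm{fin}}(A)$, I would start from Corollary \ref{cor:pseud}, which gives $\Spec A\subset\Delta^{n,\mathrm{fin},\pi}_{\eps'_n+1/n}(A)\subset \Delta^{n,\mathrm{fin},\pi}_{\eps_n^\dag+1/n}(A)$ since $\eps_n^\dag\ge\eps'_n$ by \eqref{eq:epsdag}. Given $\lambda'\in\Spec A$, note that $|\lambda'|\le\|A\|\le R$, so there exists a grid point $\lambda\in\mathrm{Grid}(n,R)$ with $|\lambda-\lambda'|\le\sqrt{2}/n$. Using the Lipschitz property \eqref{eq:lnProp} of the lower norm, any $B\in S^{(\pi)}_n$ that witnesses $\lambda'\in\Delta^{n,\mathrm{fin},\pi}_{\eps_n^\dag+1/n}(A)$ also witnesses $\lambda\in\Delta^{n,\mathrm{fin},\pi}_{\eps_n^\dag+(1+\sqrt{2})/n}(A)\subset\Delta^{n,\mathrm{fin},\pi}_{\eps_n^\dag+3/n}(A)$ since $1+\sqrt{2}<3$. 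Hence $\lambda\in\Pi^n_{\mathrm{fin}}(A)$ and $\lambda'\in\Pi^n_{\mathrm{fin}}(A)+(2/n)\overline{\D}=\widehat\Pi^n_{\mathrm{fin}}(A)$.

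The main step, and the only one that requires genuine use of pseudoergodicity in this proof, is the upper inclusion $\Pi^n_{\mathrm{fin}}(A)\subset\Specn_{\eps_n^\dag+3/n}(A)$, which will immediately give $\widehat\Pi^n_{\mathrm{fin}}(A)\subset \Specn_{\eps_n^\dag+3/n}(A)+(2/n)\overline{\D}$. I would prove the stronger fact that $\Delta^{n,\mathrm{fin},\pi}_\eps(A)\subset\Specn_\eps A$ for every $\eps\ge 0$. Fix $B\in S^{(\pi)}_n=M_n(\Sigma_\alpha^n,\Sigma_\beta^n,\Sigma_\gamma^n)+B^t_n(\Sigma_\alpha^n,\Sigma_\gamma^n)$; by construction there is a tridiagonal bi-infinite $\widetilde B\in L(E)$ whose diagonals are $n$-periodic, take values in $\Sigma_\alpha^n,\Sigma_\beta^n,\Sigma_\gamma^n\subset\Sigma_\alpha,\Sigma_\beta,\Sigma_\gamma$, and satisfy $\widetilde B^{\per,t}_{n,0}=B$. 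Since $B$ is a finite square matrix, \eqref{eqln3} gives $\Specn_\eps B=\{\lambda:\nu(B-\lambda I_n)\le\eps\}$; Corollary \ref{cor:periodic}, applied with $p=n$ and finite-section size one, then gives $\Specn_\eps B\subset\bigcup_{s\in\T}\Specn_\eps\widetilde B^{\per,s}_{n,0}=\Specn_\eps\widetilde B$, while $\widetilde B\in M(\Sigma_\alpha,\Sigma_\beta,\Sigma_\gamma)$ together with \eqref{eq:specpse} yields $\Specn_\eps\widetilde B\subset\Specn_\eps A$. Taking the union over $B\in S^{(\pi)}_n$ gives $\Delta^{n,\mathrm{fin},\pi}_\eps(A)\subset\Specn_\eps A$, as required. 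The hardest part conceptually is this upgrade: the matrices in $S^{(\pi)}_n$ are not in general submatrices of $A$ itself, and absence of spectral pollution for the $\pi$ method (Theorem \ref{thm:pseud}) alone only yields a soft inclusion with a vanishing but unquantified $\eta_n$; the sharper enclosure relies on realising each $B$ as the $n$-fold periodisation of some $\widetilde B\in M(\Sigma_\alpha,\Sigma_\beta,\Sigma_\gamma)$ and invoking the Laurent-symbol calculation of Corollary \ref{cor:periodic}.

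Finally, the Hausdorff convergence follows by squeezing: by \eqref{eq:HDconv} and the routine Minkowski-sum argument used at the end of the proofs of Proposition \ref{prop:finite} and Theorem \ref{prop:finite2}, $\Specn_{\eps_n^\dag+3/n}(A)+(2/n)\overline{\D}\Hto\Spec A$ as $n\to\infty$, so the sandwich $\Spec A\subset\widehat\Pi^n_{\mathrm{fin}}(A)\subset\Specn_{\eps_n^\dag+3/n}(A)+(2/n)\overline{\D}$ together with the observation at the end of \S\ref{sec:keynot} gives $\widehat\Pi^n_{\mathrm{fin}}(A)\Hto\Spec A$. Since $d_H(\Pi^n_{\mathrm{fin}}(A),\widehat\Pi^n_{\mathrm{fin}}(A))\le 2/n\to 0$, also $\Pi^n_{\mathrm{fin}}(A)\Hto\Spec A$, completing the proof.
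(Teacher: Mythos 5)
Your proposal is correct and is essentially the proof the paper has in mind: it omits the argument as ``a minor variant of the proof of Proposition \ref{prop:finite}'', and your grid-rounding, Lipschitz and sandwich steps are exactly that variant. The one $\pi$-specific ingredient you rightly make explicit -- the sharp inclusion $\Delta^{n,\mathrm{fin},\pi}_{\eps}(A)\subset\Specn_{\eps}A$, obtained by realising each $B\in S^{(\pi)}_n$ as the periodisation $\widetilde B^{\per,t}_{n,0}$ of an $n$-periodic $\widetilde B\in M(\Sigma_\alpha,\Sigma_\beta,\Sigma_\gamma)$ and invoking Corollary \ref{cor:periodic} together with \eqref{eq:specpse} -- is precisely the mechanism already used in the proof of Theorem \ref{thm:pseud}, so this is the intended route rather than a genuinely different one.
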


We omit the proof of the above result which is a minor variant of the proof of Proposition \ref{prop:finite} in \S\ref{sec:scalar}. But let us  interpret this proposition  as a result relating to the solvability complexity index (SCI) of \cite{SCI,ColHan2023}, as we did for Proposition \ref{prop:finite} in \S\ref{sec:SCI}.  Equipping $\C^C$ with the Hausdorff metric as in \S\ref{sec:keynot},
the mappings 
$$
\Omega_\Psi\to \C^C, \qquad A\mapsto \Pi^n_{\mathrm{fin}}(A) \quad \mbox{and} \quad A\mapsto \widehat \Pi^n_{\mathrm{fin}}(A),
$$
are, for each $n\in \N$,  general algorithms in the sense of \cite{SCI,ColHan2023},  with  evaluation set (in the sense of  \cite{SCI,ColHan2023})
\begin{equation} \label{eq:Lambda2}
\Lambda:= \{\B,\cE(\cdot,n):n\in \N\};
\end{equation}
each function in $\Lambda$ has domain $\Omega$, and each can be expressed in terms of finitely many complex-valued functions; cf.~the footnote below \eqref{eq:Lambda}.
Further, $\widehat \Pi^n_{\mathrm{fin}}(A)$ can be computed in finitely many arithmetic operations and specified using finitely many complex numbers (the elements of $\Pi^n_{\mathrm{fin}}(A)$ and the value of $n$). Thus, where $\Xi:\Omega_\Psi\to \C^C$ is the mapping given by $\Xi(A) := \Spec A$, for $A\in \Omega_\Psi$, the computational problem $\{\Xi,\Omega_\Psi, \C^C,\Lambda\}$ has  arithmetic SCI, in the sense of \cite{SCI,ColHan2023}, equal to one; more precisely, since also $\Spec A \subset \widehat \Pi^n_{\mathrm{fin}}(A)$, for each $n\in \N$ and $A\in \Omega_\Psi$, this computational problem is in the class $\Pi_1^A$, as defined in \cite{SCI,ColHan2023}. 

\begin{remark}{\bf (Rate of convergence)} It follows from the last inclusions in Proposition \ref{prop:finiteps} that, if $A\in \Omega_\Psi$, i.e., if $A$ is pseudoergodic with respect to some non-empty compact sets $\Sigma_\alpha$, $\Sigma_\beta$, and $\Sigma_\gamma$, then
$$
d_H(\Spec A,\widehat \Pi^n_{\mathrm{fin}}(A)) \leq \delta_n := d_H(\Spec A,\Specn_{\eps^\dag_n+3/n}(A)) + 2/n \to 0,
$$
as $n\to\infty$ by \eqref{eq:HDconv}. Further note, by \eqref{eq:specpse}, that $\delta_n$ depends only on $n$ and the sets $\Sigma_\alpha$, $\Sigma_\beta$, and $\Sigma_\gamma$. Thus the convergence $\widehat \Pi^n_{\mathrm{fin}}(A)\Hto \Spec A$ is uniform in $A$, for all $A$ that are pseudoergodic with respect to $\Sigma_\alpha$, $\Sigma_\beta$, and $\Sigma_\gamma$. By contrast,  the rate of convergence of the approximation \eqref{eq:col2} to $\Speps A$ (note that $\Speps A$  is given in terms of $\Sigma_\alpha$, $\Sigma_\beta$, and $\Sigma_\gamma$ by \eqref{eq:specpse}) depends on the particular realisation  of $A$.

To make this concrete (see also the related discussions in \cite[\S4.3]{CW.Heng.ML:tridiag} and \cite{Gabel}), consider the Feinberg-Zee case that $\Sigma_\alpha=\{-1,1\}$, $\Sigma_\beta = \{0\}$, and $\Sigma_\gamma = \{1\}$, and suppose $A$, taking the form \eqref{eq:A}, is a random Feinberg-Zee matrix, with $\beta_j=0$ and $\gamma_j=1$, for $j\in \Z$, and with  the entries $\alpha_j\in \{-1,1\}$ i.i.d.\ random variables, with $p:=\mathrm{Pr}(\alpha_j=-1)\in (0,1)$. Then, almost surely, $A$ 
is pseudoergodic with respect to $\Sigma_\alpha$, $\Sigma_\beta$, and $\Sigma_\gamma$, and, by the above result, the convergence $\Pi^n_{\mathrm{fin}}(A)\Hto \Spec A$ is (almost surely) independent of the particular realisation of $A$ and independent of $p$.  But the rate of convergence of \eqref{eq:col2} depends on $p$. In particular,  given any $n\in \N$, it holds, with arbitrarily  high probability if $p$ is sufficiently small,  that $\alpha_j=1$, for $|j|\leq n$,   so that $A^{\per,1}_{2n+1,-n-1}$ is symmetric, $\Spec A^{\per,1}_{2n+1,-n-1} \subset \R$, and   $\Spec A^{\per,1}_{2n+1,-n-1} \subset \R+\eps \overline D$. But this implies that  $$
d_H(\Speps A,\Spec A^{\per,1}_{2n+1,-n-1}) \geq 2,$$ since  $2\ri \in \Spec A$ by  \cite[Theorem 2.5, Lemma 2.6]{CW.Hagger}, so that  $(2+\eps)\ri \in \Speps A$.
\end{remark}

\section{Open problems and directions for further work} \label{sec:ext}

We see a number of open problems and possible directions for future work. Firstly, in our own work in preparation we are considering two extensions to the work reported above, both of which are announced, in part, in the recent conference paper \cite{PAMM}. The first is that our inclusion results \eqref{incl:met1}, \eqref{incl:met1*}, \eqref{incl:met2}, for tridiagonal bi-infinite matrices of the form \eqref{eq:A}, have natural extensions to semi-infinite matrices and, indeed, finite matrices. (The inclusion sets for finite matrices can be viewed as a generalisation of the Gershgorin theorem, which (see \S\ref{sec:ideas}) was a key inspiration  for the work we report here.)
The second extension, announced in part in \cite{PAMM}, is that it is possible, building on the results in this paper for the case of the spectrum, to write down a sequence of approximations, defined in terms of spectral properties of finite matrices, that is convergent to  the {\it essential spectrum}, with each approximation also an inclusion set for the essential spectrum.

In addition to the above we see the following open questions/possible directions for future work:
\begin{enumerate}
\item {\em How can the algorithms, implicit in the definitions of our sequences of approximations,  be  implemented so as to optimise convergence as a function of computation time?} Progress towards efficient implementation of something close to the approximation  on the right hand side of \eqref{eq:genn}, building on the work in (a draft of) this paper, has been made in work by Lindner and Schmidt \cite{LiSchmidt:Givens,TorgePhD}.

\item {\em Can the algorithms in this paper be used or adapted to resolve open questions regarding the spectra of bounded linear operators arising in applications?} We have indicated one application of this sort in \S\ref{sec:psFinite}, an example which is a pseudoergodic operator where the set of $\tau$-method matrices,  $\cS_n^{(\tau)} = \{A_{n,k}:k\in \Z\}$, has cardinality $|\cS_n^{(\tau)}|$ that is finite but grows exponentially with $n$. There are interesting potential applications, notably the Fibonacci Hamiltonian and non-self-adjoint variants \cite{DEG,DGY,Gabel}, where $|\cS_n^{(\tau)}|$ grows only linearly with $n$, allowing exploration of spectral properties with much larger values of $n$. At the other extreme, where $\cS_n$ (and the corresponding $\pi$ and $\tau_1$ method sets) are uncountable and so must be approximated as in Theorems \ref{thm:fin3} and \ref{prop:finite2}, one might seek to use or adapt our methods to study the spectra of (bounded) integral operators $\A$ on $L^2(\R)$, for example those that  arise in problems of wave scattering by unbounded rough surfaces (e.g., \cite{ZCW}). Via a standard discretisation (see, e.g., \cite[{\S}1.2.3]{LiBook} or \cite[Chapter 8]{CWLi2008:Memoir}) the operator $\A$ on $L^2(\R)$ is unitarily equivalent to  an operator $A$ on $E=\ell^2(\Z,X)$ with $X=L^2[0,1]$. Thus, concretely, one might use the inclusion sets in this paper, in the manner indicated in Remark \ref{rem:invert}, to study invertibility of $\lambda I - \A$ on $L^2(\R)$, for some specific $\lambda\in \C$ of relevance to applications. 

\item {\em Can our inclusion set analysis for $E=\ell^2(\Z,X)$ be extended to $E^p:=\ell^p(\Z,X)$, for $1\leq p\leq \infty$?} Although the spectrum of an operator of the form \eqref{eq:A} does not
change if we regard $A$ as acting on $E^p$, with $p\in[1,\infty]$,
rather than on $E$ \cite[Corollary 2.5.4]{RaRoSiBook}, the pseudospectrum does depend on $p$. We note that many results on approximation of pseudospectra of operators on $E$ extend to operators on $E^p$ (see, e.g., \cite{BGS,CW.Heng.ML:tridiag,CWLi2016:Coburn,Colb:PE_pBC}), and that Proposition 6 in Lindner \& Seidel \cite{LiSei:BigQuest}, derived for other purposes, can be seen as a key step in this direction.

\item {\em Can our inclusion sets and convergence results for $E=\ell^2(\Z,X)$ be extended to $\ell^2(\Z^N,X)$ for arbitrary $N\in \N$?} We see no reason why this extension should not be possible. Indeed, Proposition 6 in Lindner \& Seidel \cite{LiSei:BigQuest} can be seen as a first version (a version without the weights $w_j$) of an extension of our key Proposition \ref{prop:gam_main} to this more general case, indeed to the case $E=\ell^p(\Z^N,X)$, for $p\in [1,\infty]$. (Note that the proof of Proposition 6 provided in \cite[pp.~909--910]{LiSei:BigQuest} takes inspiration from  the proof of Proposition \ref{prop:gam_main} in this paper and in \cite{HengThesis}.)  This extension would be attractive so as to tackle a wider range of applications (see, e.g., the multidimensional applications tackled in \cite{Colb:PE_pBC}).
\item {\em Can our results, which are all for {\em bounded} linear operators on $E=\ell^2(\Z,X)$, be extended to (unbounded) closed, densely defined operators?} Such an extension would be attractive for many applications. We note that other recent work on approximation of spectra and/or pseudospectra (e.g., \cite{DaviesPlum,Bogli1,Bogli2,Colb2022,ColHan2023}) extends to this setting.
\end{enumerate}


\begin{ack}
We are grateful for the inspiration 
and encouragement, through many enjoyable interactions at seminars, workshops, and otherwise over many years, of Brian Davies, to whom this paper is dedicated on the occasion of his 80th birthday. We thank Estelle Basor, Albrecht B\"ottcher, Matthew Colbrook, Brian Davies, Raffael Hagger, Titus Hilberdink, Michael Levitin, Marco Marletta, Markus Seidel, Torge Schmidt, Eugene Shargorodsky, and Riko Ukena for helpful discussions and feedback regarding the work reported in this paper. We are grateful also to the detailed feedback from the two anonymous referees which has substantially improved this paper. 
\end{ack}

\begin{funding}
This work was partially supported by a Higher Education Strategic Scholarship
for Frontier Research from the Thai Ministry of Higher Education to the
second author, and the Marie-Curie Grants MEIF-CT-2005-009758 and
PERG02-GA-2007-224761 of the EU to the third author.
\end{funding}


\end{document}